\newcommand{\hkra}{\hookrightarrow}
\newcommand{\dimp}{\Leftrightarrow}
\newcommand{\rimp}{\Rightarrow}
\newcommand{\set}[1]{\left\{ #1 \right\}}
\newcommand{\cc}[1]{\overline{#1}}
\newcommand{\union}[2]{\bigcup\limits_{#1}{#2}}
\newcommand{\sub}{\subset}
\newcommand{\super}{\supseteq}
\newcommand{\sm}{\ensuremath{\setminus}}
\newcommand{\lcap}[2]{\bigcap\limits_{#1}^{#2}}
\newcommand{\s}[2]{\sum\limits_{#1}{#2} }
\newcommand{\p}[2]{\prod\limits_{#1}{#2} }
\newcommand{\g}{\circ}
\newcommand{\modulo}[2]{{\raisebox{.2em}{$#1$}\left/\raisebox{-.2em}{$#2$}\right.}}
\newcommand{\units}{^\times}
\newcommand{\inv}{^{-1}}
\newcommand{\norm}[1]{\left\lVert#1\right\rVert}
\newcommand{\cl}{\colon}
\newcommand{\djun}[1]{\bigsqcup\limits_{#1}}
\newcommand{\lbr}[1]{\Bigl(#1\Bigr)}
\newcommand{\emst}{\emptyset}
\newcommand{\iI}{_{i \in I}}
\newcommand{\xra}{\xrightarrow}
\newcommand{\scA}{\mathscr{A}}
\newcommand{\scB}{\mathscr{B}}
\newcommand{\scC}{\mathscr{C}}
\newcommand{\scE}{\mathscr{E}}
\newcommand{\scF}{\mathscr{F}}
\newcommand{\scJ}{\mathscr{J}}
\newcommand{\scK}{\mathscr{K}}
\newcommand{\scM}{\mathscr{M}}
\newcommand{\scN}{\mathscr{N}}
\newcommand{\scS}{\mathscr{S}}
\newcommand{\scT}{\mathscr{T}}
\newcommand{\scL}{\mathscr{L}}
\newcommand{\scW}{\mathscr{W}}
\newcommand{\scX}{\mathscr{X}}
\newcommand{\scY}{\mathscr{Y}}
\newcommand{\scZ}{\mathscr{Z}}
\newcommand{\mini}[2]{\min\limits_{#1}{#2}}
\newcommand{\eva}{\normalfont\text{ev}}
\newcommand{\ii}{\mathbf{i}}
\newcommand{\lspan}[1]{\langle {#1}\rangle}
\newcommand{\pf}{\,\pitchfork\,}
\newcommand{\conn}{\nabla}
\newcommand{\hpd}{\cc{\partial}}
\newcommand{\inn}{^\mathrm{o}}
\newcommand{\lc}{\underline}
\newcommand{\dul}{^\vee}
\newcommand{\vfc}[1]{[#1]^{\virt}}
\newcommand{\ide}{\normalfont\text{id}}
\newcommand{\Pt}{\normalfont\text{pt}}
\newcommand{\im}{\normalfont\text{im}}
\newcommand{\reg}{\normalfont\text{reg}}
\newcommand{\PGL}{\normalfont\text{PGL}}
\newcommand{\PU}{\normalfont\text{PU}}
\newcommand{\coker}{\normalfont\text{coker}}
\newcommand{\pr}{\normalfont\text{pr}}
\newcommand{\Mbar}{\overline{\mathcal M}}
\newcommand{\Cbar}{\overline{\mathcal C}}
\newcommand{\vdim}{\normalfont\text{vdim}}
\newcommand{\delbar}{\bar\partial}
\newcommand{\del}{\partial}
\newcommand{\Hom}{\normalfont\text{Hom}}
\newcommand{\End}{\normalfont\text{End}}
\newcommand{\Aut}{\normalfont\text{Aut}}
\newcommand{\bC}{\mathbb{C}}
\newcommand{\bD}{\mathbb{D}}
\newcommand{\bL}{\mathbb{L}}
\newcommand{\bN}{\mathbb{N}}
\newcommand{\bP}{\mathbb{P}}
\newcommand{\bQ}{\mathbb{Q}}
\newcommand{\bR}{\mathbb{R}}
\newcommand{\bT}{\mathbb{T}}
\newcommand{\bX}{\mathbb{X}}
\newcommand{\bY}{\mathbb{Y}}
\newcommand{\bZ}{\mathbb{Z}}
\newcommand{\cA}{\mathcal{A}}
\newcommand{\cB}{\mathcal{B}}
\newcommand{\cC}{\mathcal{C}}
\newcommand{\cD}{\mathcal{D}}
\newcommand{\cE}{\mathcal{E}}
\newcommand{\cF}{\mathcal{F}}
\newcommand{\cG}{\mathcal{G}}
\newcommand{\cJ}{\mathcal{J}}
\newcommand{\cK}{\mathcal{K}}
\newcommand{\cL}{\mathcal{L}}
\newcommand{\cM}{\mathcal{M}}
\newcommand{\cN}{\mathcal{N}}
\newcommand{\cO}{\mathcal{O}}
\newcommand{\cP}{\mathcal{P}}
\newcommand{\cR}{\mathcal{R}}
\newcommand{\cT}{\mathcal{T}}
\newcommand{\cU}{\mathcal{U}}
\newcommand{\cV}{\mathcal{V}}
\newcommand{\cW}{\mathcal{W}}
\newcommand{\cZ}{\mathcal{Z}}
\newcommand{\fD}{\mathfrak{D}}
\newcommand{\fL}{\mathfrak{L}}
\newcommand{\ff}{\mathfrak{f}}
\newcommand{\fg}{\mathfrak{g}}
\newcommand{\fj}{\mathfrak{j}}
\newcommand{\fl}{\mathfrak{l}}
\newcommand{\fo}{\mathfrak{o}}
\newcommand{\fp}{\mathfrak{p}}
\newcommand{\fs}{\mathfrak{s}}
\newcommand{\ft}{\mathfrak{t}}
\newcommand{\fu}{\mathfrak{u}}
\newcommand{\virt}{\normalfont\text{vir}}
\newcommand{\C}{\bC}
\newcommand{\ol}{\overline}
\newcommand{\wt}{\widetilde}
\newcommand{\wh}{\widehat}
\newcommand{\indo}{\normalfont\text{ind}}
\newcommand{\obj}{\normalfont\text{Ob}}
\newcommand{\multr}[1]{m_{#1}}
\newcommand{\orl}{\hspace{0.5pt}\fo}
\newcommand{\wch}{\widecheck}
\newcommand{\cBR}{\cB^{P}}
\newcommand{\scBR}{\scB^{P}}
\newcommand{\cBRB}{\wt{\cB}^{P}}
\newcommand{\wkpd}{W^{k,p,\delta}}
\DeclareMathOperator{\Glue}{Glue}
\DeclareMathOperator{\proj}{proj}
\DeclareMathOperator{\Maps}{Maps}
\DeclareMathOperator{\spec}{spec}
\DeclareMathOperator{\colim}{colim}
\newcommand{\graph}{\normalfont\text{graph}}
\newcommand{\stb}{\,\normalfont\text{st}}
\newcommand{\sft}{\normalfont\text{SFT}}
\newcommand{\scBS}{\scB^\bR}
\newcommand{\cBS}{\cB^\bR}
\newcommand{\cTR}{\cT^\bR}
\newcommand{\supp}{\normalfont\text{supp}}
\newtheorem{intthm}{Theorem}
\newcommand{\ov}{\overline}
\newcommand{\Nbar}{\overline{\cN}}
\newcommand{\qtimes}[1]{\underset{#1}{\times}}
\newcommand{\Flow}{\normalfont\text{Flow}}
\newcommand{\dOrb}{\normalfont\text{dOrb}}
\newtheorem{theorem}{Theorem}[section]
\newtheorem{lemma}[theorem]{Lemma}
\newtheorem{corollary}[theorem]{Corollary}
\newtheorem{proposition}[theorem]{Proposition}
\theoremstyle{definition}
\newtheorem{definition}[theorem]{Definition}
\newtheorem{construction}[theorem]{Construction}
\theoremstyle{remark}
\newtheorem{remark}[theorem]{Remark}
\newtheorem{convention}[theorem]{Convention}
\newtheorem{ex}[theorem]{Example}
\newtheorem*{notation*}{Notation}
\numberwithin{equation}{section}
 \def\l@subsection{\@tocline{2}{0pt}{2pc}{6pc}{}} \makeatother
\newcommand{\Addresses}{{
  \bigskip
  \footnotesize

  \textsc{Soham Chanda\\ \indent University of Southern California, Los Angeles, California, USA}\par\nopagebreak
  \text{ORCID}: \texttt{0000-0002-6932-2855}\\
  \textsc{}\par\nopagebreak
\textsc{Amanda Hirschi\\ \indent Université Paris Cité, Sorbonne Université, CNRS, IMJ-PRG, F-75005 Paris, France}\\
\indent \text{ORCID}: \texttt{0000-0002-2392-7875}
}}
\begin{document}

\title{A Contact Homotopy Type}
\author{Soham Chanda}\author{Amanda Hirschi}
\date{\today}
\begin{abstract}
    Adapting the construction of global Kuranishi charts to the contact setting, we associate to any non-degenerate closed contact manifold a flow category based on Reeb orbits and moduli spaces of pseudo-holomorphic buildings. The construction is natural in the sense that to any exact symplectic cobordism we can associate a flow bimodule between the flow categories of its ends. 
\end{abstract}
\maketitle
\tableofcontents

\section{Introduction}

\subsection{Context} 
Contact manifolds are smooth manifolds equipped with a maximally integral distribution, their contact structure. Although such manifolds appear in the literature as early as \cite{Lie72}, the systematic study of contact manifolds is younger than that of symplectic manifolds. While any contact manifold $(Y,\xi)$ is odd-dimensional, given a contact form $\lambda$ we can define the symplectization of $(Y,\lambda)$ to be the product $(\bR\times Y,d(e^s\lambda))$. While the symplectization is not a complete invariant of contact manifolds, \cite{Cou14}, it allows many tools from symplectic geometry to be imported into contact topology, in particular, the theory of pseudo-holomorphic curves.
Early applications were constructions of symplectic capacities, \cite{EH87}, and the Weinstein conjecture in dimension $3$, \cite{Hof93}.\par 
Symplectic field theory is the ambitious vision of \cite{EGH00}, proposing an intricate algebraic framework based on moduli spaces of punctured pseudo-holomorphic curves. We refer to \cite{HS24} for a survey on the history and possible application of SFT; we could not do them justice here.
The main problem in realizing the SFT framework is the lack of transversality of the relevant moduli spaces. The basic theory and estimates were worked out in \cite{HWZI,HWZII}, while compactness was established in \cite{BEH03,CM05}. The goal of a uniform theory to deal with the transversality issues motivated the development of polyfolds, \cite{HWZ21,FFGW16,FH18}. A construction of Kuranishi charts for all genera was given by \cite{Ish18}, but utilizing them to obtain algebraic invariants is yet to be done.\par

In genus zero, considering curves with one positive puncture, SFT postulates the existence of a Floer homology theory, called \emph{contact homology}, with generators given by Reeb orbits. In \cite{BH18} and \cite{BH23}, Bao--Honda give constructions that are suitable for computations, cf. \cite{Avd23}, while \cite{Par19} gives a more abstract construction of contact homology using the framework developed in \cite{Par16}. However, as in Hamiltonian Floer theory, contact homology uses only the information of rigid moduli spaces of curves, i.e., those of dimension zero, and of the existence of suitable moduli spaces in dimension one. Floer homotopy theory has the goal of extracting invariants also from higher-dimensional moduli spaces. It was proposed by \cite{CJS95} in 1995 and reworked in the Morse--Bott context by \cite{Z24,CK23,Bon24}. Both \cite{LT18} and \cite{AB24} give different approaches, respectively, the latter placing greater emphasis on bordism theories. Flow categories and the associated homotopical structures have been constructed in Hamiltonian Floer theory, \cite{BX22,Rez22}, symplectic cohomology, \cite{Rez24,CK23}, Lagrangian Floer theory, \cite{Lar21,PS24,BC25} as well as \cite{LS14} in a somewhat different context. Apart from \cite{TT25}, which uses generating families instead of pseudo-holomorphic curves, no Floer homotopy theoretic constructions have been given in the context of contact topology.

\subsection{Main results}
The construction of our flow category relies fundamentally on the global Kuranishi charts for moduli spaces of punctured pseudo-holomorphic curves that we build in \textsection\ref{sec:prelim-aux}. To keep the notation light here, we summarize the background on contact topology in \textsection\ref{subsec:background} and define our pseudo-holomorphic curves and their degenerations in \textsection\ref{subsec:buildings}. For this introduction, let us simply say that we consider the SFT compactifications $\Mbar^{\,J}_{\sft}(\Gamma^+,\Gamma^-;\beta)$ of moduli spaces of genus-zero $J$-holomorphic curves of relative homology class $\beta$ in the symplectisation of a contact manifold $(Y,\lambda)$ that are asymptotic at their positive and negative punctures to Reeb orbits in $\Gamma^+$ and $\Gamma^-$ respectively. 

\begin{intthm}[Theorem~\ref{thm:leveled-gkc}]\label{prop:gkc-exists}
    Let $(Y,\xi)$ be a closed contact manifold equipped with a non-degenerate contact form $\lambda$. Suppose $J$ is a $\lambda$-adapted almost complex structure on $(Y,\xi)$ and $\Gamma^\pm$ are finite sequences of Reeb orbits.
    \begin{enumerate}[ \normalfont 1),leftmargin=20pt,ref=\arabic*]
        \item The moduli space $\Mbar^{\,J}_{\sft}(\Gamma^+,\Gamma^-;\beta)$ admits a global Kuranishi chart with corners $\cK$ of the correct virtual dimension.
        \item If $\Gamma^+$ and $\Gamma^-$ consist of good Reeb orbits, then the orientation line $\fo_\cK$ of $\cK$ is canonically isomorphic to $\fo(\bR)\dul\otimes\bigotimes\limits_{\gamma\in \Gamma^+}\fo_{\gamma}\otimes\bigotimes\limits_{\gamma\in \Gamma^-}\fo\dul_{\gamma}$.
    \end{enumerate}
\end{intthm}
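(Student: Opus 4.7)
The plan is to extend the global Kuranishi chart technology developed for closed curves (in the style of Abouzaid--McLean--Smith and subsequent work) to punctured pseudo-holomorphic buildings in symplectizations. The construction will proceed in three stages: first build a chart for the open stratum of single-level buildings, then extend to lower strata using the SFT gluing theorem, and finally verify both the dimension count and the orientation identification.

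First, for a single-level curve $u\colon (\dot\Sigma,j)\to (\bR\times Y, J)$ with asymptotics $\Gamma^\pm$, I would assemble a chart from three ingredients. (a) Domain stabilization by interior marked points mapping to a divisor, combined with a slice condition with respect to the $\bR$-translation action, so that the symmetry is broken and the domain becomes rigid given the auxiliary data. (b) An embedding into a projective bundle built from sufficiently positive sections of a line bundle pulled back from $Y$, normalized at the asymptotic Reeb orbits so that the construction extends across the cylindrical ends despite the non-compactness of the target. (c) A finite-dimensional space of $\bar\partial$-perturbations, compactly supported away from the punctures, chosen large enough to achieve transversality. The result is a smooth thickening $\cK_\circ$ together with a symmetry group $G$ (incorporating a projective linear factor for the framing and $\bR$ for translation) and a vector bundle $E\to\cK_\circ$ whose zero locus recovers the open stratum of $\Mbar^{\,J}_{\sft}(\Gamma^+,\Gamma^-;\beta)$.

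To incorporate the boundary strata of buildings with $k\geq 2$ levels, I would proceed by induction on the number of levels. Near a codimension-$(k-1)$ stratum, the SFT compactness and gluing results from \cite{BEH03} provide local coordinates $(r_1,\dots,r_{k-1})\in[0,\varepsilon)^{k-1}$, where $r_i$ is the exponential of a negative neck length at the $i$-th breaking orbit. Gluing the single-level charts level-by-level, with the framing data and marked points matched at each breaking Reeb orbit, produces a corner chart compatible with the inductive hypothesis. Once the coherence of thickenings across matching asymptotics is arranged, a standard analytic argument upgrades the matching to a smooth atlas. The virtual dimension follows from Riemann--Roch for punctured Cauchy--Riemann operators:
\[
\vdim = (n-3)\chi(\dot\Sigma)+2\langle c_1(\xi),\beta\rangle+\sum_{\gamma\in\Gamma^+}\czi(\gamma)-\sum_{\gamma\in\Gamma^-}\czi(\gamma)-1,
\]
where the $-1$ records the $\bR$-translation quotient.

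For the orientation statement, the orientation line $\fo_\cK$ is canonically identified with the determinant line of the linearized $\bar\partial$-operator, twisted by the orientation of the $\bR$-slice, so it suffices to decompose the determinant line into puncture-local contributions. Applying the linear gluing theorem for Cauchy--Riemann operators near each puncture expresses the determinant as a tensor product of local determinant lines of the asymptotic operators; the condition that a Reeb orbit be \emph{good} is precisely what makes the corresponding local line canonically isomorphic to $\fo_\gamma$ without a $\bZ/2$ ambiguity (as in the analysis of Bourgeois--Mohnke). Positive punctures contribute $\fo_\gamma$, negative punctures contribute $\fo_\gamma\dul$, and the slice contributes $\fo(\bR)\dul$, assembling into the claimed canonical isomorphism. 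The main obstacle is the analytic one: ensuring that the SFT gluing yields a genuinely smooth corner structure (not merely $C^0$) compatible with the thickening across matching ends, which requires careful weighted Sobolev estimates in cylindrical coordinates near each breaking orbit and is where I expect most of the technical work to reside.
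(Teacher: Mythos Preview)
Your proposal has a genuine gap at the framing step, and this is precisely the point the paper singles out as the main new idea. You write that the embedding into projective space is built from ``sufficiently positive sections of a line bundle pulled back from $Y$, normalized at the asymptotic Reeb orbits.'' But a contact manifold $Y$ may have $H^2(Y;\bZ)=0$, so there is no positive line bundle on $Y$ to pull back; and even when one exists, pulling it back along $u_Y\colon\dot C\to Y$ yields a bundle on the \emph{punctured} domain, not on the closed nodal curve $C$, so there is no way to extract a degree on each irreducible component that behaves additively under smoothing a node. The paper's workaround is combinatorial rather than geometric: it replaces $\lambda$ by a nearby $1$-form $\wt\lambda$ whose periods over the finitely many relevant Reeb orbits are integers (Definition~\ref{de:approximation-in-symplectisation}), and then defines the line bundle on each component $C_v$ directly by the divisor $\sum_{e\in E_v^+}(\int\gamma_e^*\wt\lambda)\,z_{v,e}-\sum_{e\in E_v^-}(\int\gamma_e^*\wt\lambda)\,z_{v,e}$. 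Stokes' theorem for $d\wt\lambda$ then forces positivity of the twisted degree, and additivity under node resolution is automatic because the contributions at a matched pair of punctures cancel. Without this mechanism you cannot produce the framing $\varphi\colon C\hookrightarrow\bP^d$ that underlies the entire global chart construction, so your step (b) does not go through as stated.

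A second, structural difference: you propose to build the chart for $\Mbar^{\,J}_{\sft}$ by induction on the number of levels, gluing single-level charts along matching Reeb orbits. The paper does not do this. It first constructs a chart $\cK$ for the \emph{unleveled} moduli space $\Mbar^{\,J}(\Gamma^+,\Gamma^-;\beta)$ in the sense of \cite{Par19} (where one quotients each irreducible component separately by translation, with no level data), over a base space $\cBR$ obtained by real-oriented blow-up of $\cB\subset\Mbar_0(\bP^d,d)$. It then recovers the SFT moduli space by a \emph{generalized corner blow-up} $\cBS\to\cBR$ in the sense of Kottke--Melrose, whose refinement is dictated by the combinatorics of maximally leveled trees (Proposition~\ref{prop:maximally-leveled-tree}), and shows that $\cK^\bR\coloneqq\cBS\times_{\cBR}\cK$ is a chart for $\Mbar^{\,J}_{\sft}$. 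This sidesteps the coherent-gluing-of-thickenings problem you flag as the main analytic obstacle: the corner structure comes from the base space, not from SFT gluing of obstruction bundles. The gluing analysis (Appendix~\ref{sec:gluing}) is used only to show that the thickening is a rel--$C^1$ manifold over $\cBR$ and that the leveled thickening is the pullback (Corollary~\ref{cor:map-to-leveled-base}).

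Your orientation sketch is broadly correct in spirit and matches the paper's \textsection\ref{subsec:orientation}: one identifies $\fo_\cK$ with $\fo(D^\nabla)\otimes\fo(\bR)^\vee$ up to canonically oriented factors, and then decomposes the determinant of the Cauchy--Riemann operator into puncture-local pieces via linear gluing, with goodness of $\gamma$ ensuring that $\fo_\gamma$ is independent of the base point.
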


While much of the construction is similar to previous constructions of global Kuranishi charts, the crucial first step is different: the construction of very ample line bundles on the closed domain curves. The original idea in \cite{AMS21}, somewhat rephrased in \cite{HS22} and \cite{AMS23}, is to construct a very ample line bundle on the domain of pseudo-holomorphic curve by pulling back a sufficiently positive differential form, respectively, line bundle on the target manifold. In the contact case, where $Y$ could have vanishing $H^2$ and such a line bundle could only be pulled back to the punctured domain, this is not an option. However, in genus zero, all that is needed is the assignment of a positive degree to each irreducible component of a domain so that these degrees add correctly when a node is smoothed. This is carried out in \textsection\ref{sssec:framings_of_buildings}.

\begin{remark}[Technical remark]
    The proof of Theorem~\ref{prop:gkc-exists} passes through the moduli spaces considered in \cite{Par19}. In contrast to the compactifications usually considered in the SFT literature, these curves do not come with levels, that is, one quotients the maps on each irreducible component separately by the translation action on the target. In the process, we give a geometric way of recovering the usual moduli spaces considered in SFT from those used by \cite{Par19}, which may be of independent interest.
\end{remark}

Theorem~\ref{prop:gkc-exists} does not assume $\Gamma^+$ to be a single Reeb orbit. Hence, our construction yields the foundations necessary for rational SFT as outlined in \cite{Lat22} and used in \cite{Sie19,MZ20,MZ23}. We do not pursue this direction and instead establish that (at least genus zero) SFT moduli spaces fit into the flow category framework of \cite{AB24}. However, we have to generalize their definition of flow category in two ways. First, we allow the objects to be orbifolds and replace the cartesian product (in the definition of the compositions) by a form of fiber product. Secondly, the flow category comes with symmetric actions on the objects and morphism spaces. The details can be found in \textsection\ref{sec:sliced-flow-cats}. Restricting to the case of points and trivial symmetric actions recovers the classical definition. Tangential structures such as stable complex structures or framings can be defined for this definition of flow categories exactly as in \cite{AB24} and are discussed in \textsection\ref{subsec:sliced-flow-cat}. In particular, we show that these flow categories are the objects of a stable $\infty$-category (Theorem~\ref{prop:flow-stable}), extending one of the main results of \cite{AB24} to our setting. We expect that most of the flow category constructions in the literature, such as \cite{PS25}, can be adapted.

\begin{remark}
     Replacing the objects of the flow category by orbifolds was proposed by Mohammed Abouzaid. It differs from the definition of a Morse--Bott flow category in \cite{Z24,CK23,Bon24} in the way the composition maps are defined and because the group action depends on the object. This new definition appears naturally in our setup because we equip our curves with asymptotic markers at each puncture but do not constrain the markers to be mapped to a fixed base point on the Reeb orbit. Curiously enough, not choosing a base point is almost forced on us by the global Kuranishi chart construction.
\end{remark}

Given a contact manifold $(Y,\xi)$ with non-degenerate contact form $\lambda$, define $\cP$ to be the set of finite sequences of Reeb orbits. Given $L > 0$, we let $\cP_{\le L}\sub \cP$ be the subset of sequences where each element has action $\leq L$. We identify a Reeb orbit $\gamma$ with the associated orbifold $B\gamma$ given by the manifold 
$$E\gamma = \{\wt\gamma\mid \wt\gamma \text{ a constant-speed parametrization of }\gamma\}$$
equipped with the $S^1$-action that rotates the domain. This orbifold is equivalent to $[*/\bZ_m]$, where $m$ is the multiplicity of the Reeb orbit. To a sequence $\Gamma = (\gamma_1,\dots,\gamma_k)$ of Reeb orbits, we associate the product $B\Gamma = B\gamma_1\times \dots\times B\gamma_k$.

\begin{intthm}\label{thm:sft-flow-category} For any choice of $\lambda$-adapted almost complex structure $J$ there exists a stably complex rel--$C^1$ flow category $\scM^{Y,\lambda}$ with objects given by finite sequences of Reeb orbits and morphism spaces based on moduli spaces of punctured $J$-holomorphic curves.
\end{intthm}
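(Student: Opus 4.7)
The plan is to assemble the objects constructed in the paper into the sliced flow category formalism of Section~\ref{sec:sliced-flow-cats}. Fix the $\lambda$-adapted almost complex structure $J$ throughout. I take as objects the orbifolds $B\Gamma$ for $\Gamma\in\cP$, carrying their natural $S^1$-actions (with $\bZ_m$-isotropy at $m$-fold covers) together with the symmetric group actions permuting identical entries; morphism spaces will be built from the global Kuranishi charts produced by Theorem~\ref{prop:gkc-exists}; composition will come from SFT gluing of two-level buildings, realized as a fiber product over a shared collection of Reeb orbits; and the stable complex rel-$C^1$ datum will be read off the orientation-line identification in Theorem~\ref{prop:gkc-exists}(2).

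First I would define
$$\scM^{Y,\lambda}(\Gamma^+,\Gamma^-) := \bigsqcup_\beta \cK(\Gamma^+,\Gamma^-;\beta),$$
where the sum is over relative classes compatible with the action constraint $\cA(\Gamma^+)\geq\cA(\Gamma^-)$; only finitely many $\beta$ contribute once an action bound is fixed, which is how the flow category interacts with the filtration by $\cP_{\le L}$. The source and target maps are the evaluations at the positive and negative asymptotic markers, landing in $B\Gamma^\pm$. The symmetric actions lift to morphism spaces by relabeling asymptotics; this is where the orbifold formalism is essential, since the asymptotic markers are not pinned to a basepoint of the Reeb orbit, so a strict cartesian product of points would not capture the gluing geometry.

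Next I would define composition at $\Gamma^0$ by identifying the codimension-one boundary stratum of $\cK(\Gamma^+,\Gamma^-;\beta)$ corresponding to a two-level split with
$$\bigsqcup_{\beta_1+\beta_2=\beta}\cK(\Gamma^+,\Gamma^0;\beta_1)\times_{B\Gamma^0}\cK(\Gamma^0,\Gamma^-;\beta_2).$$
The set-theoretic identification is the SFT codimension-one boundary description recalled in Section~\ref{subsec:buildings}, and promoting it to an identification of Kuranishi charts with corners amounts to verifying that the thickenings and obstruction bundles produced in Section~\ref{sssec:framings_of_buildings} and Theorem~\ref{thm:leveled-gkc} restrict compatibly to the stratum. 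The fiber product structure, rather than a cartesian product, is forced by the marker formalism: matching asymptotic markers at a Reeb orbit on both sides of the split is exactly the fiber-product condition over $B\Gamma^0$. For the tangential structure, Theorem~\ref{prop:gkc-exists}(2) supplies a canonical isomorphism $\fo_{\cK(\Gamma^+,\Gamma^-;\beta)}\cong\fo(\bR)\dul\otimes\bigotimes_{\gamma\in\Gamma^+}\fo_\gamma\otimes\bigotimes_{\gamma\in\Gamma^-}\fo_\gamma\dul$; together with the stable complex structure on the obstruction bundle coming from the complex-linear part of the linearized Cauchy--Riemann operator, this delivers the stable complex rel-$C^1$ datum, and the gluing identifications above respect it by naturality of the orientation isomorphism.

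The hard part will be the coherent verification of the flow category axioms---associativity of composition, equivariance under the symmetric actions, and splitting of the stable complex datum along the $\bR$-translation factor---in the presence of the $\bZ_m$-isotropies and the $\bZ_2$ sign ambiguity afflicting bad Reeb orbits. I would restrict objects to sequences of good orbits (where the orientation isomorphism is canonical) and invoke the general principle that gluing of pseudo-holomorphic buildings is canonical up to contractible choice to promote naive associativity to the genuine coherent associativity required by the definition in Section~\ref{sec:sliced-flow-cats}. Higher coherences and compatibility with the symmetric actions then follow by inserting the already-verified equivariant structure on the global Kuranishi charts of Theorem~\ref{prop:gkc-exists} into the framework of Section~\ref{subsec:sliced-flow-cat}.
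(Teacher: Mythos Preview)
Your outline misses the central technical obstruction that drives the paper's architecture. The global Kuranishi chart construction of Theorem~\ref{prop:gkc-exists} requires choosing a $\cP(T)$-integral approximation $\wt\lambda$ of $\lambda$ (Definition~\ref{de:approximation-in-symplectisation}), which exists only for a \emph{finite} set of Reeb orbits. There is no single pre-perturbation datum that works for all action levels simultaneously, so you cannot directly define $\scM^{Y,\lambda}(\Gamma^+,\Gamma^-)$ as a coherent system of Kuranishi charts over all of $\cP$. The paper instead constructs, for each action bound $L$, a flow category $\scM^{Y,\lambda}_{\leq L}$ (Theorem~\ref{thm:flow-cat}), builds flow bimodules between consecutive action levels via the trivial cobordism (Proposition~\ref{prop:colimit-of-flow-cats}), and defines $\scM^{Y,\lambda}$ as the colimit of the resulting directed system inside the stable $\infty$-category $\Flow^{\Sigma,U}$. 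Your proposal contains no mechanism to bridge different action levels.

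Two further gaps. First, the stable complex structure is not ``read off'' from the orientation-line isomorphism of Theorem~\ref{prop:gkc-exists}(2): that identification gives orientability, which is strictly weaker. The paper's construction (\S\ref{subsec:stable-complex}) builds virtual bundles $V_\gamma$ over the objects, complex index bundles $I_\Lambda$ on morphism spaces, and a homotopy of Cauchy--Riemann operators (Proposition~\ref{prop:interpolating-to-complex}) interpolating between the linearized operator and a genuinely complex-linear one; compatibility across boundary strata is then checked by an inductive construction. Second, your restriction to good Reeb orbits contradicts the statement: the paper explicitly retains bad orbits and compensates for their non-orientability via the index bundles $V_\gamma$ (see \S\ref{subsec:lift-of-objects}). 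Finally, the coherence of compositions is not obtained by invoking a ``general principle'' but by an inductive construction of perturbation data (Proposition~\ref{prop:embedding-gkc}) that forces the boundary strata of $\scK_\Lambda$ to be strong equivalences onto quotient fiber products of lower charts by design.
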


We emphasize that we obtain a stable complex structure on $\scM^{Y,\lambda}$ despite including bad Reeb orbits in the objects of our symmetric flow category. The non-orientability of these Reeb orbits is compensated for by certain index bundles that are part of the stable complex structure; see \S\ref{subsec:lift-of-objects}. Theorem~\ref{thm:sft-flow-category} follows from a telescope construction using Theorem~\ref{thm:flow-cat} and Proposition~\ref{prop:colimit-of-flow-cats}. Concretely, in Theorem~\ref{thm:flow-cat} and Theorem~\ref{thm:stable-complex-structure}, we construct a symmetric flow category $\scM^{Y,\lambda}_{\leq L}$ and its stably complex lift with objects given by $\cP_{\le L}$. Proposition~\ref{prop:colimit-of-flow-cats} then asserts that for $L < L'$, we can find a stably complex symmetric bimodule from $\scM_{\leq L}^{Y,\lambda}$ to $\scM_{\leq L}^{Y,\lambda}$.

The flow category we construct should be regarded as an enhancement of contact homology after forgetting the natural dga structure on contact homology. A sketch of the relation is given in \S\ref{subsec:contact homology}. We expect that the dga structure can be encoded by utilizing the natural symmetric monoidal structure of concatenation on the objects $\scM^{Y,\lambda}_{\le L}$ but do not pursue it in this paper. 

\begin{remark}
    Instead of the flow category constructed here, one could use Theorem~\ref{prop:gkc-exists} to build a contact flow multi-category, where the objects are Reeb orbits, while moduli spaces of curves with one positive puncture and multiple negative punctures constitute the multi-morphisms. Since the foundational aspects of flow multi-categories are still under development, we chose to pursue a bar-construction-style flow category for contact manifolds.  
\end{remark}

\begin{remark} 
The global Kuranishi chart of $\Mbar^J_{\sft}(\Gamma^+,\Gamma^-;\beta)$ depends on a choice of \emph{perturbation datum}, see Definition~\ref{de:auxiliary-datum}, similar to other constructions of global Kuranishi charts, \cite{AMS21,HS22,BX22,Rez22}. One key step of Theorem~\ref{thm:sft-flow-category} is an inductive construction of such perturbation data, following \cite{BX22}. However, by carefully choosing these data and not using smoothing theory, we can avoid some of the additional steps.
\end{remark}


Restricting to cylinders, we can drop the restriction on the action in geometrically nice cases.

\begin{corollary}
    Suppose $(Y,\lambda)$ has no contractible Reeb orbits. Then, there exists a \emph{cylindrical contact flow category} $\scM^{cyl}$ with objects given by \emph{all} Reeb orbits of $\lambda$ and morphism spaces given by compactifications of moduli spaces of cylinders.
\end{corollary}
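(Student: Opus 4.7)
The plan is to adapt the construction of Theorem~\ref{thm:sft-flow-category} to moduli spaces of cylinders and to leverage the absence of contractible Reeb orbits in order to dispense with the action cutoff. The first step is to verify the key compactness statement that, under the hypothesis, every element of $\Mbar^J_{\sft}(\gamma^+,\gamma^-;\beta)$ is a multi-level building each of whose non-trivial levels consists of a single holomorphic cylinder, possibly decorated with constant sphere bubbles. Indeed, any plane bubble in a symplectization must be asymptotic to a contractible Reeb orbit and is thus forbidden; meanwhile non-constant holomorphic spheres do not exist in $(\bR\times Y, d(e^s\lambda))$ by exactness. Any degeneration introducing a component with three or more punctures would require a compensating plane on some other level to absorb the extra ends, and is therefore excluded. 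Consequently the strata of $\Mbar^J_{\sft}(\gamma^+,\gamma^-;\beta)$ are indexed by chains of intermediate Reeb orbits, and their action is controlled by $A(\gamma^+)$.

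Next, I would apply Theorem~\ref{prop:gkc-exists} to each $\Mbar^J_{\sft}(\gamma^+,\gamma^-;\beta)$ separately to obtain a global Kuranishi chart with corners of the correct virtual dimension, together with the canonical trivialisation of the orientation line when $\gamma^\pm$ are good. The stably complex structure provided by Theorem~\ref{thm:stable-complex-structure} carries over verbatim, since the putative objects of $\scM^{cyl}$ remain orbifolds of the form $[*/\bZ_m]$, exactly as in \S\ref{subsec:lift-of-objects}. The composition maps will be given by concatenation of cylinders, realized as fiber products over the orbit orbifolds $B\gamma$ of intermediate Reeb orbits, matching the definition of composition in \S\ref{sec:sliced-flow-cats}.

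To assemble $\scM^{cyl}$, I would run the inductive construction of the perturbation data from the proof of Theorem~\ref{thm:flow-cat}, but now inducting on the action difference $A(\gamma^+)-A(\gamma^-)$ rather than on an overall action cap. Because the Reeb action strictly decreases across each non-trivial cylinder, this difference is bounded below on each boundary stratum, and for any fixed $(\gamma^+,\gamma^-,\beta)$ only finitely many intermediate Reeb orbits appear in the compactification by non-degeneracy of $\lambda$; hence each inductive step remains finite. The main obstacle, and the reason this construction fails without the hypothesis, is precisely to ensure that the perturbation data can be chosen consistently over an infinite family of pairs $(\gamma^+,\gamma^-)$; what saves us is that no composition stratum refers to Reeb orbits outside the interval $[A(\gamma^-),A(\gamma^+)]$, so choices made at one stage do not propagate upward and the construction can be organized as a telescope over a filtration by action difference, analogous to Proposition~\ref{prop:colimit-of-flow-cats} but now indexed by all Reeb orbits simultaneously.
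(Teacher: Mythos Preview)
Your compactness argument and the general inductive strategy are fine; the real issue is the framing, and your final paragraph does not actually resolve it. Recall that the global Kuranishi chart construction of Theorem~\ref{prop:gkc-exists} requires as input a $\cP(T)$-integral approximation $\wt\lambda$, and this approximation determines the degree $d$ of the embedding into $\bP^d$ via the formula $d = p(\cA_{\wt\lambda}(\gamma^+)-\cA_{\wt\lambda}(\gamma^-)) + n - 2$. For the composition maps in Proposition~\ref{prop:embedding-base-spaces} to exist, these degrees must telescope correctly across all Reeb orbits that can appear as intermediate breakings, which in turn requires a \emph{single} $\wt\lambda$ valid for all orbits in play. No such $\wt\lambda$ exists once the set of Reeb orbits is infinite, and your proposed ``telescope over action difference'' does not produce one: at best it recovers the colimit construction of Proposition~\ref{prop:colimit-of-flow-cats}, yielding $\scM^{cyl}$ only as an object of $\Flow^\Sigma$ rather than as a genuine flow category whose object set is all Reeb orbits.

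The paper's approach (Theorem~\ref{thm:cyl-flow-cat}) sidesteps this entirely by replacing $\cA_{\wt\lambda}$ in the framing line bundle~\eqref{eq:linebundle-construction} with the order-preserving enumeration $\cA_e\cl\spec(\lambda)\to\bN$ of the action spectrum. For cylinders the line bundle degree only involves the difference $\cA_e(\gamma^+)-\cA_e(\gamma^-)$, which is automatically a positive integer, telescopes under composition, and is defined simultaneously for all Reeb orbits. With this single modification to the pre-perturbation datum, the entire proof of Theorem~\ref{thm:flow-cat} carries over verbatim, giving a direct construction rather than a colimit. This replacement of $\wt\lambda$ by $\cA_e$ is the idea you are missing.
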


Let now $(\wh X,d\lambda)$ be an exact symplectic cobordism from $(Y^-,\lambda^-)$ to $(Y^+,\lambda^+)$ so that the primitive $\lambda$ is of the form $e^{\pm s}\lambda^\pm$ near the respective end of $\wh X$. Let $J$ be an $\omega$-compatible almost complex structure on the completion of $X$ so that $J = J^\pm$ over the completed ends for some cylindrical almost complex structure. 

\begin{intthm}[{Theorem~\ref{thm:flo_bim}}] Given an action bound $L$, any exact symplectic cobordism induces a rel--$C^1$ flow bimodule $\scN^{\wh X}$ from $\scM^{Y^-,\hspace{-0.2pt}\lambda^-}_{\le L}$ to $\scM^{Y^+,\hspace{-0.2pt}\lambda^+}_{\leq L}$.
\end{intthm}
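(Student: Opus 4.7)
The plan is to mirror the construction of the flow categories $\scM^{Y^\pm,\lambda^\pm}_{\leq L}$, but using punctured $J$-holomorphic curves in the completion $\wh X$ in place of those in a symplectization. First I would define, for each pair $(\Gamma^-,\Gamma^+)$ of finite sequences of Reeb orbits in $Y^-$ and $Y^+$ of total action at most $L$ and each relative homology class $\beta$, the moduli space $\Mbar^{\,J}_{\sft}(\Gamma^+,\Gamma^-;\beta;\wh X)$ of SFT buildings consisting of a single ``cobordism level'' mapping to $\wh X$ together with cylindrical levels in the symplectizations of $Y^\pm$ above and below. Exactness of $(\wh X,d\lambda)$ together with the action bound yields the standard energy estimates; combined with the SFT compactness of \cite{BEH03,CM05} this makes each $\Mbar^{\,J}_{\sft}(\Gamma^+,\Gamma^-;\beta;\wh X)$ compact Hausdorff.

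The next step is to extend Theorem~\ref{prop:gkc-exists} to these cobordism moduli spaces, producing a global Kuranishi chart with corners of the correct virtual dimension. The framing construction of \textsection\ref{sssec:framings_of_buildings} is local on irreducible components and therefore applies verbatim, since only positive degrees and their additivity under smoothing are used; the main difference is that on the cobordism level one does \emph{not} quotient by a target translation, so the orientation line of $\cK$ is canonically isomorphic to $\bigotimes_{\gamma\in \Gamma^+}\fo_{\gamma}\otimes\bigotimes_{\gamma\in \Gamma^-}\fo\dul_{\gamma}$, with no leading $\fo(\bR)\dul$ factor.

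Having the charts in place, I would read off the codimension-one boundary strata in the usual SFT way: a breaking either produces an additional symplectization level over $Y^+$, an additional symplectization level under $Y^-$, or splits the cobordism level into two by nodal degeneration covered by the Kuranishi chart. Expressed as a fiber product over the orbifolds $B\Gamma$ introduced in the discussion preceding Theorem~\ref{thm:sft-flow-category}, this identifies the boundary of $\Mbar^{\,J}_{\sft}(\Gamma^+,\Gamma^-;\beta;\wh X)$ with the appropriate compositions of morphisms in $\scM^{Y^+,\lambda^+}_{\leq L}$ acting on the left and of morphisms in $\scM^{Y^-,\lambda^-}_{\leq L}$ acting on the right, which is precisely the bimodule axiom of \textsection\ref{sec:sliced-flow-cats}. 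The symmetric group actions by permutations of asymptotic markers and the orbifold composition maps are handled exactly as in Theorem~\ref{thm:flow-cat}.

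The last ingredient is the inductive construction of coherent perturbation data on the bimodule moduli spaces, extending those already chosen on the flow categories of the two ends. Following the induction strategy of Theorem~\ref{thm:flow-cat} adapted from \cite{BX22}, one filters by total energy and number of levels and extends the perturbation datum across one stratum at a time, using the rel--$C^1$ framework to keep the resulting charts compatible in the required topology. The main obstacle, and the only genuinely new point compared with the flow category construction, is verifying coherence simultaneously along boundary strata where several cylindrical levels break off on both sides of the cobordism level; this is a bookkeeping issue rather than an analytical one, because the Kuranishi charts on the two symplectization sides are already fixed by Theorem~\ref{thm:flow-cat} and the orientation computation of the previous paragraph ensures the pieces glue with consistent signs. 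Combining these steps yields the desired rel--$C^1$ flow bimodule $\scN^{\wh X}$.
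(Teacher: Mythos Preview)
Your high-level strategy is right, and it is indeed what the paper does: mirror the flow-category construction inductively, using the already-chosen perturbation data on the two symplectization ends as boundary conditions. However, several of your ``applies verbatim'' claims hide genuine new ingredients that the paper has to develop separately for cobordisms.

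First, the framing construction of \S\ref{sssec:framings_of_buildings} does \emph{not} carry over unchanged. In a symplectization a single prime $p$ and a single integral approximation $\wt\lambda$ suffice, but in a cobordism the base space must record, for each irreducible component of the domain, whether that component maps to $\wh Y^+$, to $\wh Y^-$, or to $\wh X$ proper. The paper solves this by choosing \emph{two} primes $p^+\gg p^-$ satisfying \eqref{eq:primes-for-base} and building the line bundle $L_{u,v}$ of \eqref{eq:cob-bundle} with exponents $p^{\ast^\pm(v)}$ depending on the target label; divisibility of the resulting degree by $p^+$ or $p^-$ then encodes the target of each component (Remark~\ref{rem:distinguish-from-degree}). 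This in turn forces a separate construction of the cobordism base spaces $\cBR_c$ and $\cBS_c$ in \S\ref{subsec:cobordism-base}, stratified by cobordism trees carrying the extra decorations $\ast,\ast_\pm$. Your claim that ``only positive degrees and their additivity under smoothing are used'' overlooks this.

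Second, extending perturbation data is not just bookkeeping: the perturbation space for a cobordism building must restrict, on the symplectization ends, to the translation-invariant perturbations already fixed in the construction of $\scM^{Y^\pm,\lambda^\pm}_{\leq L}$. The paper introduces \emph{joint finite-dimensional approximation schemes} (Definition preceding Lemma~\ref{lem:joint-fin-dim-scheme}) and proves their existence precisely to arrange this compatibility; this replaces the single approximation scheme of Lemma~\ref{lem:invariant-approximations-exist}.

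Finally, there is a boundary phenomenon you do not address: a family of planes in $\wh X$ (no negative punctures) can escape entirely into $\wh Y^+$, producing a two-level building whose cobordism level is \emph{empty}. The paper handles this by declaring $\scN^{\wh X}(\emptyset_{Y^+},\emptyset_{Y^-})\coloneqq *$ by fiat; without some such convention the bimodule axioms fail at the boundary.
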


\begin{corollary}[{Lemma~\ref{lem:trivial-cobordism-diagonal-bimodule}}]\label{cor:diagonal-bimodule}
     If $(\wh X,\omega)$ is the trivial cobordism $(\wh Y,d(e^s\lambda))$, then $\scN^{\wh X}$ is equivalent to the diagonal bimodule given suitable choices of auxiliary data.
\end{corollary}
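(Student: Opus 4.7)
The plan is to produce an explicit equivalence of flow bimodules $\scN^{\bR\times Y}\simeq \scM^{Y,\lambda}_{\leq L,\Delta}$ by first identifying the underlying moduli spaces and then arranging the auxiliary data so that the global Kuranishi charts of Theorem~\ref{prop:gkc-exists} agree with those defining the diagonal bimodule.

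The central geometric observation is that, for the trivial cobordism $\wh X=\bR\times Y$, a $J$-holomorphic curve in $\wh X$ with asymptotics $\Gamma^\pm$ is the same object as a $J$-holomorphic curve in the symplectization of $(Y,\lambda)$. A configuration contributing to $\scN^{\bR\times Y}(\Gamma^+,\Gamma^-)$ is therefore an SFT building together with a distinguished ``cobordism level'', the key feature being that the cobordism level is \emph{not} quotiented by the translation action on $\bR\times Y$. I would argue that the natural translation freedom of this level, after the SFT-type compactification used to define $\scN^{\wh X}$, is precisely accounted for by letting the cobordism level drift to $\pm\infty$ and split off as an additional symplectization level. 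This produces a natural homeomorphism of stratified spaces
\[
\Mbar\bigl(\bR\times Y;\Gamma^+,\Gamma^-;\beta\bigr)\;\cong\;\Mbar^{\,J}_{\sft}\bigl(\Gamma^+,\Gamma^-;\beta\bigr)
\]
compatible with the boundary strata appearing in the two composition laws: concatenation of a cobordism building with a symplectization building at either end corresponds to the concatenation of symplectization buildings in $\scM^{Y,\lambda}$.

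Next, I would upgrade this set-theoretic correspondence to an equivalence of global Kuranishi charts. Since $\wh X$ is itself a symplectization and the chosen $J$ is $\bR$-invariant, the framings, very ample line bundles on the closed domain curves from \S\ref{sssec:framings_of_buildings}, thickening spaces, and perturbation data used in Theorem~\ref{thm:flo_bim} can be selected to agree with those already fixed for the symplectization moduli spaces in Theorem~\ref{thm:flow-cat}. One then reads off that the induced map on orientation lines is the identity via the canonical isomorphisms of Theorem~\ref{prop:gkc-exists}. Together with the matching of composition maps, this exhibits the desired equivalence of symmetric flow bimodules (with rel--$C^1$ and stably complex structure when relevant).

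The main obstacle is the compatibility of auxiliary choices in the third paragraph: the inductive construction of perturbation data used to build $\scN^{\bR\times Y}$ in Theorem~\ref{thm:flo_bim} is a priori made independently of the inductive construction for $\scM^{Y,\lambda}_{\leq L}$ in Theorem~\ref{thm:sft-flow-category}. The required ``suitable choices'' amount to running both inductions simultaneously and checking that, at each action filtration step, the cobordism data can be taken to be $\bR$-invariant and to restrict to the symplectization data on each level of a building. Since the cobordism level and the symplectization levels carry identical geometric data in the trivial case, no obstruction arises and the two inductions can be synchronized; compatibility with the $S^1$-symmetric structures at the asymptotic orbits follows because both use the same markers-without-basepoints convention on $\bR\times Y$.
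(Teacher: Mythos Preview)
There is a genuine gap in your geometric identification. You claim a homeomorphism
\[
\Mbar\bigl(\bR\times Y;\Gamma^+,\Gamma^-;\beta\bigr)\;\cong\;\Mbar^{\,J}_{\sft}\bigl(\Gamma^+,\Gamma^-;\beta\bigr),
\]
but this cannot hold: the cobordism moduli space carries an extra real parameter (the unquotiented $\bR$-position of the cobordism level, which you yourself flagged), and after SFT compactification this becomes an interval, not a point. The paper makes this precise in Lemma~\ref{lem:auxiliary-thickening}: with synchronized perturbation data the cobordism thickening $\scT^c_\Lambda$ is a $[0,1]$-fiber bundle over the symplectization thickening $\scT_\Lambda$, the two boundary sections $a_0,a_\infty$ corresponding to pushing the cobordism level off to $-\infty$ or $+\infty$ and replacing it by trivial cylinders. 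The obstruction bundle and section are pulled back along this projection.

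Correspondingly, the target of the comparison is not what you assume. The diagonal bimodule of $\scX$ has morphism spaces $\Delta_\scX(x,y)=\cD\scX(x,y)$ for $x\neq y$, the \emph{conic degeneration} of $\scX(x,y)$ (see \eqref{eq:morphisms-diagonal-bimodule}), not $\scX(x,y)$ itself. The paper's proof identifies the $[0,1]$-bundle structure on $\scT^c_\Lambda$ with the conic degeneration $\cD\scB_\Lambda\times_{\scB_\Lambda}\scT_\Lambda$, using contractibility of $\text{Homeo}_+([0,1])$ to straighten the bundle and then checking compatibility with the bimodule structure maps. Your synchronization of the two inductive constructions of perturbation data is the correct first step and matches what the paper does; the missing ingredient is recognizing that the extra interval direction is a feature, not something to be collapsed, and matching it to the conic-degeneration structure that defines $\Delta_{\scM}$.
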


The diagonal bimodule should be thought of as the identity morphism in $\infty$-category $\Flow^\Sigma$. Thus, the corollary can be seen as a (weak) naturality statement of our construction.

To prove invariance up to equivalence of the choice of contact structure and almost complex structure of the flow category $\scM^{Y,\lambda}$, one would have to further prove the flow-categorical analogs of the composition of chain homotopies and invariance under deformations using flow bordisms. This is delegated to future work.

\subsection*{Acknowledgments}
The authors thank Mohammed Abouzaid, John Pardon, and Chris Woodward for valuable input and discussions. They are grateful to Russell Avdek, Kristen Hendricks, Roman Krutowski, and Noah Porcelli for comments on an earlier draft.
 S.C. thanks Julian Chaidez, Sheel Ganatra, Srijan Ghosh, Eric Kilgore, Mohan Swaminathan, Kyler Siegel and Sushmita Venugopalan for valuable discussions. S.C. is grateful to Chris Kottke for clarifications on fibers of generalized blowup maps. S.C. also thanks the Max Planck Institute for Mathematics for its hospitality. A.H. is grateful to Russell Avdek for discussions on contact topology and SFT, to Janko Latschev for explaining his understanding of the algebraic framework of SFT, and to Nick Sheridan for sharing his method to deal with orientation signs.\par 
\noindent A.H. is supported by ERC grant ROGW, No. 864919.

\section{Moduli spaces of pseudo-holomorphic buildings}

\subsection{Background}\label{subsec:background}
Let $(Y^{2n-1},\xi)$ be a closed contact manifold equipped with a contact form $\lambda$, i.e., a $1$-form satisfying $\ker(\lambda) = \xi$ and $\lambda \wedge (d\lambda)^{n-1} > 0$. Let $R$ be the associated Reeb vector field, uniquely determined by the conditions 
$$d\lambda(R,\cdot) = 0 \qquad \qquad \lambda(R) = 1.$$
Its closed orbits are called Reeb orbits. We define
	$$\cP_1\coloneqq \cP_1(\lambda) := \modulo{\set{\gamma\in C^\infty(S^1,Y)\mid\exists T > 0 : \dot{\gamma} = T\,R(\gamma)}}{\sim}$$
    to be the set of unparametrized Reeb orbits of constant speed of $\lambda$. Let $\cP^*_1\sub \cP_1$ be the subset of simple Reeb orbits. Given $\gamma\in \cP_1$, we denote by 
	\begin{itemize}
		\item $\multr{\gamma}$ the multiplicity of $\gamma$,
		\item $\cc{\gamma}$ the underlying simple orbit and identify it with $\im(\gamma)$,
        \item $\wt\gamma$ any parametrization of $\gamma$,
            \item $\cA(\gamma) := \int_{S^1}\gamma^*\lambda$ the \emph{action} of $\gamma.$
	\end{itemize}
We assume that $\lambda$ is \emph{nondegenerate}, that is, that the return map $d\phi_R^T(\gamma(1))$ of the Reeb flow has no eigenvalue $1$ for a Reeb orbit $\gamma$ of action $T$. Equivalently, the set of simple unparametrized Reeb orbits is discrete in $Y$.

An almost complex structure $J$ on the \emph{symplectization} $(\wh Y,\omega)\coloneqq (\bR \times Y,d(e^s\lambda))$ is \emph{$\lambda$-adapted} if $J$ is invariant under the $\bR$- translation action, $J(\frac{\partial}{\partial t}) = R $ and $J|_\xi$ tames $d\lambda$. Any $d\lambda$-compatible almost complex structure $J_\xi$ on $\xi$ determines uniquely a $\lambda$-adapted almost complex structure $J$ on the symplectisation $\wh Y$.\par
A smooth map
$u\cl \dot C \to \bR \times Y$ on a punctured Riemann surface $(\dot{C},\fj)$ is \emph{$ J$-holomorphic} if it satisfies the Cauchy-Riemann equation $$\delbar_{ J} u \coloneq \frac12(du + J du \fj) = 0.$$ 
Note that due to exactness, all $J$-holomorphic maps from a closed Riemann surface to the symplectization are constant. The simplest type of non-constant $J$-holomorphic curves is given by \emph{trivial cylinders}. They are of the form $$u_\gamma\cl\bR \times S^1 \to \bR \times Y: (s,t)\mapsto (Ks,\wt\gamma(Kt)),$$
for some Reeb orbit $\gamma$ and a constant-speed parametrization $\wt\gamma$ of $\gamma$, where $K = \int_\gamma\lambda$ is the action of the Reeb orbit.\par 
Since $\wh Y$ is non-compact and the curves are punctured, the usual notion of energy does not make sense in this context. The analogue is the \emph{Hofer energy}, given by 
$$E_H(u) = \int_{\dot C} f ^* d\lambda + \sup_{\phi \in \cC}\int a \;da\wedge f^*\lambda $$ 
for map $u=(a,f)\cl \dot{C}\to \wh Y$, where $\cC$ is the set of non-negative smooth functions $\phi : \bR \to \bR$ with compact support and normalized $L^1$ norm (i.e., $\int_\bR \phi(s) ds = 1.)$  In the case of non-degenerate contact form $\lambda$, it follows from \cite{HWZI} that any $J$-holomorphic curve with finite Hofer energy asymptotically converges to a trivial cylinder over a Reeb orbit.\par
The compactness result \cite[Theorem 10.1]{BEH03} (or \cite{CM05}) proves that the moduli space of punctured curves with bounded Hofer energy and bounded topological type allows a natural compactification by adding boundary strata of $J$-holomorphic buildings. We will now discuss them in detail in genus zero.

\subsection{Buildings in symplectizations}\label{subsec:buildings}
The moduli space of genus zero buildings have a natural stratification modeled on trees. We recall some basics and develop relevant notations for moduli spaces of genus zero buildings. The dual trees underlying our buildings without levels take the following form.

\begin{definition}\label{de:decorated-tree} A \emph{decorated tree (or forest)} is a directed tree (or forest) $T$ with internal and external (respectively finite and exterior) edges together with the data of 
	\begin{itemize}
		\item a class $\beta_v \in H_2(Y,\{\gamma_{e}\}_{e\in E_v})$ for each $v \in V$,
		\item $\gamma\cl E(T)\to \cP_1$ associating to an edge a Reeb orbit,
	\end{itemize}
    We call a vertex $v \in V(T)$ \emph{trivial} if $\beta_v = 0$ and $v$ has at most two adjacent edges. A decorated tree $T$ is \emph{stable} if it has no trivial vertices.
\end{definition}

\vspace*{-1.5cm}
\begin{figure}[h]
    \centering
    
    \def\svgscale{0.95}
\begingroup%
  \makeatletter%
  \providecommand\color[2][]{%
    \errmessage{(Inkscape) Color is used for the text in Inkscape, but the package 'color.sty' is not loaded}%
    \renewcommand\color[2][]{}%
  }%
  \providecommand\transparent[1]{%
    \errmessage{(Inkscape) Transparency is used (non-zero) for the text in Inkscape, but the package 'transparent.sty' is not loaded}%
    \renewcommand\transparent[1]{}%
  }%
  \providecommand\rotatebox[2]{#2}%
  \newcommand*\fsize{\dimexpr\f@size pt\relax}%
  \newcommand*\lineheight[1]{\fontsize{\fsize}{#1\fsize}\selectfont}%
  \ifx\svgwidth\undefined%
    \setlength{\unitlength}{225bp}%
    \ifx\svgscale\undefined%
      \relax%
    \else%
      \setlength{\unitlength}{\unitlength * \real{\svgscale}}%
    \fi%
  \else%
    \setlength{\unitlength}{\svgwidth}%
  \fi%
  \global\let\svgwidth\undefined%
  \global\let\svgscale\undefined%
  \makeatother%
  \begin{picture}(1,1)%
    \lineheight{1}%
    \setlength\tabcolsep{0pt}%
    \put(0,0){\includegraphics[width=\unitlength,page=1]{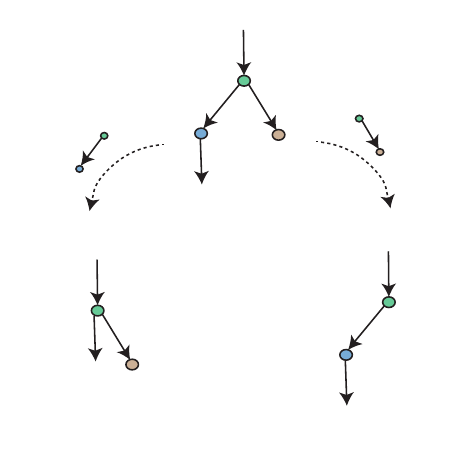}}%
    \put(-0.00592938,0.67403959){\makebox(0,0)[lt]{\lineheight{1.25}\smash{\begin{tabular}[t]{l}$\text{shrunk edge}$\end{tabular}}}}%
    \put(0.81129995,0.70430735){\makebox(0,0)[lt]{\lineheight{1.25}\smash{\begin{tabular}[t]{l}$\text{shrunk edge}$\end{tabular}}}}%
  \end{picture}%
\endgroup%

    \caption{Morphisms of a tree}
    \label{fig:tree_morph}
\end{figure}

\noindent Define the category $\scS$ (and $\scS^\bullet$) to have as objects decorated trees (respectively forests) as in Definition~\ref{de:decorated-tree} with morphisms given by contractions $p \cl T\to T'$ so that 
\begin{enumerate}
	\item for each non-contracted edge $e\in E(T)$ we have $\gamma'_{p(e)} = \gamma_{e}$,
	\item for each $v' \in V(T')$ we have $\beta_{v'} = \#_{p(v) = v'}\beta_v$.
\end{enumerate}
See Figure~\ref{fig:tree_morph} for an example.
The composition of two such morphisms is the obvious one. The category $\scS$ is the category $\scS_I$ in \cite{Par19}. We denote by $\Aut(T/T')$ the group of automorphisms of $T$ that leave $p$ invariant. We call a tree $T$ \emph{maximal} if any morphism $T\to T'$ is an isomorphism. Note for maps to a symplectization, any maximal decorated tree is a \emph{corolla}, at tree with a unique vertex and no interior edges.

\begin{notation*}\label{} Given a Riemann surface $C$ and a point $z \in C$, we denote by $$S_zC \coloneqq( T_zC\sm \{0\})/\bR_{> 0}$$ 
the boundary circle at infinity.
\end{notation*}

\begin{definition}\label{de:holomorphic-building} Let $T$ be a decorated tree. A \emph{pseudoholomorphic building of type $T$} consists of the following data
	\begin{enumerate}[label = (\roman*),leftmargin=20pt,ref= \roman*]
		\item for each $v \in V(T)$ a closed connected, possibly nodal, genus zero Riemann surface $(C_v,\fj_v)$ together with a set of pairwise distinct points $\{z_{v,e}\}_{e\in E_v}$;
		\item for each $v \in V(T)$ a smooth map $u_v \cl \dot{C}_v \coloneqq C_v\sm \{z_{v,e}\}\to \wh Y$, which  
		\begin{itemize}[leftmargin=15pt]
			\item is $C^0$-convergent to the positive trivial cylinder over $\gamma_{e}$ at $z_{v,e}$ for an incoming edge $e \in E_v^+$,
			\item is $C^0$-convergent to the negative trivial cylinder over $\gamma_{e}$ at $z_e$ for an outgoing edge $e \in E_v^-$
			\item represents $\beta_v$;
			\item satisfies $\delbar_{J,\fj_v}u_v = 0$. 
		\end{itemize}
		\item for each $e\in E^{\normalfont\text{ext}}_v$ an \emph{asymptotic marker} $\wt b_{e}\in S_{z_{v,e}}C_v$,
		\item for every interior edge $e$ from $v$ to $v'$, a \emph{matching isomorphism} $m_e \cl S_{z_{v,e}}C_v \to S_{z_{v',e}}C_{v'}$ intertwining $d{u_v}_Y$ and $d{u_{v'}}_Y$.
	\end{enumerate}
	We call such a building \emph{stable} if $T$ is stable.
    An \emph{isomorphism} of two such buildings consists of a collection $\{\iota_v \cl C_v \cong C'_v\}$ of biholomorphisms with $\iota_v(z_{v,e}) = z'_{v,e}$, preserving the matching isomorphisms so that $u_v = u'_v\g \iota_v$.
\end{definition}

\begin{definition}[Moduli spaces of buildings] Let $\wt\cM^J(T)$ be the moduli space of maps as in Definition~\ref{de:holomorphic-building} up to isomorphism. It admits a free $\bR^{V(T)}$-action given by translating the map on the corresponding sub-curve in the $\bR$-factor of $\wh Y$. The \emph{moduli space of $ J$-holomorphic buildings of type $T$} is
    $$\cM^J(T) \coloneqq \wt\cM^J(T)/\bR^{V(T)}.$$ 
    We define its \emph{SFT compactification} to be
	$$\Mbar^J(T) \coloneqq \djun{[T'\to T]}\cM^J(T')/\Aut(T'/T)$$
    equipped with the Gromov topology, \cite{Par19}.
\end{definition}

If $T$ is a corolla with incoming and outgoing exterior edges labeled by $\Gamma^+ = \{\gamma^+\}$ and $\Gamma^- = \{\gamma^-\}$, respectively, we write
    $$\Mbar^J(\Gamma^+,\Gamma^-;\beta) \coloneqq\Mbar^J(T).$$

We call $T$ \emph{effective} if $\Mbar^J(T)\neq \emst$.
By \cite[Theorem~10.1]{BEH03}, respectively \cite[Theorem 2.27]{Par19}, the moduli space $\Mbar^J(T)$ is compact for any $T$ and there exist only finitely many isomorphism classes of morphisms $T' \to T$ so that $T'$ is effective. Given $T$, define 
\begin{equation}\label{eq:tree-reeb-orbits}
    \cP(T) \coloneqq \union{T' \to T}{\{\gamma_e \mid e\in E(T')\}},
\end{equation}
to be the set of Reeb orbits which buildings in $\Mbar^J(T)$ can be asymptotic to, either at “internal" puncture or at an input or output puncture. Due to the compactness of $\Mbar^J(T)$, the set $\cP(T)$ is finite. In particular, for a fixed set of input and output Reeb orbits, the energy equations in \cite[\S 5.8]{BEH03} show that the Hofer energy is a constant that is linearly dependent on the action of the Reeb orbits.

\subsection{Leveled buildings}\label{subsec:leveled-buildings}
We now add level structures to our buildings by endowing the decorated trees defined in the last section with level structures. The derived orbifold of pseudo-holomorphic buildings with levels will be later used to construct the morphism spaces of the flow category in \textsection\ref{sec:sft-flow}. 

\begin{definition}\label{de:leveled-treee}
    A \emph{leveled tree} $(T,\ell)$ consists of a decorated tree $T$ as in Definition \ref{de:decorated-tree} equipped with a \emph{level function} $\ell : V(T) \to \bN$ satisfying 
    \begin{itemize}[leftmargin=20pt]
        \item if $\ell(v) = 1$, then $v$ has an incoming exterior edge,
        \item $\ell (w) \geq \ell(v)+1$ if $(v,w) \in E(T)$. 
     \end{itemize}
     We call $(T,\ell)$ \emph{stable} if each \emph{level} $\ell\inv(\{j\})$ is nonempty for $j \leq \max \ell$. The \textit{size} of a leveled tree is the maximal value of the level function. A \textit{morphism} of a leveled tree is obtained by simultaneous contraction of edges between adjacent levels.
\end{definition}

    Any leveled tree is from now on assumed to be stable.

\begin{lemma}\label{lem:pre-level-exists}
    Each decorated tree $T$ as in Definition~\ref{de:decorated-tree} admits a unique minimal level function, its \emph{pre-level function} $p\ell : V(T) \to \bN$ given by 
    \begin{itemize}[leftmargin=20pt]
        \item $p\ell(v) = 1$ if $v\in V^i$, the set of vertices with an incoming exterior edge,
        \item $p\ell (v) = \max \{ d(v,v_i) \mid v_i \in V^i \}$ 
     \end{itemize}
      where $d$ is the (edge-)distance function, which is well defined since $T$ is a tree.\qed
\end{lemma}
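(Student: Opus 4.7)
The plan is to verify that the function $\pl$ defined in the statement is a valid level function and is pointwise minimal among all level functions of $T$; uniqueness then follows automatically.

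First I would check that $\pl$ is well-defined. Under the convention that $d(v,v_i)$ denotes the number of vertices on the unique path between the two vertices of the finite tree $T$ (so $d(v_i,v_i) = 1$), the maximum is taken over the finite set $V^i$. Nonemptiness of $V^i$ is forced because otherwise no vertex could occupy level $1$, contradicting the stability requirement $\ell\inv(\{1\}) \neq \emst$ together with the first axiom of Definition~\ref{de:leveled-treee}; and every vertex is reachable via a directed path from some element of $V^i$, as one sees by following the unique incoming chain of edges backwards in the finite tree until an incoming exterior edge is encountered. The first axiom for $\pl$ then holds since $\pl(v) = 1$ forces $d(v, v_i) \leq 1$ for all $v_i \in V^i$ and hence $v \in V^i$, while the second axiom --- $\pl(w) \geq \pl(v) + 1$ for $(v,w) \in E(T)$ --- follows by extending any path realizing $\pl(v)$ by the edge $(v,w)$ to produce a longer witnessing path terminating at $w$.

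Minimality I would prove by induction on $\pl(v)$. In the base case $\pl(v) = 1$, we have $v \in V^i$ and any level function $\ell$ satisfies $\ell(v) \geq 1 = \pl(v)$ since it takes values in $\bN$. For the inductive step, given $v$ with $\pl(v) = k + 1$, choose a predecessor $u \in V(T)$ on a path realizing the maximum, so that $(u, v) \in E(T)$ and $\pl(u) = k$; the induction hypothesis combined with the second axiom applied to $\ell$ yields $\ell(v) \geq \ell(u) + 1 \geq k + 1 = \pl(v)$. Uniqueness is then immediate, as any other minimal level function must simultaneously dominate and be dominated by $\pl$ pointwise.

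The main point requiring care is the combinatorial well-definedness, specifically the nonemptiness of $V^i$ and the reachability of each vertex from $V^i$ by a directed chain of edges; both are implicit in the convention that the decorated trees under consideration have no component lacking a positive puncture traceable back to an incoming exterior edge, a fact I would dispatch at the outset using the finiteness of $V(T)$.
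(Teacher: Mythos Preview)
The paper provides no proof; the lemma carries a terminal \qed and is left to the reader. Your argument—verify the two axioms of Definition~\ref{de:leveled-treee}, then prove pointwise minimality by induction on $p\ell(v)$ using a predecessor on a witnessing path—is the natural one and is correct once $d(v,v_i)$ is read as the length of the \emph{directed} path from $v_i$ to $v$ (the paper's phrase ``(edge-)distance'' is equally loose on this point). With a purely undirected reading your extend-the-path step can fail, since the witnessing path from $v_{i^*}$ to $v$ may already pass through $w$; but this does not occur for the rooted trees (each vertex with a unique incoming edge, hence $|V^i|=1$) that feed into the flow category of \textsection\ref{subsec:unstructured-flow-cat}, and with the directed reading the concatenation is automatically simple.
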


\begin{definition}
    A vertex $v$ in a decorated tree is \emph{trivial} if it has exactly two adjacent edges, both labeled by the same Reeb orbit, and if it carries the $0$ homology class.
\end{definition}

\begin{remark}
    Given a leveled tree $(T,\ell)$, we can construct a tree $T_\ell$ that contracts onto $T$ by adding a chain of $\ell(w)-\ell(v)-1$ trivial vertices between the vertices $v,w \in V(T)$ with $(v,w) \in E(T)$ and $\ell(w) > 1+ \ell(v)$. This recovers the usual notion of the underlying tree of SFT buildings.
\end{remark}

If the pre-level function is injective, it is the only level function on $T$. The simplest occurrence of non-injectivity is when the tree $T$ has three vertices (as in Figure \ref{fig:tree_morph}) with a pre-level containing two vertices. In this case, the fiber of the forgetful map 
\begin{equation}\label{eq:forget-levels}\Mbar_{\sft}^J (\Gamma^+,\Gamma^-;\beta)\to \Mbar^J(\Gamma^+,\Gamma^-;\beta)\end{equation}
is homeomorphic to a closed interval $[0,1]$. In particular, the interval $(0,1)$ records the relative height between the curves corresponding to each of the vertices, and the boundary $\{0,1\}$ of the fiber consists of the breaking of the pre-level $j$ into two levels and the appearance of trivial cylinder components.

\begin{figure}[H]
    \centering
    
    \def\svgscale{1}
    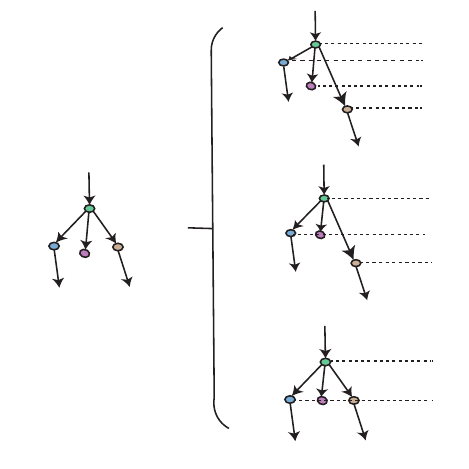

    \caption{Three different level functions for an unleveled tree. We do not draw the trivial vertices for the sake of clarity; one can recover the trivial vertices uniquely by the differences of levels between adjacent vertices.}
    \label{fig:leveled_tree}
\end{figure}

\begin{convention}
    We will denote the non-negative real line by $\bR_+ = [0,\infty).$
\end{convention}

\begin{definition}
    A leveled tree $(T,\ell)$ is \emph{maximally leveled} if $\#\ell\inv(j) \leq 1$ for each $j \in \bN$.
\end{definition}
    
\begin{proposition}\label{prop:maximally-leveled-tree}
 Given any decorated tree $T$, there exists a number, $N_T$, of maximally leveled trees $(T,\ell)$ over $T$, where $N_T$ is uniquely determined by the underlying tree.
\end{proposition}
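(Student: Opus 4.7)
The plan is to identify maximally leveled structures on $T$ with constrained linear extensions of a partial order that is intrinsic to the underlying tree, and so compute the count.

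First, I would unpack Definition~\ref{de:leveled-treee} for a maximally leveled $(T,\ell)$. The stability hypothesis forces each level $\ell^{-1}(\{j\})$ for $1\leq j\leq \max\ell$ to be nonempty, while the maximal leveling condition $\#\ell^{-1}(\{j\})\leq 1$ forces the size of each level to be at most $1$; together these imply that $\ell$ is a bijection $V(T)\to\{1,\dots,|V(T)|\}$. The edge constraint $\ell(w)\geq \ell(v)+1$ then reduces to $\ell(v)<\ell(w)$ for every directed edge $(v,w)\in E(T)$, so such bijections are exactly the linear extensions of the partial order $\preceq$ on $V(T)$ obtained as the transitive closure of the relation $v\preceq w \Leftrightarrow (v,w)\in E(T)$.

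Second, I would encode the remaining condition of Definition~\ref{de:leveled-treee} as requiring the vertex placed at level $1$ to lie in the subset $V^{\mathrm{in}}\subseteq V(T)$ of vertices carrying at least one incoming exterior edge. The crucial observation is then that both $\preceq$ and $V^{\mathrm{in}}$ depend only on the underlying directed tree together with its external edge markings, and are entirely insensitive to the decorating data $\beta_v$ and $\gamma_e$. Therefore $N_T$, which counts the linear extensions of $\preceq$ whose minimum lies in $V^{\mathrm{in}}$, is by construction determined by the underlying tree alone.

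For existence ($N_T\geq 1$), I would appeal to Lemma~\ref{lem:pre-level-exists}: since $p\ell^{-1}(\{1\})=V^{\mathrm{in}}$ holds by definition, any tie-breaking refinement of the pre-level function to a bijection onto $\{1,\dots,|V(T)|\}$ yields a maximally leveled structure whose level-$1$ vertex automatically lies in $V^{\mathrm{in}}$. The proposition is thus purely combinatorial in character, and no analytic or topological obstruction will arise; the one step requiring genuine care is the precise translation between Definition~\ref{de:leveled-treee} and the language of linear extensions, together with verifying that the edge and level-$1$ conditions do not secretly depend on the Reeb-orbit or homology decorations.
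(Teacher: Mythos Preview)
Your argument is correct and is genuinely different from the paper's. You observe directly that a stable, maximally leveled structure is precisely a bijection $\ell\colon V(T)\to\{1,\dots,|V(T)|\}$ compatible with the edge relation, i.e.\ a linear extension of the poset $(V(T),\preceq)$, subject to the single extra constraint that $\ell^{-1}(1)\in V^{\mathrm{in}}$. Since both $\preceq$ and $V^{\mathrm{in}}$ are read off from the underlying directed tree with its exterior-edge markings, the count $N_T$ manifestly ignores $\beta_v$ and $\gamma_e$.

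The paper instead runs an induction: starting from the pre-level function $(T,p\ell)$, Lemma~\ref{lem:increasing-maximality} shows that a $k$-maximally leveled structure admits exactly $(\#\ell^{-1}(k+1))!$ minimal refinements to a $(k{+}1)$-maximally leveled one, and iterating yields $N_T$ as (a tree-determined sum of) products of such factorials. Your route is shorter and conceptually cleaner for the bare statement. The paper's route buys two things: it yields the auxiliary notion of $k$-maximally leveled trees and Lemma~\ref{lem:increasing-maximality}, whose step-by-step refinement structure is exactly what feeds into the monoidal refinements $\sigma_{T,\ell}$ of Definition~\ref{de:leveled-monoid} and the iterated corner blow-ups of \textsection\ref{subsec:base-with-levels}; and it makes the recursive factorial structure of $N_T$ explicit rather than leaving it as an abstract linear-extension count. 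One small remark: your existence paragraph ($N_T\ge 1$) is correct but not strictly needed for the proposition as stated, which only asserts that $N_T$ is well-defined and tree-determined.
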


The proof is by induction, for which we need another definition.

\begin{definition}
    A leveled tree $(T,\ell)$ is \emph{$k$-maximally leveled} if 
    \begin{itemize}
        \item $\#\ell\inv(j) \leq 1$ for $j \le k$;
        \item $\ell$ is injective on the set $p\ell\inv(\{1,\dots,k\})$.
    \end{itemize}
\end{definition}

\begin{lemma}\label{lem:increasing-maximality}
    Given a $k$-maximally leveled tree $(T,\ell)$, there exists $\#\ell\inv(k+1)!$-many leveled trees $(T,\ell')$ that are $(k+1)$-maximally leveled and contract onto $(T,\ell)$ and which are minimal with respect to this property.
\end{lemma}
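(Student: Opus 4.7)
The plan is to construct $m! = \#\ell^{-1}(k+1)!$ candidate refinements of $\ell$ explicitly via permutations, then match them to all minimal $(k+1)$-maximally leveled refinements of $\ell$. Enumerate the vertices of $\ell$-level $k+1$ as $v_1, \dots, v_m$. For each $\sigma \in S_m$, define a new level function $\ell'_\sigma : V(T) \to \bN$ by $\ell'_\sigma(v) = \ell(v)$ when $\ell(v) \le k$, by $\ell'_\sigma(v_i) = k + \sigma(i)$ for $i = 1, \dots, m$, and by $\ell'_\sigma(v) = \ell(v) + m - 1$ when $\ell(v) \ge k + 2$. A short case analysis over directed edges $(v,w) \in E(T)$, depending on which of the three ranges $[1,k]$, $\{k+1\}$, $[k+2, \infty)$ the values $\ell(v)$ and $\ell(w)$ lie in, shows $\ell'_\sigma(w) \ge \ell'_\sigma(v) + 1$, so $\ell'_\sigma$ is a valid level function. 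The non-decreasing surjection $\phi : \{1, \dots, L + m - 1\} \to \{1, \dots, L\}$ that is the identity on $[1,k]$, collapses $\{k+1, \dots, k+m\}$ to $\{k+1\}$, and sends $j \mapsto j - m + 1$ for $j \ge k + m + 1$ then witnesses the contraction $\ell'_\sigma \to \ell$.

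Next, I would verify that each $\ell'_\sigma$ is $(k+1)$-maximally leveled and minimal. The single-vertex condition on the first $k+1$ levels is immediate from the construction. The injectivity of $\ell'_\sigma$ on $p\ell^{-1}(\{1, \dots, k+1\})$ reduces, via the separation of the image of $\ell'_\sigma$ into the pairwise disjoint intervals $[1,k]$, $\{k+1, \dots, k+m\}$, $[k+m+1, \infty)$, to the injectivity of $\ell$ on $p\ell^{-1}(\{1, \dots, k\})$ given by $k$-maximality. Minimality of $\ell'_\sigma$ is equivalent to the minimality of the surjection $\phi$: any strictly coarser non-decreasing surjection would either identify two of the $v_i$, breaking the single-vertex condition at level $k+1$, or have no effect.

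For the count, suppose $\ell''$ is any minimal $(k+1)$-maximally leveled refinement of $\ell$ with associated non-decreasing surjection $\phi''$. The $(k+1)$-maximality forces $|(\phi'')^{-1}(k+1)| \ge m$ in order to separate $v_1, \dots, v_m$ into distinct levels, and minimality forces both $|(\phi'')^{-1}(k+1)| = m$ and $|(\phi'')^{-1}(j)| = 1$ for $j \ne k+1$. Hence $\phi'' = \phi$, and $\ell''$ is entirely determined by the bijection $\{v_1, \dots, v_m\} \leftrightarrow \{k+1, \dots, k+m\}$, yielding exactly $m!$ such refinements; distinctness across choices of $\sigma$ is automatic since $\sigma$ is recovered as the assignment of each $v_i$ to its level. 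The main technical subtlety lies in confirming that the uniform shift on vertices of $\ell$-level at least $k+2$ does not introduce fresh collisions among $p\ell^{-1}(\{1, \dots, k+1\})$; this requires a careful use of the $k$-maximality together with the inductive construction of $\ell$ itself starting from the pre-level function $p\ell$.
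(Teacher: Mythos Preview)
Your approach differs from the paper's in how $\ell'$ is determined on vertices with $\ell(v)\ge k+2$. The paper does not prescribe your uniform shift by $m-1$; instead its condition~(c) requires $\ell'(v)-\ell'(w)=\ell(v)-\ell(w)$ whenever $v,w$ lie on a common directed path with $\ell(v),\ell(w)\ge k+1$, so that the shift of each such vertex is tied to that of its ancestor in $\ell^{-1}(k+1)$ rather than being a single global constant.

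There is a genuine gap in your argument, caused by vertices $v$ with $p\ell(v)\le k+1$ but $\ell(v)\ge k+2$. Take $T$ the tree with root $r$ and three leaves $a,b,c$, and set $(\ell(r),\ell(a),\ell(b),\ell(c))=(1,2,3,3)$. This is $1$-maximally leveled with $m=\#\ell^{-1}(2)=1$, so your unique $\ell'_\sigma$ equals $\ell$ itself; but $p\ell^{-1}(\{1,2\})=\{r,a,b,c\}$ and $\ell(b)=\ell(c)$, so $\ell$ is \emph{not} $2$-maximally leveled. The same example breaks your count: any $2$-maximal refinement must separate $b$ from $c$, forcing $|(\phi'')^{-1}(3)|\ge 2$ and refuting your claim that minimality gives $|(\phi'')^{-1}(j)|=1$ for $j\ne k+1$; indeed there are two minimal $2$-maximal refinements here, not $1!=1$. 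You flag this as ``the main technical subtlety'' and appeal to ``the inductive construction of $\ell$ starting from $p\ell$'', but the lemma's only hypothesis is $k$-maximality, and the tree above is a legitimate $k$-maximal input. Worse, the same phenomenon arises for trees produced by iterating your own recipe: with root $r$, children $a,b$, and $c,d$ children of $a$, two rounds of your uniform-shift construction yield $\ell_2=(1,2,3,4,4)$ on $(r,a,b,c,d)$, and your step at $k=2$ (where $m=1$) returns $\ell_2$ unchanged, which is not $3$-maximal since $p\ell(c)=p\ell(d)=3$.
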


\begin{proof} Set $N \coloneqq \#\ell\inv(k+1)$. Unraveling definitions, the conditions mean that we have to find $N$ level functions $\ell_1,\dots,\ell_N \cl V(T)\to \bN$ that satisfy
\begin{enumerate}[(a),leftmargin=25pt,ref=\alph*]
    \item\label{i:maximally-leveled} $\ell_i$ is $(k+1)$-maximal;
    \item\label{i:extends} $\ell_i$ agrees with $\ell$ on $\ell\inv(\{1,\dots,k\})$;
    \item\label{i:remains} for any $v,w \in V(T)$ that can be connected by a (directed) path, we have 
    \begin{equation}\label{eq:same-difference} \ell_i(v)-\ell_i(w) = \ell(v)-\ell(w) \end{equation}
    if $\ell(v)$, $\ell(w) \ge k+1$.
\end{enumerate}
    Condition~\eqref{i:extends} means that $\ell_i$ is determined on $\ell\inv(\{1,\dots,k\})$, while Condition~\eqref{i:remains} implies that the values of $\ell_i$ on $\ell\inv(\{m \geq k+2\})$ are determined by the values of $\ell_i$ on $\ell\inv(k+1)$, due to the minimality requirement. The $(k+1)$-maximality together with the requirement that $(T,\ell_i)$ be stable means that we have $N$ elements that have to distributed over $N$ levels. Thus, we have $N!$ options for extending $\ell$.
\end{proof}

\begin{proof}[Proof of Proposition~\ref{prop:maximally-leveled-tree}] The leveled tree $(T,p\ell)$ is tautologically $0$-maximal. Now we apply Lemma~\ref{lem:increasing-maximality} inductively until we obtain $\#V(T)$-corollas. The minimality of Lemma~\ref{lem:increasing-maximality} ensures that each $\#V(T)$-corolla can only be obtained through a unique path of $k$-corollas for $k \leq \# V(T)$. Thus, the number of such trees is determined inductively by the values $\#\ell\inv(k+1)!$ by Lemma~\ref{lem:increasing-maximality}.
\end{proof}

\begin{definition}\label{de:buildings-with-levels}
   Let $(T,\ell)$ be a leveled tree with underlying decorated tree $T$. A \emph{$J$-holomorphic building } $(u,C,b_*,m_*)$ \emph{of type $(T,\ell)$} is a $J$-holomorphic building of type $T_\ell$ as in Definition~\ref{de:holomorphic-building}. An isomorphism between two such buildings is defined as before. 
\end{definition}

\begin{remark}
    Note that for any such building $u$ and any trivial vertex $v$ of $T$, the map $u_v$ is forced to be a trivial cylinder.
\end{remark}

We write $\cM^J(T,\ell)$ for the space of leveled buildings of type $(T,\ell)$ up to reparametrization and translation, and we define the \emph{SFT moduli space of buildings of type at least $(T,\ell)$} to be 
   \begin{equation}
       \Mbar^J_{\sft}(T,\ell) \coloneqq \djun{(T',\ell')\to (T,\ell)}\cM^J(T',\ell')/\Aut(T'_{\ell'}/T_\ell)
   \end{equation}
   equipped with the Gromov topology as defined in \cite[\textsection7.3]{BEH03}.

\subsection{Buildings in exact symplectic cobordisms}
We now discuss the trees, which stratify the moduli space of buildings in exact symplectic cobordisms. 

\begin{definition}\label{de:exact-cobordism}
    An \emph{exact symplectic cobordism} from $(Y_+,\lambda_+)$ to $(Y_-,\lambda_-)$ is an exact symplectic manifold $(\wh X,\omega = d\lambda)$ together with embeddings
    \begin{gather*}
        \Theta^+\cl (-N,\infty)\times Y^+\to \wh X\\
         \Theta^-\cl (-\infty,N)\times Y^-\to \wh X
    \end{gather*} 
    so that $\wh X\sm \im(\Theta^+)\cup \im(\Theta^-)$ is compact and $(\Theta^\pm)^*\lambda = e^{s}\lambda^\pm$.
\end{definition}

An almost complex structure $J$ on a symplectic cobordism $(X,\omega)$ is \emph{compatible} if it is $\omega$-compatible and ${\Theta^\pm}^*J$ is translation invariant. In particular, this implies that ${\Theta^\pm}^*J$ is an adapted almost complex structure.

\begin{ex}\label{ex:interpolating-between-contact-structures}
    Suppose $(Y,\xi)$ is a contact manifold with two contact forms $\lambda^\pm$ and two adapted almost complex structures $J^\pm$. Since the space of contact forms is contractible, we can find a smooth path $\{\lambda_s\}_{s\in \bR}$ of contact structures so that $\lambda_s= \lambda^-$ for $s \leq -m$ and $\lambda_s = \lambda^+$ for $s \ge m$. We can then find a path $\{J_s\}_{s\in \bR}$ that agrees with $J^-$ on $\bR_{\leq -m}$ and $J^+$ on $\bR_{\ge m}$ and so that $J_s$ is $\lambda_s$-adapted for each $s$. By taking a $m\gg0$ and rescaling the homotopy $\{\lambda_s\}_{s\in \bR}$ we can ensure that $\partial_s\lambda_s$ is small, whence $d(e^s\lambda_s)$ is symplectic. Then, $J$ is a compatible almost complex structure on the symplectic cobordism $(\bR\times Y,\omega = d(e^s\lambda_s))$.
\end{ex}

\begin{notation*}
    Given an exact symplectic cobordism $(\wh X,\omega)$ from $(Y^+,\lambda^+)$ to $(Y^-,\lambda^-)$, we abbreviate
    $$X^{00} := \bR \times Y^+, \quad  X^{01}:=X, \quad X^{11}:= \bR \times Y^-.$$
    We set $\cP_{i} := \cP(X^{ii})$ and $\cP_{01} := \cP_0\sqcup \cP_1$.
\end{notation*}

\begin{definition}\label{de:decorated-cob-tree} A \textit{decorated cobordism tree (or forest)} is a tree (or forest) $T_c$ equipped with maps 
\begin{equation*}
    \ast \cl E(T_c)\to \{0,1 \}\qquad \qquad \ast_\pm\cl V(T_c) \to \{0,1\}
\end{equation*}
so that 
\begin{itemize}[leftmargin=20pt]
    \item $\ast(E^{\text{ext},+}) = 0$ and $\ast(E^{\text{ext},-}) = 1$,
    \item $\ast_+\leq \ast_-$,
    \item for any exterior edge $e\in E^{\text{ext},\pm}$ adjacent to the vertex $v$ we have $\ast(e) = \ast_\pm(v)$,
\end{itemize}
together with the data of
\begin{itemize}[leftmargin=20pt]
    \item a Reeb orbit $\gamma_e\in \cP_{\ast(e)}$ for each $e\in E(T_c)$,
    \item  a homology class $\beta_v\in H_2(X^{\ast_-(v)\ast_+(v)},\{\gamma_e\}_{e\in E_v})$ for each $v \in V(T_c)$.
\end{itemize}
We call $v$ a \emph{symplectisation vertex} if $\ast_+(v )= \ast_-(v)$. The cobordism tree is \emph{stable} if none of the symplectization vertices are trivial.
\end{definition}

Similar to the category $\scS$, there is a category of decorated cobordism trees $\scS^{c}$ whose objects are decorated cobordism trees and whose morphisms are contractions $T' \xrightarrow[\pi]{} T$ such that $\ast_+(\pi(v)) \leq \ast_+(v), \ast_-(\pi(v)) \geq \ast_-(v)$ and $\ast(\pi(e))= \ast(e)$ for any non-contracted edge $e$.
\vspace*{-1cm}
\begin{figure}[H]
    \centering
    
    \def\svgscale{1.1}
\begingroup%
  \makeatletter%
  \providecommand\color[2][]{%
    \errmessage{(Inkscape) Color is used for the text in Inkscape, but the package 'color.sty' is not loaded}%
    \renewcommand\color[2][]{}%
  }%
  \providecommand\transparent[1]{%
    \errmessage{(Inkscape) Transparency is used (non-zero) for the text in Inkscape, but the package 'transparent.sty' is not loaded}%
    \renewcommand\transparent[1]{}%
  }%
  \providecommand\rotatebox[2]{#2}%
  \newcommand*\fsize{\dimexpr\f@size pt\relax}%
  \newcommand*\lineheight[1]{\fontsize{\fsize}{#1\fsize}\selectfont}%
  \ifx\svgwidth\undefined%
    \setlength{\unitlength}{225bp}%
    \ifx\svgscale\undefined%
      \relax%
    \else%
      \setlength{\unitlength}{\unitlength * \real{\svgscale}}%
    \fi%
  \else%
    \setlength{\unitlength}{\svgwidth}%
  \fi%
  \global\let\svgwidth\undefined%
  \global\let\svgscale\undefined%
  \makeatother%
  \begin{picture}(1,1)%
    \lineheight{1}%
    \setlength\tabcolsep{0pt}%
    \put(0,0){\includegraphics[width=\unitlength,page=1]{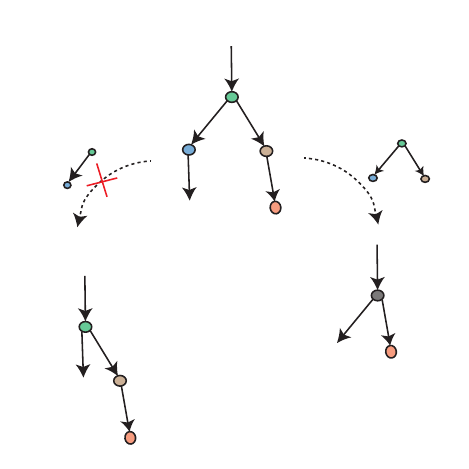}}%
    \put(0.55771398,0.7857545){\makebox(0,0)[lt]{\lineheight{1.25}\smash{\begin{tabular}[t]{l}$00$\end{tabular}}}}%
    \put(0.60585587,0.67736486){\makebox(0,0)[lt]{\lineheight{1.25}\smash{\begin{tabular}[t]{l}$01$\end{tabular}}}}%
    \put(0.43437698,0.65878914){\makebox(0,0)[lt]{\lineheight{1.25}\smash{\begin{tabular}[t]{l}$01$\end{tabular}}}}%
    \put(0.61991287,0.54651204){\makebox(0,0)[lt]{\lineheight{1.25}\smash{\begin{tabular}[t]{l}$11$\end{tabular}}}}%
    \put(0.84685593,0.3677446){\makebox(0,0)[lt]{\lineheight{1.25}\smash{\begin{tabular}[t]{l}$01$\end{tabular}}}}%
    \put(0.88905935,0.24710648){\makebox(0,0)[lt]{\lineheight{1.25}\smash{\begin{tabular}[t]{l}$11$\end{tabular}}}}%
    \put(0.2229616,0.29289322){\makebox(0,0)[lt]{\lineheight{1.25}\smash{\begin{tabular}[t]{l}$01$\end{tabular}}}}%
    \put(0.29741486,0.18141246){\makebox(0,0)[lt]{\lineheight{1.25}\smash{\begin{tabular}[t]{l}$01$\end{tabular}}}}%
    \put(0.33125722,0.06276499){\makebox(0,0)[lt]{\lineheight{1.25}\smash{\begin{tabular}[t]{l}$11$\end{tabular}}}}%
  \end{picture}%
\endgroup%

    \vspace*{-0.5cm}
    \caption{Morphisms of a cobordism tree}
    \label{fig:cob_tree_morph}
\end{figure}

\begin{definition}\label{de:leveled-cob-tree}
    A \emph{leveled cobordism tree (or forest)} $(T_c,\ell_c)$ is a decorated cobordism tree (or forest) with a level function $\ell_c$ such that $(T_c,\ell_c)$ is a leveled tree as in Definition \ref{de:leveled-treee} and there is an integer $\mathfrak{c}>0$ so that $\ell_c\inv(\mathfrak{c}) = \ast_+\inv(0) \cap \ast_-\inv(1)$ and $(\ast_+(v),\ast_-(v))=(0,0)$ if and only if $\ell(v) < \mathfrak{c}$, while $(\ast_+(v),\ast_-(v))=(1,1)$ if and only if $\ell(v) > \mathfrak{c}$.
\end{definition}

\begin{definition}
    A leveled cobordism tree is \textit{maximally levelled} if $\ell\inv(j)\leq 1$ for all $j\in \bN \sm \{ \mathfrak{c}\}.$
\end{definition}

The proof of Proposition~\ref{prop:maximally-leveled-tree} carries over to leveled cobordism trees.

\begin{proposition}\label{prop:maximally-leveled-cob-tree}
 Given any decorated cobordism tree $T_c$, there exists a number $N_{T_c}$ of maximally leveled trees $(T_c,\ell)$ over $T_c$, where $N_{T_c}$ is uniquely determined by the underlying tree.\qed
\end{proposition}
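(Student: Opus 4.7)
The plan is to reduce the statement to Proposition~\ref{prop:maximally-leveled-tree} by exploiting a canonical decomposition of any decorated cobordism tree. The key observation is that $T_c$ splits along the antichain of cobordism vertices $\ast_+\inv(0)\cap \ast_-\inv(1)$ into a \emph{positive sub-forest} $T_c^+$ (spanned by the vertices with $\ast_+=\ast_-=0$, together with the edges joining them to cobordism vertices, which we reinterpret as outgoing external edges) and a \emph{negative sub-forest} $T_c^-$ (defined symmetrically, with incoming external edges from the cobordism vertices). Each of $T_c^\pm$ is a decorated forest in the sense of Definition~\ref{de:decorated-tree}. Definition~\ref{de:leveled-cob-tree} pins all cobordism vertices to the single level $\mathfrak{c}$, so a maximal leveling of $T_c$ is exactly the combined data of a maximal leveling of $T_c^+$ and one of $T_c^-$ with a matching common value of $\mathfrak{c}$.

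Concretely, I would first establish the cobordism analog of Lemma~\ref{lem:pre-level-exists}: on $T_c^+$ take the pre-level function of Lemma~\ref{lem:pre-level-exists}, put
\[
    \mathfrak{c}\;:=\;1+\max\bigl\{\, p\ell(v)\;\bigm|\; v\in\ast_+\inv(0)\cap\ast_-\inv(0)\text{ adjacent to some cobordism vertex}\,\bigr\}
\]
(or $\mathfrak{c}:=1$ if no such $v$ exists) on every cobordism vertex, and use the symmetric prescription shifted by $\mathfrak{c}$ on $T_c^-$. Next, define $k$-maximality exactly as in the symplectization case, with the proviso that level $\mathfrak{c}$ is \emph{excluded} from the injectivity constraint and contributes no extension choice, since $\ell\inv(\mathfrak{c})=\ast_+\inv(0)\cap\ast_-\inv(1)$ is forced. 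The inductive step of Lemma~\ref{lem:increasing-maximality} then applies verbatim on each of $T_c^+$ and $T_c^-$, producing $\#\ell\inv(k+1)!$ minimal $(k+1)$-maximal extensions at every step $k\ne\mathfrak{c}$. Running the induction to the end gives $N_{T_c}=N_+\cdot N_-$, where $N_\pm$ are the counts from Proposition~\ref{prop:maximally-leveled-tree} applied to $T_c^\pm$, and these depend only on the underlying combinatorial data.

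The main subtlety is verifying that the choices on $T_c^+$ and on $T_c^-$ are truly independent, i.e., that an extension choice on the positive side at level $k<\mathfrak{c}$ cannot interact with one on the negative side at level $k'>\mathfrak{c}$. This follows directly from Condition~\eqref{eq:same-difference} in the proof of Lemma~\ref{lem:increasing-maximality}: the only edges crossing the cobordism level are those adjacent to vertices in $\ast_+\inv(0)\cap\ast_-\inv(1)$, and since these vertices are all pinned to the common level $\mathfrak{c}$, the level differences across the cobordism cut are rigidly determined by the pre-level function and carry no free parameter. Hence the counting factorizes as claimed, and $N_{T_c}$ is uniquely determined by the underlying tree.
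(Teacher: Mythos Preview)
Your proposal is correct. The paper offers no proof beyond the sentence preceding the statement --- that the argument of Proposition~\ref{prop:maximally-leveled-tree} carries over --- meaning one simply reruns the induction of Lemma~\ref{lem:increasing-maximality} on $T_c$, skipping level $\mathfrak{c}$ since that level is exempt from the maximality constraint. Your route is a genuine (if mild) variant: rather than re-executing the induction, you observe that the pinned cobordism level cleanly separates $T_c$ into the two pieces $T_c^\pm$ and reduce to the already-proved result, obtaining the explicit factorization $N_{T_c}=N_+\cdot N_-$. This packaging avoids introducing a cobordism-specific notion of $k$-maximality and makes the independence of the two sides transparent; the paper's one-liner buys brevity. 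One small caveat: Proposition~\ref{prop:maximally-leveled-tree} is stated for trees, whereas $T_c^-$ is in general a disconnected forest (one component per cobordism vertex with descendants in $\ast_\pm^{-1}(1)$), so strictly speaking you are invoking the immediate extension of that proposition to forests --- which is harmless, since the proof via Lemma~\ref{lem:increasing-maximality} never uses connectedness.
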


\begin{definition}\label{de:cob-holomorphic-building} Let $T_c$ be a decorated cobordism tree. A \emph{pseudoholomorphic building of type $T_c$} consists of the same data as in Definition \ref{de:holomorphic-building} but with the property that the target of the map $u_v$ is $X^{\ast_-(v)\ast_+(v)}$.
	We call such a building \emph{stable} if $T_c$ is stable.\par
    An \emph{isomorphism} of two such buildings consists of a collection $\{\iota_v \cl C_v \cong C'_v\}$ of biholomorphisms with $\iota_v(z_{v,e}) = z'_{v,e}$ so that $u_v = u'_v\g \iota_v$.
\end{definition}

\begin{definition}[Moduli space of cobordism buildings] Let $\wt{\cM}^J(T_c)$ be the moduli space of maps as in Definition \ref{de:cob-holomorphic-building} up to isomorphism. It admits a free $\bR^{V_s(T)}$-action, given by translations of maps on each symplectization vertex. The \textit{moduli space of $J$-holomorphic cobordism buildings of type $T_c$} is $$\cM^J(T_c):= \wt\cM^J(T_c)/ \bR^{V_s(T)}.$$
We define 
$$\ol\cM^J(T_c) \coloneqq \bigsqcup_{[T'_c \to T_c]} \cM^J(T'_c)/\Aut(T'_c/T_c)$$ equipped with the Gromov topology as in \cite{BEH03}.
\end{definition}

\section{The global Kuranishi chart construction}\label{sec:prelim-aux}

\subsection{Base space}\label{subsec:base-space} Fix integers $n\geq 0$ and $d > 0$. We write
$$\cB = \cB(d) \sub \Mbar_0(\bP^d,d)$$ for the locus of (equivalence classes of) regular embedded stable holomorphic maps $\varphi \cl C\to \bP^d$ of genus $0$ and degree $d$. In particular, $\im(\varphi)$ is not contained in a complex hyperplane of $\bP^d$ and $\varphi$ admits no nontrivial automorphisms. Thus, $\cB$ is a smooth quasi-projective variety and served as the base space of previous global Kuranishi chart constructions, \cite{AMS21,HS22,AMS23}. Given nonnegative integers $n^\pm\ge 0$, we let $$\cB_{n^+,n^-} = \cB_{n^+,n^-}(d)$$ be the preimage of $\cB$ under the forgetful map $\Mbar_{0,n^+,n^-}(\bP^d,d)\to \Mbar_0(\bP^d,d)$, where the $(n^++n^-)$-many marked points are divided into positive and negative marked points. It is an easy verification that this is again a complex manifold.\par 
While the spaces $\cB$ and $\cB_{n}$ capture the domain breaking of stable maps, they do not capture the degenerations of buildings, where nodes are constrained by the asymptotic conditions. Thus, we construct a real-oriented blow-up of $\cB$ that will serve as the base space for the Kuranishi chart of Pardon buildings; see \textsection\ref{subsec:framings}. It agrees with the base space used in Hamiltonian Floer theory, \cite{BX22,Rez22,AB24}. In \textsection\ref{subsec:base-with-levels}, we perform a generalized corner blow-up to obtain the base space for the Kuranishi chart of leveled buildings.

\subsubsection{Without levels}\label{subsec:framings} We will perform real-oriented blow-ups on the space $\cB_{n^+,n^-}(d)$ following \cite{BX22}. These spaces will serve as the base spaces for the global charts of Pardon buildings in \ref{subsec:construction}. Assume we are given an integer $d_i^\pm\geq 1$ for each marked point $z_i^\pm$ so that
    $$d =  p\lbr{\s{i}{d^+_i}-\s{j}{d^-_j}} + n - 2$$
for some uniform integer $p \ge 1$ where $n$ is the number of marked points.
The following definition will allow us to modify $\cB_{n^+,n^-}(d)$ to obtain the right boundary stratification for curves in symplectizations. 

\begin{definition}\label{de:type-for-symplectisation} Given a stable map $\varphi \cl C\to \bP^d$ whose domain has a unique node $x$, we say that $x$ is of \emph{type $0$} if it is non-separating or if it separates $C$ into irreducible components $C_0$ and $C_1$ of degree $d_0$, respectively $d_1$ so that 
\begin{equation}\label{eq:degree-comparison} \mathrm{d}_x:=
(d_0-p\hspace{-3pt}\s{z^+_i \in C_0}{d_i^+}+p\hspace{-3pt}\s{z^-_j \in C_0}{d_j^-}  -\deg(\omega_{C}|_{C_0})) - (d_1-p\hspace{-3pt}\s{z^+_i \in C_1}{d_i^+}+p\hspace{-3pt}\s{z^-_j \in C_1}{d_j^-} -\deg(\omega_{C}|_{C_1})) = 0.
\end{equation}
We say $x$ is of \emph{type $1$} with order $|\mathrm{d}_x|$ otherwise.\end{definition}

The following does not depend on the way we determine the type of the node. Given $[\varphi,C,z_*] \in \cB_{n^+,n^-}(d)$ whose domain has two nodes $x_1,x_2$, we call $x_i$ of \emph{type $j$} if $[\varphi,C,z_*]$ is the limit of a sequence of maps with one-nodal domains so that the node converges to $x_i$ and each of them is of type $j$. Inductively, this yields a decomposition $N_C = N_C\inn \sqcup N_C^\bullet$ of the nodes of the domain of any $[\varphi,C,z_*]\in \cB$.

\begin{definition}\label{de:sft-base-space}
    The space $\cBR_{n^+,n^-}(d)$ consists of elements of $\cB_{n^+,n^-}(d)$ equipped with the following additional data
 \begin{itemize}[leftmargin=20pt]
	\item an asymptotic marker $b_{k}^\pm \in S_{z_k^\pm}C$ at each marked point $z_k^\pm$ determining a real line $\ell_k^\pm \sub T_{z_k^\pm}C$,
	\item an isomorphism $m_z \cl S_zC_v \to S_zC_{v'}$, or, equivalently, an element of $(T_zC_v\otimes T_zC_{v'}\sm \{0\})/S^1$, for each $z\in \cN_C^\bullet$ between the irreducible components $C_v$ and $C_{v'}$ of $C$.\footnote{To see this equivalence, note that $\ell = [\xi \otimes \xi']$ yields a map $m_\ell([\eta]) := [\lspan{\xi,\wt\eta}\xi']$ where $\lspan{\cdot}$ is any Hermitian product on $T_zC$, while conversely given $m$ we can set $\ell_m := [\wt\xi \otimes \wt m(\xi)]$ where $\xi \in S_zC$ is arbitrary and the tilde means that we take a lift in the respective tangent space.} 
\end{itemize}
\end{definition}

For the next assertion, we recall the construction of the real-oriented blow-up of a complex manifold $Z$ along an effective normal crossing divisor $D$ from \cite[\textsection8.2]{Sab13}. Suppose first $D$ is smooth and principal, and let $S_D \to Z$ be the $S^1$-bundle of the line bundle $L_D \to Z$ associated to $D$. Let $f \cl Z\to L_D$ be a holomorphic section. Then $f\inv(0) = D$, so the composition $\wt f$
$$Z\sm D\xra{f} L_D\sm 0 \to S_D$$
is well-defined. The \emph{real-oriented blow-up} $\text{Bl}_Z(D)$ of $Z$ along $D$ is the closure of the image of $\wt f$ and the blow-down map $\text{Bl}_Z(D) \to Z$ is simply given by the restriction of the projection $S_D\to Z$.
By \cite[Lemma~8.1]{Sab13}, the space $ \text{Bl}_Z(D)$ admits a unique smooth structure with 
\begin{equation}\label{eq:boundary-blow-up}
    \del\text{Bl}_Z(D) \cong S_D|_D.
\end{equation}

\par 
If $\{D_i\}_i$ is a normal crossing divisor with finitely many irreducible components, then there exists a real blow-up of $Z$ along $\{D_i\}_i$. By \cite[Lemma~8.2]{Sab13}, it is given by
\begin{equation}\label{eq:ncd-blowup}\text{Bl}_Z(\{D_i\}_i) = \text{Bl}_Z(D_1)\times_{Z}\dots \times_Z\text{Bl}_Z(D_k).\end{equation}
which we can take as the definition for the purposes of this paper.

\begin{remark}
    In our construction we also encounter the case of blowing up a divisor $D$, which has normal crossings self-intersections, i.e., any point of $D$ admits a \emph{holomorphic} chart $\phi\cl U \to Z$ where $U$ is an open neighborhood of $0$ in $\bC^n$ and $\phi\inv(D) = U \cap \{z_1\dots z_k = 0\}$. 
    We define the smooth structure on the real-oriented blow-up, $\text{Bl}_Z(D)$ via Equation \eqref{eq:ncd-blowup} in each such coordinate chart $U$. As the transition functions on the base are holomorphic, they lift to diffeomorphisms between the blow-ups.
\end{remark}

\begin{lemma}\label{lem:building-base-smooth} $\cBR_{n^+,n^-}(d)$ is a smooth $\PGL_{d+1}(\bC)$-manifold with corners so that the depth in the boundary of $(\varphi,C,z_*,b_*,m_*)$ is given by the number of elements in $\cN^\bullet_C$. Moreover, each corner stratum is invariant under the $\PGL_{d+1}(\bC)$-action. 
\end{lemma}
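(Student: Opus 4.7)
The plan is to realize $\cBR_{n^+,n^-}(d)$ as the total space of an $S^1$-bundle (encoding the asymptotic markers) sitting over a real-oriented blow-up of $\cB_{n^+,n^-}(d)$ along the divisor of type-1 nodal curves, and then invoke the standard smoothness results \eqref{eq:boundary-blow-up} and \eqref{eq:ncd-blowup} recalled above.

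First, I would verify the relevant smooth/algebraic geometry of the base. Since $\cB_{n^+,n^-}(d)$ parametrizes regular embedded stable maps without nontrivial automorphisms, it is a smooth quasi-projective $\PGL_{d+1}(\bC)$-variety, and the locus $D = D^0 \cup D^\bullet$ of nodal domains is a (holomorphic) simple-normal-crossing divisor whose irreducible components $D_\alpha$ are indexed by the combinatorial splittings of $(d, z^+_*, z^-_*)$ across a node; this is a standard statement about the boundary of $\Mbar_{0,n}(\bP^d,d)$ restricted to the smooth locus $\cB$. Definition~\ref{de:type-for-symplectisation} depends only on the combinatorial splitting, so each irreducible component $D_\alpha$ is either entirely of type $0$ or entirely of type $1$; this gives the decomposition $D^\bullet = \bigcup_\alpha D^\bullet_\alpha$ into type-$1$ components and shows that the assignment $\cN_C = \cN_C\inn \sqcup \cN^\bullet_C$ is well defined on every nodal curve (each node of a multi-nodal curve lies on exactly one divisor component, whose type determines its label). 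Since $\PGL_{d+1}(\bC)$ acts holomorphically on $\cB_{n^+,n^-}(d)$, preserves degrees of irreducible components and marked-point distributions, and commutes with the formula \eqref{eq:degree-comparison}, each $D^\bullet_\alpha$ is $\PGL_{d+1}(\bC)$-invariant.

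Next, I would identify the additional data of Definition~\ref{de:sft-base-space} with precisely the data added by a real-oriented blow-up plus an $S^1$-bundle. The asymptotic markers $\{b^\pm_k\}$ form a smooth principal $(S^1)^{n^++n^-}$-bundle $P \to \cB_{n^+,n^-}(d)$, as each $S_{z^\pm_k}C$ is the unit circle bundle of the (smooth) cotangent line at the $k$-th marked point of the universal curve. For each type-1 component $D_\alpha^\bullet$, the line bundle $L_{D_\alpha^\bullet}$ is canonically isomorphic to the node-smoothing line bundle $T_{z_\alpha}C_v \otimes T_{z_\alpha}C_{v'}$ restricted to $D_\alpha^\bullet$ and extended suitably, so an element of $(T_{z_\alpha}C_v \otimes T_{z_\alpha}C_{v'}\sm\{0\})/S^1$ is exactly a point in the unit circle bundle $S_{D_\alpha^\bullet}|_{D_\alpha^\bullet}$ appearing on the right-hand side of \eqref{eq:boundary-blow-up}. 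Therefore $\cBR_{n^+,n^-}(d) \cong \textup{Bl}_{\cB_{n^+,n^-}(d)}\bigl(\{D^\bullet_\alpha\}_\alpha\bigr) \times_{\cB_{n^+,n^-}(d)} P$, which by \cite[Lemma~8.1, Lemma~8.2]{Sab13} (i.e.\ the statements summarized in \eqref{eq:boundary-blow-up} and \eqref{eq:ncd-blowup}) is a smooth manifold with corners, whose depth at a point $(\varphi,C,z_*,b_*,m_*)$ equals the number of type-1 divisor components through the image of $(\varphi,C,z_*)$, i.e.\ $\#\cN^\bullet_C$.

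Finally, the $\PGL_{d+1}(\bC)$-action on $\cB_{n^+,n^-}(d)$ is holomorphic and lifts canonically to the line bundles $T_{z^\pm_k}C$ and $T_{z_\alpha}C_v\otimes T_{z_\alpha}C_{v'}$ via the natural action on the universal curve; hence it lifts to the $S^1$-bundle of markers and to the real-oriented blow-up, and the lifted action is smooth by the universal property of the blow-up construction in \cite[\S 8.2]{Sab13}. Because each $D^\bullet_\alpha$ is $\PGL_{d+1}(\bC)$-invariant by the first step, the corresponding boundary face $S_{D^\bullet_\alpha}|_{D^\bullet_\alpha}$ is invariant as well, and by \eqref{eq:ncd-blowup} every corner stratum is an intersection of such faces and therefore $\PGL_{d+1}(\bC)$-invariant. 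The main technical point to check carefully is the well-definedness of the type of individual nodes on multi-nodal curves; this reduces to verifying that Equation~\eqref{eq:degree-comparison} depends only on the combinatorial type of the splitting across that node and is invariant under smoothing the other nodes, which is straightforward once one notes that all quantities entering \eqref{eq:degree-comparison} are locally constant in one-parameter smoothings of the remaining nodes.
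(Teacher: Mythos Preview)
Your proof is correct and follows essentially the same approach as the paper: both realize $\cBR_{n^+,n^-}(d)$ as the fiber product $\cB^{am}\times_{\cB}\cB^{mi}$ of the torus bundle of asymptotic markers with the real-oriented blow-up along the type-$1$ nodal divisors, and both identify the matching isomorphisms with points of the circle bundle of the node-smoothing line bundle via \cite[\S XI.3]{ACG11}. Two small differences: for the smooth lift of the $\PGL_{d+1}(\bC)$-action the paper invokes \cite[Theorem~5.1]{AK10} rather than a universal property from \cite{Sab13}, which is the more targeted reference; and for invariance of the corner strata the paper argues indirectly via connectedness of $\PGL_{d+1}(\bC)$ (each codimension-$1$ stratum is a connected component of the codimension-$1$ locus, hence preserved by a connected group), whereas you argue directly that each $D^\bullet_\alpha$ is invariant because the action fixes the domain combinatorics --- your argument is more direct and equally valid.
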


\begin{proof}
	We abbreviate $\cBR = \cBR_{n^+,n^-}(d)$ and $\cB = \cB_{n^+,n^-}(d)$. Recall that $\bL_i \to\cB$ is the complex line bundle with fiber given by $T_{z_i}^*C$ at $(\varphi,C,z_*)$. Let 
	$$\cB^{am}:= \bigoplus\limits_{i =1}^{k^+}S(\bL\dul_{i})\oplus  \bigoplus\limits_{j =1}^{k^-}S(\bL\dul_{j})$$
	be a direct sum of $U(1)$ bundles corresponding to the line bundles $\bL_*$ and let $\pi_{am} \cl \cB^{am}\to \cB$ be the forgetful map. 
    Then $\cB^{am}$ is clearly smooth (without corners) and $\pi_{am}$ is a submersion.\par Let $D_1,\dots,D_\ell$ be the divisors in $\cB$ given by curves with at least one node in $\cN^\bullet$. They form a normal crossing divisor, so we can define $\cB^{mi} := \text{Bl}_\cB(\{D_i\}_i)$ with blow-down map $\pi_{mi}\cl \cB^{mi}\to \cB$. We claim that 
    \begin{equation}
        \cBR = \cB^{am}\times_\cB\cB^{mi}
    \end{equation}
    and that the forgetful map $\cBR\to \cB$ is induced by the blow-down map $\cB^{mi}\to \cB$. To see this, we note that the asymptotic markers at the marked points correspond exactly to the additional data of elements of $S(\bL\dul_{j})$. Forgetting them yields the canonical map $\cBR\to \cB^{mi}$. Meanwhile, the normal bundle of $D_i$ has fiber $N_{\varphi} = \bL_{z^+}\dul\otimes \bL_{z^-}\dul$ by \cite[\textsection XI.3]{ACG11}. Thus an element of its sphere bundle is exactly a matching isomorphism, so the claim without the group action follows from \eqref{eq:boundary-blow-up}.
    Finally, by \cite[Theorem~5.1]{AK10}\footnote{To be precise, the statement is for projective blow-up while we work with the spherical one. However, as mentioned in the paragraph above Remark~1.1 op. cit., their results also hold for spherical blow-ups.}, the smooth $\PGL_{d+1}(\bC)$-action on $\cB$ lifts to a smooth action on $\cBR$. For the last assertion, it suffices to show that each stratum of codimension $1$ is invariant under the $\PGL$-action. Let $S$ be such a stratum, and note that $S$ is a connected component of the space $S_1(\cBR_d)$ of codimension-$1$ points in $\cBR_d$. Since $S_1(\cBR_d)$ is preserved by the action and $\PGL_{d+1}(\bC)$ is connected, the orbit of $S$ under this action is connected as well. Hence, it has to agree with $S$.
\end{proof}

Returning to the notation of \S\ref{subsec:buildings}, recall that $\scS$ is the category of decorated trees defined in Definition~\ref{de:decorated-tree}. Let $\scS\inn$ be the category obtained from $\scS$ by forgetting the data of the Reeb orbits associated to the edges and replacing the relative homology class $\beta_v \in H_2(Y,\{\gamma_{v,e}\}_{e\in E_v})$ by an integer $d_v \geq 1$, corresponding to the degree of the map $\wch\varphi_v \cl  C_v \to \bP^d$, where $\wch\varphi$ is the image of $\varphi$ under the blow-down map $\scBR\to \cB$. We equip $\scS\inn$ with the dimension function 
\begin{equation}\label{de:dimension-tree}
    \dim(T) = 2(d-3)+ 2(d+1)d+3(\# \Gamma^- + \# \Gamma^+)  -\# E^{\text{int}}(T)
\end{equation}

\vspace*{1.5pt}

\begin{lemma}\label{lem:stratification-base-space}
There exists a canonical stratification $P\cl \cBR_{n^+,n^-}(d) \to \scS\inn$ which assigns the tree type of the domain to an element of $\cBR_{n^+,n^+}(d)$. It is cell-like in the sense that $\cBR_{/T} := P(\scS\inn_{/T})$ is a smooth manifold with corners of dimension $\dim(T)$ with interior given by $P\inv(\{T\})$.
\end{lemma}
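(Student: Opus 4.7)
The plan is to define the map $P$ via the natural tree-type assignment and then deduce the manifold-with-corners structure on each closed stratum from the iterated real-oriented blow-up presentation $\cBR_{n^+,n^-}(d) = \cB^{am}\times_\cB \cB^{mi}$ established in the proof of Lemma~\ref{lem:building-base-smooth}.

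First, to a point $(\varphi,C,z_*,b_*,m_*) \in \cBR_{n^+,n^-}(d)$ I would associate the tree whose internal edges correspond bijectively to the type~$1$ nodes $\cN_C^\bullet$, oriented by the sign of $\mathrm{d}_x$ from Definition~\ref{de:type-for-symplectisation}; whose vertices correspond to the connected components of the normalization of $C$ along $\cN_C^\bullet$; whose external edges correspond to the marked points $z_k^\pm$ with their prescribed $\pm$-orientation; and whose degree label $d_v$ equals the sum of the degrees of the irreducible components making up the vertex~$v$. This assignment is independent of representative, descends from $\cB_{n^+,n^-}(d)$, and sends specialization to the morphisms in $\scS\inn$ (smoothing subsets of type~$1$ nodes). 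By construction, $P\inv(\{T\})$ is precisely the locus of curves whose type~$1$ nodal configuration and degree distribution equal $T$ and which carry no further type~$1$ nodes; consequently, $P\inv(\scS\inn_{/T})$ is the closure of this open locus in $\cBR_{n^+,n^-}(d)$.

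To prove that $\cBR_{/T}$ is a smooth manifold with corners of the asserted dimension, I would work locally near a point $x \in P\inv(\{T\})$. Standard deformation theory for stable embedded maps to $\bP^d$ provides a holomorphic Kuranishi-type slice for $\cB_{n^+,n^-}(d)$ at the underlying map of $x$ in which each node $q$ of $C$ contributes a smoothing coordinate $w_q \in \bC$, with the associated divisor locally cut out by $\{w_q = 0\}$. For a type~$1$ node this divisor is one of the $D_i$ of Lemma~\ref{lem:building-base-smooth}, and the presentation \eqref{eq:ncd-blowup} replaces each such $w_q$ by a factor $(r_q,\theta_q) \in \bR_{\ge 0} \times S^1$, converting the complex codimension-$1$ stratum $\{w_q = 0\}$ into a real codimension-$1$ boundary face. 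Combining this with the remaining transverse factors—the interior deformations of the vertex subcurves (including their type~$0$ smoothing parameters, which are not blown up), the moduli of the marked points, and the $S^1$-factors from the asymptotic markers—one obtains a local diffeomorphism of a neighborhood of $x$ in $\cBR_{/T}$ with an open subset of a product of one factor $\bR_{\ge 0} \times S^1$ per internal edge of $T$ with a Euclidean open set. Each internal edge thus contributes real codimension $1$ to the stratum, which, combined with the top-stratum real dimension $2(d-3) + 2d(d+1) + 3(\#\Gamma^+ + \#\Gamma^-)$ of $\cBR_{n^+,n^-}(d)$, yields exactly $\dim(T)$.

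The main obstacle is ensuring that at points where several type~$1$ nodes coexist the smoothing coordinates $w_{q_1},\dots,w_{q_k}$ are genuinely part of a single holomorphic coordinate system, i.e.\ that the divisors $D_i$ meet transversally rather than with higher-order tangencies. This reduces to the standard normal-crossing property of the boundary of $\Mbar_{0,n}(\bP^d, d)$ restricted to the regular embedded locus, which guarantees that the iterated real-oriented blow-up \eqref{eq:ncd-blowup} produces a clean product corner structure even in the deepest strata and that the induced $\PGL_{d+1}(\bC)$-action (being connected and preserving each codimension-$1$ stratum by Lemma~\ref{lem:building-base-smooth}) restricts smoothly to every $\cBR_{/T}$.
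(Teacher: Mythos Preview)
Your proof is correct and follows essentially the same approach as the paper's: defining $P$ by collapsing type~$0$ nodes in the dual graph, then deducing the corner structure and dimension from the local real-oriented blow-up model of Lemma~\ref{lem:building-base-smooth} together with unobstructedness of $\cB$. Your version spells out the local $(r_q,\theta_q)\in\bR_{\ge 0}\times S^1$ factors and the normal-crossing verification more explicitly, but the argument is the same.
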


\begin{proof} Define the function $P^\bR$ by letting $T_{\wh\varphi} = P^\bR(\wh\varphi)$ be the underlying dual graph of the domain of $\pi(\wh\varphi)$, where we have collapsed all edges corresponding to nodes of type $0$ (and added the degrees of the associated vertices). Given an element in (the preimage of) a gluing chart (under $\pi$) near $\wh \varphi$, we obtain a unique morphism $T_{\wh\varphi}\to T_{\wh\phi}$ in $\scS\inn$. Since $\cB$ is unobstructed, the induced germ of a map $\cBR\to \scS\inn_{T_{\wh\varphi}/}$ near $\wh\varphi$ is a stratification. The last claim follows from the dimension formula for $\cB$ and the description of the corner strata of the real blow-up above. Observing that $\cBR\to \cB^{mi}$ is a torus bundle over $\cB^{mi}$, this completes the proof.\end{proof}

\subsubsection{Base space with levels}\label{subsec:base-with-levels}
In this subsection, we construct a generalized blow-up of $\cBR$ that will serve as the base space for the global Kuranishi chart of $\Mbar_{\sft}^{\, J}(\Gamma^+,\Gamma^-;\beta)$. For this, we use the generalized blow-up of \cite{KM15} and the stratification of $\cBR$ by decorated trees.
\vspace*{-1pt}
\subsubsection*{Generalized blow-up} We give a quick recap of the generalized blow-up of a manifold $X$ with corners as defined in \cite{KM15}. We denote the set of closures of connected boundary (codimension 1) components by $\cM_1(X)$, assuming that no $F\in \cM_1(X)$ self-intersects. We can then define the set of codimension-$k$ faces $\cM_k(X)$ to consist of intersections $F_I = F_{i_1}\cap \dots\cap F_{i_k}$ of $k$ distinct elements of $\cM_1(X)$. We now associate the following combinatorial data to $X$. 
\begin{itemize}[leftmargin=17pt]
    \item  To each face $F \in \cM_k(X)$, we associate the freely generated monoid $$\sigma_F \coloneqq \bigoplus_{\substack{H\in \cM_1{X},\\ F \sub H}} \bN e_H$$
    \item  The \emph{monoidal complex} $\cP_X$ of $X$ consists of the collection of monoids $\sigma_F$ for every face $F$ of codimension 1 and higher, together with the canonical maps $i_{GF} : \sigma_G \to \sigma_F$ induced by inclusion of the faces $F\sub G$.
\end{itemize}

\begin{definition}[{\cite[Definition~2.2]{KM15}}]
    A \emph{refinement} $\cR_\sigma$ of a single monoid $\sigma$ is a collection $\cR_\sigma = \{\tau\mid \tau \subset\sigma\}$ of submonoids such that 
\begin{enumerate}[label=\roman*),leftmargin=22pt,ref=\roman*]
    \item\label{maximal-monoids-suffice} if $\tau\in \cR_\sigma$ and $\tau'\sub \tau$, then $\tau'\in \cR_\sigma$,
    \item for any $\tau_1,\tau_2\in \cR_\sigma$, the intersection $\tau_1\cap \tau_2$ is a face of both $\tau_1$ and $\tau_2$,
    \item $\text{span}_{\bR_+}(\sigma) = \union{\tau\in \cR_\sigma}{\text{span}_{\bR_+}(\tau)}$.
\end{enumerate}
\end{definition}
Note that \eqref{maximal-monoids-suffice} implies that a refinement is uniquely determined by the maximal monoids it contains. A classical example of a refinement of a monoid is given by subdivison. We refer to \cite[\textsection2]{KM15} for more details and examples.\par 
The more general notion of the \emph{refinement of monoidal complex} $\cR_Q$ amounts to a collection $\cR_Q = \{\cR_\sigma\mid \sigma\in Q\}$ of refinements of the monoids of $Q$ together with compatibilities between these refinements. We refer the reader to \cite[Definition 4.7]{KM15} for the precise definition.

The main construction in \cite{KM15} can be paraphrased as follows.

\begin{theorem}[{\cite[Theorem A]{KM15}}]\label{thm:corner-blowup}
    For any smooth refinement $\cR \to \cP_X$, there is a manifold $Y = [ X ; \cR] $ with corners with a blow-down map $b : Y \to X$ such that $\cP_Y  = \cR$ and the blow-down map induces the refinement $\cR \to \cP_X$. 
\end{theorem}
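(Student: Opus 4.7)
The plan is to construct $Y$ by performing a toric-style blow-up in local coordinates and then glue. First I would localize near a corner stratum $F\in \cM_k(X)$, choosing coordinates $U\cong \bR_+^k\times \bR^{n-k}$ identifying the local monoid with $\sigma_F\cong \bN^k$ generated by the $k$ coordinate hyperplane classes. The smoothness hypothesis on $\cR$ means that the restricted refinement $\cR_{\sigma_F}$ is a collection of smooth submonoids each isomorphic to some $\bN^j$, and its maximal cones triangulate $\bR_+^k$.

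Next, for each maximal smooth submonoid $\tau \in \cR_{\sigma_F}$ with generators $v_1,\dots,v_k\in \bN^k$ forming a $\bZ$-basis of $\bZ^k$, I would define an affine chart $V_\tau = \bR_+^k\times \bR^{n-k}$ equipped with the blow-down map
\[
    b_\tau\cl V_\tau \to U, \qquad (s_1,\dots,s_k,x)\longmapsto \bigl(s^{v_1},\dots,s^{v_k},x\bigr),
\]
where $s^{v_i} \coloneqq \prod_j s_j^{(v_i)_j}$. This is the real analogue of the standard affine chart on a smooth toric variety associated with the cone $\tau$. The smoothness (unimodularity) of $\tau$ guarantees that $b_\tau$ is a proper map of manifolds with corners whose restriction to the interior is a diffeomorphism.

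The gluing of these charts proceeds face by face on the refinement. For two maximal smooth submonoids $\tau_1,\tau_2\in \cR_{\sigma_F}$ sharing a common face $\tau_{12}$, the overlap is given by the locus in $V_{\tau_i}$ where the $s$-coordinates associated with generators of $\tau_i$ outside $\tau_{12}$ are strictly positive; on this open set the transition function is read off from the change-of-basis matrix in $\GL_k(\bZ)$ between the generators, and unimodularity ensures it is a smooth diffeomorphism rather than a mere rational map. Coherence across different strata $F\sub G$ is built in via the compatibility maps $i_{GF}\cl \sigma_G\to \sigma_F$ required in the definition of a refinement of a monoidal complex, so the charts assemble to a manifold $Y$ with corners and a global blow-down $b\cl Y\to X$.

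The subtle part, in my view, will be the globalisation and the identification $\cP_Y = \cR$. For the former, one must check that the local blow-ups built in tubular neighbourhoods of various strata patch consistently on overlapping neighbourhoods; this reduces, by induction on codimension, to verifying that the refinements at nested faces are compatible, which is precisely the content of \cite[Definition~4.7]{KM15}. For the latter, I would argue locally: the codimension-$j$ faces of $V_\tau$ are in bijection with the codimension-$j$ subfaces of $\tau$, so gluing and taking closures yields a bijection between $\cM_j(Y)$ and the maximal $j$-dimensional submonoids of $\cR$, inducing $\cP_Y = \cR$. A minor but recurring technical point is to verify that no unintended identifications occur at the exceptional locus $b\inv(\partial X)$; a direct inspection in each chart, using that the generators $v_i$ of $\tau$ are a $\bZ$-basis, suffices.
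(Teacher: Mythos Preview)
The paper does not prove this theorem at all: immediately after stating it, the authors write ``We refer the reader to the excellently written \cite{KM15} for the proof and just add some description of the exceptional divisors, which is missing from \cite{KM15}.'' So there is no proof in the paper to compare against; the result is used as a black box from Kottke--Melrose.

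That said, your sketch is essentially the strategy of \cite{KM15}: the generalised blow-up is indeed built by toric-style monomial charts indexed by the maximal smooth cones of the refinement, glued along faces using the unimodular change-of-basis matrices, with the global compatibility coming from the coherence condition on refinements of the monoidal complex. One small point worth checking is the direction of your monomial map: with $v_1,\dots,v_k$ the generators of $\tau$, the correct blow-down has $x_j = \prod_i s_i^{(v_i)_j}$ (so that $\{s_i=0\}$ is the divisor corresponding to the ray $v_i$), whereas your formula $x_i = \prod_j s_j^{(v_i)_j}$ is the transpose. Both yield valid charts on $\bR_+^k$ since the entries are nonnegative and the matrix is unimodular, but with your indexing the identification $\cP_Y=\cR$ would pair $\{s_i=0\}$ with the wrong ray; this is a labeling issue rather than a structural one, but it would make the final bookkeeping step (your bijection between faces of $V_\tau$ and subfaces of $\tau$) come out inverted.
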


We refer the reader to the excellently written \cite{KM15} for the proof and just add some description of the exceptional divisors, which is missing from \cite{KM15}. This will be useful for understanding the difference between real-oriented blow-ups and generalized blow-ups as well as\textsection\ref{subsec:leveled-gluing}, which shows that the generalized blow-up yields the `correct' base space.\par 
Recall that for the real-oriented blow $\text{Bl}_D(X)$ of a smooth quasi-projective variety along a normal crossing divisor $D = \{D_i\}_i$ in $X$, we intuitively replace $D_i$ by the
\emph{spherical projectivization} 
\[\bP_{> 0}(N_{D_i/X}) \coloneqq (N_{D_i/X}\sm 0)/\bR_{>0}.\]
Given now a smooth manifold $Y$ with corners and two embedded codimension-$1$ boundary strata $Z_1$ and $Z_2$ intersecting in $Z_{12}$, the generalized blow-up of $Y$ along $Z_{12}$ replaces $Z_{12}$ by the \emph{positive part} 
\begin{equation}\label{eq:positive-part}
    \bP_+(N_{Z_{12}/Y}) = \set{(y,[v])\in \bP_{> 0}(N_{Z_{12}/X})\mid v  =v_1\oplus v_2 \text{ with } v_i\in N_{Z_i/Y}\text{ inward pointing}}.
\end{equation}
of the spherical projectivization, where we use that $N_{Z_{12}/Y}$ splits as the direct sum $N_{Z_1/Y}\oplus N_{Z_2/Y}$. In the picture below we show the simplest case.

\begin{figure}[h]
    \centering
    
    \def\svgscale{0.6}
\begingroup%
  \makeatletter%
  \providecommand\color[2][]{%
    \errmessage{(Inkscape) Color is used for the text in Inkscape, but the package 'color.sty' is not loaded}%
    \renewcommand\color[2][]{}%
  }%
  \providecommand\transparent[1]{%
    \errmessage{(Inkscape) Transparency is used (non-zero) for the text in Inkscape, but the package 'transparent.sty' is not loaded}%
    \renewcommand\transparent[1]{}%
  }%
  \providecommand\rotatebox[2]{#2}%
  \newcommand*\fsize{\dimexpr\f@size pt\relax}%
  \newcommand*\lineheight[1]{\fontsize{\fsize}{#1\fsize}\selectfont}%
  \ifx\svgwidth\undefined%
    \setlength{\unitlength}{450bp}%
    \ifx\svgscale\undefined%
      \relax%
    \else%
      \setlength{\unitlength}{\unitlength * \real{\svgscale}}%
    \fi%
  \else%
    \setlength{\unitlength}{\svgwidth}%
  \fi%
  \global\let\svgwidth\undefined%
  \global\let\svgscale\undefined%
  \makeatother%
  \begin{picture}(1,0.5)%
    \lineheight{1}%
    \setlength\tabcolsep{0pt}%
    \put(0,0){\includegraphics[width=\unitlength,page=1]{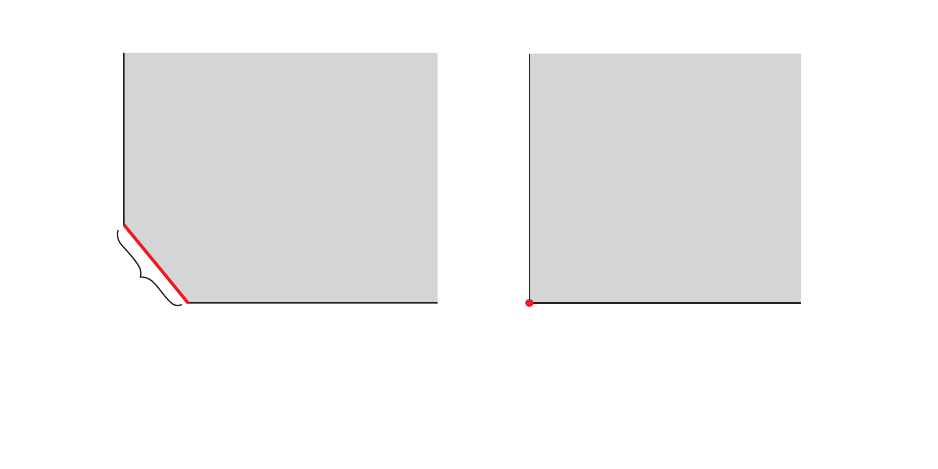}}%
    \put(0.38804408,0.07686628){\makebox(0,0)[lt]{\lineheight{1.25}\smash{\begin{tabular}[t]{l}$[\bR_+^2;\cR] \longrightarrow \bR_+^2 $\end{tabular}}}}%
    \put(0,0){\includegraphics[width=\unitlength,page=2]{blo2woT.pdf}}%
  \end{picture}%
\endgroup%

    \caption{Corner blowup of $\bR_+^2$.}
    \label{fig:blowup1}
\end{figure}

\begin{remark}\label{rem:neighbourhood-of-exceptional-boundary}
    Given a Riemannian metric $g$ on $Y$, we can identify $\bP_+(N_{Z_i/Y})$ with a subset of the sphere normal bundle $SN_{Z_i/Y}$ of $Z_i$ and extend this isomorphism to an isomorphism 
    \begin{equation*}
        [0,\epsilon)\times \bP_+(N_{Z_{12}/Y})\cong U \sub \wt Y,
    \end{equation*}
    where $\wt Y$ is the blow-up. Then, the blow-down map $\beta$ becomes the map
    \begin{equation*}
        [0,\epsilon)\times \bP_+(N_{Z_{12}/Y})\to Y : (t,y,[v]) \mapsto \exp_y(t\wt v),
    \end{equation*}
    where $\wt v$ is the unique lift of $[v]$ to the sphere normal bundle.
\end{remark}

\noindent Suppose $D$ is a normal crossing divisor in $X$ with two irreducible smooth components $D_1,D_2$. Applying the above observation to the case of $Y = \text{Bl}_D(X)$ with $Z_i$ the preimage of $D_i$ under the blow-down map, we have that the normal bundle of (the interior of) $Z_i$ is canonically isomorphic to the hyperplane line bundle 
\begin{equation}
    L_i\,\coloneqq \, \cO_{\bP_{>0}(N_{D/X})}(1) \coloneqq \bP_{>0}(N_{D/X})\times_D N_{D/X}.
\end{equation}
On the other hand, the (interior of the) intersection $Z_{12}$ is canonically identified with 
\begin{equation}\label{eq:intersection-and-projectivization}
    Z_{12}\,\cong\,\lbr{\bP_{+}(N_{Z_1/X})\times \bP_{+}(N_{D_2/X})}|_{D_{12}}
\end{equation}
with normal bundle corresponding to 
$$N_{Z_{12}/Y} \;\cong\; L_{1}|_{Z_{12}}\,\oplus\, L_{2}|_{Z_{12}}$$
under the identification~\eqref{eq:intersection-and-projectivization}. The generalized blow-up of $Y$ along $Z_{12}$ now replaces $Z_{12}$ with $\bP_{+ }(L_{1}|_{Z_{12}}\oplus L_{2}|_{Z_{12}})$. Hence, a point $p$ in $\beta\inv(Z_{12})$ corresponds to a tuple 
\begin{equation}\label{eq:point-in-blow-up}
p = \lbr{y,[v_1],[v_2],[v'_1\oplus v'_2]},\end{equation}
where $y \in D_{ij}$ and $v_i,v'_i \in (N_{D_i/X})_y$ with $[v_i'] = [v_i]$. The brackets denote the equivalence class under the $\bR_{>0}$-action on the respective bundle. The key point of~\eqref{eq:point-in-blow-up} is that the ``added data'' of $[v'_1\oplus v'_2]$ yields a ratio obtained by choosing a Riemannian metric on $X$ and lifting $v_i$ and $v_1'\oplus v_2'$ to unit vectors with respect to that metric. Another consequence is that the normal bundle of the embedding $\beta\inv(Z_{12})\hkra \beta\inv(Z_1)$ is exactly the pullback of $L_{2}|_{Z_{12}}$.

\subsubsection{Blow-up of $\cBR$}\label{subsubsec:blowup_to_leveled_base} We now return to our situation at hand, the real-oriented blow-up $\cBR \coloneqq\cBR_{n^+,n^-}(d)$ of $\cB_{n^+,n^-}(d)$. The construction of the generalized blow-up of \cite{KM15} as written assumes that the codimension one boundary components are embedded. In our situation we do not have this property due to the fact that the contraction maps $T'\to T$ of trees in $\scS$ can be fixed by a nontrivial automorphism of $T'$, see e.g., \cite[Figure~8]{Par19}. We therefore split up the construction into two steps.
\begin{itemize}[leftmargin=15pt]
     \item Fix a neighborhood $U$ of the closure of $\cBR_2$, the union of all codimension 2 strata in $\cBR$, such that every boundary (closure of codimension $1$ points) component in $U$ is embedded. 
    \item Replace $U$ with a corner blow-up $[U;\cR]$ corresponding to a suitable refinement of the monoidal complex $\cR \to \cP_{U} $.
\end{itemize}

The first step, the choice of $U$ we can do immediately. Let us now define the refinement $\cR$, recalling that the strata of $\cBR$ are indexed by decorated trees.

\begin{definition}\label{de:leveled-monoid}
    Given a decorated tree $T$ and an enumeration $E(T) = \{e_1,\dots,e_n\}$ with associated monoid $\sigma_T = \bN\lspan{e_1,\dots,e_n}$ and a maximal level function $\ell$ on $T$, the associated monoid is 
    \begin{equation}\label{} \sigma_{T,\ell} = \bN\lspan{e'_1,\dots,e'_n},\end{equation}
    where 
    \begin{equation}
        e'_i \coloneqq e_i + \s{\substack{p\ell(e_j)\leq p\ell(e_i)\\e\ell(e_j) < e\ell(e_i)}}{e_j},
    \end{equation}
	with $e\ell((v,w)) = \ell(w)$ for any edge $e = (v,w)$.
\end{definition}

\begin{definition}\label{} Given a decorated tree $T$, we let $L_T$ be the set of maximally leveled level functions on $T$. The \emph{refinement} $\cR(\sigma_T)$ of $\sigma_T$ is the refinement generated by $\set{\sigma_{T,\ell}\mid \ell\in L_T}$.
\end{definition}

\begin{lemma}\label{} $\cR(\sigma_T)$ is a smooth refinement, and the refinements $\cR(\sigma_T)$ form a refinement $\cR$ of the monoidal complex $\cP_U$.
\end{lemma}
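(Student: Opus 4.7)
The plan is to verify in turn the three conditions in the definition of a refinement for each $\cR(\sigma_T)$, then check the cross-face compatibility needed to assemble the $\cR(\sigma_T)$ into a refinement of the monoidal complex $\cP_U$ (in the sense of \cite[Def.~4.7]{KM15}).

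First I would establish smoothness of each $\sigma_{T,\ell}$. For a fixed maximally leveled tree $(T,\ell)$, enumerate the edges $e_1,\dots,e_n$ so that $i<j$ implies either $e\ell(e_i)>e\ell(e_j)$, or $e\ell(e_i)=e\ell(e_j)$ and $p\ell(e_i)>p\ell(e_j)$. With this ordering the defining relation
\[ e'_i = e_i + \sum_{\substack{p\ell(e_j)\leq p\ell(e_i)\\ e\ell(e_j) < e\ell(e_i)}} e_j \]
only involves $e_j$ with $j>i$, so the change of generators matrix is upper unitriangular. Hence $\{e'_i\}_{i}$ is a $\bZ$-basis of the ambient lattice of $\sigma_T$, which means $\sigma_{T,\ell}$ is simplicial and unimodular (smooth) and has the same dimension as $\sigma_T$.

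Second I would show $\text{span}_{\bR_+}(\sigma_T) = \bigcup_{\ell \in L_T}\text{span}_{\bR_+}(\sigma_{T,\ell})$. Given a point $\sum_{e} t_e\, e_e \in \sigma_T$ with $t_e\geq 0$, the values $\{t_e\}$ together with the pre-level partial order $p\ell$ determine a maximally leveled level function $\ell\in L_T$: declare $e\ell(e) < e\ell(e')$ when $t_e > t_{e'}$, breaking remaining ties arbitrarily but consistently with $p\ell$. With the edge ordering of the previous paragraph, the linear system $\sum c_i e'_i = \sum t_i e_i$ is upper triangular, and the coefficients $c_i$ solve as successive non-negative differences of the $t_e$'s along the chosen ordering; this verifies the covering axiom. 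Ties in the $t_e$'s correspond exactly to the shared facet of two adjacent cones.

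Third I would verify the face axiom: for $\ell_1,\ell_2 \in L_T$, the intersection $\sigma_{T,\ell_1}\cap \sigma_{T,\ell_2}$ is a common face of both. Two maximally leveled level functions differ by a sequence of ``adjacent swaps'' of edges with the same pre-level assigned to consecutive edge-levels, and the defining relation shows that the locus where two such neighbouring decompositions coincide is cut out by setting the corresponding pair of difference coefficients equal—i.e., forcing one of the $c_i$ to vanish. Iterating and using the downward closure by definition (condition (\ref{maximal-monoids-suffice}) is built into $\cR(\sigma_T)$) establishes that $\cR(\sigma_T)$ is a refinement of $\sigma_T$.

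For the global claim I would check that the refinements $\cR(\sigma_T)$ assemble compatibly along the monoidal complex maps $i_{T/T'}\colon\sigma_{T'}\to \sigma_T$ induced by contractions $T\to T'$ in $\scS^{\mathrm{in}}$. Concretely, any $\ell'\in L_{T'}$ extends to a level function $\ell \in L_T$ by inserting the contracted edges at appropriate levels compatible with the pre-level partial order (this is essentially Lemma~\ref{lem:increasing-maximality} applied iteratively to the contracted edges), and under $i_{T/T'}$ the cone $\sigma_{T',\ell'}$ is sent isomorphically onto the face of $\sigma_{T,\ell}$ obtained by setting the coordinates corresponding to the contracted edges to zero. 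The main obstacle will be bookkeeping in this last step: when multiple edges share a pre-level with one of the contracted edges, many extensions $\ell$ of $\ell'$ are possible, and one must verify that the face of $\sigma_{T,\ell}$ produced is independent of the chosen extension and that all such $\sigma_{T,\ell}$ lie in $\cR(\sigma_T)$—this follows from the uniqueness-up-to-path observation in the proof of Proposition~\ref{prop:maximally-leveled-tree}, which guarantees the combinatorial choices glue coherently.
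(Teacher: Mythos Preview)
Your smoothness argument is essentially the paper's, made explicit: the paper asserts linear independence of the $e'_i$ and you exhibit the upper-unitriangular change of basis. The paper stops there, declaring the remaining assertions ``a direct consequence of the construction,'' so your attempt to verify the covering, face, and monoidal-complex axioms goes well beyond what the paper actually writes out.

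Your covering argument (step~2), however, has a genuine gap. You propose to manufacture $\ell$ from the coordinates $t_e$ by declaring $e\ell(e)<e\ell(e')$ whenever $t_e>t_{e'}$, but this total order need not extend the partial order forced by the tree: for consecutive edges $e=(v,w)$ and $e'=(w,w')$ one must always have $e\ell(e)<e\ell(e')$, regardless of the $t$-values. Concretely, take the chain $v_1\to v_2\to v_3$ with internal edges $e_1,e_2$; there is a unique $\ell\in L_T$, and the definition of $\sigma_{T,\ell}$ gives $e'_2=e_2+e_1$, so $\sigma_{T,\ell}$ is the cone $\{t_{e_1}\geq t_{e_2}\}$, which does not cover $\sigma_T$. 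The phrase ``consistently with $p\ell$'' does not resolve this, since the obstruction comes from the tree order rather than from ties. Either the summation condition in the definition of $e'_i$ is intended to be $p\ell(e_j)=p\ell(e_i)$ rather than $\leq$ (under which the chain example trivializes and your argument, restricted to edges sharing a pre-level, goes through), or a genuinely different covering argument is required. Your steps~3 and~4 are reasonable sketches but inherit this issue.
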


\begin{proof} 
It suffices to show that each $\sigma_{T,\ell}$ is a smooth monoid in the sense of \cite[]{KM15}. Since $\sigma_T$ is smooth, it suffices to show that $e'_1,\dots,e'_n$ are linearly independent in $\bR\lspan{e_1,\dots,e_n}$. This follows from the definition of a level function. The second claim is a direct consequence of the construction.
\end{proof}

\begin{definition}\label{de:sft-base-sapce} We define the \emph{leveled base space}  to be
	\begin{equation}\label{}\cBS = \cBS_{n^+,n^-}(d) \coloneqq \frac{[U;\cR]\sqcup (\cBR\sm\cBR_2)}{\sim}\end{equation} 
	where we identify the interior of $[U;\cR]$ with $U \sm\cBR_2$ via the blow-down map $[U;\cR]\to U$.
\end{definition}

\begin{lemma}
The space $\cBS_{n^+,n^-}(d)$ is a smooth oriented manifold with corners whose codimension-$k$ boundary strata correspond to $(k+1)$-leveled trees.
\end{lemma}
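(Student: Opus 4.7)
My plan is to verify smoothness, orientation, and the indexing of boundary strata in turn. For smoothness, the space $[U;\cR]$ is a smooth manifold with corners by Theorem~\ref{thm:corner-blowup}, and $\cBR \sm \cBR_2$ is one by Lemma~\ref{lem:building-base-smooth}. The blow-down map $\beta \cl [U;\cR] \to U$ is a diffeomorphism over $U \sm \cBR_2$: any decorated tree $T$ with at most one internal edge has $\sigma_T$ of rank $\leq 1$, admitting only the trivial refinement, so $\cR$ does not subdivide codim-$\leq 1$ strata. Hence the gluing in Definition~\ref{de:sft-base-sapce} yields a smooth manifold with corners. For orientation, $\cB_{n^+,n^-}(d)$ is complex-analytic and thus canonically oriented; the real-oriented blow-up $\cBR$ inherits an orientation combining the complex orientation on the interior of $\cB_{n^+,n^-}(d)$ with the standard orientation on each exceptional $S^1$-fiber, and the generalized blow-up $[U;\cR]$ preserves orientation, as each corner blow-up is a diffeomorphism on the interior.

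For the indexing of codim-$k$ strata, Theorem~\ref{thm:corner-blowup} puts codim-$k$ strata of $[U;\cR]$ in bijection with rank-$k$ cones of the refinement $\cR$. I would then establish a bijection between these rank-$k$ cones and $(k+1)$-leveled trees as follows. Given a rank-$k$ cone $\tau$, it arises as a face of some maximal cone $\sigma_{T,\ell_{\max}}$ for a decorated tree $T$ with $n$ internal edges and some maximally leveled $\ell_{\max}$; selecting $k$ of the $n$ generators $\{e'_i\}$ of $\sigma_{T,\ell_{\max}}$ corresponds, by Definition~\ref{de:leveled-monoid}, to collapsing the level gaps not in the selection. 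The resulting $(T', \ell)$, with $T'$ the contraction of $T$ along the corresponding edges and $\ell$ the induced level function, is a $(k+1)$-leveled tree. Conversely, any $(k+1)$-leveled tree $(T', \ell)$ extends to a maximally leveled tree on a refinement $T$ of $T'$ by the inductive argument in the proof of Proposition~\ref{prop:maximally-leveled-tree}, and the preserved transitions select the desired face.

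The main obstacle is showing this bijection is well-defined: a given $(k+1)$-leveled tree may extend to several maximally leveled trees, and the rank-$k$ faces sitting inside the corresponding maximal cones must then be identified. I would verify this by checking that the selected generators depend only on $\ell$, not on the extension $\ell_{\max}$ --- a statement that rests on the minimality condition in Lemma~\ref{lem:increasing-maximality} and the compatibility of refinements across different cones in the monoidal complex $\cP_U$.
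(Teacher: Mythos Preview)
Your approach is essentially the same as the paper's: both invoke \cite[Proposition~3.2]{KM15} (packaged in Theorem~\ref{thm:corner-blowup}) for the correspondence between rank-$k$ cones of $\cR$ and codimension-$k$ faces of $[U;\cR]$, and both rely on Definition~\ref{de:leveled-monoid} to identify these cones with $(k+1)$-leveled trees. You are more explicit than the paper on two points: you verify that the gluing along $U\sm\cBR_2$ is smooth (because $\cR$ is trivial on cones of rank $\leq 1$), and you address orientation, which the paper's proof omits entirely despite the word appearing in the statement. Your final worry about well-definedness of the bijection---that different maximally leveled extensions of a given $(k+1)$-leveled tree might yield different rank-$k$ faces---is more caution than the paper exercises; the paper simply asserts the correspondence follows from Definition~\ref{de:leveled-monoid}, and indeed the generators $e'_i$ selected by a face depend only on the relative level data preserved under collapse, not on the choice of maximal extension.
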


\begin{proof}
The proof will be based on \cite[ Proposition 3.2]{KM15} which states that the dimension of a monoid $\tau$ in a refinement is equal to the codimension of a face $F_\tau$ corresponding to the monoid in the blow-up. We know that in the complement of $\cBR_2$, the codimension-$1$ strata correspond to trees with exactly two vertices, and there are no strata of higher codimension. Thus the result follows easily in $\cBR\sm\cBR_2.$ Definition \ref{de:sft-base-sapce} shows that it remains to consider $[U;\cR]$.
It follows from Definition \ref{de:leveled-monoid} that every monoid of dimension $k$ in the smooth refinement corresponds to a leveled tree with $(k+1)$-many levels. The result then follows directly from \cite[Proposition~3.2]{KM15}.
\end{proof}

\begin{ex}We briefly describe the preimage of a (simple) corner stratum of $\cBR$ under the blow-down map. Let $T_k$ be the tree with a unique vertex, one input and $k$ outputs, and let $\cBR_{T_k}$ be the stratum corresponding to $T_k$. For the standard simplex $\Delta^n $, we define the \emph{maximally blown-up simplex} $\wt \Delta^n$ by iteratively blowing up the faces of the simplex, starting with the zero-dimensional faces and ending at a $(n-2)$-dimensional face. Thus, we obtain a sequence 
    \begin{equation}\label{eq:max-blown-up-simplex} \Delta^n \xleftarrow[\text{ blow up vertices}]{\pi_1}\Delta^n_1  \xleftarrow[\text{ blow up edges  }]{\pi_2} \Delta^n_2  \xleftarrow[\text{ blow up $2$-faces}]{\pi_3} \dots \xleftarrow[\text{ blow up } (n-2) \text{-faces}]{\pi_{n-2}} \wt \Delta^n \end{equation}
    of real-oriented and generalized blow-ups. We write $\wt\pi \cl \wt\Delta^n \to \Delta^n$
    for the composition of the maps in \eqref{eq:max-blown-up-simplex}.
Now, a small enough neighborhood of $\cBR_{T_{k}}$ is diffeomorphic to $\cBR_{T_{k}} \times [0,1)^k$. The corner blow-up replaces $\cBR_{T_{k}} \times [0,1)^k$ with $\cBR_{T_{k}} \times \wt{\Delta}^{k-1} \times [0,1).$ Explicitly, in the case of the tree $T_2$ we have 

\begin{figure}[h]
    \centering
    
    \def\svgscale{1}
\begingroup%
  \makeatletter%
  \providecommand\color[2][]{%
    \errmessage{(Inkscape) Color is used for the text in Inkscape, but the package 'color.sty' is not loaded}%
    \renewcommand\color[2][]{}%
  }%
  \providecommand\transparent[1]{%
    \errmessage{(Inkscape) Transparency is used (non-zero) for the text in Inkscape, but the package 'transparent.sty' is not loaded}%
    \renewcommand\transparent[1]{}%
  }%
  \providecommand\rotatebox[2]{#2}%
  \newcommand*\fsize{\dimexpr\f@size pt\relax}%
  \newcommand*\lineheight[1]{\fontsize{\fsize}{#1\fsize}\selectfont}%
  \ifx\svgwidth\undefined%
    \setlength{\unitlength}{450bp}%
    \ifx\svgscale\undefined%
      \relax%
    \else%
      \setlength{\unitlength}{\unitlength * \real{\svgscale}}%
    \fi%
  \else%
    \setlength{\unitlength}{\svgwidth}%
  \fi%
  \global\let\svgwidth\undefined%
  \global\let\svgscale\undefined%
  \makeatother%
  \begin{picture}(1,0.5)%
    \lineheight{1}%
    \setlength\tabcolsep{0pt}%
    \put(0,0){\includegraphics[width=\unitlength,page=1]{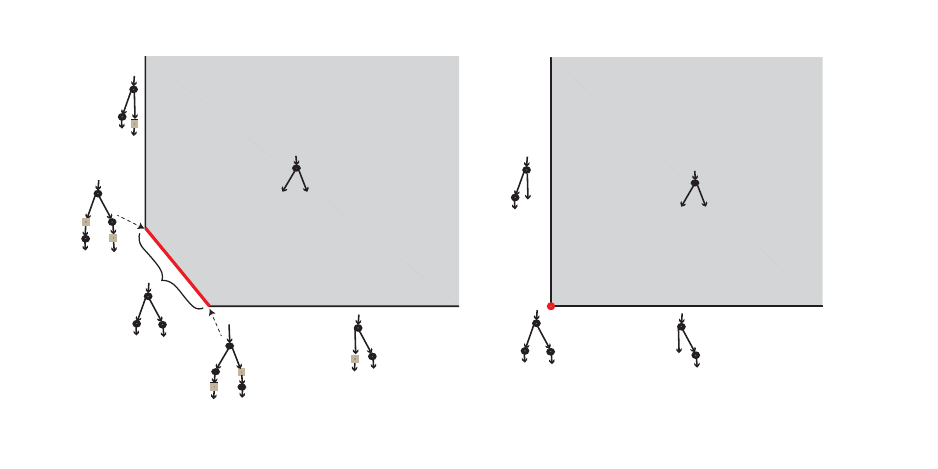}}%
    \put(0.71338582,0.0448819){\makebox(0,0)[lt]{\lineheight{1.25}\smash{\begin{tabular}[t]{l}$\bR_+^2$\end{tabular}}}}%
    \put(0.26771652,0.04330709){\makebox(0,0)[lt]{\lineheight{1.25}\smash{\begin{tabular}[t]{l}$[\bR_+^2;\cR]$\end{tabular}}}}%
  \end{picture}%
\endgroup%

    \vspace*{-1cm}
    \caption{Corner blowup of $\bR_+^2$ corresponding to the tree with 3 vertices.}
    \label{fig:blowup2}
\end{figure}

\end{ex}

 \subsubsection{Base space for disconnected domains}\label{subsec:base-for-disconnected} We now construct the base space for leveled buildings with disconnected domains, where each component has a unique incoming vertex. While the base space for disconnected Pardon buildings is simply given by the product of base space for (connected) Pardon buildings, the construction of $\cBS$ is more subtle because the level structure is defined for the whole forest, not each tree separately. The first lemma reduces this to the connected setting.

\begin{lemma}\label{lem:from-disconnected-to-connected}
    There exists a functor $\Phi$ from the category $\scS^\bullet$ of forests with $k$ components, each with one incoming edge, to the full subcategory $\scS_{k,\gamma_0}\sub \scS$ of decorated trees with one incoming edge labeled by a fixed Reeb orbit $\gamma_0$, so that the root vertex has energy zero and $k$ outgoing edges.
\end{lemma}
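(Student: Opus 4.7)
The plan is to construct $\Phi$ by adjoining a single trivial root vertex to any forest in $\scS^\bullet$. Given $F \in \scS^\bullet$ with components $F_1,\dots,F_k$, let $e_i^+$ denote the unique incoming exterior edge of $F_i$, labeled by the Reeb orbit $\gamma_i$. Define $\Phi(F)$ to be the decorated tree with vertex set $V(F) \sqcup \{v_0\}$ and $\beta_{v_0} = 0 \in H_2(Y,\{\gamma_0,\gamma_1,\dots,\gamma_k\})$, where each former exterior edge $e_i^+$ is reinterpreted as an interior edge from $v_0$ to the root of $F_i$ (still labeled $\gamma_i$), and a new exterior incoming edge labeled $\gamma_0$ is attached to $v_0$. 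By construction $v_0$ has $k$ outgoing edges, one incoming exterior edge labeled $\gamma_0$, and carries the zero homology class, so $\Phi(F) \in \scS_{k,\gamma_0}$.

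For morphisms, given a contraction $p \cl F \to F'$ in $\scS^\bullet$, the key observation is that edge contractions neither merge connected components nor split them, so $p$ induces a bijection on connected components and restricts to contractions $p_i \cl F_i \to F'_{\sigma(i)}$ for some permutation $\sigma$; absorbing $\sigma$ into the natural (unordered) identification of components of $F'$ with outgoing edges of $v_0'$, we define $\Phi(p) \cl \Phi(F) \to \Phi(F')$ to act as $p$ on $V(F)$ and to send $v_0 \mapsto v_0'$. The induced map on edges contracts exactly the interior edges contracted by $p$, sends each newly-interior edge $e_i^+$ to ${e'}^+_{\sigma(i)}$, and sends the new exterior edge at $v_0$ to the new exterior edge at $v_0'$.

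It remains to verify the two conditions defining a morphism in $\scS$. Reeb orbit labels on non-contracted edges are preserved: those inherited from $F$ are preserved because $p$ is a morphism, the new exterior edges at $v_0$ and $v_0'$ share the label $\gamma_0$, and the interior edges from $v_0$, $v_0'$ to the roots of the components inherit the labels $\gamma_i$ from the original exterior edges. The homology additivity $\beta_{v'} = \#_{\Phi(p)(v) = v'} \beta_v$ holds trivially at $v' = v_0'$ (both sides are zero, since $v_0$ is the unique preimage and carries zero class), and at every other vertex it reduces to the corresponding identity for $p$. Functoriality $\Phi(\ide_F) = \ide_{\Phi(F)}$ and $\Phi(q \g p) = \Phi(q) \g \Phi(p)$ is immediate from the local nature of the construction. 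There is no real obstacle here; the only point requiring any care is that the extra root vertex must carry the zero homology class, so that the additivity condition imposes no constraint at the root.
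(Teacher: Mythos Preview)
Your proof is correct and follows essentially the same approach as the paper: attach a single ``ghost'' root vertex $v_0$ with zero homology class and one incoming edge labeled $\gamma_0$, turning the incoming exterior edges of the forest components into interior edges from $v_0$. The paper phrases this as gluing a fixed corolla $T_0$ onto the forest and dispatches the verification as ``by inspection,'' whereas you spell out the morphism compatibility in detail; the paper additionally remarks that $\Phi$ is an isomorphism, which the lemma as stated does not require.
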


\begin{proof} The category $\scS^\bullet$ splits into a disjoint union of full subcategories, indexed by the Reeb orbits at the incoming exterior edges. Fix one such subcategory $\scS^\bullet_{\lc{\gamma}}$, labeled by Reeb orbits $\gamma_1,\dots,\gamma_k$. Let $T_0$ be the decorated corolla with zero energy, one incoming exterior edge, labeled by an arbitrary Reeb orbit $\gamma_0$, and $k$ outgoing exterior edges, labeled by $\gamma_1,\dots,\gamma_k$. Then, the functor 
\begin{gather*}
     \scS^\bullet_{\lc{\gamma}}\;\to\; \scS_{k,\gamma_0}\\
     T = T_ 1\sqcup \dots \sqcup T_k \;\mapsto\; T_0 \#_{i} T_i
\end{gather*}
is well-defined and an isomorphism by inspection.
\end{proof}

Intuitively, one can think of the isomorphism by introducing a \emph{ghost vertex} to which all incoming exterior edges of the components of the forest are connected. We write $e_j$ for the edge connecting $T_j$ to the ghost vertex in the construction of Lemma~\ref{lem:from-disconnected-to-connected}.
The functor $\Phi$ can be upgraded to a functor 
\begin{equation}\label{eq:upgraded-functor}
    P \cl \scS^\bullet_k\times (\{1,\dots,k\},\super)\to \scS
\end{equation}
by letting $P(T,I)$ be the tree obtained from $\Phi(T)$ by contracting all edges $e_j$ with $j \notin I$.\\

To make the blow-up construction concrete, fix Reeb orbits $\Gamma^+$ and $\Gamma^-$ and let $\Lambda\cl \Gamma^-\to \Gamma^+$ be a function. It induces a partition of $\Gamma^-$ into the sets $\Lambda_\gamma := \Lambda\inv(\{\gamma\})$. Fix for each $\gamma\in \Gamma^+$ a corolla $T_\gamma$ with a unique positive edge labeled by $\gamma$ and negative edges labeled by $\Lambda_\gamma$. Suppose the objects in the category $\scS_{/T_\gamma}$ come equipped with a function $d_\gamma \cl V(T'_\gamma)\to \bZ_{> 0}$ that is additive under contractions of edges. Set $d_\gamma \coloneqq d_\gamma(T_\gamma)$. Then, the product 
$$\cBR_\Lambda \coloneq \p{\gamma\in\Gamma^+}{\cBR_{\gamma,\Lambda_\gamma}(d_\gamma)}$$ 
is going to be the base space for the construction of the global Kuranishi chart for Pardon buildings with disconnected domains stratified by forests that contract onto the forest $\sqcup_\gamma T_\gamma$. Define the category $\wt\scS$ of leveled decorated forests similarly.

\begin{construction}\label{con:disconnected-leveled-base} Fix sequences $\Gamma^\pm$ of Reeb orbits and a partition $\Lambda\cl \Gamma^+\to \Gamma^-$. Then, we define the base space $\cBS_{\Lambda}$ as follows. Equip $[0,1)^{|\Gamma^+|}\times \cBR_\Lambda$ with the product stratification. Let $U \sub \cBR_\Lambda$ be a neighborhood of the strata of codimension at least $2$, so that each hyperplane in $U$ is embedded. The strata of $U$ are indexed by a decorated forest $T$ and a subset $I\sub \{1,\dots,k\}$, where $k = |\Gamma^+|$. Then, the monoid $\sigma_{T,I}$ associated to the stratum $S_{T,I}$ of $U$ is given by 
$$\sigma_{T,I} \coloneqq \bigoplus_{e\in E(T)} \bN e \;\oplus\;\bigoplus\limits_{i \in I}\,\bN e_i \;=\bigoplus_{e\in E(P(T,I))} \bN e.$$
The second equality holds by the definition of $P(T,I)$. We may now apply the algorithm of Proposition~\ref{prop:maximally-leveled-tree} along with Definition~\ref{de:leveled-monoid} for $\sigma_{T,I} = \sigma_{P(T,I)}$ to obtain a refinement $\wt\cR$ of the monoidal complex of $[0,1)^{|\Gamma^+|}\times U$. Then, we define $\wt U$ to be the pushout
\begin{equation*}\label{eq:disconnected-refinement}\begin{tikzcd}
			(U\sm  S_2(\cBR_{\lc{d}}))\times [0,1)^{|\Gamma^+|} \arrow[r,""] \arrow[d,""]&{[}U;\wt\cR{]} \arrow[d,""]\\ 
            {[0,1)^{|\Gamma^+|}} \times \cBR_\Lambda\arrow[r,""] & \wt U\end{tikzcd}\end{equation*}
           where the upper horizontal map is the inverse of the blow-down map, restricted to the complement of the blown-up locus. This admits a canonical smooth structure. Moreover, $\wt U$ is equipped with a canonical smooth map $\wt\beta\cl \wt U \to [0,1)^{|\Gamma^+|}$. We define the \emph{base space of disconnected buildings}
       \begin{equation}\label{eq:disconnected-base}
           \cBS_{\Lambda} \coloneqq \wt\beta\inv(\{0\}^{|\Gamma^+|})
       \end{equation}
       to be the preimage of the stratum of highest codimension in $[0,1)^{|\Gamma^+|}$.
\end{construction}

\begin{lemma}\label{lem:disconnected-base-well-defined}
    $\cBS_{\Lambda}$ is a smooth manifold with corners of dimension $\dim (\cBR_{\Lambda})+|\Gamma^+|-1$. 
\end{lemma}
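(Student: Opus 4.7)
The strategy is to apply Theorem~\ref{thm:corner-blowup} to the refinement $\wt\cR$, verify the pushout $\wt U$ inherits a smooth corner structure, and then use the local model of the generalized blow-up to identify the preimage $\wt\beta\inv(\{0\}^{|\Gamma^+|})$ as a smooth submanifold with corners of the correct dimension.

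First, I would verify that $\wt\cR$ is a smooth refinement of the monoidal complex $\cP_{[0,1)^{|\Gamma^+|}\times U}$. Since every maximal monoid in $\wt\cR$ is of the form $\sigma_{P(T,I),\ell}$ for a maximally leveled function $\ell$ on $P(T,I)$, smoothness reduces to the same argument used for $\cR$: by Definition~\ref{de:leveled-monoid}, the generators $e'_1,\dots,e'_n$ are upper-triangular in $e_1,\dots,e_n$ and hence $\bZ$-linearly independent. Compatibility of the chosen refinements across faces follows from the inductive construction of Proposition~\ref{prop:maximally-leveled-cob-tree} applied to $P(T,I)$, together with the observation that the functor $P$ of \eqref{eq:upgraded-functor} is compatible with contractions. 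By Theorem~\ref{thm:corner-blowup}, $[U;\wt\cR]$ is then a smooth manifold with corners of the same dimension as $[0,1)^{|\Gamma^+|}\times U$, namely $|\Gamma^+|+\dim\cBR_\Lambda$. The pushout $\wt U$ is well-defined and inherits a smooth corner structure because the attaching map is, by construction, the inverse of the blow-down map $[U;\wt\cR]\to U$ away from the blown-up locus, which is a diffeomorphism there.

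Next, I would analyze $\wt\beta\inv(\{0\}^{|\Gamma^+|})$. Observe that the projection $[0,1)^{|\Gamma^+|}\times\cBR_\Lambda\to [0,1)^{|\Gamma^+|}$ is a submersion, and the map $\wt\beta$ factors through this projection and the blow-down. Using the local model of Remark~\ref{rem:neighbourhood-of-exceptional-boundary} and the explicit example preceding Construction~\ref{con:disconnected-leveled-base}, near a point of $\cBR_\Lambda$ lying in the interior of a stratum $\cBR_{T}$ the refinement $\wt\cR$ restricts to the refinement of $[0,1)^{|\Gamma^+|}\times[0,1)^{|E^{\text{int}}(T)|}$ determined by the maximally leveled functions on $P(T,\{1,\dots,|\Gamma^+|\})$. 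Under the blow-up, the deepest corner stratum $\{0\}^{|\Gamma^+|}\times\cBR_T$ is replaced by a product $\cBR_T\times F_T$, where $F_T$ is the face of $[U;\wt\cR]$ corresponding to those monoids in $\wt\cR$ that contain all generators $e_1,\dots,e_{|\Gamma^+|}$. The chain of maximally leveled functions produced by Proposition~\ref{prop:maximally-leveled-cob-tree} identifies $F_T$ with a maximally blown-up simplex of dimension $|\Gamma^+|-1+\dim(\cBR_T)-\dim(\cBR_\Lambda)$, and these local pieces glue to a smooth submanifold with corners $\cBS_\Lambda\hkra\wt U$.

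For the dimension count, observe that a generic point of $\cBS_\Lambda$ lies over a smooth point of $\cBR_\Lambda$, where the local picture is simply $\cBR_\Lambda\times\wt\Delta^{|\Gamma^+|-1}\times[0,1)$ mapping to $[0,1)^{|\Gamma^+|}$ via the radial blow-down on the second factor; the fiber over the origin is $\cBR_\Lambda\times\wt\Delta^{|\Gamma^+|-1}$, which has dimension $\dim\cBR_\Lambda+|\Gamma^+|-1$, as claimed.

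The main obstacle is the compatible gluing across different forest strata of $\cBR_\Lambda$: one needs to check that the combinatorial choice of maximally leveled functions encoded in $\wt\cR$ is compatible under the contraction maps $\scS^\bullet_{/T}\to\scS_{/P(T,I)}$, so that the local models $\cBR_T\times F_T$ assemble into a globally defined smooth submanifold with corners. This is essentially a bookkeeping exercise using the functoriality of $P$ from Lemma~\ref{lem:from-disconnected-to-connected} and the inductive construction in the proof of Lemma~\ref{lem:increasing-maximality}, which guarantees that each maximally leveled refinement of $\sigma_{P(T,I)}$ restricts, upon contracting an edge, to the corresponding refinement of $\sigma_{P(T',I)}$.
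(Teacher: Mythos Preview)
Your approach is correct in spirit but takes a more laborious route than the paper's proof. You work locally: verify $\wt\cR$ is smooth, identify the preimage of the origin chart-by-chart using the maximally blown-up simplex model, and then argue that the local pieces glue across strata. This works, but the gluing step you flag as ``the main obstacle'' is genuinely the most delicate part, and your treatment of it is a bit sketchy (you would need to verify that the preimage of a deep corner under a generalized blow-down is a submanifold with corners, not just a union of local models).

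The paper sidesteps this entirely by a factorization trick. Using the universal property of the generalized blow-up (or equivalently, that $\wt\cR$ is realized by iterated star subdivisions, \cite[Corollary~7.3]{KM15}), it splits the blow-up into two stages: first blow up $[0,1)^{|\Gamma^+|}$ alone to get a manifold $Y$, then blow up $Y\times\cBR_\Lambda$ according to the remaining refinement. The key observation is that after the first stage, the preimage $Z\subset Y$ of $\{0\}^{|\Gamma^+|}$ is a \emph{codimension-one face} of $Y$---hence automatically a manifold with corners of dimension $|\Gamma^+|-1$, with no gluing to check. Then $\cBS_\Lambda$ is identified with the generalized blow-up of $Z\times\cBR_\Lambda$, which is a manifold with corners of the right dimension directly by Theorem~\ref{thm:corner-blowup}. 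This avoids any local-to-global patching argument. Your approach buys a more explicit picture of what $\cBS_\Lambda$ looks like near each stratum; the paper's buys a two-line proof.
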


\begin{proof}
By the universal property of the generalized blow-up, we can do the blow-up iteratively. Another way to see this is that the refinement we obtain can be realized as a sequence of iterated star subdivisions; then it follows from \cite[Corollary 7.3]{KM15}. By Construction~\ref{con:disconnected-leveled-base}, the first step is to blow-up $[0,1)^{|\Gamma^+|}$ to obtain a space $Y$, while the second step is to blow-up $Y\times\cBR_\Lambda$ according to the refinement described above. Using the explicit description in \cite[\textsection3]{KM15}, we see that the preimage $Z\sub Y$ of $\{0\}^{|\Gamma^+|}\sub [0,1)^{|\Gamma^+|}$ under the blow-down map is a codimension-$1$ face of $Y$ and thus a manifold with corners. Then, $\cBS_{\lc{d}}$ is the preimage of $Z$ under the generalized blow-up, or equivalently, the blow-up of $Z\times \cBR_\Lambda$ according to the given refinements. Thus, it is a smooth manifold with corners by \cite[Theorem~A]{KM15} and has dimension $\dim (\cBR_\Lambda)+|\Gamma^+|-1$.
\end{proof}

\subsection{Families of buildings}
Let $(Y,\lambda)$ be a nondegenerate closed contact manifold, and fix a $\lambda$-adapted almost complex structure $J$ on the symplectization $\wh Y$ of $Y$. We now explain how to obtain \emph{framings} of our punctured domain curves. This will lead to the definition of the (infinite-dimensional) family of buildings, Definition~\ref{de:family-of-tree}, out of which we cut our thickening by a perturbed Cauchy--Riemann equation. 

\subsubsection{Framings of buildings in symplectizations}\label{sssec:framings_of_buildings}  

\begin{definition}\label{de:approximation-in-symplectisation} 
Let $\cF$ be a finite set of Reeb orbits. We call $\wt\lambda\in \Omega^1(Y)$ an \emph{$\cF$-integral approximation} of $\lambda$ if 
	\begin{enumerate}[label = (\roman*),leftmargin=25pt,ref=\roman*]
		\item\label{i:contact} $\wt\lambda \wedge (d\wt\lambda)^n > 0$, 
		\item\label{i:reeb-orbits} $\int_{S^1}\wt\gamma^*\wt\lambda \in \bN$ for any $\gamma\in \cF$,
		\item\label{i:positive} For any subsets $\Gamma^+,\Gamma^-\sub \cF$, we have
        \begin{equation}
            \s{\gamma\in \Gamma^+}\cA_\lambda(\gamma)-\s{\gamma\in \Gamma^-}\cA_\lambda(\gamma) > 0 \qquad \rimp \qquad \s{\gamma\in \Gamma^+}\cA_{\wt\lambda}(\gamma)-\s{\gamma\in \Gamma^-}\cA_{\wt\lambda}(\gamma) > 0
        \end{equation}
	\end{enumerate}
\end{definition}

\begin{lemma}\label{lem:contact-approximation} There exists an $\cF$-integral approximation of $\lambda$ for any finite set $\cF\sub \cP(\lambda)$.
\end{lemma}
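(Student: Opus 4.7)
The plan is to construct $\wt\lambda$ as a small perturbation of a large rescaling $N\lambda$, with the perturbation supported in small tubular neighborhoods of the underlying simple Reeb orbits. The idea is that rescaling $\lambda$ by a large integer $N$ multiplies every action by $N$ but preserves the contact condition and the Reeb orbit images; a bounded additive correction then suffices to shift each rescaled action to the nearest positive integer without destroying the contact condition.

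First, I would let $\cF^\ast := \{\cc\gamma : \gamma\in \cF\}$ be the finite set of underlying simple Reeb orbits. By non-degeneracy of $\lambda$, these are pairwise disjoint embedded circles, so I can pick disjoint tubular neighborhoods $U_{\cc\gamma} \cong S^1 \times D^{2n-2}$ in which $\cc\gamma$ is identified with $S^1 \times \{0\}$. Inside each such neighborhood I would fix, once and for all, a $1$-form $\alpha_{\cc\gamma} = \rho(x)\, dt$, where $\rho$ is a bump function on the disk factor equal to $1$ near the origin, normalized so that $\int_{\cc\gamma}\alpha_{\cc\gamma} = 1$. Crucially, these auxiliary forms have $C^1$-norm bounded by a constant independent of any further choices.

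For each positive integer $N$, the candidate form would be
\[
\wt\lambda_N \;:=\; N\lambda \;+\; \sum_{\cc\gamma\in \cF^\ast} c_{\cc\gamma,N}\,\alpha_{\cc\gamma},
\]
where $c_{\cc\gamma,N}\in [-\tfrac12,\tfrac12]$ is chosen so that $N\cA_\lambda(\cc\gamma) + c_{\cc\gamma,N}$ is the nearest integer to $N\cA_\lambda(\cc\gamma)$. Any $\gamma\in \cF$ is the $m$-fold cover of some $\cc\gamma\in \cF^\ast$, so
\[
\int_{S^1}\wt\gamma^*\wt\lambda_N \;=\; m\int_{\cc\gamma}\wt\lambda_N \;=\; m\bigl(N\cA_\lambda(\cc\gamma) + c_{\cc\gamma,N}\bigr) \;\in\; \bN,
\]
giving condition~(ii) for free. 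For condition~(i), I would expand $\wt\lambda_N \wedge (d\wt\lambda_N)^n$: the leading term is $N^{n+1}\lambda \wedge (d\lambda)^n$ and the remaining terms are of order at most $N^n$, because the perturbation $\eta_N := \sum c_{\cc\gamma,N}\alpha_{\cc\gamma}$ has $C^1$-norm bounded uniformly in $N$. Since $\lambda \wedge (d\lambda)^n$ is positive and bounded below on the closed manifold $Y$, the full wedge product is positive for $N$ large. Condition~(iii) follows by the same dominance: for each of the finitely many pairs $(\Gamma^+,\Gamma^-)$ with $\sum_{\Gamma^+}\cA_\lambda - \sum_{\Gamma^-}\cA_\lambda > 0$, the corresponding difference under $\wt\lambda_N$ equals $N$ times the original difference plus a term bounded in terms of $|\cF|$ and the multiplicities, which remains positive for $N$ sufficiently large.

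The argument has no serious obstacle: the rescaling trick decouples the integrality requirement, which can be met by an $N$-independent bounded perturbation, from the openness of conditions (i) and (iii), which then survive as soon as $N$ is chosen large enough to dominate this bounded perturbation. The only bookkeeping point is to pick one common $N$ that works for all three conditions simultaneously, which is immediate from the finiteness of $\cF$ and hence of all the quantities that must be controlled.
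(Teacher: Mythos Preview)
Your proof is correct and follows essentially the same strategy as the paper: a local modification near the underlying simple orbits to adjust the actions, combined with a global integer rescaling. The only cosmetic difference is the order of operations---the paper first perturbs $\lambda$ slightly to make the actions rational and then multiplies by a large integer to clear denominators, whereas you first rescale by $N$ and then add a uniformly bounded correction to hit the nearest integer; both versions exploit that conditions~(i) and~(iii) are open and dominated by the rescaling.
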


\begin{proof} Due to the finiteness of $\cF$ and compactness of $S^1$, all conditions are open save for the requirement that $\int_{S^1}\wt\gamma^*\wt\lambda\in \bN$. Thus there exists $\lambda'$ satisfying the other conditions and with $\int_{S^1}\wt\gamma^*\lambda'\in \bQ$ for any $\gamma\in \cF$. We can ensure this by modifying $\lambda$ separately in disjoint neighborhoods of all the orbits $\gamma \in \cF$.  Multiplying $\lambda'$ with a sufficiently high positive integer, we obtain the desired $\cF$-integral approximation $\wt\lambda$ of $\lambda$.
\end{proof}

Given a decorated tree $T$ we can thus fix a $\cP(T)$-integral approximation $\wt\lambda$ of $\lambda$, where $\cP(T)$ was defined in Equation~\eqref{eq:tree-reeb-orbits}, and an integer $p \geq 3$. Given a $J$-holomorphic building $(u,C,z_*,m_*)$ with underlying dual graph $T_u$ that contracts to $T$, we will now construct the line bundle that will serve as our ``reference line bundle" later on. 
For $v \in V(T_u)$, set 
\begin{equation}\label{eq:linebundle-construction}
	L_{u,v} \coloneqq \cO_{C_v}\lbr{\s{e\in E_v^+}{\int_{S^1}\gamma_e^*\wt\lambda \; z_{v,e}} - \s{e\in E_v^-}{\int_{S^1}\gamma_e^*\wt\lambda\; z_{v,e}}}^{\otimes p}.
\end{equation}
By Stokes' Theorem, 
\begin{equation}\label{eq:positive-degree}
\deg(\omega_{C_v}(D_v)\otimes L_v) = -2 + |D_v|+ p\int_{\dot{C}_v}u_v^*d\wt \lambda\;> 0 
\end{equation}
\noindent In particular, we obtain a very ample holomorphic line bundle 
\begin{equation}
    \wt{\fL}_u := \omega_{\wt C}(D)\,\otimes \wt{L}_u
\end{equation}
on the normalization $\wt C $ of $C$, where $D$ is the divisor of special points on $\wt C$. Since $C$ has genus $0$, the bundle $\wt L_u$ descends to a unique (up to holomorphic isomorphism) holomorphic line bundle $\fL_u$ on $C$. A basis of the global holomorphic sections of $\fL_u$ defines an automorphism-free stable map $\iota \cl C\to \bP^d$ so that $H^1(C,\iota^*\cO(1)) = 0$.
In particular, if the positive/negative exterior edges of $T_u$ are labeled by $\Gamma^\pm$, then $[\iota]\in \cB_{\Gamma^+,\Gamma^-}(d)$, as defined in \textsection\ref{subsec:base-space}.

Fix now sequences $\Gamma^\pm$ of Reeb orbits and a relative homology class $\beta\in H_2(Y,\Gamma^+\sqcup\Gamma^-)$. Let $T$ be the corolla with degree $\beta$ and positive/negative exterior edges labeled by $\Gamma^\pm$. Fix $L > \cA_\lambda(\Gamma^+),\cA_\lambda(\Gamma^-)$. Then, choose a $\cP(T)$-integral approximation $\wt\lambda$ and let $d$ be the integer of Definition~\ref{de:auxiliary-datum}. Let $\cBR = \cBR_{\Gamma^+\sqcup \Gamma^-}(d)$ be the space defined in \textsection\ref{subsec:base-space}. Recall that it is a principal torus bundle over the real-oriented blow-up of $\cB_{0,\Gamma^+\sqcup \Gamma^-}(d)$. Let $\cC\to \cBR$ be the pullback of the universal family of $\cB_{0,\Gamma^+\sqcup \Gamma^-}(d)$.
Due to the action bound, we may fix $\kappa_L > 0$ sufficiently small so that non-trivial cylinders have $\lambda$-energy at least $2\kappa_L$. 

\begin{definition}\label{de:family-of-tree}
We define $\cZ = \cZ_{\wt\lambda}(T)$ be the space of tuples $(\varphi,u)$ where 
\begin{enumerate}[label=\roman*),leftmargin=20pt,ref=\roman*]
    \item $\varphi\in \cBR$ lies in the stratum associated to a tree $T_\varphi$ admitting a contraction $T_\varphi\to T$,
    \item $u = ([u_v])_{v\in V(T_\varphi)}$ is a collection of equivalence classes of smooth maps $u_v \cl \dot{C}_v\to \wh Y$ up to translation, where
	\begin{itemize}[leftmargin=15pt]
    \setlength\itemsep{2.5pt}
	\item $u_v$ is $J$-holomorphic near the punctures of $\dot{C}_v$,
	\item if $x \in C_v$ is a positive/negative puncture (mapping to a node) of type $1$, then $u_v$ is positively/negatively asymptotic to the Reeb orbit $\gamma_e$, where $e\in E(T')$ is the associated edge;
    \item the matching isomorphism of $\varphi$ at the edge $e =(v,v')$ intertwines $(\wh u_v)_{z_{v,e}}$ and $(\wh u_{v'})_{z_{v',e}}$,
    \item $\int_{C_v}{u_v}^*_Yd\lambda \geq 0$,
	\item if $C_v$ is unstable, then $\int_{C_v}u^*_Yd\lambda \geq \kappa_L$.
	\end{itemize}
    \end{enumerate}
\end{definition}

\noindent We assume $\cBR$ and $\cC$ are equipped with $G$-invariant metrics $d_\cB$ and $d_\cC$ respectively, the choice of which is irrelevant. Define 
\begin{equation}\label{eq:curlyCnbd}
    \cC_{\ge \epsilon} \coloneqq \{ \zeta \in \cC\mid d_\cC(\zeta,\text{Crit}(\pi)) \ge\epsilon\}
\end{equation}
for $\epsilon > 0$. 
We equip $\cZ$ with the topology generated by the following $\epsilon$-neighborhoods for $\epsilon > 0$. Given $(\varphi,u)\in \cZ$, define $\cN_\epsilon(\varphi,u)$ to be the subset of points $(\varphi',u')$ such that
\begin{itemize}[leftmargin=20pt]
\setlength\itemsep{1pt}
	\item $d_\cB(\varphi,\varphi') < \epsilon$;
	\item the (orbits of the) graphs satisfy 
    \begin{equation*}
    d_{\text{H}}\lbr{\bR^{V(T_\varphi)}\cdot\graph(\varphi,u)|_{\cC_{\ge\epsilon}},\bR^{V(T_{\varphi'})}\cdot\graph(\varphi',u')|_{\cC_{\ge\epsilon}}} < \epsilon
	\end{equation*} 
	in the Gromov-Hausdorff metric, where $T_\varphi$ is the dual graph of the fiber $\cC_\varphi$ and we choose any representatives of the classes $[u_v]$ and $[u'_{v'}]$; 
	\item for $e \in E(T_\varphi)$ with associated Reeb orbit $\gamma_e$ and corresponding node $x_e \in \cC_\varphi$ we have $$d_Y(u'_Y(z),\gamma_e) < \epsilon$$ 
	for any $z \in \cC_{\varphi'}$ with $d_\cC(z,x_e) \leq \epsilon$.
\end{itemize}

\subsubsection{Determining unitary framings}\label{subsec:domain-metrics} 
Abusing notation, we also denote by $\cC \to \cZ$ the pullback of the universal family $\cC\to \cB_{\Gamma^+\sqcup \Gamma^-}(d)$. The $\cG$-action on $\cBR$ lifts to a $\cG$-action on $\cZ$. In contrast to the action on $\cBR$, which is not proper, the additional data of the building means that the lifted action is proper, which will be crucial to reduce to the action of the compact group $G$ later on.

\begin{lemma}\label{lem:palais-proper} The $\cG$-action on $\cZ$, given by 
	$$g \cdot (s,u) = (g\cdot s,u\g g\inv),$$
	is proper in the sense of Palais, \cite{Pal61}.
\end{lemma}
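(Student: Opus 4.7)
The plan is to verify the sequential characterization of Palais properness: take sequences $(g_n, z_n) \in \cG \times \cZ$ with $z_n = (s_n, u_n) \to z = (s, u)$ and $g_n \cdot z_n \to z' = (s', u')$, and extract a subsequence along which $g_n$ converges in $\cG = \PGL_{d+1}(\bC)$. Since $\cG$ is a Lie group, this reduces to showing that $\{g_n\}$ stays in a compact subset. Unraveling the topology on $\cZ$ yields two pieces of data: $\varphi_n \to \varphi$ and $g_n \g \varphi_n \to \varphi'$ in $\cBR$, together with Gromov--Hausdorff convergence, modulo the $\bR^{V(T)}$-translation action, of the graphs of $u_n$ and of $u_n \g g_n^{-1}$ in $\cC \times \wh Y$ over $\cC_{\ge \epsilon}$.

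The heart of the argument is a point-tracking step in $\bP^d$. Fix an irreducible component $C_v$ of the limit domain $C = C_s$. By the energy bound $\int_{C_v} u_v^* d\lambda \geq \kappa_L$ when $C_v$ is unstable, or else by the stability assumption on the underlying tree, the map $u_v$ is non-constant; by Micallef--White simplicity (applied at interior points away from punctures), $u_v$ is then a topological embedding on the complement of a discrete subset. Choose $d+2$ points $p_1,\dots,p_{d+2}$ in such an injective locus of $C_v$ so that $\varphi(p_1),\dots,\varphi(p_{d+2}) \in \bP^d$ are in general linear position. For large $n$, lift them to nearby points $p_i^n$ in $C_{s_n}$, so that $\varphi_n(p_i^n) \to \varphi(p_i)$ and, up to the common $\bR$-translation ambiguity on $C_v$, $u_n(p_i^n) \to u_v(p_i)$. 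The convergence $g_n \cdot z_n \to z'$ together with the injectivity of $u'$ on the corresponding component of $C_{s'}$ identifies the limits of $g_n(\varphi_n(p_i^n)) \in \bP^d$ as the unique points of $\varphi'(C'_{v'})$ mapping to $u_v(p_i)$ under $u'$. Since $d+2$ points in general position determine a $\PGL_{d+1}(\bC)$-element up to a compact family, this forces $\{g_n\}$ to be bounded; after passing to a subsequence, continuity of the $\cG$-action on $\cZ$ produces a limit $g_\infty$ with $g_\infty \cdot z = z'$.

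The main obstacle lies in the boundary strata of $\cZ$, where the fibers $C_{s_n}$ are nodal and the convergence is controlled by the stratification of $\cBR$ together with the additional data carried by the real and generalized blow-ups, namely the asymptotic markers and matching isomorphisms. I would apply the tracking step on each irreducible component of the limit building separately and check that the resulting local bounds on $g_n$ are mutually compatible: because $\cG$ acts by a single global element of $\PGL_{d+1}(\bC)$ on the embedded curve $\varphi_n(C_{s_n})$, the matching isomorphisms at each node, which are themselves part of the converging data and live in compact $S^1$-bundles, automatically enforce this compatibility. In the converse direction, once $\{g_n\}$ is bounded, the asymptotic markers and matching isomorphisms converge along the chosen subsequence, giving the required convergence of $g_n \cdot z_n$ in $\cZ$ and completing the verification.
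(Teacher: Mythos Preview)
Your strategy has a real gap at the Micallef--White step. The elements of $\cZ$ are \emph{not} $J$-holomorphic curves: Definition~\ref{de:family-of-tree} only requires $u_v$ to be $J$-holomorphic near the punctures, with no constraint on the interior beyond smoothness and the energy inequality. So Micallef--White gives you nothing about local injectivity of $u_v$ away from the punctures, and your point-tracking step --- identifying the limits of $g_n(\varphi_n(p_i^n))$ via the unique preimage under $u'$ --- has no foundation. There is a second, smaller issue: you pick $p_1,\dots,p_{d+2}$ in a \emph{single} component $C_v$, but $\varphi|_{C_v}$ has degree $d_v$, so its image spans only a $\bP^{d_v}\subset\bP^d$; if $C$ is reducible you cannot find $d+2$ points in general linear position this way, and you would have to spread the points over several components and keep track of the separate $\bR$-translations.

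The paper avoids both difficulties by not tracking points at all. It observes that the $Y$-projection $u_Y$ extends continuously to the real blow-up $\wt\cC_s$ of the domain at the punctures and nodes (using only the asymptotic behaviour near punctures, which \emph{is} controlled), and that this extension $\bar u_Y\colon \wt\cC_s\to Y$ has finite Lipschitz number with respect to a smooth metric on the compact manifold-with-boundary $\wt\cC_s$. This reduces the problem to the compact-target situation, where the Palais-properness argument of \cite[Lemma~4.13]{AMS23} applies directly. The moral is that the $\bR$-factor and the noncompactness of the punctured domain are both red herrings once you pass to $\bar u_Y$ on $\wt\cC_s$; your approach fights these features head-on and, to succeed, would need a substitute for injectivity that works for arbitrary smooth maps.
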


\begin{proof} Let $\wt \cC$ be the real blowup at the images of the sections and the nodes of the fibers of $\cC\to \cV$. Fix a smooth metric on $\wt\cC$. This restricts to a smooth metric on $\cC\inn$. Given now $(s,u)\in \cZ$, we see that the projection $u_Y$ of $u$ to $Y$ extends to a continuous function $\bar{u}_Y\cl \wt\cC_s \to Y$. Given any Riemannian metric on $Y$, we obtain an inequality of Lipschitz numbers 
	$$L(u_Y)\leq L(\bar{u}_Y).$$
	The Lipschitz number of $\bar{u}_Y$ is finite since its differential is bounded away from the ``puncture circles" and we know its behavior near the ``puncture circles". Thus, we may use the same argument as in \cite[Lemma 4.13]{AMS23} to conclude.
 \end{proof}

 \begin{lemma}\label{lem:zero-locus-right}
     Writing $\cZ_{\delbar}\sub \cZ$ for the closed subspace of $J$-holomorphic building, the quotient map 
    \begin{equation}\label{eq:footprint}
        \psi \cl \cZ_{\delbar}/\cG\to \Mbar^{\, J}(T)
    \end{equation}
    is an isomorphism of orbi-spaces.
 \end{lemma}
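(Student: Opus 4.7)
The plan is to construct a $\cG$-equivariant forgetful map $\phi \cl \cZ_\delbar \to \Mbar^{\,J}(T)$ which descends to $\psi$, and then verify it is a bijection on points, matches isotropy groups, and is a homeomorphism. Define $\phi$ by sending $(\varphi,u)$ to the equivalence class of the building $(u,C_\varphi,z_{*,\varphi},b_{*,\varphi},m_{*,\varphi})$, where the domain curve $C_\varphi$, its marked points, asymptotic markers $b_*$, and matching isomorphisms $m_*$ are all read off from the framing datum $\varphi \in \cBR$ as in Definition~\ref{de:sft-base-space}. Since $u$ already satisfies all the conditions of Definition~\ref{de:holomorphic-building} (asymptotic behavior at punctures of type 1, matching isomorphisms, and $\delbar_J u=0$), this lands in $\Mbar^{\,J}(T)$. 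The $\cG$-action on $\cZ$ changes the map to $\bP^d$ but leaves the underlying curve, the markers, and the matching isomorphisms unchanged, so $\phi$ factors through $\cZ_\delbar/\cG$.

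For surjectivity of $\psi$, I would invoke the framing construction of \textsection\ref{sssec:framings_of_buildings}: given a building $(u,C,z_*,b_*,m_*)$ of type $T'\to T$ on each irreducible component $C_v$, form the line bundle $L_{u,v}$ as in \eqref{eq:linebundle-construction} using the $\cP(T)$-integral approximation $\wt\lambda$. The twist by $\omega_{\wt C}(D)$ together with \eqref{eq:positive-degree} gives very ampleness on each component, and because $C$ has arithmetic genus zero, the bundles glue to a unique (up to isomorphism) holomorphic line bundle $\fL_u$ on $C$ of the prescribed degree $d$. A choice of basis of $H^0(C,\fL_u)$ yields an embedded stable map $\iota\cl C \to \bP^d$, whence $(\iota,u) \in \cZ_\delbar$ (the bundle $\fL_u$ was designed so that $\iota \in \cB_{\Gamma^+\sqcup\Gamma^-}(d)$) and the markers/matching isomorphisms of the building lift to framing data in $\cBR$. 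Thus $\psi([(\iota,u)])$ is the given building.

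For injectivity and the matching of isotropy, two framings $(\iota,u), (\iota',u')$ mapping to the same class in $\Mbar^{\,J}(T)$ differ by (i) an isomorphism $\sigma$ of the underlying building (which acts on $C$, preserving $u$, the markers and matchings) and (ii) a change of basis of $H^0(C,\fL_u)$. The latter is exactly an element of $\PGL_{d+1}(\bC)=\cG$, and $\sigma$ combined with postcomposition by a suitable element of $\cG$ gives an element of $\cG$ taking $(\iota,u)$ to $(\iota',u')$; this uses that elements of $\cB$ are automorphism-free by construction. The same argument run on a single framed building shows that the stabilizer in $\cG$ of $(\iota,u)$ is identified with the automorphism group of the building, so the orbifold isotropies agree stratum by stratum. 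Combining with the stratification in Lemma~\ref{lem:stratification-base-space} and the decomposition $\Mbar^{\,J}(T)=\sqcup_{[T'\to T]}\cM^J(T')/\Aut(T'/T)$, the bijection $\psi$ is compatible with strata.

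Finally, I would verify that $\psi$ is a homeomorphism. The neighborhoods $\cN_\epsilon(\varphi,u)$ of Definition~\ref{de:family-of-tree} were set up in terms of Gromov--Hausdorff closeness of the graphs up to the $\bR^{V(T_\varphi)}$-translation action, together with control near the nodes via the Reeb orbit $\gamma_e$; after quotienting by $\cG$ (which is exactly what allows one to forget the embedding in $\bP^d$ and identify curves intrinsically) this precisely recovers the Gromov topology of \cite{BEH03,Par19} on $\Mbar^{\,J}(T)$. The main technical obstacle is checking that the framing construction of the preceding paragraph can be done continuously in families near boundary strata where nodes form or smooth: concretely, one must show that as a sequence of framed buildings converges to a nodal framed building in $\cBR$, the associated line bundles $\fL_u$ and their sections behave compatibly with the real blow-up structure of $\cBR$, so that bases can be chosen continuously, at least locally and up to $\cG$. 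Once this is verified, continuity of $\psi$ and of its inverse both follow, completing the proof that $\psi$ is an isomorphism of orbi-spaces.
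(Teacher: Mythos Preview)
Your treatment of surjectivity, injectivity, and the matching of isotropy groups is essentially the same as the paper's, which also appeals to the framing construction of \textsection\ref{sssec:framings_of_buildings} for surjectivity and to a change-of-basis argument (via \cite[Discussion~3.16]{HS22}) for injectivity and stabilizers. Continuity of $\psi$ is also immediate from the definition of the topology on $\cZ$, as you say.

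The gap is in your homeomorphism argument, specifically openness (equivalently, closedness) of $\psi$. You correctly identify the obstacle---that one must show the framing $\fL_u$ and a basis of its sections can be chosen continuously as the domain degenerates---but you do not resolve it, and the phrase ``once this is verified'' is doing all the work. Choosing bases continuously across strata is genuinely delicate: the vector spaces $H^0(C,\fL_u)$ are jumping in dimension along individual components as nodes form, and there is no obvious continuous section of the $\cG$-quotient map over a neighborhood of a nodal curve.

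The paper sidesteps this by a different mechanism. Rather than trying to produce a continuous local section of $\psi$, it shows that the forgetful map $\cZ_\delbar \to \Mbar^{\,J}(T)$ is \emph{locally the projectivization of a continuous orbi-vector bundle}, hence proper and closed. The key trick is: near a given building $[u,C]$, use \cite[Proposition~3.26]{Par19} to find a divisor $D\sub Y$ so that adding $u_Y\inv(D)$ as marked points stabilizes the domain; this yields a continuous map from a neighborhood $U$ to a Deligne--Mumford space. Then choose local sections $\rho_1,\dots,\rho_d$ of the universal curve over that Deligne--Mumford space, with the correct number of $\rho_i$ landing on each irreducible component as dictated by $\deg(\fL_u|_{C_v})$, and set $\cc{\cL}=\cO(\rho_1+\dots+\rho_d)$. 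Because the curves have genus zero, $\cc{\cL}$ pulled back to each fiber is isomorphic to $\fL_u$ (same multi-degree suffices), and the forgetful map over $U$ becomes the projectivization of the pushforward of $\cc{\cL}$, which is a vector bundle since $H^1$ vanishes. This is the argument of \cite[Lemma~4.8]{HS22}, and it is exactly where the genus-zero hypothesis enters, as the paper notes in the remark following the proof.
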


 \begin{proof} The surjectivity of~\eqref{eq:footprint} follows from the construction of the very ample line bundles in \textsection\ref{subsec:framings}, while injectivity (including the statement about isotropy groups) can be shown as in \cite[Discussion~3.16]{HS22}. Continuity follows from the definition of the metric on $\cZ_{\delbar}$. Thus, it remains to show that~\eqref{eq:footprint} is open or, equivalently, closed.\par
 \vspace{-4pt}
 This can be checked locally on the target. To this end, let $[u,C]\in \Mbar^{\, J}(T)$ be arbitrary.
 By \cite[Proposition~3.26]{Par19}, we can find a divisor $D\sub Y$ so that $u_Y\pf D$ and adding $u_Y\inv(D)$ as marked points stabilizes the domain $C$ of $u$, and so that this is the minimal number of marked points required to stabilize the domain.
 Then, there exists a neighborhood $U \sub \Mbar^{\, J}(T)$ of $[u,C]$ so that for any $[u',C']\in U$, the map $u'_Y$ intersects $D$ transversely and the added points ${u'}\inv(D)$ stabilize $C'$.
 This yields a continuous map 
 $$\ff\cl U \to \Mbar_{0,\#\Gamma^-+\#\Gamma^++ m}/S_m,$$ 
 where $m = \#u\inv(D)$ and the symmetric action permutes the last $m$ marked points.
 Let $\rho_1,\dots,\rho(d)$ be local sections of the universal family 
 $$\cc{\cC} := \cC_{0,\#\Gamma^-+\#\Gamma^++ m}\to \Mbar_{0,\#\Gamma^-+\#\Gamma^++ m}/S_m$$ near $p = \ff([u,C])$, whose images do not meet the nodal points of the fibers and so that for each irreducible component $C_v$ of $C$ we have 
 \begin{equation}\label{eq:number-of-sections}
 \#\{i\mid \rho_i(p) \in C_v\} = \deg(\fL_u|_{C_v}^{\otimes p}).
 \end{equation}
Shrinking $U$ if necessary, we may assume the equality in \eqref{eq:number-of-sections} holds for any point in $U$. Define the holomorphic line bundle $\cc{\cL}\coloneqq \cO_{\cc{\cC}}(\rho_1 +\dots +\rho(d))$. It pulls back to a complex orbi-line bundle over the universal family of $\Mbar^{\, J}(T)$, and has the same multi-degree as $\fL_u^{\otimes p}$ when restricted to the fiber over $[u,C]$. Therefore, as in \cite[Lemma~4.8]{HS22}, the forgetful map $\cZ_{\delbar}\to \Mbar^{\, J}(T)$ is locally the projectivization of a continuous orbi-bundle. In particular, the map is closed.
\end{proof}

\begin{remark}
    The above proof strongly relies on the fact that our curves have genus zero. 
\end{remark}

By \cite[Lemma~4.4]{HS22}, the $\PGL_{d+1}(\bC)$-action on $\cB_{n}(d)$ is proper when restricted to the locus $\cB^{\stb}_{n}(d)\sub\cB_{n+ \Gamma^+,\Gamma^-}(d)$ of curves with stable domain (for any $n \ge 0$). Therefore, \cite[Corollary~4.6]{HS22} asserts the existence of a $\PGL_{d+1}(\bC)$-invariant map 
\begin{equation}\label{eq:reducing-structure-group}
    \zeta\cl \cB^{\stb}_{3d'}(d)\to \PGL_{d+1}(\bC)/\PU(d+1)\cong \fp\fu_{d+1}.
\end{equation}
where the isomorphism is induced by the polar decomposition, \cite[Lemma~3.8(iii)]{HS22}. By averaging, we can choose $\zeta$ to be invariant under the $S_{3d'}$-action on $\cB_{3d'+ \Gamma^+,\Gamma^-}^{\stb}(d)$ given by permuting the marked points labeled by $\{1,\dots,3d'\}$.
Lemma~\ref{lem:zero-locus-right} and \cite[Proposition~3.26]{Par19}, refined as in the proof of \cite[Lemma~4.3]{HS22}, imply that we can find a finite set $\{D_i,r_i\}$ of compact codimension-$2$ submanifolds with boundary $D_i \sub Y$ so that the sets 
$$U_i := \set{(\varphi,u) \in \cZ\mid u\pf D_i,\,\forall C'\sub \cC|_\varphi: \# C'\cap u\inv(D_i) = 3\deg(L_u|_{C'}),\, (\cC|_\varphi,u\inv(D_i)) \text{ is stable}}$$
form an open cover of $\cZ_{\delbar}$. We replace $\cZ$ with $\union{i}{U_i}$ without further mention. Since $\cZ$ is metrizable, there exists a $\cG$-invariant partition of unity $\{\chi_i\}\iI$ subordinate to $\{U_i\}\iI$ by Lemma \ref{lem:palais-proper} and \cite[Corollary~4.12]{AMS23}. Let 
$$\Phi_i \cl U_i \to \cB_{3d'}^{\stb}(d)/S_{3d'}$$ 
be given by forgetting the marked points labeled by $\Gamma^\pm$, adding the intersections as marked points, mapping to $\cBR_{3d'}(d)/S_{3d'}$ and then applying the blow-down map. Finally, define
\begin{equation}\label{}
    \zeta_\cU \cl \cZ\,\to\, \fp\fu_{d+1} 
\end{equation}
by 
\begin{equation}\label{eq:construction-of-slice-map}
\zeta_\cU(\varphi,u) = \s{i}{\chi_i(\varphi,u)\,\zeta(\Phi_i(\varphi,u))}.\end{equation}

In \cite{HS22}, the collection $\{(U_i,D_i,\chi_i)\}_i$ was called a good covering; see Definition~3.10 op. cit. While the above discussion shows that we can do the same in our setting, this definition is too restrictive for the inductive construction in \textsection\ref{subsec:unstructured-flow-cat}.
A prototype of the issue one faces is that once we fix divisors for one-leveled buildings, we need to find new divisors for two-leveled buildings such that the restriction to the one leveled components has the same intersection combinatorics with the already chosen divisors.
Thus, we replace it with the following variation.

\begin{definition}\label{de:good-covering}
    A \emph{good covering} of $\cZ_{\delbar}$ consists of 
    \begin{enumerate}[label=\roman*),leftmargin=20pt,ref=\roman*]
        \item a finite collection $\{U_i\}_i$ of $\PGL_{d+1}(\bC)$-invariant open subsets of $\cZ$ that cover $\cZ_{\delbar}$,
        \item for each $i$ smooth $\cG$-equivariant sections $\sigma_{i,j}\cl U_i \to \cC\inn|_{U_i}$ for $1 \leq j \le d+2$ together with divisors $D_{ij}\sub Y$ so that 
        \begin{itemize}[leftmargin=5pt]
            \item $\sigma_{i,j}$ and $\sigma_{i,j'}$ have disjoint images for $j \neq j'$
            \item for each $j$ we have $u(\sigma_{i,j}(\varphi,u)) \in D_{i,j}$ and $u\pf D_{i,j}$ near $\sigma_{i,j}(\varphi,u)$ for any $(\varphi,u)\in U_i$,
        \end{itemize}
        \item a continuous $\cG$-invariant function $\chi_i \cl \cZ \to [0,1]$ with support in $U_i$
    \end{enumerate}
     so that $\s{i}{\chi_i}$ is positive on $\cZ_{\delbar}$.
\end{definition}

\begin{remark}
    Note that the divisors need not be distinct, i.e., in the construction above, we can take $D_{i,j} = D_i$ for any $j$. Thus the existence of a good covering follows from the discussion above Definition \ref{de:good-covering}.
\end{remark}

The discussion before Definition~\ref{de:good-covering} carries over to the definition of good coverings and yields the following statement.

\begin{lemma}\label{lem:map-to-lie-algebra}
    A good covering $\cU$ together with the map in \eqref{eq:reducing-structure-group} determines a $\cG$-invariant map $\zeta_\cU\cl \cZ \to \fp\fu_{d+1}$.\qed
\end{lemma}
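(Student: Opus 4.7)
The plan is to mirror the construction sketched in the paragraph immediately preceding Definition~\ref{de:good-covering}, now carried out from the flexible data of a good covering rather than from the intersection loci $u\inv(D_i)$.

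First, for each index $i$ I would build a continuous $\cG$-equivariant map
\[ \Phi_i \cl U_i \longrightarrow \cB^{\stb}_{N}(d)/S_{N} \]
(where $N$ is the number of marked points in the source of $\zeta$) as follows: given $(\varphi,u)\in U_i$, forget the $\Gamma^+\sqcup \Gamma^-$-labeled marked points of $\varphi$, add the points $\{\sigma_{i,j}(\varphi,u)\}_j$ as new unordered marked points on $\cC|_\varphi$, and apply the blow-down map $\cBR \to \cB$. Condition (ii) of Definition~\ref{de:good-covering} ensures that the $\sigma_{i,j}$ have pairwise disjoint images lying in the smooth locus of each fiber, and the transversality $u \pf D_{i,j}$ at $\sigma_{i,j}(\varphi,u)$ together with the incidence $u(\sigma_{i,j})\in D_{i,j}$ guarantees that the resulting marked curve represents a point of $\cB^{\stb}_{N}(d)$.

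Next, I would compose with the $S_{N}$-invariant map $\zeta$ of~\eqref{eq:reducing-structure-group} to obtain $\zeta \g \Phi_i \cl U_i \to \fp\fu_{d+1}$. The $\PGL_{d+1}(\bC)$-invariance of $\zeta$, combined with the $\cG$-equivariance of each section $\sigma_{i,j}$ and the target-intrinsic nature of the divisors $D_{i,j}\sub Y$, yields $\cG$-invariance of $\zeta \g \Phi_i$ on $U_i$.

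Finally, I set
\[ \zeta_\cU(\varphi,u) \coloneqq \sum_{i} \chi_i(\varphi,u)\,\zeta(\Phi_i(\varphi,u)), \]
extending each summand by zero off $U_i$, which is legitimate since $\supp(\chi_i) \sub U_i$ and $\fp\fu_{d+1}$ carries a real vector-space structure under the Cartan identification, so linear combinations are meaningful. Continuity is immediate from continuity of the factors, and the $\cG$-invariance of $\chi_i$ and of $\zeta \g \Phi_i$ transfers directly to $\zeta_\cU$. The one point worth isolating, which is essentially already absorbed into Definition~\ref{de:good-covering}, is that the local maps $\Phi_i$ need not agree on overlaps, and no compatibility between the families $\{\sigma_{i,j}\}_j$ and $\{\sigma_{i',j'}\}_{j'}$ for $i\neq i'$ is required; this is precisely the flexibility that the inductive construction of perturbation data in \textsection\ref{subsec:unstructured-flow-cat} will need.
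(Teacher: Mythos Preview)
Your proposal is correct and follows exactly the approach the paper intends: the lemma is stated with a \qed\ and the paper simply says ``The discussion before Definition~\ref{de:good-covering} carries over,'' which is precisely the construction you spell out --- use the sections $\sigma_{i,j}$ to add unordered marked points, map via $\Phi_i$ to $\cB^{\stb}_N(d)/S_N$, compose with $\zeta$, and then take the $\chi_i$-weighted sum as in~\eqref{eq:construction-of-slice-map}. One small remark: the transversality condition $u\pf D_{i,j}$ and the incidence $u(\sigma_{i,j})\in D_{i,j}$ are what make the sections vary continuously, but what actually lands you in $\cB^{\stb}$ is that the $\sigma_{i,j}$ take values in $\cC^{\inn}$ with pairwise disjoint images and that there are enough of them on each irreducible component to stabilize the domain; this is implicit in the definition of a good covering rather than a consequence of transversality.
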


\subsection{Kuranishi charts for buildings in symplectizations}\label{subsec:construction}
We restate our main theorems more precisely here and prove them in the following subsections. Let $\Gamma^+,\Gamma^-$ be finite collections of Reeb orbits of action $\leq  L$, and let $\beta\in H_2(Y,\Gamma^+\sqcup \Gamma^-)$ be a relative homology class. We define $T$ to be the decorated corolla as in Definition \ref{de:decorated-tree} with positive/negative exterior edges labeled by $\Gamma^+$ and $\Gamma^-$, respectively, and with degree $\beta$. We write $\Mbar^{\,J}(T) = \Mbar^{\,J}(\Gamma^+,\Gamma^-;\beta)$.

\begin{definition}\label{de:pre-perturbation} A \emph{pre-perturbation datum} $\fD = (\wt\lambda,\conn,p)$ for $\Mbar^J(T)$ consists of 
	\begin{itemize}[leftmargin=20pt]
		\item a $\cP(T)$-integral approximation $\wt\lambda$ of $\lambda$ as in Definition~\ref{de:approximation-in-symplectisation};
		\item a translation-invariant $J$-linear connection $\conn$ on $T\wh Y$;
		\item an integer $p\gg 1$.
        \end{itemize}
\end{definition}       

\noindent To such a pre-perturbation datum, we can associate the following spaces. Set
\begin{equation}\label{eq:auxiliary-degree}
    d' \coloneqq p\lbr{\s{\gamma\in \Gamma^+}\cA_{\wt\lambda}(\gamma)-\s{\gamma\in \Gamma^-}\cA_{\wt\lambda}(\gamma)}
\end{equation}
and $d \coloneqq d'-2$. Let $\cBR \coloneqq \cBR_{\Gamma^+,\Gamma^-}(d)$ be the smooth manifold with corners defined in \textsection\ref{subsec:framings} and define the groups
\begin{equation}\label{eq:covering-groups}
    G  \coloneqq \PU(d+1) \qquad \qquad \cG \coloneqq \PGL_{d+1}(\bC). 
\end{equation}
We let $\cZ =\cZ_{\wt\lambda}(T)$ be the family of curves over $\cBR$ defined in Definition~\ref{de:family-of-tree}. As before, let $\cC \to \cZ$ be the pullback of the universal family of $\cBR$.

\begin{definition}\label{de:auxiliary-datum} A \emph{perturbation datum} $\alpha = (\fD,\cU,\zeta,E,\mu)$ extending $\fD$ is the data of
        \begin{itemize}[leftmargin=20pt]
        \item a good covering $\cU = \{(U_i,\sigma_i,\chi_i)\}_{i\in I}$ on a subset of $\cZ$ and a $\cG$-equivariant map 
        \[\zeta \cl \cB^{\text{st}}_{3d'}(d)/S_{3d'}\to \cG/G; \]
		\item a finite-dimensional $G$-representation $E$ equipped with an equivariant linear map $\mu \cl E\to \cV$, where
        \begin{equation}\label{de:full-perturbation-space}
            \cV \coloneqq \set{\eta\in C^\infty(\cC^{\circ}\times \wh Y,\cc{\Hom}_\bC(\pr_1^*T_{\cC^{\circ}/\cBR},\pr_2^*T\wh Y))^\bR\mid \text{ supp}(\eta)/\bR \text{ is compact}},
        \end{equation} 
        $\cC^{\inn}$ denoting the complement of the special points of the fibers, so that for any $(\varphi,u)\in \cZ_{\delbar}$ with $\zeta_\cU(\varphi,u) = 0$, the Cauchy--Riemann operator 
    \[D\delbar_{J}(u) +\mu(\cdot)|_{\graph(\varphi,u)} \cl C^\infty_c(\dot{C},u^*T\wh Y)^\bR\oplus E_k \to \Omega^{0,1}_c(\dot{C},u^*T\wh Y)\]
    is surjective, where $\zeta_\cU$ is the map given by Lemma~\ref{lem:map-to-lie-algebra}.\end{itemize}
\end{definition}

\begin{theorem}\label{thm:pardon-gkc} Let $T$ be a decorated tree as at the beginning of the subsection.
	\begin{enumerate}[\normalfont 1),leftmargin=15pt,ref=\arabic*]
		\item\label{gkc-unobstructed-aux} Any pre-perturbation datum $\fD$ can be completed to a perturbation datum $\alpha = (\fD,\cU,\zeta,E.\mu)$ for $\Mbar^{\,J}(T)$ as in Definition~\ref{de:auxiliary-datum}.
		\item\label{gkc-rel-smooth} If $\alpha$ is a perturbation datum, then Construction~\ref{con:thickening} and Definition~\ref{con:obstruction} yield a rel--$C^1$ global Kuranishi chart with corners for $\Mbar^J(T)$.
		\item\label{gkc-orientation} If $\Gamma^\pm$ consists of good Reeb orbits, there exists a canonical isomorphism 
        \begin{equation}
\fo_{\cK_\alpha}\,\cong\,\fo(\bR)\dul\otimes\bigotimes\limits_{\gamma\in \Gamma^+}\fo_{\gamma}\otimes\bigotimes\limits_{\gamma\in \Gamma^-}\fo\dul_{\gamma}
        \end{equation} 
        of orientation lines, where $\fo_\gamma$ is the orientation line of the Reeb orbit, defined in \textsection\ref{subsec:orientation}.
	\end{enumerate}
\end{theorem}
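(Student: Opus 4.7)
The plan is to prove the three claims in sequence, following the strategy of \cite{AMS21,HS22} adapted to punctured curves, with the base space $\cBR$ taking the role of $\Mbar_0(\bP^d,d)$ and the family $\cZ = \cZ_{\wt\lambda}(T)$ of Definition~\ref{de:family-of-tree} replacing the classical family of stable maps.

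For part~\eqref{gkc-unobstructed-aux}, the data $(\cU,\zeta)$ are already available: the discussion preceding Definition~\ref{de:good-covering} constructs a good covering of $\cZ_{\delbar}$ and Lemma~\ref{lem:map-to-lie-algebra} produces the $\cG$-invariant map $\zeta_\cU\cl \cZ\to \fp\fu_{d+1}$. To obtain $(E,\mu)$, I would work as follows. At each $[u,C]\in \Mbar^{\,J}(T)$ the Cauchy--Riemann operator $D\delbar_J(u)$ on the space of compactly supported sections is Fredholm, so its image has finite codimension in $\Omega^{0,1}_c(\dot C, u^*T\wh Y)$. By a standard cutoff and local perturbation argument on the smooth locus $\cC^\circ$, I can find finitely many elements of $\cV$ (supported away from the punctures and nodes) whose images together with that of $D\delbar_J(u)$ span the cokernel. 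Since $\Mbar^{\,J}(T)$ is compact by \cite[Theorem~10.1]{BEH03} and surjectivity is an open condition, a finite collection $E_0 \subset \cV$ suffices to achieve surjectivity at every point of $\cZ_{\delbar}\cap\zeta_\cU\inv(0)$. Setting $E$ to be a finite-dimensional $G$-invariant subspace of $\cV$ containing $E_0$ (obtained by averaging the $G$-action on an enlargement of $E_0$) and $\mu$ the inclusion yields the desired datum.

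For part~\eqref{gkc-rel-smooth}, the thickening should be
\[
\cT_\alpha \;\coloneqq\; \big\{ (\varphi,u,e)\in \cZ \times E \,\big|\, \zeta_\cU(\varphi,u)=0,\ \delbar_J u \,=\, \mu(e)|_{\graph(\varphi,u)} \big\},
\]
with $G$-action induced from the product action, obstruction bundle the trivial $G$-equivariant bundle $\cT_\alpha\times E$, and obstruction section $(\varphi,u,e)\mapsto e$. The footprint of the zero locus recovers $\Mbar^{\,J}(T)$ by Lemma~\ref{lem:zero-locus-right}. Rel-$C^1$ smoothness with corners follows from the universal smoothness of the $\delbar$-section in the interior combined with the corner structure of the base $\cBR$ established in Lemma~\ref{lem:building-base-smooth}: the slice equation $\zeta_\cU=0$ cuts out a smooth $G$-invariant slice by the properness of Lemma~\ref{lem:palais-proper}, and surjectivity of the linearization $D\delbar_J(u) + \mu(\cdot)|_{\graph}$ gives, via the parametric implicit function theorem, a rel-$C^1$ manifold structure on $\cT_\alpha$ of the correct virtual dimension. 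The orbifold quotient by $G$ is well-defined because $G$ acts with finite stabilizers on $\zeta_\cU\inv(0)$ by Lemma~\ref{lem:palais-proper}.

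For part~\eqref{gkc-orientation}, I would decompose the orientation line of $\cK_\alpha$ using the short exact sequences defining $\cT_\alpha$ and the $G$-quotient, obtaining
\[
\fo_{\cK_\alpha} \;\cong\; \fo_\cBR \,\otimes\, \fo\big(\det D\delbar_J \oplus E \big) \,\otimes\, \fo_E\dul \,\otimes\, \fo_G,
\]
in which $\fo_\cBR$ and $\fo_G$ are canonically oriented by their complex structures. The key geometric input is the standard identification, for punctured $J$-holomorphic curves in the symplectization whose asymptotics are good Reeb orbits (as in \cite{BM04}), of the determinant line of the Cauchy--Riemann operator with $\bigotimes_{\gamma\in\Gamma^+}\fo_\gamma\,\otimes\,\bigotimes_{\gamma\in\Gamma^-}\fo_\gamma\dul$; the failure for bad orbits is exactly what their non-orientability obstructs. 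The remaining $\fo(\bR)\dul$ factor appears when quotienting by the $\bR$-translation action used in defining $\cM^J$ from $\wt\cM^J$; this translation freedom is absorbed in the $\cG$-quotient but survives, via $\zeta_\cU$, as one real factor. The main obstacle I anticipate is keeping the orientation signs coherent through the combination of three nontrivial gradings---the complex orientation on $\cBR$, the Fredholm determinant orientation, and the quotient by the (non-compact) translation action---and ensuring that the isomorphism is genuinely canonical and independent of the auxiliary choices $\fD$ and $\cU$.
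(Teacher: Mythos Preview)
Your proposal for part~\eqref{gkc-unobstructed-aux} is essentially correct and matches the paper's strategy, which formalizes your ad hoc construction via a finite-dimensional approximation scheme (an increasing sequence $(E_k,\iota_k)$ of $G$-representations whose images are $C^\infty_{\text{loc}}$-dense in $\cV$) and then takes $E = E_k$ for $k$ the maximum over the compact set $\{\zeta_\cU = 0\}\cap\cZ_{\delbar}$ of the upper-semicontinuous function recording the first index achieving surjectivity.

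Part~\eqref{gkc-rel-smooth} has a genuine problem. You define the thickening by imposing both $\delbar_J u = \mu(e)|_{\graph}$ \emph{and} $\zeta_\cU(\varphi,u) = 0$, with obstruction bundle the trivial bundle $E$. This does not work: the map $\zeta_\cU$ is only continuous, because it is built in~\eqref{eq:construction-of-slice-map} from a continuous partition of unity $\{\chi_i\}$. Cutting by $\zeta_\cU = 0$ therefore cannot produce a rel--$C^1$ manifold, and no ``parametric implicit function theorem'' applies. The paper's construction (Construction~\ref{con:thickening} and Definition~\ref{con:obstruction}) instead takes the thickening to be the locus where only the perturbed Cauchy--Riemann equation holds, enlarges the obstruction bundle to $\cE = E\oplus\fp\fu$, and puts $\zeta_\cU$ into the second component of the obstruction section $\fs_{\text{pre}} = (w,\zeta_\cU)$. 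Since $\fs_{\text{pre}}$ is then only continuous, a separate mollification argument (Lemma~\ref{lem:better-obstruction-section}) replaces it by a rel--$C^1$ section with the same zero locus. The rel--$C^1$ structure of $\cT$ over $\cBR$ is not immediate either: it requires the gluing analysis of \S\ref{sec:gluing} (Theorem~\ref{thm:gluing-main} and Proposition~\ref{prop:c1loc}), which is what Proposition~\ref{prop:pre-thickening-smooth} invokes.

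For part~\eqref{gkc-orientation} your outline is in the right direction, but the origin of the $\fo(\bR)\dul$ factor is not ``surviving via $\zeta_\cU$''. In the paper's computation (\S\ref{subsec:orientation}) the vertical tangent space $T_{\cT/\cBR}$ at $(\varphi,u,w)$ is the kernel of the surjective operator $D_u^\conn + \mu(-)|_{\graph}$; the moduli space was defined by quotienting by the $\bR$-translation, and this contributes the factor $\fo(\bR)\dul$ directly when passing from $\fo(T_{\cT/\cBR})$ to $\fo(D^\conn)\otimes\fo(E_k)$. The factor $\fo(\Mbar_{0,|\Gamma^+|+|\Gamma^-|})$ is then canonically trivial by its complex structure, and the identification of $\fo(D^\conn)$ with the tensor of Reeb orbit orientation lines uses \cite[Lemma~2.51]{Par19}.
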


Before giving the construction of the thickening, Construction~\ref{con:thickening}, we have to briefly discuss Cauchy-Riemann operators on punctured Riemann surfaces. More details can be found in \textsection\ref{sec:gluing} or \cite[\textsection7]{Wen16}. Fix $k \geq 4$ and $0 < \delta < 1$ so that 
\begin{equation}\label{eq:small-delta}
    \delta< \mini{\gamma\in \cP(T)}{\inf|\sigma(A_\gamma)|}
\end{equation}
This is well-defined since $\cP(T)$ is finite and $\lambda$ is nondegenerate. For our purposes here, it suffices to consider the linearized Cauchy--Riemann equation without variation of the domain. Fix thus a possibly nodal Riemann surface $(C,\fj)$ with underlying graph contracting to $T$. Choose also a complex linear translation-invariant connection $\conn$ on $T\wh Y$ and a Riemannian metric on $\wh Y$. Then, we can associate to any smooth map $u \cl \dot{C}\to \wh Y$, which is $J$-holomorphic and asymptotic to a trivial cylinder near the punctures, the operator 
\begin{equation}
    D^{\conn}_u \cl W^{k,2,\delta}(C,u^*T\wh Y)\to W^{k-1,2,\delta}(\wt C,\Omega^{0,1}_{\wt C}\otimes_\bC u^*T\wh Y)
\end{equation}
given by the derivative of 
\begin{equation}
    \scF_u(\xi) = \Phi_{\exp_u(\xi) \to u}(d\exp_u(\xi)^{0,1}_{J,\fj}).
\end{equation}
where $\Phi$ is the parallel transport along sufficiently short geodesics. The operator $D_u$ is independent of the choice of connection and metric if $u$ is $J$-holomorphic. Since the exponential weight $\delta$ satisfies \eqref{eq:small-delta}, we have that $D_u$ is Fredholm by \cite[Lemma~7.10]{Wen16}
with index 
\begin{equation}
    \indo(D_u^\conn) = n\chi(C) -n(|\Gamma^+|+|\Gamma^-|) + 2c_1^\tau(u^*T\wh Y) + \s{\gamma\in \Gamma^+}{\mu^\tau_{\text{CZ}}(\gamma)}- \s{\gamma\in \Gamma^-}{\mu^\tau_{\text{CZ}}(\gamma)}.
\end{equation}

\begin{construction}[Thickening]\label{con:thickening} The thickening $\cT = \cT_\alpha$ consists of tuples $(\varphi,u,w)\in \cZ\times E_k$ where
\begin{enumerate}[label = \roman*),leftmargin=20pt,ref= \roman*]
    \item the image $\wch\varphi$ of $\varphi$ under the blow-down map $\cBR\to \cB$ satisfies 
		\begin{equation}
		    d_v \coloneqq\deg(\wch\varphi_v) =|D_v|-2+ p\,\lbr{\s{e' \in E^{\normalfont\text{int},+}_v}{\int\gamma_{v,e'}^*\wt\lambda}-\s{e' \in E^{\normalfont\text{int},+}_v}{\int\gamma_{v,e'}^*\wt\lambda}};,
		\end{equation} 
	for each $v\in V(T_\varphi)$, where $D_v$ is the divisor of special points on $C_v$ and $\gamma_{v,e}$ is the Reeb orbit to which $u_v$ converges at the puncture $z_{v,e}$;
    \item the matching isomorphism $m_e$ associated to $\varphi$ at the edge $e = (v,v')$ intertwines $u_v$ and $u_{v'}$ in the sense that $\wh u_{v'}\g m_{(v,v')} = \wh u_v$;
    \item the perturbation $w$ satisfies
    \begin{equation}
			\hpd_J\, u_v + \mu_{k}(w) |_{\graph(\varphi_v,u_v)}  = 0
		\end{equation}
		for each $v \in V(T_\varphi)$\footnote{By the assumption that elements of $\cV$ are invariant under translation, this equation is well-defined, i.e., does not depend on the choice of representative $u_v$.}, and the map
		\begin{equation}\label{eq:perturbations-suffice}
		     E \to \coker(D_u^\conn): w \mapsto [\mu_{k}(w) |_{\graph(\varphi,u)}]
		\end{equation}
		is surjective.
\end{enumerate}  
\end{construction}

We take the covering group to be 
\begin{equation}
    \wh G := G\times \p{\gamma\in \Gamma^+\sqcup\Gamma^-}S^1,
\end{equation}
where the torus acts by rotating the respective asymptotic marker. The thickening $\cT$ admits a continuous $\wh G$-equivariant map $\Pi\cl\cT\to \cBR$. In general, $\cT$ is not compact and $\Pi$ is not surjective. 

\begin{definition}[Obstruction bundle and section]\label{con:obstruction} We define the obstruction bundle $\cE = \cE_\alpha$ to be the trivial bundle 
	\begin{equation}
		\cE\coloneqq E\, \oplus\, \fp\fu 
	\end{equation}
	and define the (pre-)obstruction section $\fs_{pre} \cl \cT \to \cE$ by 
	\begin{equation}\label{} \
		\fs_{pre}(\varphi,u,w) = (w,\zeta_\cU(\varphi,u)). 
	\end{equation}
\end{definition}

\begin{remark}\label{} The projection of $\fs_{pre}$ to $\fp\fu$ is only continuous. We will replace this part of the obstruction section by an equivariant section with the same zero locus that is of class rel--$C^1$, see Lemma~\ref{lem:better-obstruction-section}.\end{remark}

\begin{proof}[Proof of Theorem \ref{thm:pardon-gkc}\hspace{0.2pt}\eqref{gkc-unobstructed-aux}]
Given the work done in \textsection\ref{sec:prelim-aux}, it remains to find a perturbation space $(E,\mu)$. Let us summarize why. By Lemma~\ref{lem:contact-approximation}, we can find a $\cP(T)$-integral approximation $\wt\lambda$, while the existence of a translation-invariant $J$-linear connection $\wh \conn$ on $T\wh Y$ is immediate. Since our curves have genus zero, any integer $p \gg 3$ is sufficiently large for $\fL_u := \eqref{eq:linebundle-construction}$ to be very ample for any $(\varphi,u)\in\cZ$ close to $\cZ_{\delbar = 0}$ and $H^1(C,\fL_u) = 0$. Good coverings, as in Definition~\ref{de:good-covering}, were constructed in \textsection\ref{subsec:domain-metrics}. To obtain the existence of $(E,\mu)$, we recall from \cite[Definition~4.1]{AMS23} that a \emph{finite-dimensional approximation scheme} of a smooth $G$-vector bundle $V\to B$ over a smooth $G$-manifold is a sequence $(E_k,\iota_k)$ of finite-dimensional $G$-representations with $G$-equivariant linear maps $\iota_k \cl E_k \to C^\infty_c(B,V)$ so that
 \begin{itemize}[leftmargin=20pt]
    \setlength\itemsep{2pt}
     \item $E_k \sub E_{k+1}$ is a sub-representation with $\iota_{k+1}|_{E_k} = \iota_k$,
     \item $\union{k \geq 1}{\im(\iota_k)}$ is dense in $C^\infty_c(B,V)$ in the $C^\infty_{\text{loc}}$-topology.
 \end{itemize}

\begin{lemma}\label{lem:invariant-approximations-exist}
    Finite-dimensional approximation schemes of $\cV$ exist.
\end{lemma}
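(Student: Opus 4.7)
The plan is to combine the separability of compactly supported smooth sections with an equivariant smoothing argument based on the Peter--Weyl theorem. Under the identification $\wh Y \cong \bR\times Y$, the $\bR$-invariance and compact-support-modulo-translation condition identify $\cV$ with $C^\infty_c(\cC^\circ\times Y, \bar V)$ for the descended bundle $\bar V$, equipped with the standard LF-topology inherited from the Fr\'echet spaces $C^\infty(K,\bar V)$ for compact $K\subset \cC^\circ\times Y$. The $G$-action on $\cC^\circ$ (trivial on $Y$) induces a continuous linear action on $\cV$.

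The key construction is the averaging operator
\begin{equation*}
    A_\phi(\eta) \coloneqq \int_G \phi(g)\,(g\cdot \eta)\, dg, \qquad \phi\in C(G),\ \eta\in \cV,
\end{equation*}
which is jointly continuous in both arguments. A direct computation shows that when $\phi(g)=\langle \rho(g)v,w\rangle$ is a matrix coefficient of an irreducible unitary $G$-representation $\rho$, the $G$-translates of $A_\phi(\eta)$ span a subspace isomorphic to a subrepresentation of $\rho$. Consequently, $A_\phi(\eta)$ is \emph{$G$-finite}---that is, its $G$-orbit spans a finite-dimensional subspace---whenever $\phi$ is a finite linear combination of matrix coefficients, and the generated $G$-subrepresentation of $\cV$ is finite-dimensional.

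By the Peter--Weyl theorem, finite linear combinations of matrix coefficients are uniformly dense in $C(G)$. Choose a sequence $\phi_n \in C(G)$ of such combinations that form an approximate identity at $e\in G$ in the $L^1$-sense. Smoothness of the $G$-action together with differentiation under the integral sign yields $A_{\phi_n}(\eta)\to \eta$ in the $C^\infty_{\mathrm{loc}}$-topology on $\cV$, proving density of the $G$-finite vectors. Exploiting separability of $\cV$, we extract a countable dense sequence $\{\zeta_i\}_{i\in\bN}$ of $G$-finite vectors, let $W_i\subset \cV$ denote the finite-dimensional $G$-subrepresentation generated by $\zeta_i$, and set $E_k \coloneqq W_1+\cdots+W_k$ with $\iota_k$ the inclusion. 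The required nesting $E_k\subset E_{k+1}$ and density of $\bigcup_k E_k$ follow immediately. The only subtle point is the $C^\infty_{\mathrm{loc}}$-convergence $A_{\phi_n}(\eta)\to \eta$; this reduces to the smoothness of the action map $G\times \cV\to \cV$ jointly with each of the defining seminorms on $\cV$, so that convolution with an approximate identity on $G$ transfers into convergence with all derivatives on compact subsets of $\cC^\circ\times Y$.
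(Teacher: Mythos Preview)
Your proof is correct. Both you and the paper begin by quotienting out the $\bR$-translation to reduce to $C^\infty_c(\cC^\circ\times Y,\bar V)$, but from there the approaches diverge. The paper simply invokes \cite[Lemma~4.2]{AMS23}; the method behind that lemma---visible in the paper's later proof of the joint approximation scheme (Lemma~\ref{lem:joint-fin-dim-scheme})---is to exhaust the base by $G$-invariant compact manifolds with boundary, double each one, and take finite sums of eigenspaces of a $G$-invariant Laplacian, then multiply by bump functions to land back in $C^\infty_c$. Your route via Peter--Weyl and $G$-finite vectors is more representation-theoretic and arguably more elementary: it avoids spectral theory entirely and works for any continuous action of a compact group on a separable locally convex space. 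The Laplacian construction, on the other hand, produces $E_k$ that are explicitly tied to the geometry (choices of metrics, connections, exhaustions), which is exactly what the paper needs later when it has to build \emph{joint} approximation schemes compatible across a cobordism and its two ends---there the spectral construction can be matched up across the pieces in a way that would be harder to arrange with abstract $G$-finite vectors.
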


\begin{proof}
    Apply \cite[Lemma~4.2]{AMS23} to the manifold $B \coloneqq \cC\inn\times \wh Y/\bR$ and the vector bundle $V \coloneqq \cc{\Hom}_\bC(\pr_1^*T_{\cC^{\circ}/\cBR},\bR\oplus\pr_2^*T(\wh Y/\bR))$.
\end{proof}

\begin{lemma}\label{lem:transversality-pointwise} Let $y = (\varphi,u) \in \cZ$ be arbitrary. Then there exists $k_y\geq 0$ so that 
	\begin{equation}
		E_{k_y} \to \coker(D_u) : w\mapsto [\mu_{k}(w) |_{\graph(\varphi,u)}]
	\end{equation}
	is surjective.
\end{lemma}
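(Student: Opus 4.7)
The plan is to reduce the statement to two ingredients: surjectivity of the ``full perturbation'' map $\Psi\cl \cV \to \coker(D_u^\conn)$, $\eta \mapsto [\eta|_{\graph(\varphi,u)}]$, and the $C^\infty_{\loc}$-density property of the approximation scheme from Lemma~\ref{lem:invariant-approximations-exist}. Since $D_u^\conn$ is Fredholm (by the discussion preceding Construction~\ref{con:thickening}), $\coker(D_u^\conn)$ is finite-dimensional. A continuous linear map into a finite-dimensional space is surjective as soon as it hits a basis, and by density we can approximate any chosen preimages by elements of some finite-dimensional $E_k$; linear independence being an open condition then forces $E_{k_y}\to \coker(D_u^\conn)$ to be surjective for all sufficiently large $k_y$. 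So the whole problem reduces to showing that $\Psi$ itself is surjective.

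To prove $\Psi$ is surjective, I plan to exploit the fact that, after quotienting out the $\bR$-translation on the second factor of $\cC^\circ\times \wh Y$, the map
\begin{equation*}
    \dot C \;\longrightarrow\; \cC^\circ\times Y,\qquad z\,\longmapsto\, \bigl((\varphi,z),\,u_Y(z)\bigr)
\end{equation*}
is a proper embedding, since already its first component embeds $\dot C$ as the punctured fiber of the universal family over $\cBR$. Along this embedding, the bundle whose sections comprise $\cV$ pulls back to $\cc{\Hom}_\bC(T\dot C, u^*T\wh Y)\cong \Omega^{0,1}(\dot C,u^*T\wh Y)$. Using a tubular neighborhood of the image together with a bump function, any smooth compactly supported section over $\dot C$ extends to a translation-invariant section on $\cC^\circ\times \wh Y$ with quotient-compact support, i.e.\ to an element of $\cV$. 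Hence the restriction map $\rho\cl\cV \to \Omega^{0,1}_c(\dot C,u^*T\wh Y)$ surjects onto compactly supported sections, which in turn project onto $\coker(D_u^\conn)$ by the standard density characterization of the cokernel of a Cauchy--Riemann-type Fredholm operator on an asymptotically cylindrical domain.

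Having produced $\eta_1,\dots,\eta_m\in \cV$ whose classes $[\eta_i|_{\graph(\varphi,u)}]$ form a basis of $\coker(D_u^\conn)$, I will pick $w_{i,k}\in E_k$ with $\iota_k(w_{i,k})\to \eta_i$ in $C^\infty_{\loc}$ (Lemma~\ref{lem:invariant-approximations-exist}). Since each $\eta_i$ intersects the image of $\graph(\varphi,u)$ in a compact subset of $\dot C$, the restrictions converge in $\Omega^{0,1}_c$ and a fortiori in the finite-dimensional quotient $\coker(D_u^\conn)$. Openness of linear independence then gives surjectivity of $E_{k_y}\to \coker(D_u^\conn)$ for all $k_y\gg 0$. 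The main technical obstacle is the extension step in the second paragraph: carrying out the tubular-neighborhood extension while respecting both the $\bR$-translation invariance and the requirement that the support be compact in the quotient $\cC^\circ\times (\wh Y/\bR)$. This is essentially bookkeeping, but it is where the specific form of $\cV$ in \eqref{de:full-perturbation-space} really matters.
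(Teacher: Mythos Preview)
Your approach matches the paper's: both rely on $D_u^\conn$ being Fredholm together with unique continuation for elements of $\ker((D_u^\conn)^*)$, combined with the density property of the approximation scheme. The paper compresses this to one sentence plus a reference; your reduction to surjectivity of $\Psi\cl\cV\to\coker(D_u^\conn)$ and the tubular-neighbourhood extension are both correct and make the argument explicit.

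There is, however, a genuine gap in your final step, and it is not the one you flag. You claim that since $\iota_k(w_{i,k})\to\eta_i$ in $C^\infty_{\text{loc}}$ and each $\eta_i|_{\graph}$ has compact support in $\dot C$, ``the restrictions converge in $\Omega^{0,1}_c$ and a fortiori in the finite-dimensional quotient $\coker(D_u^\conn)$''. But $C^\infty_{\text{loc}}$ convergence on $\cC^\circ\times(\wh Y/\bR)$ only controls the restrictions on compact subsets of $\dot C$; the supports of $\iota_k(w_{i,k})|_{\graph}$ are each compact (by properness of the graph map) but need not lie in any \emph{fixed} compact set, so convergence in the inductive-limit topology on $\Omega^{0,1}_c$ fails. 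Nor can you pass directly to the cokernel: the projection is computed by pairing with $\psi\in\ker((D_u^\conn)^*)$, which decays exponentially on the cylindrical ends but is not compactly supported, so the functional $\eta\mapsto\int_{\dot C}\langle\eta|_{\graph},\psi\rangle$ is \emph{not} $C^\infty_{\text{loc}}$-continuous on $\cV$. (A bump escaping toward a puncture with amplitude $\sim e^{\delta s}$ goes to zero in $C^\infty_{\text{loc}}$ yet pairs to a nonzero constant with $\psi$.) The extension step you worried about is routine; the real obstacle is here. One fix is to use the explicit construction of the approximation scheme from \cite[Lemma~4.2]{AMS23}, which allows any $\eta$ with support in a compact $K'$ to be approximated by elements of $\bigcup_k E_k$ with support in a fixed compact neighbourhood of $K'$; then the supports of the restrictions are uniformly bounded and your argument goes through.
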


\begin{proof} This follows from the facts that $D_u$ is Fredholm and that an element in the cokernel is identically zero if it vanishes on an open subset of $C$. See also \cite[Proposition~3.26]{Par19}.
\end{proof}

\begin{lemma}[Openness of transversality]\label{lem:regular-locus-open} Given $y\in \cZ_\alpha$ and $k_y$ as in Lemma \ref{lem:transversality-pointwise}, there exists a neighborhood $W_y \sub \cZ$ of $y$ so that for any $y' = (\varphi',u',w')\in W_y$ the map 
	\begin{equation}
		E_{k_y} \to \coker(D_{u'}) : w\mapsto [\mu_{k}(w') |_{\graph(\varphi',u')}]
	\end{equation}
	is surjective.
\end{lemma}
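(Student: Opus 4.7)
The plan is to upgrade pointwise transversality at $y$ to openness by a standard Fredholm perturbation argument, with the main subtlety appearing when the underlying graph $T_{\varphi'}$ is a strict contraction of $T_\varphi$. I would begin by fixing $k := k_y$ and, using Lemma~\ref{lem:transversality-pointwise}, choosing a subspace $F \subseteq E_{k_y}$ of dimension $\dim \coker(D_u^\conn)$ mapping isomorphically onto $\coker(D_u^\conn)$. The augmented Cauchy--Riemann operator
\begin{equation*}
\wt D_y \colon F \oplus W^{k,2,\delta}(\dot C_\varphi, u^*T\wh Y) \longrightarrow W^{k-1,2,\delta}(\dot C_\varphi, \Omega^{0,1}_{\dot C_\varphi} \otimes_\bC u^*T\wh Y), \quad (w,\xi) \mapsto \mu_k(w)|_{\graph(\varphi,u)} + D_u^\conn \xi,
\end{equation*}
is then Fredholm and surjective by construction, hence admits a bounded right inverse $Q_y$. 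It suffices to prove surjectivity of the analogous augmented operator $\wt D_{y'}$ at each $y' = (\varphi', u', w')$ close to $y$ in the Gromov topology, since surjectivity of the original map then follows \emph{a fortiori}.

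For $y'$ in the same stratum, i.e.\ with $T_{\varphi'} = T_\varphi$, the relevant function spaces are canonically identified via the exponential map on $\wh Y$ and a smooth trivialization of the universal curve, and standard continuity of the linearized operator yields $\|\wt D_{y'} - \wt D_y\|_{op} \to 0$; then $\wt D_{y'} \circ Q_y = I + (\wt D_{y'} - \wt D_y) Q_y$ is invertible for $y'$ sufficiently near $y$, forcing surjectivity of $\wt D_{y'}$. In the degeneration case, where $T_{\varphi'} \to T_\varphi$ is a non-trivial contraction, I would invoke the pregluing construction of \textsection\ref{sec:gluing} to produce a bounded approximate isometry between the weighted Sobolev spaces on $\dot C_{\varphi'}$ and those on $\dot C_\varphi$, supported away from the pinched necks; since the exponential weight $\delta$ satisfies \eqref{eq:small-delta}, the resulting operator-norm convergence $\wt D_{y'} \to \wt D_y$ is uniform under this identification, and the same perturbation argument concludes.

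The main technical obstacle is the degeneration case: implementing the pregluing identification precisely enough that operator-norm convergence holds uniformly in the weighted Sobolev norms, so that the right inverse $Q_y$ survives the transition across strata. This is the standard subtlety of linear gluing for punctured pseudo-holomorphic curves, but with the estimates of \textsection\ref{sec:gluing} in hand and the nondegeneracy of the Reeb orbits at the pinching necks, the reduction is routine.
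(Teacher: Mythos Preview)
Your approach is essentially the same as the paper's, which simply notes that surjectivity is open within a stratum and defers the cross-stratum case to \cite[Lemma~5.8]{Par19}; you have spelled out the standard right-inverse perturbation argument and identified the gluing estimates of \textsection\ref{sec:gluing} as the relevant input for the second case. One minor slip: the contraction goes the other way. A point $y'$ near $y$ in the Gromov topology has tree type $T_{\varphi'}$ obtained from $T_\varphi$ by \emph{smoothing} nodes, so the morphism in $\scS$ is $T_\varphi \to T_{\varphi'}$, not $T_{\varphi'}\to T_\varphi$; your description of ``pinched necks'' and the pregluing identification is nonetheless the correct picture, so this does not affect the argument.
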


\begin{proof} Over a given stratum, this follows from the fact that regularity is an open condition. To see that it is also an open condition under gluing, refer to \cite[Lemma~5.8]{Par19} and the discussion loc. cit.
\end{proof}

\noindent Define the function $\wt k \cl \cZ \to \bN$ by 
	\[\wt k(\varphi,u) = \inf\{\ell \mid E_\ell \to \coker(D_u)\text{ is surjective}\}.\]
By Lemma \ref{lem:transversality-pointwise}, the function $\wt k$ is well-defined and, by Lemma \ref{lem:regular-locus-open}, it is upper semi-continuous. Thus it achieves a maximum $k$ on the compact set $\set{(\varphi,u)\in\cZ_{\delbar}\mid \zeta_\cU(\varphi,u) = 0}$. Setting $E = E_k$, we obtain the desired perturbation space.\end{proof}

We can now prove Theorem~\ref{thm:pardon-gkc}\eqref{gkc-rel-smooth}. Let $\alpha$ be a perturbation datum. We use the same notation as above. Write $\beta \cl \cBR\to \cB$ for the forgetful map.

\begin{proposition}\label{prop:pre-thickening-smooth} The forgetful map $\cT\to \cBR$ is a $G$-equivariant $C^1_{loc}$-fiber bundle. In particular, it is a topological submersion whose fibers carry a canonical smooth structure. 
\end{proposition}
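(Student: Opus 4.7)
The plan is to build local trivializations near each point $\varphi_0\in\cBR$ by combining a pre-gluing identification of the universal curve with a fiberwise implicit function theorem argument. Fix a point $\varphi_0$ lying in the stratum labeled by a decorated tree $T_0$. On that stratum the universal family $\cC\to\cBR$ is smooth and admits a smooth local trivialization. Near a corner of $\cBR$ corresponding to a contraction $T\to T_0$ that smooths nodes in $\cN_C^\bullet$, the gluing parameters $(t_e)\in[0,\epsilon)^{|\cN_C^\bullet|}$ together with the phases recorded by the matching isomorphisms $m_e$ determine a plumbed smooth Riemann surface $C_\varphi$ in which each node $e$ is replaced by a long neck of length $\sim -\log t_e$. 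Written in the real-oriented blow-up coordinates that define $\cBR$, this plumbing depends $C^1_{loc}$ on $\varphi$.

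Over this trivialized family the fiber of $\cT\to\cBR$ at $\varphi$ is cut out by the section
\begin{equation*}
\scF_\varphi(u,w) \;=\; \delbar_J u \,+\, \mu(w)|_{\graph(\varphi,u)}
\end{equation*}
of a Banach bundle over a Banach manifold of maps $u$ satisfying the prescribed asymptotic and matching compatibilities, modelled on the weighted Sobolev space $W^{k,2,\delta}(\dot C,u^*T\wh Y)$ introduced just before Construction~\ref{con:thickening} (with $\delta$ chosen as in \eqref{eq:small-delta}). The surjectivity clause in Definition~\ref{de:auxiliary-datum} guarantees that for each zero of $\scF_\varphi$ the linearization
\begin{equation*}
D\delbar_J(u) + \mu(\cdot)|_{\graph(\varphi,u)}\cl W^{k,2,\delta}(\dot C,u^*T\wh Y)\oplus E \to W^{k-1,2,\delta}(\dot C,\Omega^{0,1}\otimes_\bC u^*T\wh Y)
\end{equation*}
is surjective. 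A fiberwise application of the implicit function theorem then produces a smooth local manifold of solutions, and elliptic regularity combined with the smoothness of $\mu$ upgrades these solutions to smooth maps on the domain, endowing each fiber with a canonical smooth structure.

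To promote this to a local trivialization that varies $C^1_{loc}$ \emph{across} corner strata of $\cBR$, the plan is to invoke the SFT gluing package developed in \S\ref{sec:gluing}: uniform invertibility, with $C^1$ dependence on the gluing parameters, of approximate right inverses for the linearized operator on the pre-glued surface. Combined with the plumbing construction, these estimates parametrize nearby solutions by the base parameter $\varphi$ together with data in a fixed model fiber, which is the required local trivialization. Because every ingredient---the plumbing of the universal curve, the perturbation space $(E,\mu)$, the slicing map $\zeta_\cU$, and the Banach setup---is constructed $\wh G$-equivariantly, the resulting local trivializations are $G$-equivariant and the smooth structure on the fibers is $G$-invariant. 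The main obstacle, and the reason this proposition is nontrivial, is establishing uniform invertibility of the linearization together with its $C^1$ dependence on the gluing parameters $t_e$ through $-\log t_e$; this in turn forces the choice of exponential weight $\delta$ strictly less than the smallest spectral gap of the asymptotic operators, as ensured by \eqref{eq:small-delta}, and underlies the ``rel--$C^1$'' (rather than smooth) regularity of the resulting global Kuranishi chart.
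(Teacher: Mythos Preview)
Your proposal is correct and takes essentially the same approach as the paper: the paper's proof simply cites Theorem~\ref{thm:gluing-main} and Proposition~\ref{prop:c1loc} from the gluing appendix, and your outline is an accurate unpacking of what those results provide (pre-gluing trivialization of the universal curve, fiberwise implicit function theorem using the surjectivity built into the perturbation datum, and uniform $C^1_{loc}$ dependence on the gluing parameters). One small inaccuracy: the map $\zeta_\cU$ is not part of the data cutting out $\cT$ (it enters only in the obstruction section $\fs_{\text{pre}}$), so it is not needed for this proposition.
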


\begin{proof}This follows from Theorem~\ref{thm:gluing-main} and Proposition~\ref{prop:c1loc}.
\end{proof}

\begin{lemma}\label{lem:group-action} The action $G\times \cT/G\times \cBR\to \cT/\cBR$ is fiberwise locally linear and of class rel--$C^1$.
\end{lemma}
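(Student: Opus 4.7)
The plan is to reduce the statement to a computation in local gluing charts provided by Proposition~\ref{prop:pre-thickening-smooth} and then propagate smoothness of the $G$--action on $\cBR$ into rel--$C^1$ regularity for its lift to $\cT$. Recall that the fiber of $\cT\to \cBR$ over $\varphi$ consists of pairs $(u,w)\in \Maps(\dot C_\varphi, \wh Y)\times E$ solving the perturbed Cauchy--Riemann equation with asymptotic data and matching isomorphisms specified by $\varphi$; the lifted $G$-action sends $(\varphi,u,w)$ to $(g\cdot\varphi, u\g \wh g^{-1}, w)$, where $\wh g^{-1}\cl \cC|_{g\cdot\varphi}\to \cC|_\varphi$ is the biholomorphism coming from the $G$-equivariant structure on the universal family $\cC\to \cBR$.

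First I would fix $(\varphi_0,u_0,w_0)\in \cT$ together with $g_0\in G$, and then use the gluing chart from Theorem~\ref{thm:gluing-main} to obtain neighborhoods $U\ni \varphi_0$ and $U'\ni g_0\cdot\varphi_0$ together with local trivializations $\Pi^{-1}(U)\cong U\times V$ and $\Pi^{-1}(U')\cong U'\times V'$, where $V,V'$ are finite-dimensional real vector spaces (obtained as the kernel of the linearized operator extended by $E$). In these trivializations, the $G$-action $g\cdot : \cT_\varphi \to \cT_{g\cdot\varphi}$ is obtained by pre-composing maps with the biholomorphism $\wh g^{-1}$; since pre-composition is a bounded linear map on the relevant weighted Sobolev spaces $W^{k,2,\delta}$ and commutes with the linearized Cauchy--Riemann operator up to the natural isomorphism of tangent bundles, it descends to a linear isomorphism $V\to V'$ on the local slices. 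This establishes the fiberwise local linearity: the action in these trivializations is a genuinely linear map on $V$.

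Next, I would check that the family of linear maps varies rel--$C^1$ with $(g,\varphi)\in G\times U$. The $G$-action on the base $\cBR$ is smooth by Lemma~\ref{lem:building-base-smooth}, and it lifts to a smooth action on the universal family $\cC\to \cBR$ because $G\sub \PGL_{d+1}(\bC)$ acts by biholomorphisms of $\bP^d$. Hence the family of biholomorphisms $\{\wh g^{-1}\cl \cC|_{g\cdot\varphi}\to \cC|_\varphi\}_{(g,\varphi)\in G\times U}$ is smooth in both parameters, and so is its action on sections by pullback. Since the gluing chart identifications depend $C^1_{\mathrm{loc}}$ on the base and smoothly along the fiber by Proposition~\ref{prop:pre-thickening-smooth}, composing them with this smooth family of pullbacks yields a rel--$C^1$ family of linear isomorphisms $V\to V'$, as required.

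The main obstacle will be keeping track of the compatibility between the gluing chart at $\varphi_0$ and the gluing chart at $g_0\cdot \varphi_0$ in the rel--$C^1$ sense: the gluing construction involves cut-off functions, a choice of pre-gluing, and Newton iteration, all of which are \emph{a priori} made separately at different base points. The key observation that resolves this is that the whole construction is equivariant under biholomorphic reparametrizations of the domain coming from the universal family, so two gluing charts related by the $G$-action differ by an explicit pullback isomorphism rather than an independent construction; hence no loss of regularity occurs when transporting across the $G$-orbit, and the rel--$C^1$ estimate from Proposition~\ref{prop:pre-thickening-smooth} transfers to the action map verbatim.
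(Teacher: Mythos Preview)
Your argument for the rel--$C^1$ part is essentially the same as the paper's: the action lifts to a smooth family of biholomorphisms of the universal family, and the gluing charts of the appendix transport this to rel--$C^1$ regularity of the action on $\cT$. One small correction: the action on the $E$-factor is $w\mapsto g\cdot w$, not $w\mapsto w$, since $E$ is a nontrivial $G$-representation and $\mu$ is $G$-equivariant; this does not affect the argument because the $E$-action is already linear.

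Where you diverge from the paper is in the local linearity step. The paper does \emph{not} argue that the action is literally linear in the gluing charts; instead it first establishes rel--$C^1$ regularity and then invokes the Bochner-type linearization argument of \cite[Lemma~5.9]{HS22} (averaging over the compact group $G$ using the $C^1$ structure on each fiber) to produce, at every point, a chart in which the action of the isotropy is linear. Your route is to use equivariance of the gluing construction: the gluing chart at $(g_0\varphi_0,u_0\g\wh g_0^{-1},g_0 w_0)$ is the $g_0$-pushforward of the chart at $(\varphi_0,u_0,w_0)$, hence the map $\cT_{\varphi_0}\to\cT_{g_0\varphi_0}$ is linear between these two specific charts.

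That observation is correct, but it does not by itself give what ``fiberwise locally linear'' asks for. Your target chart depends on $g_0$; as $g$ varies in a neighborhood of $g_0$, you are comparing the fixed source chart at $\varphi_0$ with the fixed target chart at $g_0\varphi_0$ (not the $g$-translated one), and in those fixed charts the composite $(\text{chart})^{-1}\circ(\text{pullback by }\wh g)\circ(\text{chart})$ need not be linear, because the outer identifications involve the nonlinear Newton iteration. In other words, your equivariance remark shows linearity pointwise in $g$ after a $g$-dependent change of chart, whereas local linearity requires a single chart in which the whole local action is linear. This is precisely what the Bochner argument supplies, and is why the paper cites \cite[Lemma~5.9]{HS22} rather than reading linearity off from the gluing construction.
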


\begin{proof} The action map is continuous by the definition of the topology on $\cT$. The fact that it is of class rel--$C^1$ follows from the gluing result in \textsection\ref{sec:gluing}, while the fiberwise local linearity is a consequence of relative $C^1$-regularity and the same argument as in \cite[Lemma~5.9]{HS22}.
\end{proof}

\begin{lemma}\label{lem:zero-locus-correct} The forgetful map $\fs_{\text{pre}}\inv(0)\to \Mbar^J(T)$ descends to an isomorphism $$\fs_{\text{pre}}\inv(0)/G\,\cong\, \Mbar^J(T)$$ 
    of orbispaces.
\end{lemma}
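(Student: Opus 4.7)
The plan is to reduce the statement to Lemma~\ref{lem:zero-locus-right} by first describing $\fs_{pre}\inv(0)$ explicitly and then invoking the slice property of $\zeta_\cU$.

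First, I would unravel the zero locus. A tuple $(\varphi,u,w)\in \cT$ satisfies $\fs_{pre}(\varphi,u,w) = (w,\zeta_\cU(\varphi,u)) = 0$ precisely when $w = 0$ and $\zeta_\cU(\varphi,u) = 0$. With $w = 0$, the perturbed Cauchy--Riemann equation of Construction~\ref{con:thickening} collapses to $\delbar_J u_v = 0$ on each irreducible component, and combined with the matching and asymptotic conditions already encoded in $\cZ$, this puts $(\varphi,u)\in \cZ_{\delbar}$. Conversely, $(\varphi,u,0)$ lies in $\fs_{pre}\inv(0)$ for every $(\varphi,u)\in \cZ_{\delbar}\cap\zeta_\cU\inv(0)$, so projection onto the first two factors yields a $G$-equivariant homeomorphism
\begin{equation*}
\fs_{pre}\inv(0) \;\cong\; \cZ_{\delbar}\cap \zeta_\cU\inv(0).
\end{equation*}

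Second, I would establish that $\zeta_\cU\inv(0)$ is a \emph{slice} for the $\cG$-action on $\cZ$: every $\cG$-orbit meets it in exactly one $G$-orbit, and at any such point the $G$- and $\cG$-stabilizers coincide. Under the polar-decomposition identification $\cG/G\cong \fp\fu_{d+1}$, the map $\zeta$ in \eqref{eq:reducing-structure-group} is a $\cG$-equivariant classifying map for reducing the structure group from $\cG$ to $G$, and its zero set is a slice on the locus of marked curves with stable domain. The averaging formula \eqref{eq:construction-of-slice-map} preserves the slice property because the partition of unity $\{\chi_i\}$ and each footprint map $\Phi_i$ are $\cG$-equivariant by the axioms of a good covering (Definition~\ref{de:good-covering}), and the slices they define on overlapping patches are mutually compatible. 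This yields an orbispace isomorphism
\begin{equation*}
\bigl(\cZ_{\delbar}\cap \zeta_\cU\inv(0)\bigr)/G \;\xrightarrow{\cong}\; \cZ_{\delbar}/\cG.
\end{equation*}
Combining the two reductions with Lemma~\ref{lem:zero-locus-right} then produces the desired isomorphism.

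The main obstacle I anticipate is rigorously justifying the slice property after averaging: since the $\cG$-action on $\fp\fu_{d+1}$ via polar decomposition is not linear, convex combinations of slice-defining maps are not automatically slice-defining. The combinatorial requirements of Definition~\ref{de:good-covering} are tailored precisely so that, locally around any $(\varphi,u)\in \cZ_{\delbar}$, all footprint maps land in a common $\cG$-equivariant chart in which averaging preserves the relevant equivariance. Isotropy preservation then follows from the fact that $G$ is the full stabilizer of the basepoint of $\cG/G$. Continuity of the induced map on quotients with respect to the Gromov topology follows from the metric description of $\cZ$ in \S\ref{subsec:domain-metrics} together with the closedness argument used in the proof of Lemma~\ref{lem:zero-locus-right}.
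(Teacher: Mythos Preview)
Your approach matches the paper's: reduce to Lemma~\ref{lem:zero-locus-right} via the identification $\fs_{\text{pre}}\inv(0)\cong \cZ_{\delbar}\cap\zeta_\cU\inv(0)$, then use that $\zeta_\cU\inv(0)$ is a $G$-slice for the $\cG$-action. The paper's proof is two sentences and simply invokes ``the $\cG$-equivariance of $\zeta_\cU$'' for the slice step; the averaging subtlety you flag is genuine but is not argued in the paper's proof of this lemma either---it is implicitly deferred to the construction preceding Lemma~\ref{lem:map-to-lie-algebra} and the cited references.
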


\begin{proof} By Lemma~\ref{lem:zero-locus-right}, it suffices to show that $\fs_{\text{pre}}\inv(0)/G\,\cong\, \cZ_{\delbar}/\cG$. This follows from the $\cG$-equivariance of $\zeta_\cU$.
\end{proof}

The obstruction bundle $\cE$ is trivially of class rel--$C^1$; however, the obstruction section $\fs_{\text{pre}}$ is only continuous since the cut-off functions we construct in \textsection\ref{subsec:domain-metrics} are only continuous. 

\begin{lemma}\label{lem:better-obstruction-section} We can replace $\fs_{\text{pre}}$ by a $\wh G$-equivariant obstruction section $\fs$ of class rel--$C^1$ with $$\fs_{\text{pre}}\inv(0) = \fs\inv(0).$$
\end{lemma}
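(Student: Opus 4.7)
The pre-obstruction section decomposes as $\fs_{\text{pre}} = (w \mapsto w) \oplus (\zeta_\cU \g \pi_\cZ)$, whose $E$-component is smooth and linear, so requires no modification. The non-smoothness of $\fs_{\text{pre}}$ is concentrated entirely in the $\fp\fu$-component $\zeta_\cU = \s{i}{\chi_i (\zeta \g \Phi_i)}$, and stems solely from the continuous (rather than rel--$C^1$) cutoffs $\chi_i$. The task thus reduces to producing a $\wh G$-equivariant rel--$C^1$ replacement $\wt\zeta_\cU$ of $\zeta_\cU$ with $\wt\zeta_\cU\inv(0) = \zeta_\cU\inv(0)$.

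By Proposition~\ref{prop:pre-thickening-smooth} and Lemma~\ref{lem:group-action}, the thickening $\cT \to \cBR$ is a rel--$C^1$ fiber bundle with a rel--$C^1$ action of the compact group $\wh G$. The standard construction of $C^k$-partitions of unity on $C^k$-manifolds (combined with averaging over $\wh G$ to ensure equivariance) yields $\wh G$-invariant rel--$C^1$ partitions of unity subordinate to any $\wh G$-invariant open cover of $\cT$. Using this, I construct $\wh G$-invariant rel--$C^1$ cutoffs $\wt\chi_i$ subordinate to $\{U_i\}$, as well as rel--$C^1$ $\wh G$-invariant extensions $\xi_i \cl \cT \to \fp\fu$ of $\zeta \g \Phi_i$ supported in $U_i$, and set
\[
\wt\zeta_\cU \coloneqq \s{i}{\wt\chi_i \cdot \xi_i}, \qquad \fs \coloneqq (w \mapsto w) \oplus (\wt\zeta_\cU \g \pi_\cZ).
\]
This is $\wh G$-equivariant and rel--$C^1$ by construction.

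The main obstacle is the zero set equality $\fs\inv(0) = \fs_{\text{pre}}\inv(0)$, which does not follow from mere $C^0$-closeness of $\wt\zeta_\cU$ to $\zeta_\cU$. To handle it, I observe that near the compact footprint $\fs_{\text{pre}}\inv(0)$, transversality of $u$ to the divisors $D_{i,j}$ (from Definition~\ref{de:good-covering}) combined with compactness implies that the intersection-counting data defining $\chi_i$ are locally constant and vary rel--$C^1$-smoothly. Consequently, I can arrange $\wt\chi_i = \chi_i$ and $\xi_i = \zeta \g \Phi_i$ on a $\wh G$-invariant open neighborhood $V_0$ of the footprint, so that $\wt\zeta_\cU = \zeta_\cU$ on $V_0$ and the zero sets agree within $V_0$. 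On the complement of $V_0$ inside a $\wh G$-invariant neighborhood $W$ of the footprint with $W/\wh G$ relatively compact, I use the lower bound $\|\zeta_\cU\|\geq \epsilon > 0$ and choose the rel--$C^1$ cutoffs close enough to $\chi_i$ in $C^0$ on $W\sm V_0$ so that $\wt\zeta_\cU$ remains nonvanishing there. Outside $W$, a further $\wh G$-equivariant cutoff argument using Lemma~\ref{lem:palais-proper} ensures $\fs \neq 0$. The crux of the proof is making the rel--$C^1$ regularity claim for $\chi_i$ near the footprint precise, by carefully tracking how the divisors $D_{i,j}$ meet curves in a neighborhood of the footprint and verifying that the resulting combinatorial data assemble into rel--$C^1$ functions on $\cT$.
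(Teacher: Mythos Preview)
Your overall plan—modifying only the $\fp\fu$-component by replacing the continuous partition-of-unity functions with rel--$C^1$ ones—matches the spirit of the paper's one-line proof, which defers to a mollification argument in \cite[Lemma~4.55]{AMS23}. You are also correct that each summand $\zeta\circ\Phi_i$ is already rel--$C^1$ on $\cT$ (transversality to the $D_{i,j}$ makes the added marked points, hence $\Phi_i$, vary rel--$C^1$), so the lack of regularity is concentrated entirely in the coefficients $\chi_i$.

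The gap is in your zero-set argument. You assert that ``the intersection-counting data defining $\chi_i$ are locally constant'' near the footprint and conclude that you may arrange $\wt\chi_i=\chi_i$ on a neighborhood $V_0$. This misreads Definition~\ref{de:good-covering}: the transversality data determine the open sets $U_i$, the sections $\sigma_{i,j}$, and hence the maps $\Phi_i$, but the $\chi_i$ themselves are \emph{arbitrary} continuous $\cG$-invariant bump functions supported in $U_i$. They are not built from any intersection data and carry no regularity beyond continuity, so there is no mechanism for setting $\wt\chi_i=\chi_i$ on $V_0$ while keeping $\wt\chi_i$ of class rel--$C^1$. This matters because the zero locus of $\zeta_\cU=\sum_i\chi_i\,(\zeta\circ\Phi_i)$ genuinely depends on the weights: the individual slices $(\zeta\circ\Phi_i)^{-1}(0)$ need not agree (the maps $\Phi_i$ add different marked points, which enter the construction of $\zeta$), so perturbing the $\chi_i$ can move $\zeta_\cU^{-1}(0)$. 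Your $C^0$-closeness bound on $W\setminus V_0$ does not control what happens inside $V_0$, which is exactly where the zeros live. Whatever the precise mollification in \cite{AMS23} does, it is not the ``agree on $V_0$, bound away on the complement'' scheme you outline.
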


\begin{proof} We only have to replace the projection of $\fs_{\text{pre}}$ to the second summand. This follows from a mollification argument as in \cite[Lemma~4.55]{AMS23}.
\end{proof}

\subsection{Leveled buildings}\label{subsec:leveled-gluing} For the flow category, it is important to use leveled buildings and not Pardon buildings. Thus, we prove here that our global Kuranishi charts constructed in \textsection\ref{subsec:construction} also yield global Kuranishi charts for the buildings classically used in SFT.

\begin{theorem}\label{thm:leveled-gkc}
 Suppose $\Gamma^\pm$ are collections of Reeb orbits of total action $ < L$, $\beta$ is a relative homology class, and $\alpha$ is a perturbation datum for $\Mbar^{\, J}(\Gamma^+,\Gamma^-;\beta)$. Then $$\cK^\bR \coloneqq\cBS\times_{\cBR}\cK_\alpha$$ is a global Kuranishi chart with corners for the moduli space $\Mbar_{\sft}^{\, J}(\Gamma^+,\Gamma^-;\beta)$ of leveled SFT buildings. 
\end{theorem}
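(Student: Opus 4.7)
The strategy is to pull back the global Kuranishi chart $\cK_\alpha = (\wh G, \cT_\alpha, \cE_\alpha, \fs_\alpha)$ constructed in Theorem~\ref{thm:pardon-gkc} along the blow-down map $\pi \colon \cBS \to \cBR$, and verify that the resulting tuple is a global Kuranishi chart with corners for $\Mbar^{\,J}_{\sft}(\Gamma^+,\Gamma^-;\beta)$. Concretely, I set $\cT^\bR \coloneqq \cBS \times_{\cBR} \cT_\alpha$, and take $\cE^\bR$ and $\fs^\bR$ to be the pullbacks of $\cE_\alpha$ and $\fs_\alpha$ along the natural projection $\cT^\bR \to \cT_\alpha$.

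The first step will be to check the rel-$C^1$ structure. By Proposition~\ref{prop:pre-thickening-smooth}, $\cT_\alpha \to \cBR$ is a $\wh G$-equivariant rel-$C^1$ topological submersion with smooth fibers. Since $\pi$ is a smooth map of manifolds with corners (being an iterated real-oriented and generalized blow-up), the fiber product $\cT^\bR \to \cBS$ is again a rel-$C^1$ topological submersion, and in particular $\cT^\bR$ inherits the structure of a topological manifold with corners whose corner stratification is indexed by leveled decorated trees. The $\wh G$-action is fiberwise locally linear by pullback of Lemma~\ref{lem:group-action}, and the pulled back obstruction section $\fs^\bR$ remains $\wh G$-equivariant and rel-$C^1$ by Lemma~\ref{lem:better-obstruction-section}.

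The second step is to identify the zero locus. Using the definition of fiber product and Lemma~\ref{lem:zero-locus-correct},
\[
(\fs^\bR)^{-1}(0)/\wh G \;=\; \cBS \times_{\cBR} \bigl(\fs_\alpha^{-1}(0)/\wh G\bigr) \;\cong\; \cBS \times_{\cBR} \Mbar^{\,J}(\Gamma^+,\Gamma^-;\beta).
\]
The main task, and the main obstacle, will be to exhibit a canonical homeomorphism of orbispaces
\[
\cBS \times_{\cBR} \Mbar^{\,J}(\Gamma^+,\Gamma^-;\beta) \;\cong\; \Mbar^{\,J}_{\sft}(\Gamma^+,\Gamma^-;\beta),
\]
compatible with the forgetful maps to $\cBR$ and $\Mbar^{\,J}(\Gamma^+,\Gamma^-;\beta)$.

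On strata both sides are indexed by leveled decorated trees, matching under the construction of $\cBS$ in \textsection\ref{subsec:base-with-levels} and Definition~\ref{de:buildings-with-levels}, so the bijection on points is essentially tautological once one observes that a fiber of $\Mbar^{\,J}_{\sft} \to \Mbar^{\,J}$ over a stratum labeled by $T$ parametrizes level functions together with ratios of relative heights between curves at adjacent levels, while by Equation~\eqref{eq:point-in-blow-up} and Remark~\ref{rem:neighbourhood-of-exceptional-boundary} the fiber of $\cBS \to \cBR$ over the corresponding stratum records exactly the same data through the positive parts $\bP_+$ of projectivized normal bundles. The hard part will be matching the two topologies: the SFT Gromov topology of \cite[\textsection7.3]{BEH03} must be compared with the smooth structure on $\cBS$ near the exceptional faces. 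In the model coordinates $[0,\epsilon)\times \bP_+(N_{Z_{12}/Y})$ of Remark~\ref{rem:neighbourhood-of-exceptional-boundary}, the SFT gluing parameter at a broken Reeb orbit corresponds to the normal coordinate of the exceptional face, while the logarithm of the relative height between adjacent levels corresponds to the position along $\bP_+$; making this correspondence explicit and uniform over the compact space of leveled buildings will require a careful matching of the gluing estimates of \textsection\ref{sec:gluing} with the coordinate charts coming from the iterated blow-up. Once this is established, all remaining axioms of a global Kuranishi chart with corners for $\Mbar^{\,J}_{\sft}(\Gamma^+,\Gamma^-;\beta)$ transfer from $\cK_\alpha$ to $\cK^\bR$ by functoriality of the pullback.
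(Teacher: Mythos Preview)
Your strategy is correct and aligns with the paper's, but the order of operations differs in a way worth noting. You define $\cT^\bR$ as the abstract fiber product $\cBS\times_{\cBR}\cT_\alpha$ and then must identify the zero locus with $\Mbar^{\,J}_{\sft}$. The paper instead defines a \emph{leveled thickening} $\cTR$ geometrically (Definition~\ref{de:leveled-thickening}) as the space of leveled perturbed buildings, and then proves (Corollary~\ref{cor:map-to-leveled-base}) that $\cTR$ is canonically homeomorphic to the fiber product. The advantage of the paper's route is that the identification of the zero locus with $\Mbar^{\,J}_{\sft}$ is immediate from the geometric definition, and the burden shifts to constructing the map $\pi^\bR\colon\cTR\to\cBS$ and showing the square~\eqref{dig:blow-up-square-thickening} is Cartesian. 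This is done via Proposition~\ref{prop:gluing-level}, whose key input is a concrete construction of the lift $\wt\varphi\in\cBS$ from a leveled building $(\varphi,u_0,w)$: one chooses \emph{normalizing} cylindrical ends at each puncture (charts in which $u_0(s,t)=(L_\gamma s,\wt\gamma(t))+O(e^{-ks})$), extracts tangent vectors from these, and invokes Lemma~\ref{lem:tangent-vectors-give-lift}. Your sketch of matching relative heights with the $\bP_+$ coordinates is correct in spirit, but the paper's normalizing-end construction is what makes the correspondence canonical and what lets the gluing charts~\eqref{dig:blow-up-square} patch globally. Either direction proves the theorem; the paper's is more explicit about the geometric content of the fiber of $\cBS\to\cBR$.
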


The proof relies heavily on our adaptation of the gluing theorem in \cite{Par19}, developed in \textsection\ref{sec:gluing}. We first start with a discussion of local charts in the corner blow-up $\cBS$ of $\cBR$ and then prove Proposition~\ref{prop:gluing-level} which is the key ingredient for Theorem~\ref{thm:leveled-gkc}.

\subsubsection{Local charts for $\cBS$}\label{dsc: gluing}
Let $\varphi \in \cBR$ and by abuse of notation let $\cBR|_{T_\varphi}$ denote a small neighborhood of $\varphi$ in the stratum corresponding to the tree type $T_\varphi$. Recall that the gluing results \ref{ssc:base_gluing} prove that for a small enough $\cBR|_{T_\varphi}$, there is a local chart $$g_b\cl G_{T_\varphi} \times \cBR|_{T_\varphi} \to \cBR.$$
Recall from the discussion below Theorem~\ref{thm:corner-blowup} that points in $\cBS$ correspond to points in $\cB$ along with additional data coming from the normal bundles to the corner strata that are blown up. Concretely, an element $p \in \cBS$ lying over the intersection $D_I = \cap_{i\in I} D_i$ of divisors is given by 
\begin{equation}\label{eq:lift-corner-blow-up}
p = (y, [v_1],\dots,[v_I], [v'_{1,1}\oplus\dots\oplus v'_{1,k_1}],, \dots,  [v'_{\ell,1}\dots\dots\oplus v'_{\ell,k_\ell}]) 
\end{equation}
where $y\in D_I$ and $v_i,v_i' \in (N_{D_i/\cB})_y$ are related by a positive scaling. We thus can make the following observation.

\begin{lemma}\label{lem:tangent-vectors-give-lift}
    Given $\varphi\in \cBR$ with underlying decorated tree $T$, a choice of tangent vector $\{s_{v,e}\}_{v\in V(T),e\in E^{\normalfont\text{int}}_v}$ at each marked point determines a lift $\wt\varphi\in \cBS$ of $\varphi$.\qed 
\end{lemma}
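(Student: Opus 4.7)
The plan is to use the explicit local description of the generalized blow-up $\cBS \to \cBR$ from Section~\ref{subsubsec:blowup_to_leveled_base} to construct the lift from the scale data encoded by the tangent vectors.

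First, I would localize around $\varphi$. Let $k$ denote the number of type-$1$ internal edges of $T$ and $S \subset \cBR$ the stratum of tree type $T$ containing $\varphi$. Using Lemma~\ref{lem:stratification-base-space} together with the neighborhood description in Remark~\ref{rem:neighbourhood-of-exceptional-boundary}, a tubular neighborhood of $S$ in $\cBR$ takes the form $S \times [0,\epsilon)^k$, where the $i$-th collar coordinate parameterizes the smoothing at the node corresponding to the $i$-th internal edge. The generalized blow-up replaces the codimension-$k$ face $S \times \{0\}^k$ by $S \times \wt\Delta^{k-1}$, as in the example surrounding Figure~\ref{fig:blowup2}.

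Next, I would extract a point in $\wt\Delta^{k-1}$ from the tangent-vector data. Fix once and for all a $\PGL_{d+1}(\bC)$-invariant fiberwise Hermitian metric on the universal curve $\cC \to \cBR$, obtained by averaging. For each internal edge $e = (v,v')$ of $T$, set $n_e := \|s_{v,e}\|\, \|s_{v',e}\| > 0$. The tuple $(n_{e_1},\ldots, n_{e_k}) \in \bR_{>0}^k$ projects to a point in the open simplex $\Delta^{k-1}$, which canonically lifts to a point in the interior of $\wt\Delta^{k-1}$, since the iterated boundary blow-ups in the construction of $\wt\Delta^{k-1}$ only affect the boundary of the simplex. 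Combining this with $\varphi \in S$ gives the desired lift $\wt\varphi \in \cBS$.

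The main point that needs verification is that this construction is intrinsic and recovers the description given in equation~\eqref{eq:lift-corner-blow-up}. The projective data $[v_i]$ appearing there is already encoded in the matching isomorphisms of $\varphi$, while the additional ratio data $[v'_{j,1}\oplus \ldots \oplus v'_{j,k_j}]$ should match, under the identification~\eqref{eq:intersection-and-projectivization}, the projective class of the sub-tuple of $(n_{e_1},\ldots, n_{e_k})$ indexed by the relevant edges. I expect the only delicate step to be showing independence of the choice of Hermitian metric: any two invariant fiberwise metrics differ by a positive smooth function on $\cC$, and the resulting rescaling of each $n_e$ by a positive factor is absorbed by the projective identifications defining $\wt\Delta^{k-1}$ at each level of the iterated blow-up.
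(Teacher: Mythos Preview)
Your argument contains two genuine gaps.

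First, the identification of the fiber of $\cBS\to\cBR$ over a codimension-$k$ point as $\wt\Delta^{k-1}$ is specific to the example around Figure~\ref{fig:blowup2}, where all internal edges share a pre-level. For a general tree $T$ the refinement of $\sigma_T$ in Definition~\ref{de:leveled-monoid} depends on the combinatorics of $T$, and (as stated immediately after the lemma) a neighborhood of a lift $\wt\varphi$ with level function $\ell$ in the fiber is $\prod_{j}(\Delta^{\#\ell^{-1}(j)-1})^{\text{int}}$. For instance, for the linear tree on three vertices the pre-level function is already maximally leveled, the fiber over such a point is a single point, and there is nothing resembling $\wt\Delta^{1}$ to map into. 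So your target space is wrong for most trees.

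Second, the metric-independence claim fails. Two invariant fiberwise metrics differ by a positive function $f$ on $\cC$, and each $n_e$ gets rescaled by $\sqrt{f(z_{v,e})f(z_{v',e})}$, which varies with $e$. Only a single global positive scalar is absorbed by any one $\bP_{>0}$-quotient, so the projective class of the tuple $(n_e)_e$ genuinely changes.

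The construction the paper has in mind is the metric-free one already set up in equations~\eqref{eq:point-in-blow-up} and~\eqref{eq:lift-corner-blow-up}: for each internal edge $e=(v,v')$ form the actual tensor $v'_e := s_{v,e}\otimes s_{v',e}\in N_e = T_{z_{v,e}}C_v\otimes T_{z_{v',e}}C_{v'}$. When $[v'_e]$ agrees with the matching isomorphism $m_e$ carried by $\varphi$ (which is the situation in which the lemma is applied in Proposition~\ref{prop:gluing-level}), the vector $v'_e$ lies in the inward ray of $N_e$, and the required positive-projective data $[v'_{j,1}\oplus\dots\oplus v'_{j,k_j}]$ in~\eqref{eq:lift-corner-blow-up} are obtained by grouping the edges according to pre-level. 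No norm is needed.
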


Let $(T_\varphi,\ell)$ be the underlying leveled tree of $\wt\varphi$. Then, a sufficiently small neighborhood of $ \wt\varphi$ in the fiber $\cBS|_{\varphi}$ of the blow-down map is diffeomorphic to $ \prod_{j = 1}^{\max\ell}(\Delta^{\#\ell\inv(j)-1})^{\text{int}}$ by the natural isomorphism 
\begin{equation}\label{eq:simplex-interior}
    (\Delta^{k} )^{\text{int}}\, \cong \,(\bR_{>0})^{k+1} /\bR_{>0}.
\end{equation}
The stratum carries a natural action of $(\prod_{i=1}^k\bR^{e\ell\inv(i)}_{>0},\times)$, given by scaling the $v_*'$ terms in~\eqref{eq:lift-corner-blow-up}. We will show that a local chart $g_b\cl G_{T_\varphi} \times \cBR|_{T_\varphi} \to \cBR$ centered at $\varphi$ can be lifted to give a local chart centered at $\wt\varphi$ in the leveled base space $\cBS$. To this end, let $k  = \max\ell -1 $ be the number of `gaps' between levels. The level function induces a function $el \cl E(T_\varphi) \to \bN$ by the relation 
$$e\ell(e) = \ell(v') $$
when $e =(v,v')$ with $\ell(v') > \ell(v)$.
There is a natural embedding,
$$ \varDelta\cl [0,1)^k \times \prod_{j = 1}^{\max\ell}(\Delta^{\#\ell\inv(j)-1})^{\text{int}}\hookrightarrow G_{T_\varphi/}$$  
defined by $$\varDelta(t_1,\dots,t_k,w_1,\dots,w_{k}) = (\Delta^{w_1}_{e\ell\inv(1)} (t_1) , \Delta^{w_2}_{e\ell\inv(2)} (t_2), \dots , \Delta^{w_k}_{e\ell\inv(k)} (t_k))   $$  where $\Delta^{w_i}_{e\ell\inv(i)}(x)$ is the linear embedding of the ray $[0,1) \mapsto [0,1)^{e\ell\inv(i)}$  corresponding to the point $w_i$ in the simplex $\Delta^{\# \ell\inv(i)-1}$. Here we use the extension of~\eqref{eq:simplex-interior} to an isomorphism $$\bR_+^k / \bR_{>0} \cong \Delta^{k-1}.$$ 
See Figure~\ref{fig:blowup2} for a pictorial description.


\begin{lemma}
There exists a diffeomorphism $lg_b$ making
\begin{equation}\label{dig:square-base}
\begin{tikzcd}
 {[}0,1)^k\times \prod_{j = 1}^{\max\ell}(\Delta^{\#\ell\inv(j)-1})^{\text{int}}\times \cBR|_{T_\varphi}\arrow[rr,  "lg_b"] \arrow[d,"\varDelta\times\ide"] && V^\bR \arrow[d] \\
 G_{T_\varphi/}\times  \cBR|_{T_\varphi}\quad\arrow[rr, "g_b"] && \cBR 
\end{tikzcd}
\end{equation}
into a Cartesian square, up to shrinking $\cBR|_{T_{\varphi}}$.
\end{lemma}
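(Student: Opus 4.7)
The plan is to construct $lg_b$ as the lift of the map $g_b \circ (\varDelta \times \ide)$ through the blow-down map $\cBS \to \cBR$, uniquely determined by requiring that the origin $(0,\ldots,0,w_1,\ldots,w_k,\varphi)$ of the domain maps to $\tilde\varphi$, and then to show that this lift is a diffeomorphism onto its image $V^\bR$.

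I would begin by unpacking the local structure of $\cBS$ near $\tilde\varphi$ using the generalized blow-up construction of \textsection\ref{subsubsec:blowup_to_leveled_base} together with Remark~\ref{rem:neighbourhood-of-exceptional-boundary}. The gluing chart $g_b$ identifies a neighborhood of $\varphi$ in $\cBR$ with $G_{T_\varphi/} \times \cBR|_{T_\varphi} = [0,1)^{E^{\normalfont\text{int}}(T_\varphi)} \times \cBR|_{T_\varphi}$. The refinement $\cR(\sigma_{T_\varphi})$ of Definition~\ref{de:leveled-monoid} replaces the deepest corner of this cube by a polytope whose maximal cones correspond to maximally leveled refinements of $(T_\varphi,\ell)$. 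A neighborhood of $\tilde\varphi$ in $\cBS$ is thus modeled on the product of $\cBR|_{T_\varphi}$ with a neighborhood of the face of this polytope corresponding to $(T_\varphi,\ell)$, which factors as a product of simplex interiors (one for each level gap) together with radial $[0,1)$-factors. This is precisely the shape of the source of $lg_b$.

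Next, I would verify that the map $\varDelta$ is the blow-down of this factored local model. For each internal edge $e$ lying in the level gap $i = e\ell(e)$, the $e$-component of $\varDelta(t_1,\ldots,t_k,w_1,\ldots,w_k)$ is the product of the radial parameter $t_i$ with the $e$-component of the simplex point $w_i \in \Delta^{\#e\ell^{-1}(i)-1}$. This matches exactly the explicit local formula for the blow-down map $\cBS \to \cBR$ on the normal cone, as given in Remark~\ref{rem:neighbourhood-of-exceptional-boundary}. Thus, by the universal property of the generalized blow-up, a unique continuous lift $lg_b$ of $g_b \circ (\varDelta \times \ide)$ exists with $lg_b(0,\ldots,0,w_*,\varphi) = \tilde\varphi$, and its image $V^\bR$ is an open neighborhood of $\tilde\varphi$.

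The diffeomorphism property follows from \cite[Theorem~A]{KM15}: the smooth structure on $\cBS$ is defined so that each maximal cone of the refinement is a smooth chart on which the blow-down map has the explicit monomial form above. Since $g_b$ is a diffeomorphism onto its image by the gluing results of \textsection\ref{ssc:base_gluing} and $\varDelta$ is a smooth embedding, the composition $lg_b$ is a diffeomorphism onto $V^\bR$. The Cartesian property then holds because $V^\bR$ is precisely the preimage (inside the chosen chart) of $g_b(\varDelta(\ldots) \times \cBR|_{T_\varphi})$ under the blow-down. The main obstacle will be to verify smoothness across the boundary faces of the simplex factors, where the maximally leveled refinement of $(T_\varphi,\ell)$ changes; this requires passing between adjacent maximal cones of $\cR(\sigma_{T_\varphi})$, and the required smoothness of the transition maps follows from the iterated star subdivision description of $\cR(\sigma_{T_\varphi})$ used in the proof of Lemma~\ref{lem:disconnected-base-well-defined}.
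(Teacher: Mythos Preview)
Your approach is essentially the same as the paper's: the paper's proof is a single sentence invoking the explicit local description of the generalized blow-up from \cite[\textsection3]{KM15} together with the identification of the fiber $\cBS|_\varphi$ with the product of simplex interiors, and your proposal is a detailed unpacking of exactly this. Your only misstep is the ``main obstacle'' you flag at the end: since the domain of $lg_b$ uses the \emph{interiors} $(\Delta^{\#\ell^{-1}(j)-1})^{\text{int}}$ of the simplices, you never cross their boundary faces, so the chart stays inside a single maximal cone of the refinement and no transition between cones is needed.
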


\begin{proof}
    This follows from the explicit local description of the generalized blow-up in \cite[\textsection3]{KM15} and our identification of the fiber $\cBS|_{\varphi}$ with the product of interiors of simplices.
\end{proof}

\subsubsection{Leveled thickening}\label{leveled-thickening}
Fix two collections $\Gamma^\pm$ of Reeb orbits of total action $< L$ and let $\alpha$ be a perturbation datum for the moduli space $\Mbar^{\, J}(\Gamma^+,\Gamma^-;\beta)$ of Pardon buildings of degree $\beta$. Let $\cK = \cK_\alpha$ be the associated global Kuranishi chart constructed in \textsection\ref{subsec:construction} and recall that its base space $\cBR$ is a torus bundle over the blow-up of a subset of regular stable maps in some complex projective space. By \textsection\ref{subsec:base-with-levels}, there exists a further generalized blow-up $\cBS\to \cBR$ based on refining decorated trees to decorated trees with levels.
We now use $\alpha$ to construct a `leveled thickening' $\cTR$ and will show that it is homeomorphic to the pullback of $\cT$ along $\cBS \to \cBR$, see Corollary~\ref{cor:map-to-leveled-base}. 

\begin{definition}\label{de:leveled-thickening}
    The \emph{leveled thickening} $\cTR \coloneq \cc{\cT}^\bR(T)$ is given set-wise by the disjoint union 
    \begin{equation}\label{eq:leveled-thickening}
        \cTR \coloneqq \djun{T'_{\ell'}\to T_{\ell}}\modulo{\cTR(T'_{\ell'})}{\Aut(T'_{\ell'}/T_{\ell})}
    \end{equation}
    where $\cTR(T'_{\ell'})$ is the quotient $\cTR(T'_{\ell'}) = \wt\cTR(T'_{\ell'})/\bR^{\max\ell'}$ of the space $\wt\cTR(T'_{\ell'}) = \{(\varphi,u,w)\}$, where 
    \begin{itemize}[leftmargin=20pt]
        \item $\varphi \in \cBR|_{T'}$,
        \item $u = (u_v)_{v\in V(T'_{\ell'})}$ is a sequence of maps, where 
        \begin{itemize}[leftmargin=15pt]
            \item for $v \in V_{nt}(T'_{\ell'}) = V(T') $ 
        $$u_v \cl (\dot{\cC}|_{\varphi})_v \to \bR\times Y$$ 
        is a smooth map representing $\beta_v$, which is positively/negatively asymptotic near the puncture $z_{v,e}\in (\cC|_{\varphi})_v$ to the trivial cylinder over $\gamma_e$, while
        \item for $v \in V_t(T'_{\ell'})$, $u_v$ is the trivial cylinder over the associated Reeb orbit;
        \end{itemize}
        \item $w\in V_r$ is a vector so that for any $v \in  V_{nt}(T'_{\ell'}) $ we have 
        \begin{equation}
            \delbar_{J}\, u_v\ + \ \mu_k(w)|_{\graph(\check{\varphi}_v,u_v)} \ = \ 0.
        \end{equation}
    \end{itemize}
    We require that $(\varphi,u,w)$ satisfy the regularity assumption~\eqref{eq:perturbations-suffice} and equip $\cTR$ with the Gromov topology.
\end{definition}

\begin{remark}
    An unbranched trivial cylinder is regular by \cite[Lemma~2.40]{Par19}, so it is not an issue that we do not perturb them.
\end{remark}

\begin{proposition}\label{prop:gluing-level}
Fix a point $(\varphi,u_0,w) \in \cTR$. Let $g\cl   G_{T_\varphi/} \times  \cBR|_{T_\varphi} \times N\hookrightarrow  \cT$ denote the gluing map as constructed in the proof of Theorem \ref{thm:gluing-main}. Then, there is a lift $lg$ of the map $g$ to a gluing map for the leveled thickening $\cTR$ such that the following diagram is Cartesian,
\begin{equation}\label{dig:blow-up-square}
\begin{tikzcd}
 {[}0,1)^k\times \cBR|_{T_\varphi}\times \prod_{j = 1}^{\max\ell}(\Delta^{\#\ell\inv(j)-1})^{\text{int}}\times N \arrow[rr,  "lg"] \arrow[d,"\rho"] && U^\bR \arrow[d] \\
 G_{T_\varphi/}\times  \cBR|_{T_\varphi} \times  N\arrow[rr, "g"] && U 
\end{tikzcd}
\end{equation}
where $U^\bR$ and $U$ are neighborhoods of the points $(\varphi,u_0,w)$ and $(\varphi,[u_0],w)$ in $\cTR$ and $\cT$ respectively. 
The vertical arrow on the right is the natural projection map induced by forgetting the level data. The left vertical arrow $p$ is defined by  $$\rho(t,b,x,u,w) = (\varDelta(t,x),b,\cc{u},w),$$
where the map $u\mapsto \cc{u}$ forgets the trivial cylinders in $u$ and quotients by more translations. 
\end{proposition}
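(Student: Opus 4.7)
The plan is to construct $lg$ by promoting the base-level lift $lg_b$ from the Cartesian square~\eqref{dig:square-base} to the full thickening, exploiting the fact that, by Proposition~\ref{prop:pre-thickening-smooth}, both $\cT$ and $\cTR$ are locally (in the base direction) trivial $N$-fibrations whose fibers are controlled by the same perturbation data. Given an input $(t,b,x,u,w)$, I would first set $G := \varDelta(t,x) \in G_{T_\varphi/}$ and feed $(G,b,\cc{u},w)$ into $g$ to produce an element of $\cT$ whose underlying domain has been glued along the Type~$1$ nodes of $T_\varphi$ with parameter $G$. I then promote this to an element of $\cTR$ by decoding $(t,x)$: each $t_i = 0$ preserves the neck between levels $i$ and $i+1$, so that the corresponding vertices of $T'_{\ell'}$ remain on distinct levels with intermediate trivial cylinders inserted over the labelling Reeb orbits, whereas $t_i > 0$ smooths the neck at the rate dictated by the simplex coordinate $x_i$. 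At each level, the simplex coordinates additionally prescribe the choice of representative for the per-level translation quotient $\bR^{\max \ell'}$, thereby recording the relative heights inside the level.

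\textbf{Commutativity and the Cartesian property.}
Commutativity of~\eqref{dig:blow-up-square} is immediate from the construction: the projection $\cTR \to \cT$ forgets exactly the trivial cylinders and the per-vertex-height data recorded by $x$, so its composition with $lg$ reproduces $g \circ (\varDelta \times \ide)$ on the relevant data, which coincides with $g \circ \rho$. To see that~\eqref{dig:blow-up-square} is Cartesian, I would invoke the base-level Cartesian square~\eqref{dig:square-base} together with the fiber-bundle structure of Proposition~\ref{prop:pre-thickening-smooth}: the map $g$ factors through $g_b$ with fiber modelled on $N$, and the same holds for $lg$ over $lg_b$ with the identical fiber $N$, since the perturbation space and the regularity condition~\eqref{eq:perturbations-suffice} depend only on the base curve. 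The Cartesian property of~\eqref{dig:blow-up-square} then follows formally from that of~\eqref{dig:square-base} by a standard fiber-product argument.

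\textbf{Main obstacle.}
The technical heart of the argument is checking that $lg$ is continuous with respect to the Gromov topology on $\cTR$ at the exceptional stratum where some $t_i$ vanish. In the open stratum where all $t_i > 0$, continuity is immediate from Theorem~\ref{thm:gluing-main}. At the exceptional stratum I would match the SFT-style neck stretching as $t_i \to 0^+$ with the description of the exceptional divisor of the generalized blow-up recalled in Remark~\ref{rem:neighbourhood-of-exceptional-boundary}: the lift of each gluing parameter to the positive part of the normal bundle corresponds exactly to the simplex coordinate $x_j$ that prescribes the relative rates at which the several necks in the level-$j$ transition close. This is precisely the compatibility built into the generalized blow-up $\cBS$ in~\textsection\ref{subsec:base-with-levels}, and together with the identification of $\cBS|_\varphi$ with a product of simplex interiors it guarantees both that the image of $lg$ indeed lies in $\cTR$ and that $lg$ is a homeomorphism onto its image. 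The rel--$C^1$ regularity of $lg$ in the fibers of $\cTR \to \cBS$ is then inherited from the corresponding regularity of $g$ from Theorem~\ref{thm:gluing-main}.
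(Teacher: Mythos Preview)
Your overall strategy---promote $lg_b$ to the thickening by exploiting the fiber-bundle structure---is compatible with the paper's, and your discussion of commutativity and the Cartesian property is fine once $lg$ is actually in hand. The gap is in the construction of $lg$ itself.

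You propose to build $lg$ by going around the square: apply $\rho$, then $g$, then ``promote'' the result to $\cTR$ using the simplex coordinates $(t,x)$. But this promotion step is exactly where the geometric content lies, and you do not supply it. An element of $\cT$ does not carry a canonical lift to $\cTR$; the fiber of the forgetful map is the simplex factor, and simply asserting that ``$x$ records the relative heights'' does not produce a leveled building on the glued curve. In particular, when some $t_i = 0$ you must insert trivial cylinders and specify a per-level translation representative, and when $t_i > 0$ you must explain what leveled structure the glued curve inherits. Your description stops at the combinatorial level.

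The paper fills this gap with a concrete geometric input you omit. It first uses the leveled building $u_0$ to choose \emph{normalizing ends}: cylindrical charts at each puncture in which $u_0(s,t) = (L_\gamma s,\wt\gamma(t)) + O(e^{-ks})$. Such a chart determines a tangent vector at the puncture, and via Lemma~\ref{lem:tangent-vectors-give-lift} the collection of these tangent vectors determines a specific lift $\wt\varphi \in \cBS$ of $\varphi$, hence pinning down $lg_b$. The paper then defines $lg$ directly on the $\{0\}^k$-stratum by the tautological formula $lg(lg_b^{-1}(\wt\varphi),[u],w') := (\varphi,u,w')$, and extends it by repeating the gluing construction of Theorem~\ref{thm:gluing-main}, noting that the diagonal embedding $\varDelta$ forces the \emph{target} gluing to insert cylinders of the same length between any two adjacent levels. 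This last observation is what makes the leveled gluing well-defined and is the substantive point you are missing.
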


\begin{proof}
We first choose the lift $\wt\varphi$ of $\varphi$, using the leveled building $u_0$. The key observation is that a cylindrical chart at a puncture $p_{v,e}$ determines a tangent vector in $T_{p_{v,e}}C_v$. Indeed, a cylindrical chart can be compactified to obtain a disc chart $\vartheta \cl \bD^2_r \to C_v$ so that $\vartheta(0)=p_{v,e}$ and the associated tangent vector is $d\vartheta(0)1 \in T_{p_{v,e}}C_v$. Using these tangent vectors, Lemma~\ref{lem:tangent-vectors-give-lift} gives us a lift $\wt\varphi$ of $\varphi$ in $\cBS$. We will see that the leveled building determines cylindrical charts up to first order and thus a lift. We first fix a representative of $u_0$.
As in \S \ref{ssc:base_gluing}, choose sections $q'_v$ of the smooth locus of the universal family $\cC\to \cBR$ near $\varphi$. This determined a unique representative $\ol{u}_0 \cl \cC^o_\varphi \to \bR \times Y$ by requiring that $\ol{u}_0(q'_v) \in \{0\}\times Y$ for each $v$. We can similarly add marked points $q'_v$ to get a representative $\wt{u}_0$ of the leveled building $u_0$ such that $\wt{u}_0(q'_v) = 0$. Fix cylindrical charts near each puncture $p \in \cC|_\varphi$ such that $u_0$ is given by 
\[u_0(s,t) = (L_\gamma s,\wt\gamma(t)) + O(e^{-ks}) \]
in these local coordinates, where $\wt\gamma$ is a parametrization of $\gamma$ given by the base point (which is non-unique if $\gamma$ is multiply-covered). We call such a chart a \textit{normalizing end} and use them together with the above observation to obtain a lift $\wt \varphi$ of $\varphi$ by Lemma~\ref{lem:tangent-vectors-give-lift}. A direct computation shows that $\wt \varphi$ does not depend on the choice of representative of the leveled building $u_0$ or the sections $q'_v$. Using this lift $\wt \varphi$, we get a local chart $g_b\cl G_{T_\varphi} \times \cBR|_{T_\varphi} \to \cBR$ and its lift $lg_b$ as in \eqref{dig:square-base}.

We first construct $lg$ on $\{0\}^k \times \cBR|_{T_\varphi}\times \prod_{j = 1}^{\max\ell}(\Delta^{\#\ell\inv(j)-1})^{\text{int}}\times N$, we can extend it to the whole space by a similar construction as in Theorem \ref{thm:gluing-main}. 
We define ${lg}$ on $\{0\}^k \times \cBR|_{T_\varphi}\times \prod_{j = 1}^{\max\ell}(\Delta^{\#\ell\inv(j)-1})^{\text{int}}\times N$ as 
$${lg} \cl \{0\}^k \times \cBR|_{T_\varphi}\times \prod_{j = 1}^{\max\ell}(\Delta^{\#\ell\inv(j)-1})^{\text{int}}\times N  \to U^\bR|_{(T_\varphi,\ell)}$$
given by the relation
$${lg}\lbr{lg_b\inv(\wt\varphi), [u],w'} := (\varphi,u,w'),$$ 
where $(\varphi,u,w')$ lies in a small enough neighborhood $U^\bR|_{(T_\varphi,\ell)} \subset  \cTR|_{(T_nu,\ell)} $ in the restriction of $\cTR$ to the strata of leveled tree type $(T_\varphi,\ell).$   

Finally, we extend ${lg}$ to obtain the required lift $lg$ of $g$ as stated in the diagram \eqref{dig:blow-up-square} by a construction similar to that of $\Glue$ in the proof of Theorem \ref{thm:gluing-main}. The construction of the diagonal map $\Delta$ ensures that while performing the gluing of the target as in \S \ref{ssc:targ_glu}, we glue cylinders of the same length between two levels. The rest of the gluing construction can be followed verbatim to obtain the leveled gluing map $lg$. 
\end{proof}

We can now prove the key result that will allow us to show that the global Kuranishi chart for $\Mbar^{\, J}(T)$ pulled back along $\cBS\to\cBR$ yields a global Kuranishi for $\Mbar^{\, J}_{\sft}(T_\ell)$. 

\begin{corollary}\label{cor:map-to-leveled-base} There exists a canonical map $\pi^\bR\cl\cTR\to \cBS$ so that the following square 
    \begin{equation}\label{dig:blow-up-square-thickening}
    \begin{tikzcd}
    \cTR \arrow[r,"\Pi"] \arrow[d,"\pi^\bR"] & \cT \arrow[d,"\pi"] \\
    \cBS\arrow[r, ""] & \cBR 
    \end{tikzcd}\end{equation}
is a pullback square. 
\end{corollary}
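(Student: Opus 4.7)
The plan is to first construct the map $\pi^\bR$ pointwise using the normalizing-end construction from the proof of Proposition~\ref{prop:gluing-level}. Given $(\varphi,u_0,w)\in\cTR$ with underlying leveled tree $(T_\varphi,\ell)$, the leveled building $u_0$ determines, at each puncture $z_{v,e}$, a cylindrical chart which I compactify to a disc chart $\vartheta\cl\bD^2_r\to C_v$; its derivative $d\vartheta(0)\cdot 1$ yields a tangent vector at $z_{v,e}$. By Lemma~\ref{lem:tangent-vectors-give-lift}, this collection of tangent vectors determines a lift $\wt\varphi\in\cBS$ of $\varphi$, and I would verify (as in the proof of Proposition~\ref{prop:gluing-level}) that $\wt\varphi$ is independent of the chosen representative of $u_0$ and of the auxiliary sections used to normalize it. Setting $\pi^\bR(\varphi,u_0,w)\coloneqq\wt\varphi$ then yields a commuting square, since both compositions in \eqref{dig:blow-up-square-thickening} land at $\varphi\in\cBR$.

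Next I would show that the induced map $\Psi\cl\cTR\to\cBS\times_{\cBR}\cT$ is a homeomorphism. Continuity of $\Psi$ reduces to continuity of the normalizing-end construction under Gromov convergence of leveled buildings, which can be read off from the explicit form of the gluing map $lg$ in Proposition~\ref{prop:gluing-level}. Bijectivity is the heart of the matter: given a pair $(\wt\varphi,(\varphi,[u],w))$ in the pullback, the datum of $\wt\varphi$ encodes both the level function $\ell$ on $T_\varphi$ and the simplex-valued parameters in $\prod_{j=1}^{\max\ell}(\Delta^{\#\ell\inv(j)-1})^{\mathrm{int}}$ recording the relative heights of components lying in a common level. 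Using these ratios, one can promote the class $[u]$ (taken modulo $\bR^{V(T_\varphi)}$) to a canonical class modulo $\bR^{\max\ell}$, producing a unique preimage in $\cTR$; conversely, this same relation shows injectivity.

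Finally, I would upgrade $\Psi$ from a continuous bijection to a homeomorphism by comparing local charts. Proposition~\ref{prop:gluing-level} supplies, for each point of $\cTR$, a gluing chart $lg$ fitting into the Cartesian square \eqref{dig:blow-up-square}; the bottom row is exactly the gluing chart for $\cT$, and the vertical projection $\varDelta\times\ide$ matches the local description of $\cBS\to\cBR$ in \eqref{dig:square-base}. Consequently, the restriction of $\Psi$ to the image of each gluing chart is identified with the identity between two equivalent descriptions of the same pullback, so $\Psi$ is a local homeomorphism. The main obstacle I anticipate is bookkeeping near deep corner strata where both domain degenerations and target breakings occur simultaneously and different level functions meet; however, this is precisely what the generalized blow-up of \textsection\ref{subsubsec:blowup_to_leveled_base} was designed to resolve, so I expect it to be handled cleanly by invoking the iterative nature of the blow-up and the corresponding iterative structure of the leveled gluing map.
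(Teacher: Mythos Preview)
Your proposal is correct and arrives at the same conclusion, but the paper organizes the argument differently. You define $\pi^\bR$ \emph{pointwise} via the normalizing-end construction, then separately verify well-definedness, continuity, bijectivity onto the pullback, and finally the local-homeomorphism property using the charts of Proposition~\ref{prop:gluing-level}. The paper instead defines $\pi^\bR$ \emph{locally in charts} from the start: covering $\cTR$ and $\cT$ by the domains of gluing charts $lg_i$, $g_{i'}$ and covering $\cBS$, $\cBR$ by the corresponding base charts $lg_{b,i}$, $g_{b,i'}$, one sets $\pi^\bR_i := lg_{b,i}\circ\pr\circ lg_i^{-1}$. Each resulting square is then Cartesian by inspection in coordinates, and the universal property of the fiber product forces the $\pi^\bR_i$ to agree on overlaps, so no separate well-definedness or continuity check is needed. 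Your geometric pointwise description is more intuitive and makes the content of $\pi^\bR$ transparent; the paper's chart-first approach is more economical, since the universal property absorbs several of your verification steps into one. Both routes ultimately rest on the Cartesian square~\eqref{dig:blow-up-square} of Proposition~\ref{prop:gluing-level}.
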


\begin{proof} 
In order to construct $\pi^\bR$, fix open covers $\{U_i^\bR\}_{i\in I^\bR}$ of $\cTR$ and $\{U_i\}_{i\in I}$ of $\cT$ with maps $i \mapsto i'$ so that there exist local charts $lg_i$ and $g_{i'}$ as in~\eqref{dig:blow-up-square} and so that we have diffeomorphisms $lg_{b,i}$ and $g_{b,i'}$ onto $V_i^\bR\sub \cBS$ for $i\in I^\bR$ and $V_{i'} \sub \cBR$ for $i'\in I$ as in \eqref{dig:square-base}, with $\pi(U_{i'}) \sub V_{i'}$ for each $i'$. Moreover, choose the $U_i^\bR$ so that 
$$\Pi\inv(U_{i'}) \,=\, \union{i\in I^\bR_{i'}}{U_{i}^\bR}.$$
for each $j \in I$ and, shrinking the sets $V_i^\bR$, that $V_i^\bR\neq V_{j}^\bR$ for $i\neq j$ with $i'\neq j'$.
Increasing the cover $\{U_i\}_{i\in I}$ trivially, we may assume $I^\bR = I$. Thus, we may define $\pi^\bR_i\cl U_i^\bR\to V_i^\bR$ to be the composition 
\begin{multline}
    U_i^\bR\,\xra{lg_i\inv }\,[0,1)^{k_i}\times \cBR|_{T_i}\times  \prod_{j = 1}^{\max\ell_i}(\Delta^{\#\ell_i\inv(j)-1})^{\text{int}}\times N_i\\\longrightarrow\, [0,1)^{k_i}\times \cBR|_{T_i}\times  \prod_{j = 1}^{\max\ell_i}(\Delta^{\#\ell_i\inv(j)-1})^{\text{int}} \,\xra{lg_{b,i}} V_i^\bR.
\end{multline}
This map makes the square
\begin{equation}\label{dig:blow-up-square-open-subset}
    \begin{tikzcd}
    U_i^\bR \arrow[r,""] \arrow[d,"\pi_i^\bR"] & U_i \arrow[d,"\pi|_{U_i}"] \\
    V_i^\bR\arrow[r, ""] & V_i 
    \end{tikzcd}\end{equation}
    commute due to the commutativity of~\eqref{dig:square-base} and~\eqref{dig:blow-up-square-thickening}. That $U_i^\bR$ is the fiber product can be checked using the local coordinates $lg_i$ and $g_i$ as well as their counterparts on the base, whence it becomes immediate. The universal property of the fiber product implies that $\pi_i^\bR$ agrees with $\pi_j^\bR$ on the intersection $U_i^\bR\cap U_j^\bR$, yielding the map $\pi^\bR$.
    Moreover, it follows from~\eqref{dig:blow-up-square} that
\begin{equation}
    U_i^\bR \,=\, \Pi\inv(U_i) \cap (\pi^\bR)\inv(V_i^\bR).
\end{equation}
    Thus we may conclude by a formal argument, using that the squares~\eqref{dig:blow-up-square-open-subset} are Cartesian.
\end{proof}

\subsubsection{Disconnected buildings}
Let us now here also record the construction of a global Kuranishi chart for moduli spaces of disconnected leveled buildings.  

\begin{definition}\label{de:disconnected-domains}
    Suppose $\Lambda\cl \Gamma^-\to \Gamma^+$ is a function, and let $\beta = (\beta-\gamma)_\gamma\in \Gamma^+$ be a sequence of relative homology classes. We define $\Mbar^{\, J}_{\sft}(\Gamma^+,\Gamma^-;\beta)_\Lambda$ to be the moduli space of leveled buildings with $|\Gamma^+|$ components, where the curve restricted to a connected component is positively asymptotic to a unique Reeb orbit $\gamma\in \Gamma^+$ and negatively asymptotic to the Reeb orbits in $\Lambda_\gamma\coloneqq \Lambda\inv(\gamma)$.
\end{definition}

\noindent The corresponding moduli space $\Mbar^{\,J}(\Gamma^+,\Gamma^-;\beta)_\Lambda$ of Pardon buildings is simply the product 
$$\Mbar^{\,J}(\Gamma^+,\Gamma^-)_\Lambda\coloneqq \p{\gamma\in \Gamma^+}{\Mbar^{\,J}(\gamma,\Lambda_\gamma;\beta_\gamma)}.$$
However, in the case of leveled buildings, we have to incorporate the relative translations between components.

\begin{proposition}\label{prop:disconnected-buildings}
   Let $\cK_{\gamma,\Lambda_\gamma}$ be the global Kuranishi chart constructed for $\Mbar^{\,J}(\gamma,\Lambda_\gamma;\beta_\gamma)$ in Theorem~\ref{thm:pardon-gkc} with base space $\cBR_{\gamma,\Lambda_\gamma}$. Then, letting $\cBS_\Lambda$ be the base space constructed in \S\ref{subsec:base-for-disconnected}, the pullback chart  
   $$\cK^\bR_\Lambda \,\coloneqq\, \cBS_\Lambda\times_{\p{\gamma}{\cBR_{\gamma,\Lambda_\gamma}}}\p{\gamma}{\cK_{\gamma,\Lambda_\gamma}}$$ 
   is a global Kuranishi chart with corners for $\Mbar^{\, J}_{\sft}(\Gamma^+,\Gamma^-;\beta)_\Lambda$.
\end{proposition}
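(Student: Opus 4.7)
The plan is to extend the proof of Theorem~\ref{thm:leveled-gkc} to the disconnected setting, using the specific construction of $\cBS_\Lambda$ in Construction~\ref{con:disconnected-leveled-base}. First, observe that $\prod_\gamma \cK_{\gamma,\Lambda_\gamma}$ is a rel--$C^1$ global Kuranishi chart with corners for the product moduli space $\Mbar^{\,J}(\Gamma^+,\Gamma^-;\beta)_\Lambda = \prod_\gamma \Mbar^{\,J}(\gamma,\Lambda_\gamma;\beta_\gamma)$ of Pardon buildings, since all relevant structure (smoothness, topological submersion, $\wh G$-actions, obstruction bundles, and zero-locus identifications) respects products. Its covering group is $\wh G_\Lambda \coloneqq \prod_\gamma \wh G_\gamma$.

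The next step is to define a disconnected leveled thickening $\cTR_\Lambda$, parallel to Definition~\ref{de:leveled-thickening}, whose elements are tuples $(\varphi, u, w)$ with $\varphi$ lying in the stratum of $\cBS_\Lambda$ corresponding to a leveled forest refining $\sqcup_\gamma T_\gamma$, and where $u = (u_v)_v$ satisfies the perturbed Cauchy--Riemann equation, taken modulo a single overall $\bR$-translation (as opposed to one $\bR$-quotient per connected component in $\prod_\gamma \cT_{\gamma,\Lambda_\gamma}$). The central claim, analogous to Corollary~\ref{cor:map-to-leveled-base}, is that there is a canonical map $\pi^\bR_\Lambda \cl \cTR_\Lambda \to \cBS_\Lambda$ fitting into a Cartesian square
\begin{equation*}
\begin{tikzcd}
\cTR_\Lambda \arrow[r] \arrow[d, "\pi^\bR_\Lambda"] & \prod_\gamma \cT_{\gamma,\Lambda_\gamma} \arrow[d] \\
\cBS_\Lambda \arrow[r] & \prod_\gamma \cBR_{\gamma,\Lambda_\gamma}.
\end{tikzcd}
\end{equation*}
Given this, the pullback of the obstruction bundle and section from $\prod_\gamma \cK_{\gamma,\Lambda_\gamma}$ is canonically identified with an obstruction bundle and section on $\cK^\bR_\Lambda$; its zero locus, modulo $\wh G_\Lambda$ (the overall $\bR$-action having been absorbed in passing from the product thickening to $\cTR_\Lambda$), is homeomorphic to $\Mbar^{\,J}_{\sft}(\Gamma^+,\Gamma^-;\beta)_\Lambda$ by Lemma~\ref{lem:zero-locus-correct} applied component-wise, together with the stratification of $\cBS_\Lambda$ by leveled forests established in Lemma~\ref{lem:disconnected-base-well-defined}.

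To verify the Cartesian square, one adapts Proposition~\ref{prop:gluing-level}. A choice of normalizing cylindrical ends at all punctures across all components determines, via Lemma~\ref{lem:tangent-vectors-give-lift} applied to the appropriate enlargement tracking the $[0,1)^{|\Gamma^+|}$ factor, a lift of a base point in $\prod_\gamma \cBR_{\gamma,\Lambda_\gamma}$ to a specific point of $\cBS_\Lambda$. Near such a point, the local description of $\cBS_\Lambda$ factors as a product of maximally blown-up simplices (encoding the relative positions of components within a common level) and an interval $[0,1)^k$ (encoding the $k$ inter-level neck lengths), by the iterated star subdivision appearing in the proof of Lemma~\ref{lem:disconnected-base-well-defined}. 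Both types of coordinates can be fed into the gluing construction of Theorem~\ref{thm:gluing-main} and Proposition~\ref{prop:gluing-level}: the neck parameters produce target-gluing of cylinders simultaneously between any two adjacent levels across all components, while the simplex coordinates fix the finite relative heights within each level.

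The main obstacle is precisely the simultaneous gluing across multiple components at a given pair of adjacent levels. However, because the asymptotic matching conditions are strictly local at each puncture and the perturbation space $E$ acts on each component independently in Construction~\ref{con:thickening}, the construction of the gluing map proceeds level-by-level in exactly the manner of \S\ref{subsec:leveled-gluing}, with no genuinely new analytic input beyond what is already proved there. The resulting local charts on $\cK^\bR_\Lambda$ assemble using the universal property of fiber products as in Corollary~\ref{cor:map-to-leveled-base}, completing the verification that $\cK^\bR_\Lambda$ is a rel--$C^1$ global Kuranishi chart with corners of the correct virtual dimension for $\Mbar^{\,J}_{\sft}(\Gamma^+,\Gamma^-;\beta)_\Lambda$.
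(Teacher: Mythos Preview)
Your proposal is correct and follows essentially the same route as the paper: the paper's proof is a one-liner stating that the argument is analogous to Theorem~\ref{thm:leveled-gkc}, invoking the ghost-vertex correspondence of Lemma~\ref{lem:from-disconnected-to-connected} to reduce forests to trees. You have effectively unpacked that ``analogous'' argument in detail; the ghost-vertex reduction is already baked into Construction~\ref{con:disconnected-leveled-base} (via the functor $P$ of~\eqref{eq:upgraded-functor}), so by citing that construction you are implicitly using it. One minor wording issue: your description of $\cTR_\Lambda$ as being taken ``modulo a single overall $\bR$-translation'' is only accurate on the top stratum; on deeper strata the quotient is by $\bR^{\max\ell'}$, one factor per level, as in Definition~\ref{de:leveled-thickening}.
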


\begin{proof}
    The proof is analogous to the proof of Theorem~\ref{thm:leveled-gkc}, using the correspondence between forests and trees with a `ghost vertex' discussed at the beginning of \S\ref{subsec:base-for-disconnected}.
\end{proof}
\subsection{Kuranishi charts for buildings in symplectic cobordisms}\label{subsec:gkc-for-cobordism} 
In this subsection we construct global charts for moduli spaces of (leveled) buildings in symplectic cobordisms. Fix thus an exact symplectic cobordism $(\wh X,\omega = d\lambda)$ from $(Y^-,\lambda^-)$ to $(Y^+,\lambda^+)$ as well as open embeddings 
\begin{gather*}\label{eq:contact-ends} 
\Theta^+ \cl (N,\infty) \times Y^+\to \wh X\\ 
	\Theta^- \cl (-\infty,-N) \times Y^-\to \wh X
\end{gather*}
for some $N \gg 0$, so that $(\Theta^\pm)^*\lambda = e^s \lambda^\pm$ and $X \coloneqq \wh X\sm (\im(\Theta^+)\cup \im(\Theta^-))$ is compact.\par 
Fix an $\omega$-compatible almost complex structure $\wh J$ on $\wh X$ whose pullback under $\Theta^\pm$ is an adapted almost complex structure $J^\pm$ on $\wh Y^\pm$.

\subsubsection{Base space}\label{subsec:cobordism-base} The construction of the base space for buildings in symplectic cobordisms is similar.

\begin{definition}\label{de:type-for-exact-cobordism} Given a stable map $\varphi \cl C\to \bP^d$ of genus zero whose domain has a unique node $x$, we say that $x$ is of \emph{type $0$} if it is non-separating or if it separates $C$ into irreducible components $C_0$ and $C_1$ of degree $d_0$, respectively $d_1$ so that 
\begin{equation}\label{eq:degree-comparison-cobordism} 
\mathrm{d}_x:=(d_0-p^+\s{z^+_i \in C_0}{d_i^+}+p^-\s{z^-_j \in C_0}{d_j^-}) - (d_1-p^+\s{z^+_i \in C_1}{d_i^+}+p^-\s{z^-_j \in C_1}{d_j^-}) = 0.
\end{equation}
We say $x$ is of \emph{type $1$} with order $|\mathrm{d}_x|$ otherwise.\end{definition}

We let $\cB'_c$ be the real-oriented blow-up at the normal crossing divisor with irreducible components given by the divisors corresponding to curves with a unique node that has type $1$. Then, we define $\cBR_c$ to be the total space of the torus bundle over $\cB'_c$ obtained by adding asymptotic markers at the marked points.

Recall from Lemma \ref{lem:stratification-base-space} that there is a stratification $P\cl  \cBR \to \scS^o$ which assigns to a map the tree type of its domain. Here, we can upgrade this to a stratification that keeps track of the targets by using the convention discussed in Remark \ref{rem:distinguish-from-degree}. For a vertex $v\in V(T)$ let $D_{C_v}\sub C_v$ be the divisor of special points on $C_v$. Then, define $\ast^\pm(v)$ by setting

\begin{itemize}[leftmargin=20pt]
    \item  $\ast^\pm(v) = 0$ if \[ p^+ \mid \;
\underbracket[0.8pt]{\left(\deg(\varphi\!\mid_{C_v})-\deg(\omega_{C_v}(D_{C_v}))\right)}_{\neq 0}
\]

    \item $\ast^\pm(v) = 1$ if \[ p^- \mid \underbracket[0.8pt]{\left(\deg(\varphi\!\mid_{C_v})-\deg(\omega_{C_v}(D_{C_v}))\right)}_{\neq 0}
\]

    \item if $(\deg(\varphi|_{C_v})-\deg(\omega_{C_v}(D_{C_v}))) = 0$, then \begin{equation}\ast^\pm(v)=
        \begin{cases}
            1 & \text{ if }p^-\mid |\mathrm{d}_x|\\
            0 & \text{ if }p^+\mid |\mathrm{d}_x|,
        \end{cases}
    \end{equation}
    \item if $p^\pm \not \mid  (\deg(\varphi|_{C_v})-\deg(\omega_{C_v}(D_{C_v})))$, then $\ast^+(v)=0$ and $\ast^-(v) = 1$.
\end{itemize}

\begin{remark}
    Notice that the discussion above does not lift the stratification to  $\scS^c$ since not all morphisms of $\scS$ induce morphisms in $\scS^c$. Moreover, at this stage there is not always a function $\ast$ on $E(T)$ for which $(T, \ast, \ast^\pm)$ is a cobordism tree.
    This issue can be resolved by a corner-blow-up construction as explained below.
\end{remark}

In order to obtain the leveled base space $\cBS_c$, we need a preliminary definition. We call $(T,\ast_\pm)$ a \emph{pre-cobordism tree (or forest)} if the functions $\ast_\pm\cl V(T)\to \{0, 1\}$ satisfy the conditions in Definition \ref{de:decorated-cob-tree}. Similarly, $(T,\ast_\pm,\ell)$ is a \emph{leveled pre-cobordism tree} if the level function $\ell$ satisfies the conditions in Definition \ref{de:leveled-cob-tree}.\par
Then, we define the \emph{leveled cobordism base space} $\cBS_c$ to be the space obtained by applying the generalized blow-up on $\cBR_c$ associated to the refinement arising from maximally leveled pre-cobordism trees. This construction is similar to the construction for base space for leveled buildings. We may assume that $\cBS_c$ carries a stratification by leveled cobordism trees by removing the locus corresponding to leveled pre-cobordism trees, which do not support a leveled cobordism tree as in Definition~\ref{de:leveled-cob-tree}.

\subsubsection{Framings}\label{subsec:cobordism-framings} In order to obtain framings of buildings in $\wh X$, we use a similar construction as in \textsection\ref{subsec:framings}. However, we vary it somewhat to ensure that (in most cases) we can already see from the base space which `part' of the cobordism an irreducible component is mapped to.

\begin{definition}\label{de:approximation-cobordism}
    Let  $\cF = \cF^+\sqcup \cF^-$ be a finite set of Reeb orbits of $\lambda^+$ and $\lambda^-$. We call a $1$-form $\wt\lambda$ on $\wh X$ an \emph{$\cF$-integral approximation} if 
\begin{enumerate}[label=(\roman*),leftmargin=25pt,ref=\roman*]
	\item $(\Theta^\pm)^*\wt\lambda = e^s\wt\lambda^\pm$, where $\wt\lambda^\pm$ is an $\cF^\pm$-integral approximation,
    \item $\forall \Gamma^\pm \sub \cF\,:\, \cA_\lambda(\Gamma^+)-\cA_\lambda(\Gamma^-) > 0 \,\rimp \, \cA_{\wt\lambda}(\Gamma^+)-\cA_{wt\lambda}(\Gamma^-) > 0$.
\end{enumerate}
\end{definition}

The existence of $\wt \lambda$ follows from the same argument as in Lemma \ref{lem:contact-approximation}. Fix two prime numbers $p^\pm$ so that 
\begin{equation}\label{eq:primes-for-base}
    p^- > \s{\gamma\in \scF^+}{\cA_{\wt\lambda}(\gamma)}\qquad \text{and}\qquad p^+ \gg p^-
\end{equation}
Given a smooth stable building $u = (u_v,C_v,z_{v,*}^\pm)$ with dual graph $T$, recall that the vertices of $T$ are decorated with a pair of symbols $*^\pm(v)\in \{+,-\}^2$\footnote{In Definition \ref{de:decorated-cob-tree} we had that $\ast^\pm$ took values in $\{ 0,1\}$. We abuse notation in this subsection by identifying $0\sim +, 1\sim -$ to reduce clutter in \eqref{eq:cob-bundle}.}
so that $u_v$ maps to $Y^{*^\pm(v)}$ if the two symbols agree and to $\wh X$ if they disagree. Define for $v \in V(T)$ the line bundle 
\begin{equation}\label{eq:cob-bundle} 
L_{u,v} \coloneqq \cO_{C_v}\lbr{p^{*^+(v)}\s{e \in E_v^+}{\int\gamma_e^*\wt\lambda}-p^{*^-(v)}\s{e \in E_v^-}{\int\gamma_e^*\wt\lambda}}. 
\end{equation}
As before, this yields a holomorphic line bundle 
\begin{equation}
    \fL_u \coloneqq \omega_C(D) \otimes L_u
\end{equation} 
on $C$, where $D$ is the divisor of marked points, which is unique up to holomorphic isomorphism. By the definition of the $\cF$-integral approximation and the choice of $p^\pm$, $\deg(\fL_u|_{C'}) > 0$ for each irreducible component $C'$ of $C$. Thus, we obtain a $\PGL_{d+1}(\bC)$-orbit in $\cB_{\Gamma^+,\Gamma^-}(d)$ as before, where $d = \deg(\fL_u)$.

\begin{remark}\label{rem:distinguish-from-degree}
    The numbers $p^\pm$ were chosen in this specific way so that one can see from the framing which irreducible components are mapped to the ends $\wh Y^\pm$ and which are mapped to $\wh X$ proper. Concretely, given a framing $\varphi \cl C\hkra \bP^d$ associated to the line bundle $\fL_u$, an irreducible component $C'\sub C$ is mapped to $\wh Y^\pm$ if and only if  
    \[ p^\pm \mid \underbracket{ \deg(\varphi|_{C'})-\deg(\omega_C|_{C'})}_{\neq0}\]
    \noindent while in the case where $\deg(\varphi|_{C'})-\deg(\omega_{C}|_{C'})=0$, the component $C'$ is mapped to $\wh Y^\pm$ if and only if $p^\pm \mid |\mathrm{d}_x|$ where $x$ is any type 1 node in $C'$.
\end{remark}

\subsubsection{Families and local models}\label{subsec:families-and-local-models} We can now define the analogue of the infinite-dimensional spaces $\cZ$ of \textsection\ref{subsec:domain-metrics}. Fix a decorated corolla $T$ in $\scS^c$ and let $\wt\lambda$ be a $\cP(T)$-integral approximation, where we can define $\cP(T)$ for a cobordism tree as in Equation~\eqref{eq:tree-reeb-orbits}. Let $\Gamma^\pm$ be the Reeb orbits labeling the positive and negative edges of $T$, and set $L =\cA_\lambda(\Gamma^+)$. Fix a constant $\kappa_L > 0$ so that any non-trivial $J$-holomorphic map between Reeb orbits of action at most $L$ has $\omega$-energy at least $2\kappa_L$.

\begin{definition}\label{de:family-of-cobordism-buildings}
We define $\cZ^c = \cZ^c_{\wt\lambda}(T)$ to consist of tuples $(\varphi,T',u)$ of the form 
\begin{enumerate}[label=\roman*),leftmargin=20pt,ref=\roman*]
    \item $\varphi\in \cBR$ with image $\wch\varphi\in \cB$;
    \item $T'\in \scS^c_{/T}$ is mapped to $T_\varphi$ by the forgetful functor $\scS^c\to \scS$ and $\wch\varphi\in \cB$ satisfies 
    \begin{equation*}
        \deg(\wch\varphi_v) = |D_v|-2 + p^{\star^+(v)}\s{\gamma\in E^+_v}{\int \gamma_e^*\wt\lambda} -  p^{\star^-(v)}\s{\gamma\in E^-_v}{\int \gamma_e^*\wt\lambda}
    \end{equation*}
    for each $v \in V(T_\varphi)$, where $D_v$ is the divisor of special points on $C_v$;
    \item $u = (u_v)_{v\in V(T')}$ is a collection of maps where 
    \begin{enumerate}[leftmargin=15pt]
        \item $u_v$ is an equivalence class of smooth maps $\dot{C}_v \to \wh Y^+$ up to translation if $*^\pm(v) = 0$,
        \item $u_v$ is an equivalence class of smooth maps $\dot{C}_v \to \wh Y^-$ up to translation if $*^\pm(v) = 1$,
        \item $u_v$ is a smooth map $\dot{C}_v \to \wh X$ whenever $v$ is a nontrivial vertex with $\ast^+(v) = 0$ and $*^-(v) =1$ and a trivial cylinder over the associated Reeb orbit if $v$ is trivial, 
    \end{enumerate}
    such that 
	\begin{itemize}[leftmargin=20pt]
    \setlength\itemsep{2.5pt}
	\item $u_v$ is $J$-holomorphic near the punctures of $\dot{C}_v$;
	\item if $x \in C_v$ is a positive/negative node of type $1$, then $u_v$ is positively/negatively asymptotic to a Reeb orbit $\gamma\in \cP^\pm$ near $x\in C$;
    \item $\int_{C_v}{u_v}^*\omega \geq 0$;
	\item if $C_v$ is unstable, then $\int_{C_v}u^*_v\omega \geq \kappa_L$.
	\end{itemize}
    \end{enumerate}
     We write $\cZ^c_{\delbar}$ for the locus of $J$-holomorphic elements of $\cZ^c$.
\end{definition}

\begin{remark}
    $V(T')$ is only bigger than $V(T_\varphi)$ if $\wh X$ is the trivial cobordism from $(Y,\lambda)$ to itself. In this case, the base does not capture the full stratification of $\cZ^c$ and the resulting thickening will be a rel--$C^1$ manifold with boundary (in the fibers). This will be important in \textsection\ref{subsec:stable-complex}. 
\end{remark}

\begin{lemma}\label{lem:stratification-cobordism-family}
    $\cZ^c$ carries a canonical topology, which is stratified by $\scS^c$.
\end{lemma}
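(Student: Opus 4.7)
The plan is to transcribe the construction of the topology on $\cZ$ given in \S\ref{subsec:domain-metrics} to the cobordism setting, using the family $\cBR_c$ of \S\ref{subsec:cobordism-base} in place of $\cBR$. I would fix $\cG$-invariant metrics $d_\cB$ and $d_\cC$ on $\cBR_c$ and on the pullback universal family $\cC\to \cZ^c$. Given a point $(\varphi,T',u)\in \cZ^c$ and $\epsilon > 0$, the $\epsilon$-neighborhood $\cN_\epsilon(\varphi,T',u)$ would consist of those $(\varphi',T'',u')$ satisfying $d_\cB(\varphi,\varphi') < \epsilon$; with the $\bR^{V_s(T')}$-orbits of the graphs of $u$ and $u'$ (where $V_s\sub V(T')$ denotes the symplectization vertices, the translation action on the $\wh Y^\pm$-components being lifted to $\wh X$ via the collar embeddings $\Theta^\pm$) within $\epsilon$ in the Gromov--Hausdorff metric on $\cC_{\ge\epsilon}$; and with $u'$ converging to the trivial cylinder over $\gamma_e$ at each internal edge $e$, as in Definition~\ref{de:family-of-tree}. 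These neighborhoods form the basis of a topology, and $\cG$-invariance of the metrics propagates to $\cG$-invariance of the topology.

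The stratification map will be $P^c\cl \cZ^c \to \scS^c$ sending $(\varphi,T',u)\mapsto T'$. The underlying decorated tree $T_\varphi$ varies according to Lemma~\ref{lem:stratification-base-space}, so the only new content is the decorations $\star^\pm$ on vertices and $\star$ on edges. These are determined by the target space of each component: a vertex $v$ has $(\star^+(v),\star^-(v))\in \{(0,0),(1,1),(0,1)\}$ depending on whether $u_v$ maps to $\wh Y^+$, $\wh Y^-$, or $\wh X$, and an edge $e$ has $\star(e)\in \{0,1\}$ according to whether $\gamma_e$ is a Reeb orbit of $\lambda^+$ or $\lambda^-$. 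The tree $T'$ may contract more edges than $T_\varphi$, namely those separating two symplectization components that degenerate via a non-trivial translation; Gromov--Hausdorff closeness of the translated graphs forces these additional decorations to stabilize along convergent sequences, while the energy threshold $\kappa_L$ prevents an unstable symplectization component from collapsing in the limit. Hence a small neighborhood of $(\varphi,T',u)$ is mapped by $P^c$ into $\scS^c_{/T'}$.

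The hard part will be verifying that the $\star^\pm$-decoration behaves correctly when an $\wh X$-component is joined to a chain of $\wh Y^\pm$-components at a degeneration: one must check that the types on each side of a contracted edge in a morphism $T''\to T'$ are compatible with Definition~\ref{de:type-for-exact-cobordism} and the choice of primes $p^\pm$ in \eqref{eq:primes-for-base}, so that $T''\to T'$ is indeed a morphism in $\scS^c$ and not merely in $\scS$. This follows from the observation in Remark~\ref{rem:distinguish-from-degree} that the decoration can be read off from the degree modulo $p^\pm$ together with the number of special points of each component, quantities that are preserved under the gluing morphism on $\cBR_c$. Together with the continuity of $P^c$ with respect to the poset topology on $\scS^c$, this yields the desired stratification.
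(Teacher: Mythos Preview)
Your outline is on the right track but leaves the topology under-specified by glossing over the one genuinely new phenomenon in the cobordism case.

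In $\scS^c$, unlike in $\scS$, there are morphisms $T' \to T''$ that contract no edge at all: a symplectization vertex with $(\star^+(v),\star^-(v)) \in \{(0,0),(1,1)\}$ can change into a cobordism vertex with $(\star^+,\star^-) = (0,1)$ while the underlying tree stays fixed. Geometrically, a component mapping to $\wh Y^\pm$ slides into $\wh X$ through the collar without any breaking of the domain. Your $\epsilon$-neighborhood does not say how to compare such a pair: $u_v$ is an $\bR$-equivalence class of maps to $\wh Y^\pm$, while the nearby $u'_v$ is an actual map to $\wh X$, and an ``$\bR^{V_s(T')}$-orbit of the graph of $u'$'' is not well-defined on that component. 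Your parenthetical about lifting via $\Theta^\pm$ is the right intuition, but $\Theta^\pm$ is only defined on a half-cylinder, so one must first translate $u_v$ by some amount before composing, and that translation is itself a free parameter. The paper makes this precise by introducing an explicit local model $G^c_{T'/}$ (following \cite[\S2.5]{Par19}) that carries, in addition to the edge parameters $g_e$, a parameter $g_v\in(0,\infty]$ for each such vertex, subject to the linear constraint $g_v = g_e + g_{v'}$ along edges; a point $\textbf{g}\in G^c_{T'/}$ then determines target-gluing maps $\pi_{\textbf{g}}\colon \wh X_v \to \wh X_{\pi_{\textbf{g}}(v)}$, and the comparison is made vertex-by-vertex between $\pi_{\textbf{g}}\circ u_v$ and $u'_{\pi_{\textbf{g}}(v)}$ (with no $\bR$-orbit taken when the target is the cobordism). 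This local model, together with its natural stratification by $\scS^c_{T'/}$, is the technical content of the proof.

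Two smaller points. You write that $T'$ may contract more edges than $T_\varphi$, but by Definition~\ref{de:family-of-cobordism-buildings}(ii) the forgetful functor $\scS^c \to \scS$ sends $T'$ to $T_\varphi$; they have the same underlying tree, and $T'$ only records the extra $\star,\star^\pm$ decorations. Also, a neighborhood of the $T'$-stratum meets tree types in the coslice $\scS^c_{T'/}$, not the slice $\scS^c_{/T'}$.
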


\begin{proof}
 We equip $\cZ^c$ with the following topology. We use $\cC_{\ge \epsilon}$ as defined in \eqref{eq:curlyCnbd}. Given $(\varphi,T', u)\in \cZ^c$, define $\cN_\epsilon(\varphi,T',u)$ to be the subset of points $(\varphi',T'',u')$ such that
\begin{itemize}[leftmargin=20pt]
\setlength\itemsep{1pt}
	\item $d_\cB(\varphi,\varphi') < \epsilon$;
        \item 
        $T' \xrightarrow{\pi_{\textbf{g}}}T''$ for $\textbf{g}$ in an $\epsilon$-neighborhood of $0$ in $G^c_{T'/}$
	\item the (orbits of the) graphs satisfy 
    \begin{equation*}
    d_{\text{H}}\lbr{\bR\cdot\graph(\varphi_v,u_v)|_{\cC_{\ge\epsilon}},\bR\cdot\graph(\varphi'_{\pi_\textbf{g}(v)},u'_{\pi_\textbf{g}(v)})|_{\cC_{\ge\epsilon}}} < \epsilon
	\end{equation*} 
	in the Gromov-Hausdorff metric for every vertex $v$ such that $\pi_\textbf{g}(v)$ is a symplectization vertex, and we choose any representatives of the classes $[u_v]$ and $[u'_{\pi_\textbf{g}(v)}]$; 
    \item the graphs satisfy
    $$d_{\text{H}}\lbr{\graph(\varphi_v,\pi_g\circ u_v)|_{\cC_{\ge\epsilon}},\graph(\varphi'_{\pi_\textbf{g}(v)},u'_{\pi_\textbf{g}(v)})|_{\cC_{\ge\epsilon}}} < \epsilon$$
    for vertices $v$ such that $\ast(\pi_g(v)) = 01$,
	\item for $e \in E(T'')$ with associated Reeb orbit $\gamma_e$ and corresponding node $x_e \in \cC_\varphi$ we have $$d_Y(u'_Y(z),\gamma_e) < \epsilon$$ 
	for any $z \in \cC_{\varphi'}$ with $d_\cC(z,x_e) \leq \epsilon$.
\end{itemize}
Similar to $G_{\text{II}}$ in \cite[\S 2.5]{Par19}, the gluing parameter space 
\begin{equation}\label{eq:local-model-cobordisms}
G^c_{T/}:= 
\left\{
\bigl(\{g_{e}\}_{e}, \{g_{v}\}_{v}\bigr) \in (0,\infty]^{E^{\mathrm{int}}(T)} \times (0,\infty]^{E_{00}(T)}
\;\middle|\;
g_{v} = g_{e} + g_{v'} \;\; \text{for }e= (v,v') \text{ with } v \in E_{00}(T)
\right\}
\end{equation}
 models neighborhoods of cobordism buildings. In the equation above, we interpret $g_{v'}=0$ if $v'\not \in E_{00}(T)$. There is a natural stratification of $G^c_{T/} \to (\scS^c)_{T/}$ obtained by sending $(\{ g_e\}, \{ g_v\})$ to the map $\pi: T \to T'$
such that
\begin{itemize}
    \item the edge $e$ is contracted if $g_e< \infty$
    \item $\ast(v)=00$ is changed to $\ast(v)=01$ if $g_v<\infty$.
\end{itemize}
Conceptually, we view $G^c$ as the parameter space of gluing the \textit{target} of the cobordism buildings. Any element $\textbf{g}\in G^c_{T/}$ naturally induces a collection of maps $$\{ \pi_* : \wh{X}_v \to \wh{X}_{\pi(v)} \}_{v\in V(T)}$$ 
where \begin{itemize}
    \item maps of the type  $\wh Y^\pm \to \wh Y^\pm$ are allowed to be
any $\bR$-translation
\item maps of the type $\wh X  \to \wh X$ must be the identity
\item maps of the type $\wh Y^\pm \to \wh X$ are the pre-composition of the relevant boundary collar identifying the ends of $\wh X$ with $\wh Y^\pm$ with any $\bR-$translation of $\wh Y^\pm$.
\end{itemize}
We denote by $0$ the unique element in $G^c$ corresponding to performing no gluing. An $\epsilon$-neighborhood of $0$ is defined via the natural identification $[0,1) \cong (1,\infty]$ via $t \mapsto 1/t$.
 It follows from the construction of $\cZ^c$ that this topology is equipped with a natural stratification by $\scS^c$.
\end{proof}

\noindent The following properties are shown by the same arguments as Lemma~\ref{lem:palais-proper} and Lemma~\ref{lem:zero-locus-correct}.

\begin{lemma}
    The $G_\bC$-action on $\cZ^c$ is Palais proper.\qed
\end{lemma}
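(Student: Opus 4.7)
The plan is to follow the strategy of Lemma \ref{lem:palais-proper} almost verbatim, with minor modifications to accommodate the three types of vertices allowed in a decorated cobordism tree. The key device there was to extend each punctured map $u_v$ continuously to a map from the real-oriented blow-up $\wt C_v$ of $C_v$ at the punctures and nodes, obtain a uniform Lipschitz bound on this extension with values in some compact target, and then apply \cite[Lemma~4.13]{AMS23}.

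For each vertex $v \in V(T')$ I would distinguish by $\ast^\pm(v)$. On symplectization vertices (those with $\ast^+(v) = \ast^-(v) \in \{0,1\}$), the map $u_v$ is only defined up to $\bR$-translation with target $\wh Y^\pm$, and the argument of Lemma \ref{lem:palais-proper} applies verbatim after replacing $Y$ by $Y^\pm$. On cobordism vertices (those with $\ast^+(v) = 0$ and $\ast^-(v) = 1$), the map $u_v \cl \dot C_v \to \wh X$ has genuine target $\wh X$, which is non-compact; however, the uniform bound on the Hofer energy (depending only on the actions of the Reeb orbits labeling $T$) together with the asymptotic analysis of \cite{HWZI} confines the image of $u_v$ to a compact region of $\wh X$ modulo cylindrical neighborhoods of the punctures where $u_v$ is close to a trivial cylinder. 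Precomposing with the canonical retraction $\wh X \to X \cup Y^+ \cup Y^-$ that collapses the $\bR$-factor on the cylindrical ends $\im(\Theta^\pm)$ produces a continuous extension $\bar u_v$ of (the composition of) $u_v$ to $\wt C_v$ with target a fixed compact space.

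The Lipschitz constant of $\bar u_v$ is finite by the same reasoning as in Lemma \ref{lem:palais-proper}: the derivative is bounded away from the puncture circles by standard compactness, and on a collar neighborhood of each puncture circle one has exponential convergence to the trivial cylinder. Uniformity of these bounds over local neighborhoods in $\cZ^c$, including across strata of different tree type, is built into the Gromov-type topology specified in Lemma \ref{lem:stratification-cobordism-family}; once this is in place, \cite[Lemma~4.13]{AMS23} yields Palais properness of the $G_\bC$-action directly.

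The only step requiring more than bookkeeping is the uniformity of the extension and Lipschitz bounds across changes of tree type; that is, when gluing parameters collapse a cobordism vertex to a pair of adjacent vertices or vice versa. This uniformity should, however, follow from the gluing estimates already developed for the unleveled thickening in \textsection\ref{subsec:construction} together with the translation compatibility built into the gluing parameter space $G^c_{T/}$ of Equation \eqref{eq:local-model-cobordisms}. The remaining arguments are a direct translation of the symplectization case.
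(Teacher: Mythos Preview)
Your approach is correct and matches the paper's, which simply defers to the argument of Lemma~\ref{lem:palais-proper} without further elaboration. Your retraction $\wh X \to X \cup Y^+ \cup Y^-$ for cobordism vertices is the natural adaptation of the projection to $Y$ used there; the Hofer-energy digression and the worry about uniformity across tree-type changes are more than strictly needed (the Lipschitz bound is local and the retraction already lands in a compact target regardless of where $u_v$ goes in the interior), but harmless.
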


\begin{lemma}
The induced map $$\psi\cl \modulo{\cZ^c_{\delbar}}{\PGL_{d+1}(\bC)} \to \Mbar^{\wh X,\,J}(T)$$ is an isomorphism.\qed
\end{lemma}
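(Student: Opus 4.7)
The plan is to mirror the proof of Lemma~\ref{lem:zero-locus-right}, adapting each step to accommodate the additional stratification by the cobordism tree functions $\ast^\pm$ and the possibility that the curve maps to the three different ``target regions'' $\wh Y^+$, $\wh X$, and $\wh Y^-$. First I will verify that the footprint map $\psi$ is well defined and bijective onto $\Mbar^{\wh X,\,J}(T)$. Surjectivity follows from the framing construction of \textsection\ref{subsec:cobordism-framings}: given a $J$-holomorphic cobordism building $(u,C,z_*,m_*)$ of type $T'\to T$, the choice of primes $p^\pm$ as in \eqref{eq:primes-for-base} together with the $\cP(T)$-integral approximation $\wt\lambda$ produces a very ample line bundle $\fL_u$ on the domain $C$ whose multi-degree on each irreducible component detects whether that component maps to $\wh Y^+$, $\wh X$, or $\wh Y^-$ (cf.\ Remark~\ref{rem:distinguish-from-degree}). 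A choice of basis of $H^0(C,\fL_u)$ produces an element of $\cZ^c_{\delbar}$ lifting $(u,C,z_*,m_*)$, and two such bases differ by an element of $\PGL_{d+1}(\bC)$, giving injectivity on the quotient. The additional matching and asymptotic data in Definition~\ref{de:family-of-cobordism-buildings} match exactly with the data in Definition~\ref{de:cob-holomorphic-building}.

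Next, I will establish continuity of $\psi$. This is essentially built into the definition of the topology on $\cZ^c$ given in the proof of Lemma~\ref{lem:stratification-cobordism-family}: the $\epsilon$-neighborhoods are modeled precisely on Gromov--Hausdorff convergence of graphs on compact subsets of $\cC^\circ$ together with control of the asymptotic behavior near the punctures. This matches the Gromov topology on $\Mbar^{\wh X,\,J}(T)$ as defined in \cite{BEH03}, with the minor subtlety that one must check that the stratification-refining data (the choice of $T'\in \scS^c_{/T}$, which records whether a vertex carries a $00$, $01$, or $11$ target) varies correctly under limits; this is precisely the role of the cobordism gluing parameter space $G^c_{T/}$ introduced in~\eqref{eq:local-model-cobordisms}.

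The main work, and the key obstacle, is proving openness of $\psi$, equivalently closedness. As in Lemma~\ref{lem:zero-locus-right}, I will argue this locally near an arbitrary point $[u,C]\in\Mbar^{\wh X,\,J}(T)$. Using an adaptation of \cite[Proposition 3.26]{Par19} to the cobordism setting, I will produce a finite collection of codimension-two submanifolds $D_i\sub \wh X$ (with pieces lying in the cylindrical ends as well as in $X$ proper) so that $u_v\pitchfork D_i$ for each component and so that adding the preimages as marked points stabilizes the domain; this yields a continuous footprint map $\ff$ to a moduli space of stable punctured curves. Choosing local sections $\rho_1,\dots,\rho_{d}$ of the universal family near $\ff([u,C])$ with multiplicities matching $\deg(\fL_u|_{C_v})$ on each irreducible component — here carefully taking into account the target-region decomposition determined by $p^\pm$ — defines a holomorphic line bundle $\wch\cL$ over a neighborhood whose restriction to $[u,C]$ has the same multi-degree as $\fL_u$. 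As in \cite[Lemma 4.8]{HS22}, the forgetful map $\cZ^c_{\delbar}\to \Mbar^{\wh X,\,J}(T)$ is then locally the projectivization of a continuous orbi-bundle and hence closed.

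The main subtlety to watch for is that, unlike in the symplectization case, the targets on different irreducible components can be distinct, so the section count~\eqref{eq:number-of-sections} must be performed separately according to whether a component maps to $\wh Y^\pm$ or $\wh X$; the choice of $p^+\gg p^-$ in~\eqref{eq:primes-for-base} ensures the multi-degrees stay distinguishable, so the entire argument goes through verbatim once the bookkeeping is in place.
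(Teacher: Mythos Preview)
Your proposal is correct and follows exactly the approach the paper indicates: the paper gives no proof of this lemma beyond the sentence ``The following properties are shown by the same arguments as Lemma~\ref{lem:palais-proper} and Lemma~\ref{lem:zero-locus-correct}'' and a \qed, and you have faithfully carried out the adaptation of the proof of Lemma~\ref{lem:zero-locus-right} to the cobordism setting, correctly flagging the bookkeeping needed for the target decomposition via $p^\pm$.
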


\subsubsection{Construction}\label{subsec:construction-cobordism} Recall that we have fixed a corolla $T$ with degree $\beta$ and positive/negative edges labeled by $\Gamma^+$ and $\Gamma^-$ respectively. We will write $\Mbar^{\wh X,J}(\Gamma^+,\Gamma^-;\beta) = \Mbar^{\wh X,J}(T)$ from now on to make the step to leveled buildings in Corollary~\ref{cor:leveled-gkc-cobordism} notationally easier.

\begin{definition} A \emph{pre-perturbation datum} $\fD = (\wt\lambda,\conn,p^\pm)$ for $\Mbar^{\wh X,J}(\Gamma^+,\Gamma^-;\beta)$
consists of 
	\begin{itemize}[leftmargin=20pt]
		\item a $\cP(\Gamma^+,\Gamma^-)$-integral approximation $\wt\lambda$ of $\lambda$ as in Definition~\ref{de:approximation-in-symplectisation};
		\item a $J$-linear connection $\conn$ on $T\wh X$ so that $(\Theta^\pm)^*\conn$ is translation invariant;
		\item primes $p^\pm$ satisfying \eqref{eq:primes-for-base}.
        \end{itemize}
 \end{definition}       
Given this, we set 
\begin{equation}\label{eq:auxiliary-degree-cobordism}
    d' \coloneqq p^+\s{\gamma\in \Gamma^+}\cA_{\wt\lambda}(\gamma)-p^-\s{\gamma\in \Gamma^-}\cA_{\wt\lambda}(\gamma)
\end{equation}
and $d \coloneqq d' +|\Gamma^+|+|\Gamma^-|-2$. Let $\cBR_c$ be the smooth manifold with corners defined in \textsection\ref{subsec:cobordism-base} and set $G := \PU(d+1)$ and $\cG := \PGL_{d+1}(\bC)$. 
We let $\cZ^c =\cZ^c_{\wt\lambda}(T)\to \cBS_c$ be the family of buildings of Definition~\ref{de:family-of-cobordism-buildings} with $T$ being the corolla with positive/negative edges labeled by $\Gamma^\pm$.
\begin{definition}\label{de:auxiliary-datum-cobordism}  A \emph{perturbation datum} extending $\fD$ is a tuple $\alpha= (\fD,\cU,\lambda,E,\mu,\mu^\pm)$, consisting of
        \begin{itemize}[leftmargin=20pt]
        \item a good covering $\cU := \{(U_i,\sigma_{ij},D_{ij},\chi_i)\}_{i\in I}$ of $\cZ^c$ and a $\cG$-equivariant map 
        \[\zeta\cl \cB^{\text{st}}_{d'}(d)/S_{d'}\to \cG/G \]
        yielding a $\cG$-equivariant map $\zeta_\cU \cl \cZ\to \fs\fu(d+1)$ (by Lemma~\ref{lem:map-to-lie-algebra});
	\item a \emph{joint perturbation space} $(E,\mu,\mu^\pm)$ consisting of a finite-dimensional $G$-representation $E$ equipped with $G$-equivariant linear maps
    $$\mu\cl E\to C^\infty_c(\cC\inn\times\wh X,\Lambda^{*,0,1}_{\cC\inn/\cBR_d}\otimes_\bC T\wh X)$$
    and 
    $$\mu^\pm\cl E\to C^\infty_c(\cC\inn\times\wh Y^\pm,\Lambda^{*,0,1}_{\cC\inn/\cBR_d}\otimes_\bC T\wh Y^\pm)^\bR$$
    so that $(\Theta^\pm)^*\mu(e)$ agrees with $ \mu^\pm(e)$ restricted to the respective end in $\cC\inn\times \wh Y^\pm$. We require that the map 
    \begin{equation}
        E\to \coker(D_u) : e\mapsto [\mu(e)|_{\graph(\varphi,u)}]
    \end{equation}
    is surjective for any $(\varphi,u)\in \cZ^c_{\delbar}$ with $\zeta_\cU(\varphi,u) = 0$.
\end{itemize}
\end{definition}

\begin{construction}\label{con:gkc-cobordism} Given a perturbation data $\alpha$, we define $$\cK^c_\alpha := (\bT^{\Gamma^+\sqcup \Gamma^-}\times G,\cT^c/\cBR,\cE,\fs)$$ 
by letting $\cT\sub \cZ\times E$ be the space of tuples $(\varphi,\wt T,u,w)$ such that 
\begin{enumerate}[label=\alph*),leftmargin=20pt,ref=\alph*]
    \item for each nontrivial vertex $v \in V(\wt T)$ the associated map $u_v$ (respectively a representative thereof) satisfies 
    \begin{equation}\label{eq:cobordism-perturbed-cr}
        \delbar_{\wh J} \, u_v + \mu_k^{\ast(v)}(w)|_{\graph(\varphi_v,u_v)} = 0,
    \end{equation}
    on $\dot{C}_v$
    \item the linearized operator of~\eqref{eq:cobordism-perturbed-cr} is surjective (without variation of the framing $\varphi$).
\end{enumerate}
    The obstruction bundle $\cE\to \cT$ is the trivial bundle $$\cE =E\oplus \fp\fu(d+1),$$ while the obstruction section $\fs$ is given by a mollification of $\wh\fs(\varphi,u,w) = (w,\lambda_\cU(\varphi,u))$ as in Lemma~\ref{lem:better-obstruction-section}.
\end{construction}

\begin{theorem}\label{thm:cobordism-gkc} Given an exact symplectic cobordism $(\wh X,d\lambda)$ from $(Y^+,\lambda^+)$ to $(Y^-,\lambda^-)$, a compatible almost complex structure $J$ on $\wh X$, and a tree $T$ in $\scS^c$, the following holds
\begin{enumerate}[\normalfont 1),leftmargin=15pt,ref=\arabic*]
		\item\label{cobordism-gkc-unobstructed-aux} The moduli space $\Mbar^{\wh X\,J}(T)$ of buildings in $\wh X$ admits perturbation data. Construction~\ref{con:gkc-cobordism} associates to each perturbation datum $\alpha$ a rel--$C^1$ global Kuranishi chart $\cK^c_\alpha$ with corners for $\Mbar^{\wh X\,J}(T)$.
		\item\label{cobordism-gkc-orientation} If the Reeb orbits $\Gamma^+$ and $\Gamma^-$ labeling the exterior edges of $T$ consist of good Reeb orbits, then there exists a canonical isomorphism $$\fo_{\cK^c_\alpha}\cong \bigotimes\limits_{\gamma\in \Gamma^+}\fo_\gamma\otimes\bigotimes\limits_{\gamma\in \Gamma^-}\fo\dul_{\gamma}$$ of orientation lines.
	\end{enumerate}    
\end{theorem}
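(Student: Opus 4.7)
The proof plan mirrors that of Theorem~\ref{thm:pardon-gkc}, adapting each step to the cobordism setting. For part~1), I would first complete any pre-perturbation datum $\fD$ to a full perturbation datum. The $\cP(T)$-integral approximation $\wt\lambda$ is constructed by the cobordism analog of Lemma~\ref{lem:contact-approximation}: take $\cF^\pm$-integral approximations $\wt\lambda^\pm$ on the contact ends, extend via the cylindrical primitive $e^s\wt\lambda^\pm$ on $\im(\Theta^\pm)$ together with an arbitrary smooth extension on the compact interior, and rescale by a common positive integer to enforce the strict action inequality of Definition~\ref{de:approximation-cobordism}. A $J$-linear connection on $T\wh X$ that restricts to translation-invariant connections on the ends exists by partition of unity, and primes $p^\pm$ satisfying~\eqref{eq:primes-for-base} are tautologically available. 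A good covering of $\cZ^c$ and the $\cG$-equivariant slicing map $\zeta$ are produced as in \S\ref{subsec:domain-metrics}, after verifying the cobordism analog of Lemma~\ref{lem:palais-proper} for the $\cG$-action on $\cZ^c$.

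The genuinely new ingredient is the \emph{joint} perturbation space $(E,\mu,\mu^\pm)$ subject to the compatibility $(\Theta^\pm)^*\mu(e) = \mu^\pm(e)$ over the cylindrical ends. I would obtain this as a finite-dimensional approximation scheme of the closed subspace of triples $(\eta,\eta^+,\eta^-)$ in the direct sum $\cV^X \oplus \cV^+ \oplus \cV^-$ satisfying this compatibility, where $\cV^X$ is defined as in~\eqref{de:full-perturbation-space} with $\wh Y$ replaced by $\wh X$ (and without the translation invariance requirement) and $\cV^\pm$ is the original symplectization perturbation space for $\wh Y^\pm$. The analog of Lemma~\ref{lem:invariant-approximations-exist} applies because the compatibility is a linear condition preserved by equivariant mollification. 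Pointwise regularity at each $(\varphi,u)\in \cZ^c_{\delbar}$ reduces to Lemma~\ref{lem:transversality-pointwise} since regularizing perturbations can be supported in the interior of each component away from the ends, where the compatibility imposes no constraint; the upper semicontinuity of the minimal index and compactness of $\{\zeta_\cU = 0\}\cap\cZ^c_{\delbar}$ then yield a uniform finite-dimensional $E$.

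For the Kuranishi chart properties, the forgetful map $\Pi\cl \cT\to \cBS_c$ is shown to be a $\wh G$-equivariant rel-$C^1$ fiber bundle of the correct virtual dimension by adapting the gluing theorem of \S\ref{sec:gluing} to the cobordism case. The key modification involves the target gluing parameter space $G^c_{T/}$ of~\eqref{eq:local-model-cobordisms}: symplectization vertices retain their $\bR$-translation quotients, while interior vertices with $\ast^\pm(v)=(0,1)$ do not, and neck-stretching between a symplectization level and the interior level uses the collar coordinates from $\Theta^\pm$ with the same cylindrical analytic estimates. The $\wh G$-action is fiberwise locally linear and rel-$C^1$ by the argument of Lemma~\ref{lem:group-action}; the zero-locus identification $\fs\inv(0)/\wh G \cong \Mbar^{\wh X,J}(T)$ follows from $\cG$-equivariance of $\zeta_\cU$ combined with the cobordism analog of Lemma~\ref{lem:zero-locus-correct}; and the obstruction section is smoothed via the mollification of Lemma~\ref{lem:better-obstruction-section}. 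For part~2), the orientation of $\cK^c_\alpha$ differs from the symplectization case only in the absence of the $\fo(\bR)\dul$ factor, as $\wh X$ carries no global translation action. A linear gluing formula for the determinant of the index bundle decomposes $\fo_{\cK^c_\alpha}$ as a tensor product of contributions from each puncture, and goodness of the Reeb orbits ensures these descend canonically to $\fo_\gamma$ or $\fo_\gamma\dul$.

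The main obstacle will be the adaptation of the gluing theorem to handle target gluing between a symplectization level and the interior level, and verifying that the joint perturbation space simultaneously regularizes interior curves and any curves that may develop in the symplectization after breaking, while matching conditions across punctures of different types are preserved.
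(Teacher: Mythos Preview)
Your overall plan matches the paper's: reduce to the symplectization case and isolate the joint perturbation space as the one genuinely new ingredient. The paper says exactly this and then supplies the missing piece via a dedicated definition of \emph{joint finite-dimensional approximation scheme} and Lemma~\ref{lem:joint-fin-dim-scheme}, which constructs such a scheme explicitly using eigenspaces of Laplacians on the compact pieces $\wh Y^\pm/\bR$ and on a compact exhaustion of the interior of $\wh X$, glued together by bump functions.

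The one place your sketch is too optimistic is the sentence ``the analog of Lemma~\ref{lem:invariant-approximations-exist} applies because the compatibility is a linear condition preserved by equivariant mollification.'' That lemma works by passing to the quotient $B = \cC^\circ\times\wh Y/\bR$ and applying \cite[Lemma~4.2]{AMS23} there. In the cobordism case there is no global $\bR$-action on $\wh X$ to quotient by, so you cannot run that argument directly on the subspace of compatible triples; the compatibility forces the restriction of $\mu(e)$ to the ends to be translation-invariant, which is not a condition you can impose by mollifying on $\cC^\circ\times\wh X$ alone. The paper's fix is to build the scheme in three pieces---one for each end (where one \emph{can} quotient by $\bR$) and one compactly supported in the interior---and then check the density conditions (ii)--(iv) of the joint definition by hand. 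Your final paragraph correctly flags this as the main obstacle; the resolution is precisely this splitting, not an appeal to the symplectization lemma.
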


As in the case of buildings in symplectizations, this yields a chart for leveled buildings.

\begin{corollary}\label{cor:leveled-gkc-cobordism}
    The pullback Kuranishi chart 
    \begin{equation}
        \cK^{c,\bR} \coloneqq \cBS_c\times_{\cBR_c}\cK^c_\alpha
    \end{equation}
    is a global Kuranishi chart for the moduli space $\Mbar^{\wh X,\,J}_{\sft}(\Gamma^+,\Gamma^-;\beta)$.
\end{corollary}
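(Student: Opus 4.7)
The plan is to adapt the argument of Theorem~\ref{thm:leveled-gkc} to the cobordism setting. First, mimicking Definition~\ref{de:leveled-thickening}, I would define a \emph{leveled cobordism thickening} $\cT^{c,\bR}$ consisting of tuples $(\varphi,u,w)$ with $\varphi \in \cBS_c$ supported on a leveled cobordism tree $(T'_c,\ell')$ contracting onto $(T_c,\ell)$, and $u = (u_v)_{v \in V(T'_{c,\ell'})}$ a collection of maps with the correct target ($\wh Y^+$, $\wh X$, or $\wh Y^-$ according to $(\ast^+(v),\ast^-(v))$), which solve the perturbed Cauchy--Riemann equation of Construction~\ref{con:gkc-cobordism} and are glued via matching isomorphisms along interior edges; trivial vertices carry the trivial cylinder over the associated Reeb orbit. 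One then verifies, as in Lemma~\ref{lem:zero-locus-correct}, that the zero locus $\fs^{-1}(0)/\wh G$ of the obstruction section of $\cT^{c,\bR}$ (pulled back from $\cT^c$) is homeomorphic to $\Mbar^{\wh X,J}_{\sft}(\Gamma^+,\Gamma^-;\beta)$ equipped with its Gromov topology. The isotropy computation carries over unchanged since the extra data of levels consists of continuous ratios on which the $\wh G$-action is trivial.

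Next, the crux is a gluing result in the spirit of Proposition~\ref{prop:gluing-level}. Given $(\varphi,u_0,w) \in \cT^{c,\bR}$ with underlying leveled cobordism tree $(T_c,\ell)$, I would combine two inputs: the local chart $g\cl G^c_{T_c/} \times \cBR_c|_{T_c} \times N \hookrightarrow \cT^c$ that extends Theorem~\ref{thm:gluing-main} to cobordism buildings (using the parameter space $G^c_{T_c/}$ from \eqref{eq:local-model-cobordisms}, whose separate handling of the cobordism level $\mathfrak{c}$ is the only new feature), and the local description of $\cBS_c$ as a generalized blow-up of $\cBR_c$ from \S\ref{subsubsec:blowup_to_leveled_base}. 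As in Lemma~\ref{lem:tangent-vectors-give-lift}, the cylindrical coordinates supplied by $u_0$ determine a canonical lift $\wt\varphi \in \cBS_c$. Choosing for each gap between consecutive levels the diagonal embedding $\varDelta$ into the relevant factor of $G^c_{T_c/}$ (with the cobordism-level gap treated separately, since at $\ell=\mathfrak{c}$ gluing produces cylinders inside $\wh X$ rather than in a symplectization), I would produce a lift $lg$ fitting into a Cartesian diagram analogous to \eqref{dig:blow-up-square}.

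From this local statement, the global Cartesian square
\begin{equation*}
\begin{tikzcd}
\cT^{c,\bR} \arrow[r] \arrow[d] & \cT^c \arrow[d]\\
\cBS_c \arrow[r] & \cBR_c
\end{tikzcd}
\end{equation*}
follows by the same sheaf-theoretic patching as in Corollary~\ref{cor:map-to-leveled-base}: one covers $\cT^c$ by the charts above, observes that the local pullback coincides with the corresponding open in $\cT^{c,\bR}$, and glues by the universal property of the fiber product. Rel--$C^1$ regularity of the forgetful map $\cT^{c,\bR} \to \cBS_c$, local linearity of the $\wh G$-action, and the required properties of the obstruction section are then inherited from $\cK^c_\alpha$ by base change along the smooth blow-down $\cBS_c \to \cBR_c$, as in the proof of Theorem~\ref{thm:leveled-gkc}.

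The main obstacle I foresee is the cobordism-level gluing, i.e.\ the behavior of the construction at $\ell = \mathfrak{c}$, where the gluing parameter varies in $(0,\infty]$ (gluing a $Y^\pm$-level to $\wh X$) rather than in the purely symplectization regime. Concretely, one must check that the extension of the diagonal embedding $\varDelta$ and the leveled gluing map $lg$ are compatible with the constraints $g_v = g_e + g_{v'}$ imposed in \eqref{eq:local-model-cobordisms}, so that the refinement of the monoidal complex used to define $\cBS_c$ in \S\ref{subsec:cobordism-base} is indeed compatible with the local model $G^c_{T_c/}$. Once this bookkeeping is verified, the remainder of the argument is a direct transcription of the symplectization case.
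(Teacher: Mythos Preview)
Your proposal is correct and takes exactly the same approach as the paper: the paper's proof simply states that it is analogous to the proof of Theorem~\ref{thm:leveled-gkc}, and what you have written is a careful and accurate expansion of that analogy. Your identification of the cobordism-level gluing at $\ell = \mathfrak{c}$ as the one genuinely new bookkeeping point is apt, and your outline of how to handle it via the parameter space $G^c_{T_c/}$ of \eqref{eq:local-model-cobordisms} is the right one.
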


\begin{proof}
    The proof is analogous to the proof of Theorem~\ref{thm:leveled-gkc}.
\end{proof}

The proof of the first assertion is analogous to the proof of Theorem~\ref{thm:pardon-gkc}\eqref{gkc-unobstructed-aux} and \eqref{gkc-rel-smooth}. We simply have to replace Lemma~\ref{lem:invariant-approximations-exist} with the following definition and existence result. Then the arguments carry over verbatim. The proof of Claim~\eqref{cobordism-gkc-orientation} follows from the arguments of \textsection\ref{subsec:orientation}.

\begin{definition}
Suppose $V\to B$ and $E^\pm \to B^\pm$ are three smooth $G$-vector bundles and that $B^\pm$ admits a free $\bR$-action, which commutes with the $G$-action and lifts to $E^\pm$. Suppose there exist open $G$-invariant subsets $B^\pm_{\circ} \sub B^\pm$ whose orbit under the $\bR$-action covers all of $B^\pm$ and which admit smooth $G$-equivariant open embeddings $j^\pm \cl B^\pm_{\circ} \hkra B$ with disjoint image lifting to embeddings of vector bundles. Assume additionally that the quotients $B^\pm/\bR$ and $B\sm \im(j^+)\sqcup \im(j^-)$ are compact. Then, a \emph{joint finite-dimensional approximation scheme} of $(V,E^\pm)$ is a sequence $(E_\ell,\mu_\ell,\mu^\pm_\ell)$ of finite-dimensional $G$-representations together with $G$-equivariant linear maps 
$$\mu_\ell \cl E_\ell \hkra C^\infty(B,V)$$
and 
$$\mu^\pm_\ell \cl E_\ell \hkra C^\infty_c(B,V)^\bR := \{\eta\in C^\infty(B^\pm,E^\pm)^\bR\mid \supp(\eta)/\bR \text{ is compact}\}$$
satisfying 
\begin{enumerate}[label=\roman*),leftmargin=20pt]
    \setlength\itemsep{2pt}
     \item $E_\ell$ is a subrepresentation of $E_{\ell+1}$ with $\mu_{\ell+1}|_{E_\ell} = \mu_\ell$ and $\mu^\pm_{\ell+1}|_{E_\ell} = \mu^\pm_\ell$,
     \item $\union{}{\im(\mu^\pm_\ell)}$ is dense in $C^\infty_c(B^\pm,E^\pm)^\bR$ in the $C^\infty_{\text{loc}}$-topology,
     \item $\supp(\mu_\ell(v)|_{\im(j_*^\pm)}-j_*^\pm\mu_\ell^\pm(v)|_{B_{\circ}^\pm})$ is precompact in $B$ for any $\ell\ge 1$ and $v\in E_\ell$,
     \item $\union{}{\im(\mu_\ell-{j^+}_*\mu^+_\ell-{j^-}_*\mu^-_\ell)}$ is dense in $C^\infty_c(B,V)$ in the $C^\infty_{\text{loc}}$-topology.
 \end{enumerate}
 Note that the last property makes sense due to the third one.
\end{definition}

\begin{lemma} \label{lem:joint-fin-dim-scheme}
    Given finite approximation schemes $\mu^\pm_*$, there exists a choice of $\mu_*$ such that $(\mu^+,\mu, \mu^-)$ forms a joint finite-dimensional approximation scheme.
\end{lemma}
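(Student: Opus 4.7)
The plan is to construct $E_\ell$ as the direct sum $E^+_\ell\oplus E^-_\ell\oplus F_\ell$ of $G$-representations, where $(F_\ell,\nu_\ell)$ is a finite-dimensional approximation scheme for $V\to B$ in the sense of \cite[Definition~4.1]{AMS23} (whose existence is guaranteed by \cite[Lemma~4.2]{AMS23}), and to define the joint approximation $\mu_\ell$ as the sum of cut-off pushforwards from the ends plus the core contribution $\nu_\ell$. Concretely, choose a pair of $G$-invariant smooth cutoff functions $\chi^\pm\cl B\to [0,1]$ with $\supp(\chi^\pm)\sub \im(j^\pm)$ and such that $\chi^\pm\equiv 1$ outside some $G$-invariant precompact open subset $K^\pm\sub \im(j^\pm)$; such cutoffs exist because $B$ is a manifold with compact core $B\sm\im(j^+)\sqcup\im(j^-)$ and cylindrical $G$-invariant ends $\im(j^\pm)$, and $G$ is compact so that averaging produces equivariant cutoffs. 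Then set $\tilde\mu^\pm_\ell\coloneqq \mu^\pm_\ell\g p_\pm$, where $p_\pm\cl E_\ell\to E^\pm_\ell$ is the obvious projection, and define
$$\mu_\ell(v^+,v^-,w)\coloneqq \chi^+\cdot j^+_*\mu^+_\ell(v^+)\,+\,\chi^-\cdot j^-_*\mu^-_\ell(v^-)\,+\,\nu_\ell(w),$$
where the pushforwards $j^\pm_*\mu^\pm_\ell(v^\pm)$, which live on $\im(j^\pm)\sub B$, are extended by zero to the rest of $B$ (legal because $\chi^\pm$ vanishes outside $\im(j^\pm)$).

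The nested structure $E_\ell\sub E_{\ell+1}$ and the compatibilities $\mu_{\ell+1}|_{E_\ell} = \mu_\ell$ and $\tilde\mu^\pm_{\ell+1}|_{E_\ell} = \tilde\mu^\pm_\ell$ are inherited summand-by-summand, while density of $\bigcup_\ell\im(\tilde\mu^\pm_\ell)$ in $C^\infty_c(B^\pm,E^\pm)^\bR$ holds by hypothesis on $(E^\pm_\ell,\mu^\pm_\ell)$. For condition (iii), a direct calculation gives
$$\mu_\ell(v)|_{\im(j^\pm)}\,-\,j^\pm_*\mu^\pm_\ell(v^\pm)|_{B^\pm_\circ}\;=\;(\chi^\pm-1)\cdot j^\pm_*\mu^\pm_\ell(v^\pm)\,+\,\nu_\ell(w)|_{\im(j^\pm)},$$
whose support is contained in $K^\pm\cup\supp(\nu_\ell(w))$ and hence precompact in $B$, while the cross-terms vanish since $\im(j^+)\cap\im(j^-) = \emst$. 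For condition (iv), setting $v^+=v^-=0$ and varying $w$, the expression $\mu_\ell(v)-j^+_*\mu^+_\ell(v^+)-j^-_*\mu^-_\ell(v^-)$ reduces to $\nu_\ell(w)$, so $\bigcup_\ell \im\bigl(\mu_\ell-j^+_*\mu^+_\ell-j^-_*\mu^-_\ell\bigr)$ contains $\bigcup_\ell \im(\nu_\ell)$, which is dense in $C^\infty_c(B,V)$ by the choice of $(F_\ell,\nu_\ell)$.

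The construction is essentially bookkeeping, and I do not expect a serious analytical obstacle: the cylindrical approximation schemes at the ends are provided by the hypothesis, and all that is needed is a partition-of-unity-type merging with a standard core approximation scheme for $V\to B$. The only point worth verifying carefully is that the cutoffs $\chi^\pm$ can be chosen $G$-invariantly, which follows by averaging since $G = \PU(d+1)$ is compact.
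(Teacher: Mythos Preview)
Your proof is correct and follows essentially the same strategy as the paper: decompose $E_\ell$ as a direct sum of the two end pieces and a core piece, extend $\mu^\pm_\ell$ by zero on the complementary summands, and merge via cutoffs. The paper differs only in packaging: it re-derives the end schemes and the core scheme explicitly via Laplacian eigenspaces (following \cite[Lemma~4.2]{AMS23}) and uses an increasing \emph{sequence} of cutoffs $\rho^\pm_n$ indexed by the exhaustion, whereas you invoke \cite[Lemma~4.2]{AMS23} as a black box for $(F_\ell,\nu_\ell)$ and use a single fixed cutoff $\chi^\pm$. Your version is slightly cleaner and more faithful to the hypothesis that $\mu^\pm_*$ are \emph{given}.

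One phrasing to tighten: you write ``$\chi^\pm\equiv 1$ outside some $G$-invariant precompact open subset $K^\pm\sub \im(j^\pm)$'', but since $\supp(\chi^\pm)\sub\im(j^\pm)$ you really mean $\chi^\pm\equiv 1$ on $\im(j^\pm)\sm K^\pm$. As written it would force $B\sm\im(j^\pm)\sub K^\pm\sub\im(j^\pm)$, a contradiction. The intent is clear and the rest of the argument uses the correct interpretation.
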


\begin{proof}
    We adapt the proof of \cite[Lemma~4.2]{AMS23}. Fix $G\times \bR$-invariant connections $\conn^\pm$ on $E^\pm$ and let $\conn$ be a $G$-invariant connection on $V$ so that ${j^\pm}^*\conn$ agrees with $\conn^\pm|_{B^\pm_{\circ}}$ away from a subset $K \sub B^\pm_\circ$, which is precompact in $B^\pm$.
    Let $A \coloneqq B\,\sm \;  (\im(j^+)\cup \im(j^-))$ and let $(A_n)_n$ be a countable exhaustion of $A$ so that each $A_n$ is a smooth $G$-invariant manifold with boundary. Fix an increasing sequence$(\rho_{n,k})_k$ of $G$-invariant smooth bump functions with support in $A_n$ and $A_n = \union{}{\rho_{n,k}\inv(1)}$ for each $n$.
    
    Similarly, let $(B^\pm_n)_n$ be a countable exhaustion of $B^\pm_{\circ}$ by $G$-invariant smooth manifolds with boundary, and let $\rho^\pm_n$ be a $G$-invariant smooth bump function that is identically $1$ on $B^\pm_n$ and supported in $B^\pm_{\circ}$.
    Then, let $\wch\conn^\pm$ be the induced connection on $E^\pm/\bR\to B^\pm/\bR$ and let $(\lambda_\ell^\pm)_\ell$ be the increasing sequence of non-negative eigenvalues of the Laplacian associated to $\wch\conn^\pm$. 
    
    Let $\wch W_\ell^\pm$ be the preimage of the space of eigenfunctions associated to the non-negative eigenvalues $\lambda_j^\pm$ with $j \leq \ell$ and let $W^\pm_\ell \sub C^\infty_c(B^\pm,V^\ell)^\bR$ be the preimage of $\wch{W}_\ell^\pm$. Define 
    $$E^\pm_\ell := \bigoplus\limits_{n\leq \ell} W^\pm_n$$ 
    and let $\mu^\pm_\ell \cl E^\pm_\ell \to C^\infty_c(B^\pm,E^\pm)^\bR$ be the inclusion on each summand. Then, define 
    $$\mu_\ell \cl E^\pm_\ell \to C^\infty(B,V)$$ 
    by 
    $$\mu_\ell((v_n)_n) = \s{n\leq \ell}{\rho_n^\pm\,j^\pm_*\mu^\pm_n(v_n)}.$$
    Finally, doubling $A_{n+1}$ and $V|_{A_{n+1}}$ and considering the eigenspaces of the Laplacian of the induced connection on the doubled vector bundle, we obtain for each $n$ a sequence of vector spaces $(E_{n,k}\inn)_k$ together with maps 
    $$\mu_{n,k}\cl E_{n,k}\inn\to C^\infty_c(B,V) : v \mapsto \rho_{n,k}\, v.$$
    Define for $\ell \ge 1$ the vector space 
    $$E\inn_\ell := \bigoplus\limits_{n,k\leq \ell} E_{n,k}\inn$$
    and let $\mu_\ell \cl E\inn_\ell \hkra C^\infty_c(B,V)$ be the canonical map induced by the maps $\mu\inn_{n,k}$. We finally define 
    $$E_\ell \coloneqq V^+_\ell \oplus E\inn_\ell \oplus V^-_\ell$$
    and let $\mu_\ell$ be given by the sum of the maps $\mu_\ell$ defined above. Extend $\mu^\pm_\ell$ to $E_\ell$ by letting it be $0$ on $E\inn_\ell\oplus V^\mp_\ell$.
\end{proof}

\subsubsection{Disconnected buildings in symplectic cobordisms}
Given a symplectic cobordism $(\wh X,\omega)$ as above and sequences $\Gamma^\pm$ of Reeb orbits of $\lambda^\pm$ as well as a partition $\Lambda\cl \Gamma^-\to \Gamma^+$ and a sequence $\beta (\beta_\gamma)_{\gamma\in \Gamma^+}$ of relative homology classes, we define the moduli space $\Mbar^{\wh X,J}_{\sft}(\Gamma^+,\Gamma^-;\beta)$ of disconnected leveled buildings in $\wh X$ exactly as in Definition~\ref{de:disconnected-domains} except that (one level of) the buildings now maps to the symplectization.\par 

Given $\gamma\in \Gamma^+$ let $\cK_\gamma^c$ be the global Kuranishi chart for $\Mbar^{\wh X,J}(\gamma,\Lambda_\gamma;\beta_\gamma)$ with base space $\cBS_{c,\gamma}$ given by Theorem~\ref{thm:cobordism-gkc}. Recall that $\cBS_{c,\gamma}$ was defined in \S\ref{subsec:cobordism-base} as the corner blow-up of $\cBR_{*}$ corresponding to refinement using maximally leveled pre-cobordism trees. Similarly, let $\cBS_{c,\Lambda}$ be the corner blow-up of $\p{\gamma\in \Gamma^+}{\cBR_{c,\gamma}}$ corresponding to refinements as in \textsection\ref{subsubsec:blowup_to_leveled_base} but using maximally leveled pre-cobordism forests.

\begin{proposition}\label{prop:disconnected-in-cobordims}
 The pullback global Kuranishi chart
   $$\cK^\bR_{c,\Lambda} \,\coloneqq\, \cBS_{c,\Lambda}\times_{\p{\gamma}{\cBR_{c,\gamma}}}\p{\gamma}{\cK^c_{\gamma}}$$ 
   is a global Kuranishi chart for $\Mbar^{\wh X, J}_{\sft}(\Gamma^+,\Gamma^-;\beta)_\Lambda$.
\end{proposition}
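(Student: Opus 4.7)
The proof mirrors the strategy of Proposition~\ref{prop:disconnected-buildings}, with the symplectization-based arguments replaced by their cobordism analogues developed in \textsection\ref{subsec:gkc-for-cobordism} and Corollary~\ref{cor:leveled-gkc-cobordism}. The plan is to combine (a) the ghost-vertex trick that reduces forests to trees, (b) the fact that the product of the connected-component charts $\p{\gamma}{\cK^c_\gamma}$ is a global Kuranishi chart for the moduli space of \emph{disconnected Pardon cobordism buildings} with partition $\Lambda$, and (c) the generalized-blow-up construction of $\cBS_{c,\Lambda}$, which is the only genuinely new ingredient compared to Theorem~\ref{thm:cobordism-gkc}.

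First I would extend Lemma~\ref{lem:from-disconnected-to-connected} to the cobordism setting, producing a functor from $\scS^{c,\bullet}_k$ to a subcategory of $\scS^c$ by attaching a ghost vertex above the $k$ positive exterior edges; since the ghost vertex has $\ast^\pm=0$, the resulting tree still lies in $\scS^c$ and the level function extends unambiguously. Composing with the partition data $(\{1,\dots,k\},\supseteq)$ as in~\eqref{eq:upgraded-functor} gives the indexing scheme needed to copy Construction~\ref{con:disconnected-leveled-base} verbatim: one starts from $[0,1)^{|\Gamma^+|}\times \p{\gamma}{\cBR_{c,\gamma}}$, refines the monoidal complex of a neighborhood of the codimension-$\ge 2$ strata using the maximally leveled pre-cobordism forests (Definition~\ref{de:leveled-monoid}), and identifies $\cBS_{c,\Lambda}$ with the preimage of the deepest stratum of $[0,1)^{|\Gamma^+|}$ under the blow-down. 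The proof of Lemma~\ref{lem:disconnected-base-well-defined} then applies word-for-word to show that $\cBS_{c,\Lambda}$ is a smooth manifold with corners, stratified by leveled cobordism forests of type $(\sqcup_\gamma T_\gamma,\ell)$.

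Next, I would verify that the pullback chart $\cK^\bR_{c,\Lambda}$ has the correct footprint. The product chart $\p{\gamma}{\cK^c_\gamma}$ tautologically globally charts the product moduli space of disconnected Pardon cobordism buildings, while Proposition~\ref{prop:gluing-level} applied componentwise together with the local-chart analysis in \textsection\ref{dsc: gluing} shows, exactly as in Corollary~\ref{cor:map-to-leveled-base}, that the corresponding \emph{leveled} thickening $\cTR^c_\Lambda$ (defined as in Definition~\ref{de:leveled-thickening}, but now allowing cobordism forests) fits into a Cartesian square
\begin{equation*}
\begin{tikzcd}
\cTR^c_\Lambda \arrow[r] \arrow[d] & \p{\gamma}{\cT^c_\gamma} \arrow[d] \\
\cBS_{c,\Lambda} \arrow[r] & \p{\gamma}{\cBR_{c,\gamma}}.
\end{tikzcd}
\end{equation*}
The Gromov topology on $\cTR^c_\Lambda$ matches the induced fiber-product topology by the same open-cover argument as in the proof of Corollary~\ref{cor:map-to-leveled-base}. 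The obstruction bundle and section are pulled back from the product, so the zero-locus identification $\fs\inv(0)/\wh G\cong \Mbar^{\wh X,J}_{\sft}(\Gamma^+,\Gamma^-;\beta)_\Lambda$ follows from the componentwise version together with the fact that $\cBS_{c,\Lambda}$ precisely encodes the relative heights that are quotiented out in the definition of the leveled moduli space.

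The main obstacle is bookkeeping: one must check that the monoidal-complex refinement used to build $\cBS_{c,\Lambda}$ simultaneously records (i) the level structure \emph{within} each connected component, already handled by the factors $\cBS_{c,\gamma}$ of Corollary~\ref{cor:leveled-gkc-cobordism}, and (ii) the \emph{relative} heights between components at shared levels of the cobordism stratum $\ell\inv(\mathfrak c)$. Both pieces of data are captured by the monoid $\sigma_{T,I}=\sigma_{P(T,I)}$ via the ghost-vertex identification, so invoking \cite[Theorem A]{KM15} and the compatibility of iterated star subdivisions (as in the proof of Lemma~\ref{lem:disconnected-base-well-defined}) concludes the argument. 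Relative $C^1$ regularity, orientation, and the $\wh G$-action are inherited from the product chart, and the pullback preserves each of these.
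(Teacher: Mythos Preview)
Your proposal is correct and follows essentially the same approach as the paper, whose proof consists of the single line ``analogous to the proof of Theorem~\ref{thm:leveled-gkc}''; you have simply unwound what that means by combining the ghost-vertex reduction of Proposition~\ref{prop:disconnected-buildings} with the cobordism analogue of the leveled-thickening/Cartesian-square argument in \textsection\ref{subsec:leveled-gluing}.

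One small inaccuracy: there are no ``relative heights between components at shared levels of the cobordism stratum $\ell\inv(\mathfrak c)$'' to record. At the cobordism level the maps land in $\wh X$ itself rather than in an $\bR$-quotient, so the maximal-leveling procedure (Definition following Definition~\ref{de:leveled-cob-tree}) does \emph{not} separate the vertices at level $\mathfrak c$, and the monoidal refinement leaves that stratum alone. The extra data encoded by $\cBS_{c,\Lambda}$ beyond $\p{\gamma}{\cBS_{c,\gamma}}$ concerns only the symplectization levels above and below $\mathfrak c$. This does not affect the validity of your argument, since the refinement you invoke already handles this correctly; it is just a mis-description of what is being glued.
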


\begin{proof}
    The proof is analogous to the proof of Theorem~\ref{thm:leveled-gkc}.
\end{proof}
\subsection{Orientations}\label{subsec:orientation}

The \emph{determinant line} of a global Kuranishi chart $\cK = (G,\cT,\cE,\fs)$, is the real line bundle
\begin{equation}
    \det(\cK) := \det(T\cT) \otimes \det(\fg)\dul \otimes \det(\cE)\dul
\end{equation}
on $\cT$ restricted to the zero locus $\fs\inv(0)$. 
Its \emph{orientation line} is the sheaf of $\bZ_2$-torsors 
\begin{equation}
\orl(\cK) := (\det(\cK)\sm 0)/\bR_{> 0}.\end{equation}
in degree $\vdim(\cK)$. Given $n \in \bZ$, we define $\orl(n) \coloneqq \bZ_2[-n]$.

\begin{definition}
    An \emph{orientation} of $\cK$ is an isomorphism $\orl(\vdim\cK)\cong \orl(\cK)$.
\end{definition} 

 We will throughout use the isomorphism 
 \begin{equation}\label{eq:multiplied-with-dual}
     \orl\otimes\orl\dul\cong\fo(0): v\otimes f \mapsto f(v)
 \end{equation}
 to trivialize the multiplication of an orientation line with its dual. Given a finite-dimensional vector space $V$, we let $\orl(V)$ be the $\bZ_2$-torsor in degree $\dim V$ associated to $H_{\dim V}(V,V\sm\{0\};\bZ)$. Given a Cauchy--Riemann operator $D$ we define its orientation line to be 
\[\orl(D)\coloneqq \orl(\ker D)\otimes \orl(\coker D)\dul.\]

\begin{remark}
   Given a finite-dimensional vector space $V$, the zero map $D_0 = 0 \cl V\to V$ has orientation line $\orl(D_0) \cong \orl(V)\orl(V)\dul$. On the other hand, it is homotopic to the identity $D_1 = \ide$ with orientation line $\orl(D_1) =\orl(0)$. Our choice of trivialization in~\eqref{eq:multiplied-with-dual} ensures that the canonical isomorphism $\orl(D_0) \cong \orl(D_1)$ is orientation-preserving.
\end{remark}

\begin{lemma}
    Let $\cK_\alpha$ be the global Kuranishi chart of Theorem~\ref{thm:pardon-gkc}. Then there exists a canonical isomorphism 
    \begin{equation}
        \orl(\cK) \cong \orl(\delbar_J) \otimes \orl(\bR)\dul\otimes\orl(2|\Gamma^+|-2|\Gamma^-|-6).
    \end{equation} 
\end{lemma}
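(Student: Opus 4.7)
My plan is to unpack the definition $\orl(\cK) = \orl(T\cT)\otimes\orl(\wh\fg)\dul\otimes\orl(\cE)\dul$ on the zero locus $\fs\inv(0)$ and perform systematic cancellations between the three factors. Recall that the covering group is $\wh G = G\times\prod_{\gamma\in\Gamma^+\sqcup\Gamma^-} S^1$ with $G = \PU(d+1)$, so $\wh\fg \cong \mathfrak{pu}(d+1)\oplus\bR^{|\Gamma^+|+|\Gamma^-|}$, while the obstruction bundle is $\cE = E\oplus\mathfrak{pu}(d+1)$.

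At a regular point $(\varphi,u,0)$ of the zero locus the linearization of the defining equations of $\cT$ assembles into a surjective Fredholm operator
\[\wt D\cl T_\varphi\cBR \oplus W^{k,2,\delta}(u^*T\wh Y) \oplus E \longrightarrow W^{k-1,2,\delta}(\Omega^{0,1}\otimes u^*T\wh Y),\]
whose kernel is $T_{(\varphi,u,0)}\cT$. Standard Fredholm orientation line formalism, applied to the block-triangular decomposition of $\wt D$, yields
\[\orl(T\cT) \;\cong\; \orl(\delbar_J)\otimes\orl(T_\varphi\cBR)\otimes\orl(E)\otimes\orl(\bR)\dul,\]
where the final factor records the quotient by $\bR$-translation built into the definition of $\cZ$ (Definition~\ref{de:family-of-tree}).

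I then decompose $T_\varphi\cBR$ using the principal bundle structure of $\cBR$: because $\cG$ acts freely on the regular embedded locus and the asymptotic markers form a torus fibration, there is a canonical splitting of real vector spaces $T_\varphi\cBR \cong \mathfrak{pgl}_{d+1}(\bC)\oplus V\oplus \bR^{|\Gamma^+|+|\Gamma^-|}$, where $V = T_{[\varphi]}(\cB/\cG)$ is a complex vector space. The real identification $\mathfrak{pgl}\cong\mathfrak{pu}\otimes_\bR\bC$ together with the complex orientation endows $\orl(\mathfrak{pgl})$ with a canonical trivialization as $\orl(\mathfrak{pu})^{\otimes 2}$. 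Substituting into the formula for $\orl(\cK)$ and collecting cancellations---$\orl(E)$ against $\orl(E)\dul$ from $\cE$; the two copies of $\orl(\mathfrak{pu})\dul$ coming from $\wh\fg$ and $\cE$ pairing canonically with $\orl(\mathfrak{pgl})$ from $T\cBR$; and $\orl(\bR^{|\Gamma^+|+|\Gamma^-|})$ with $\orl(\bR^{|\Gamma^+|+|\Gamma^-|})\dul$ from $\wh\fg$---leaves
\[\orl(\cK) \;\cong\; \orl(\delbar_J)\otimes\orl(\bR)\dul\otimes\orl(V).\]

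It then remains to identify the canonical $\bZ_2$-torsor $\orl(V)$ with $\orl(2|\Gamma^+|-2|\Gamma^-|-6)$. The essential input here is the construction of the framing line bundle $\fL_u$ in \eqref{eq:linebundle-construction} and the auxiliary degree $d'$ of \eqref{eq:auxiliary-degree}: the action integrals $\int_{S^1}\gamma^*\wt\lambda$ enter with opposite signs at positive and negative punctures, and this asymmetry propagates through the dimension of $\cB_{\Gamma^+,\Gamma^-}(d)$ and hence through the complex dimension of $V$. I expect this bookkeeping step, which must keep the orientation conventions for the positive and negative Reeb asymptotics consistent throughout, to be the main obstacle; it has to match the sign conventions that later produce the canonical orientation line isomorphisms $\fo_\gamma$ and $\fo_\gamma\dul$ at the positive and negative punctures in the refined statement of Theorem~\ref{thm:pardon-gkc}.
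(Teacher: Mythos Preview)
Your reduction through step 4 is essentially the paper's argument: split $T\cT$ into base and fibre directions, invoke the stabilisation isomorphism $\orl(D_u^\conn + \mu_k)\cong\orl(D_u^\conn)\otimes\orl(E)$, decompose $T\cBR$ using the free $\cG$-action and the torus of asymptotic markers, and cancel. The paper phrases the splitting as $T\cT = T_{\cT/\cBR}\oplus\pi^*T\cBR$ via the rel--$C^1$ structure rather than via a block-triangular linearised operator, but this is cosmetic.

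The genuine gap is in your final step, and your stated intuition for it is wrong. You expect the asymmetry between $\Gamma^+$ and $\Gamma^-$ in the degree formula to arise from the sign asymmetry in the framing line bundle $\fL_u$ and the degree $d'$. It does not. The complex dimension of $\cB_{\Gamma^+,\Gamma^-}(d)$ does depend on $d$, but $\dim_\bC\PGL_{d+1}(\bC) = d^2+2d$ depends on $d$ in exactly the compensating way: a Riemann--Roch count gives $\dim_\bC V = \dim_\bC(\cB/\cG) = |\Gamma^+|+|\Gamma^-|-3$, symmetric in the two. The paper makes this identification directly by writing
\[
\orl(\cB)\;\cong\;\orl(\fp\fg\fl)\otimes\orl(\Mbar_{0,|\Gamma^+|+|\Gamma^-|}),
\]
using that curves in $\cB$ are regular, genus-zero, automorphism-free, and non-degenerate in $\bP^d$, so that $\cB/\cG$ is locally identified with an open subset of $\Mbar_{0,n}$ for $n = |\Gamma^+|+|\Gamma^-|$. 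Thus $V\cong T\Mbar_{0,n}$ is complex of real dimension $2|\Gamma^+|+2|\Gamma^-|-6$ (the minus sign in the displayed statement appears to be a typo; the paper's own proof concludes with $\orl(\Mbar_{0,|\Gamma^+|+|\Gamma^-|})$). There is no delicate bookkeeping with positive versus negative asymptotics here; that asymmetry enters only later, in the identification of $\orl(\delbar_J)$ with $\bigotimes_{\gamma\in\Gamma^+}\fo_\gamma\otimes\bigotimes_{\gamma\in\Gamma^-}\fo_\gamma\dul$.
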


\begin{proof}
    Observe first that $\cK_\alpha$ admits a well-defined vector lift of its tangent micro-bundle, given by $$T\cT_\alpha = T_{\cT_\alpha/\cBR_\alpha}\oplus \pi^*T\cBR_\alpha,$$ 
    where $\pi \cl \cT_\alpha\to \cBR_\alpha$ is the forgetful map. We will call it from now on simply the tangent bundle and will omit the subscript $\alpha$. Recall that $\cBR$ is a torus bundle over a blow-up of the complex manifold $\cB\sub \Mbar_{0,|\Gamma^+|+|\Gamma^-|}(\bP^d,d)$. Thus, 
    $$\orl(\cBR)\cong \orl(\cB)\otimes \orl(S^1)^{\otimes \Gamma^+\sqcup \Gamma^-}\cong \orl(\fp\fg\fl)\otimes\orl(\Mbar_{0,|\Gamma^+|\sqcup \Gamma^-})\otimes \orl(S^1)^{\otimes \Gamma^+\sqcup \Gamma^-}$$
    canonically. Meanwhile, for $(\varphi,u,w)\in \cT$ we have
    $$(T_{\cT/\cBR})_{(\varphi,u,w)} = \ker\lbr{D_u^\conn + \mu_k(-)|_{\graph(\varphi,u)}\cl C^\infty(\dot{C},u^*T\wh Y) \oplus E_k \to \Omega^{0,1}(\dot{C},u^*T\wh Y)}$$
    which agrees with the index of the Cauchy--Riemann operator $D_u^\conn + \mu_k(-)|_{\graph(\varphi,u)}$. By \cite[Lemma~3.2]{Bao23}, there exists a canonical isomorphism 
    \begin{equation}\label{eq:orientation-vertical-tangent-bundle}
        \orl(D_u^\conn + \mu_k(-)|_{\graph(\varphi,u)}) \cong \orl(D_u^\conn) \otimes \orl(E_k).
    \end{equation}
    Combining these two isomorphisms with the polarization isomorphism $\cG\cong G \times \fg$, we obtain the canonical isomorphisms (over the locus of curves with smooth domains) 
   \begin{align*}
       \orl(\cK_\alpha) \,&\cong\, \orl(D^\conn) \orl(E_k)\orl(\bR)\dul\orl(\cBR)\orl(\wh\fg)\dul\orl(\fg)\dul\orl(E_k)\dul\\
       &\cong\, \orl(D^\conn) \orl(E_k)\orl(\bR)\dul\orl(\fp\fg\fl)\orl(\Mbar_{0,|\Gamma^+|+|\Gamma^-|})\orl(S^1)^{\otimes \Gamma^+\sqcup \Gamma^-}(\orl(S^1)^{\otimes \Gamma^+\sqcup \Gamma^-})\dul\orl(\fg)\dul\orl(\fg)\dul\orl(E_k)\dul\\
       &\cong\,\orl(D^\conn)\orl(\bR)\dul \orl(\Mbar_{0,|\Gamma^+|+|\Gamma^-|})\orl(\fp\fg\fl)\orl(\fp\fg\fl)\dul\orl(E_k)\dul\orl(E_k)\\&\cong\,\orl(D^\conn)\orl(\bR)\dul \orl(\Mbar_{0,|\Gamma^+|+|\Gamma^-|}),
   \end{align*}
  where we omitted the tensor product. Note that we use the Koszul sign rule when switching two orientation lines.
\end{proof}

We associate to any based Reeb orbit $(\gamma,b_\gamma)$ a virtual vector space $V_{\gamma} =(V_\gamma^+,V_\gamma^-)$ as follows.

\begin{definition}[{\cite[Definition~2.46]{Par19}}] Let $\wt\gamma$ be the constant-speed parametrization determined by a base point $b\in \ol\gamma$. Pull back the complex bundle $\wt\gamma^*\xi \oplus \bC \to S^1$ to a bundle $\cV\to \bC\units$. The Lie derivative $\cL_R$ and the trivial connection on $\bC$ pull back to yield a connection $\conn$ on $\cV$. Let $(\wt\cV,\delbar)$ be an extension of $(\cV,\conn^{0,1})$ to all of $\bC$. We define 
\begin{equation}
    V_{\gamma,b} \coloneqq \text{Ind}(\wt\cV,\delbar)
\end{equation}
    to be the index bundle of this extension.
\end{definition}

By \cite[Lemma~2.47]{Par19}, such extensions $(\wt\cV,\delbar)$ exist, and any two extensions differ by the direct sum with a complex vector space (up to isomorphism). In particular, the associated orientation line $\fo(V_{\gamma,b})$
is independent of the choice of extension. Recall that a Reeb orbit $\gamma$ is \emph{good} if it is \emph{not} an even multiple cover of a simple Reeb orbit $\ol\gamma$ with $|\sigma(\cA_{\ol\gamma})\cap (-1,0)| \equiv 1$ mod $2$. Equivalently, $\gamma$ is good if the action of $\bZ/m_\gamma$ on $\orl(V_{\gamma,b})$ is trivial. Thus, for good Reeb orbits, we have a canonical isomorphism $\orl(V_{\gamma,b})\cong \orl(V_{\gamma,b'})$ for any two base points $b,b'$ and we can set 
\begin{equation}
    \orl_\gamma \coloneqq \orl(V_{\gamma,b}).
\end{equation}
By a straightforward generalization of \cite[Lemma~2.51]{Par19} to the case with several positive punctures, the orientation line $\orl(\delbar_{J}) = \orl(D^\conn)$ is canonically isomorphic to 
$$\orl(\Gamma^+;\Gamma^-) \coloneqq \bigotimes\limits_{\gamma\in \Gamma^+}\orl_\gamma\otimes \bigotimes\limits_{\gamma\in \Gamma^-}\orl_\gamma\dul$$
This completes the proof of Theorem~\ref{thm:pardon-gkc}\eqref{gkc-orientation}.

\section{A contact flow category and bimodules}\label{sec:sft-flow}

In this section we associate a flow category to a contact manifold $(Y,\lambda)$ and a flow bimodule to an exact symplectic cobordism $(X,\omega)$. The objects are finite sequences of Reeb orbits in either case, and morphisms are buildings of genus zero. Due to a technical obstruction, we can only construct the flow category and bimodule after restricting the action of the Reeb orbits. Thus, in \textsection\ref{subsec:colimit-proof}, we show that the ``full'' contact flow category can be obtained via a colimit.

\subsection{Flow categories}\label{sec:sliced-flow-cats} Our flow categories are more general than the flow categories of \cite{AB24}; their objects are orbifolds, and the composition is defined on a certain fiber product instead of the usual product. The precise definition is given in \S\ref{subsec:sliced-flow-cat}.

\subsubsection{Preliminaries} Recall that a Lie groupoid $X = [X_1\rightrightarrows X_0]$ is a groupoid where the set of objects and morphisms carry the structure of smooth manifolds, all structure maps are smooth, and the source and target maps $s,t \cl X_1\to X_0$ are submersions (whence the multiplication is a well-defined smooth map). We write $\phi \cl x\to y$ for $\phi\in X_1$ with source $x = s(\phi)$ and target $y = t(\phi)$. We will also abuse notation and write $y =\phi(x)$.\par
We call $X$ \emph{\'etale} if $s$ (and thus $t$) is a local diffeomorphism and \emph{proper} if $(s,t)\cl X_1 \to X_0\times X_0$ is proper. By \cite[Corollary~1.4]{Par24}, any \'etale proper Lie groupoid with a finite number of isotropy types is equivalent to a \emph{transformation groupoid} $[M/G] := [G\times M\rightrightarrows M]$ given by the action of a compact Lie group $G$ on a smooth manifold $M$ so that all points have finite isotropy. All Lie groupoids we consider in our main application are of this form. We need a weakening of the notion of smoothness. See also \cite{Swa21} for more details.

\begin{definition}
    A map $\pi\cl M\to B$ to a smooth manifold is \emph{of class rel--$C^1$} if there exist local charts $\{\phi_i\cl U_i \to B\}$ and $\{\varphi_i\cl U_i\times N\to M\}$ so that $\pi\g \varphi = \phi\g \pr_U$ , the transition maps $\varphi_{ij}(u) =\varphi_i\inv\varphi_j(u,\cdot)$ is of class $C^1$ for $u \in U_i\cap U_j$, and $u\mapsto \varphi_{ij}(u)$ is continuous in the $C^1_{loc}$-topology.
\end{definition}

A \emph{rel--$C^1$ Lie groupoid} is a topological groupoid $X$ equipped with a morphism $X\to B$ of groupoids so that $X_1/B_1$ and $X_0/B_0$ are rel--$C^1$ manifolds and the structure maps are of class rel--$C^1$. The following notions can be defined verbatim in the rel--$C^1$ setting. We refrain from doing so here for the sake of clarity, but we will use them in that generality in \S\ref{subsec:unstructured-flow-cat}.

\begin{definition}
    A \emph{slicing} of a morphism of Lie groupoids $f \cl X\to [Y/G]$ is a $G$-action on $X_0$, whose action map factors through $G\times X_0 \to X_1$ and which satisfies $f_1(g,x) = (g,f_0(y))$. 
    We say $f$ is \emph{sliced} if it is (implicitly) equipped with a slicing and \emph{sliceable} if it admits a slicing. We say a slicing is \emph{free} if the $G$-action on $X_0$ is free.
\end{definition}

\begin{ex}
    A morphism $f\cl G\to G'$ of groups is sliceable if there exists an inclusion $G'\hkra G$ of groups, which is a section of $f$. 
\end{ex}

In our main application, $X$ is the groupoid associated to an action of a compact Lie group $G_X$ on a smooth manifold $\wt X$ and similarly for $Y$, with $G_X = G_Y\times G'_X$ canonically.

\begin{definition}\label{de:sliced-fiber-product}
    Given two morphisms $f \cl X\to Z = [Z_0/G]$ and $g\cl Y\to Z$ that intersect transversely in $Z_0$ and are equipped with free slicings $j_f$ and $j_g$, we define the \emph{quotient fiber product} $X\ov{\times}_Z Y$ to be the Lie groupoid with objects given by 
    \begin{equation}
        (X\bar{\times}_Z Y)_{0} \coloneqq X_0\times_{Z_0} Y_0/G 
     \end{equation}
    and morphisms $(X\ov{\times}_Z Y)_1 \coloneqq f_1\inv(\ide)\times g_1\inv(\ide)$.
\end{definition}

\begin{ex}\label{ex:quotient-fiber-product}
    Suppose 
    $$f\cl X=[M/ G_Z]\to [Z_0/G_Z]\qquad\text{and}\qquad g\cl Y= [N/ G_Z]\to [Z_0/G_Z]$$ 
   are submersions on the level of objects, and $G_Z$ acts freely on $M\times Z$. Then, $X\ov{\times}_Z Y$ is the manifold $(M\times_{Z_0} N)/G_Z$, where $G_Z$ acts diagonally via the slicings on $M$ and $N$. If, moreover, the action of $G_Z$ on $Z_0$ is transitive, then we have a canonical isomorphism $\fg_Z \cong TZ_0$ and therefore, a canonical identification 
   \begin{equation}\label{eq:tangen-bundle-sliced-fiber-product}
       T(X\,\ov{\times}_{Z}\, Y) \,\cong\, TX\oplus TY
   \end{equation}
   of $G_Z$-vector bundles on $M\times_{Z_0} N$. The same is true if $M$ and $N$ are equipped with almost free actions by $G\times G_Z$ and $G'\times  G_Z$, respectively.
\end{ex}

This quotient fiber product commutes with filtered colimits in the following sense.

\begin{lemma}\label{lem:commute-with-colimits}
	 Suppose $\{X_\alpha\}_{\alpha\in A}$ and $\{Y_\beta\}_{\beta\in B}$ are filtered diagrams in $\normalfont\text{dOrb}_{/\cdot}$ with $S_{X_\alpha} = S_{X_{\alpha'}}$ and $T_{X_\alpha} = S_{Y_\beta}$ for all $\alpha,\beta$ and $T_{Y_{\alpha}} = T_{Y_\beta}$. If the colimits $X$ and $Y$ exist, then the colimit of $\{(X_\alpha\ov{\times} Y_\beta)\}_{(\alpha,\beta)\in A\times B}$ exists and is given by $X\ov{\times} Y$.
\end{lemma}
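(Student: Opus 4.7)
The plan is to reduce to the well-known fact that filtered colimits commute with finite limits and with arbitrary colimits, and to exploit the rigidity provided by the three hypotheses on sources and targets.

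First I would observe that the equalities $S_{X_\alpha} = S_{X_{\alpha'}}$, $T_{X_\alpha} = S_{Y_\beta}$, and $T_{Y_\alpha} = T_{Y_\beta}$ imply that in every quotient fiber product $X_\alpha \ov{\times} Y_\beta$, the target orbifold $Z$ and the slicing group $G$ acting on $Z_0$ are the same throughout both filtered diagrams. Passing to local models as in Example~\ref{ex:quotient-fiber-product} (possibly atlas by atlas), each $X_\alpha$ and $Y_\beta$ is represented as the groupoid associated with an almost free action of a fixed product group on rel--$C^1$ manifolds $M_\alpha$, $N_\beta$ with $G$-equivariant maps to $Z_0$, and $X_\alpha \ov{\times} Y_\beta$ is represented by the rel--$C^1$ manifold $(M_\alpha \times_{Z_0} N_\beta)/G$ with the induced action.

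Second, I would combine two standard categorical facts. Filtered colimits commute with finite limits (in particular strict pullbacks) in the underlying category of topological spaces, and this lifts to the rel--$C^1$ setting because the structure maps of the diagrams are submersions onto their images (so the pullbacks remain transverse and manifold-like). Applying this gives
\begin{equation*}
\colim_{(\alpha,\beta)\in A\times B}\bigl(M_\alpha \times_{Z_0} N_\beta\bigr) \;\cong\; \bigl(\colim_\alpha M_\alpha\bigr) \times_{Z_0} \bigl(\colim_\beta N_\beta\bigr).
\end{equation*}
The quotient by the fixed group $G$ is itself a colimit (a coequalizer for the action map), so it commutes with every colimit, giving
\begin{equation*}
\colim_{(\alpha,\beta)}\bigl(M_\alpha \times_{Z_0} N_\beta\bigr)/G \;\cong\; \bigl(\tfrac{\colim_\alpha M_\alpha \times_{Z_0} \colim_\beta N_\beta}{G}\bigr) \;=\; X \ov{\times} Y.
\end{equation*}
The same argument applied to the morphism spaces $f_1\inv(\ide)\times g_1\inv(\ide)$ handles the groupoid structure, so naturality of the isomorphism in both variables yields the claim at the level of Lie groupoids.

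The main technical obstacle I expect is verifying that the intermediate objects remain in the category $\dOrb_{/\cdot}$, i.e.\ that the colimits are still representable by rel--$C^1$ Lie groupoids with free slicings. This reduces to checking that freeness and almost-freeness are preserved in filtered colimits, which follows immediately from the fact that the structure maps in the diagrams $\{X_\alpha\}$ and $\{Y_\beta\}$ are $G$-equivariant and that freeness of an action is a pointwise (hence colimit-stable) condition; combined with the assumption that the colimits $X$ and $Y$ themselves exist, this ensures all intermediate quotients are legitimate objects of $\dOrb_{/\cdot}$.
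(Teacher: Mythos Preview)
Your proposal is correct and follows essentially the same two-step strategy as the paper: first use that filtered colimits commute with the finite limit $X_\alpha\times_{Z_0}Y_\beta$, then observe that the quotient by the fixed slicing group $G$ (being a colimit itself) commutes with the remaining colimit. The paper's version is terser and simply says the second step ``follows directly from the definition'' once one notes the morphisms are compatible with the slicings, while you spell out the coequalizer argument and the representability check more explicitly; but the underlying idea is identical.
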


\begin{proof}
    Write $S = S_{X_\alpha}$, $Z = T_{X_\alpha} = [Z_1\rightrightarrows Z_0]$ and $T = T_{Y_\beta}$ for some $\alpha$ and $\beta$. Then, we have that $X_\alpha\ov{\times} Y_\beta$ is a quotient of the fiber product $X_\alpha\times_{Z_0} Y_\beta$. Since finite limits commute with filtered colimits, we have that 
    $$\colim\limits_{(\alpha,\beta)}X_\alpha\times_{Z_0} Y_\beta = X\times_{Z_0} Y.$$
    Since the morphisms in $\dOrb_{/\cdot}$ are compatible with the slicings of $t_{X_\alpha}$ and $s_{Y_\beta}$, the claim follows now directly from the definition.
\end{proof}

\subsubsection{A more general notion of flow category}\label{subsec:sliced-flow-cat}
Given the setup in the previous subsection, the definition of a flow category looks almost as before, except that we replaced the Cartesian product by the quotient fiber product and that we have a symmetric action on the objects. Note that in the case where each object is simply a point and the symmetric action is trivial, this recovers the original definition of a flow category as in \cite{AB24} or \cite{CJS95}.\par 
Recall from \cite{AB24} that a \emph{strong equivalence} $f \cl \bX\to \bY$ of derived orbifolds is a morphism of derived orbifolds (that is, spaces equipped with global Kuranishi charts) together with a morphism $\pi \cl Y\to X$ of the underlying thickenings so that $X\to Y$ corresponds to the inclusion of the zero section.

\begin{definition}\label{de:derived-orbifold-category}
	We define the category $\dOrb_{/\cdot}$ to have as objects derived orbifolds $\bX$ with corners equipped with two maps $s_{\bX}\cl \bX\to S_\bX = [\wt{ S}_{\bX}/G^S_\bX]$ and $t_{\bX}\cl \bX\to T_\bX= [\wt{ T}_{\bX}/G^T_\bX]$ to compact transitive orbifolds so that $s_\bX$ and $t_\bX$ are freely sliced submersions. The morphisms are given by strong equivalences $f \cl \bX\to \bY$ so that $s_\bY\g f = s_\bX$ and $t_\bY\g f = t_\bX$.
\end{definition}

Given $\bX,\bY$ in $\dOrb_{/\cdot}$ with $T_\bX = S_\bY$, we define
\begin{equation}\label{eq:new-product}
	\bX\,\ov{\times}\, \bY \,\coloneqq\, (\bX\,\ov{\times}_{S_\bY}\, \bY,S_\bX,T_\bY)
\end{equation}
with the canonical structural maps. This yields one part of our generalization. In order to encode the symmetric action concisely, we introduce the following definition. 

\begin{definition}\label{de:symmetric-set}
    Let $\Delta^*$ be the groupoid whose objects are pairs $(n,\prec)$, where $n\in \bZ_{\ge 0}$ is an integer and $\prec$ is a total ordering of $\{1,\dots,n\}$, and whose morphisms are order-preserving isomorphisms $(n,\prec)\to (n,\prec')$. A \emph{symmetric set} in $\cC$ with \emph{orbit set} $I$ is a functor $P \cl I \times \Delta^*\to \cC$, where we consider the set $I$ as a discrete category. We set $|P(i,\{(n,\prec)\})|:= n$ and will identify $P$ with its image in $\obj(\cC)$.
\end{definition}

In the future we will repeatedly use the observation that a disjoint union of symmetric sets is canonically a symmetric set.

\begin{definition}\label{de:sliced-flow-cat}
    A \emph{symmetric flow category} $\scM$ consists of a symmetric set $\cP$ of closed orbifolds, the objects of $\scM$, and for any $\alpha,\beta\in \cP$ a derived orbifold $\scM(\alpha,\beta)$ of morphisms equipped with 
    \begin{itemize}
        \item a proper function $E\cl \scM(\alpha,\beta)\to [0,\infty)$,
        \item free sliced submersions $s_{\alpha\beta} \cl \scM(\alpha,\beta)\to \alpha$ and $t_{\alpha\beta} \cl \scM(\alpha,\beta)\to \beta$
        \item isomorphisms 
        \begin{equation}\label{eq:symmetric-isomorphisms}
            \scM(\sigma\cdot\alpha,\beta) \cong \scM(\alpha,\beta)\cong \scM(\alpha,\sigma'\cdot\beta)
        \end{equation}for any $\sigma\in S_{|\alpha|}$ and $\sigma'\in S_{|\beta|}.$
    \end{itemize}
    The composition functions, defined on the quotient fiber product
     \begin{equation}\label{eq:composition}  
    \scM(\alpha,\beta)\,\ov{\times}_{\,\beta} \;\scM(\beta,\gamma) \;\to \;\scM(\alpha,\gamma),
    \end{equation}
    are smooth embeddings onto a codimension-$1$ boundary stratum $\del_{\beta}\scM(\alpha,\gamma)$ of $\scM(\alpha,\gamma)$. They are \begin{itemize} 
    \item additive with respect to $E$.
        \item compatible with the symmetric action in the sense that 
        \begin{center}\begin{tikzcd}
		\scM(\alpha,\beta)\,\ov{\times}_{\,\beta} \;\scM(\beta,\gamma) \arrow[rr,"\cong"] \arrow[dr,""]&&\scM(\alpha,\sigma\cdot\beta)\,\ov{\times}_{\,\sigma\cdot\beta} \;\scM(\sigma\cdot\beta,\gamma) \arrow[dl,""]\\ & \scM(\alpha,\gamma) \end{tikzcd} \end{center}
        commutes for any $\beta$ and $\sigma\in S_{|\beta|}$, whence
        $$\del_{\beta}\scM(\alpha,\gamma) = \del_{\sigma\cdot\beta}\scM(\alpha,\gamma).$$ 
        \end{itemize}
        Thus, we can write $\del_{[\beta]}\scM(\alpha,\gamma)$, using the orbit of $\beta$ under the symmetric action to indicate the boundary stratum. We require that these strata cover the boundary of $\scM(\alpha,\gamma)$ and that
    \begin{equation}\begin{tikzcd}
\scM(\alpha,\beta)\,\ov{\times}_{\,\beta}\,\scM(\alpha,\beta')\,\ov{\times}_{\,\beta'}\,\scM(\beta',\gamma)  \arrow[r,""] \arrow[d,""]&\scM(\alpha,\beta)\,\ov{\times}_{\,\beta}\,\scM(\beta,\gamma) \arrow[d,""]\\ 
        \scM(\alpha,\beta')\,\ov{\times}_{\,\beta'}\,\scM(\beta',\gamma) \arrow[r,""] &  \scM(\alpha,\gamma) \end{tikzcd} \end{equation}
is a pullback square for any $\alpha,\beta,\beta',\gamma\in \cP$.
\end{definition}

\begin{definition}\label{de:rel-c^1-flow-cat}
    We say a symmetric flow category $\scM$ is \emph{of class rel--$C^1$} if the morphism spaces are derived orbifolds of class rel-$C^1$, the maps $s_{\alpha\beta}$ and $t_{\alpha\beta}$ are of class rel--$C^1$ as described in \S\ref{sec:gluing} as well as the symmetric actions and composition maps are of class rel--$C^1$.
\end{definition}

\begin{remark}\label{rem:extends}
    Since this generalization only makes the notation heavier, we will phrase all remaining proofs in terms of smooth flow categories. However, the definitions and proofs carry over verbatim to the rel--$C^1$ setting.
\end{remark} 

\begin{ex}
    In our main example, each object of $\scM$ is of the form $B\Gamma =[E\Gamma/\bT_\Gamma]$, where $\Gamma = (\gamma_1,\dots,\gamma_k)$ is a sequence of Reeb orbits and $\bT_\Gamma = (S^1)^\Gamma$. The symmetric action permutes the ordering of the sequence, and the morphism spaces are manifolds $\wt\scM(\Gamma^-,\Gamma^+)$ equipped with an action by the Lie group $\bT_{\Gamma^-}\times\bT_{\Gamma^+}\times G_{\Gamma^-,\Gamma^+}$. The action of $\bT_{\Gamma^-}\times\bT_{\Gamma^+}$ comes from rotating the asymptotic markers. Then, the quotient fiber product over $\Gamma$ is the quotient of 
    $$\wt\scM(\Gamma^-,\Gamma)\times_{E\Gamma} \wt\scM(\Gamma,\Gamma^+)$$
    by the (free) diagonal $\bT_\Gamma$ action; it carries the induced action of $\bT_{\Gamma^-}\times G_{\Gamma^-,\Gamma}\times \bT_{\Gamma^+}\times G_{\Gamma,\Gamma^+}$.
\end{ex}

We briefly discuss stable complex structures on the flow categories of Definition~\ref{de:sliced-flow-cat}; see also \cite[Definition~3.8]{AB24}. The definition of framed structures as in \cite{AB24} can be adapted similarly. 

\begin{definition}\label{de:stable-complex-flow-cat}
    A \emph{stably complex lift} $\scM^U$ of a symmetric flow category $\scM$ consists of
    \begin{enumerate}[leftmargin=20pt]
        \item a symmetric set of virtual orbi-bundles $V_\alpha \to \alpha$, for object $\alpha$ of $\scM$, lifting the symmetric set $\cP$ of objects,
        \item a complex virtual vector bundle $I_{\alpha\beta}$ on $\scM(\alpha,\beta)$, 
        \item a vector bundle $W_{\alpha\beta}$ on $\scM(\alpha,\beta)$,
        \item a virtual vector space $U_{\alpha\beta}$ of the form $U_{\alpha\beta} = (0,\bR^{\{\beta\}})$
        \item an equivalence
        \begin{equation}\label{eq:equivalence-stable-complex}
            T\scM(\alpha,\beta)\oplus  V_\beta\oplus \bR^{\{\beta\}} \,\simeq\, (W_{\alpha\beta},W_{\alpha\beta})\oplus I_{\alpha\beta}\oplus V_\alpha
        \end{equation}
        of virtual vector bundles;
        \item compatible equivalences 
        \begin{equation}\label{eq:compatible-complex}
            I_{(\sigma\cdot\alpha)\beta}\cong I_{\alpha\beta}\cong I_{\alpha(\sigma'\cdot\beta)}\end{equation}
            \begin{equation}
           \label{eq:compatible-stabilising} W_{(\sigma\cdot\alpha)\beta}\cong W_{\alpha\beta}\cong W_{\alpha(\sigma'\cdot\beta)}
        \end{equation}
        of complex virtual vector bundles, respectively vector bundles, that lift~\eqref{eq:symmetric-isomorphisms} and intertwine the equivalences~\eqref{eq:equivalence-stable-complex}.
    \end{enumerate}
    Moreover, for any objects $\alpha,\beta,\gamma$ of $\scM$ we have split embeddings and isomorphisms
    \begin{gather}
        I_{\alpha\beta}\oplus I_{\beta\gamma}\,\to\, I_{\alpha\gamma},\\
        W_{\alpha\beta}\oplus W_{\beta\gamma}\,\to\, W_{\alpha\gamma},\\
        U_{\alpha\beta}\oplus U_{\beta\gamma}\,\cong\, (0,\bR^{\{\beta\}})\oplus U_{\alpha\gamma},
    \end{gather}
    respectively, that cover the composition map~\eqref{eq:composition} and are compatible with the equivalences~\eqref{eq:compatible-complex} and~\eqref{eq:compatible-stabilising}, so that the square
    \begin{equation}\label{dig:associativity-stable-complex}
    \begin{tikzcd}
		T\scM(\alpha,\beta)\oplus V_\beta\oplus \bR^{\{\beta\}}\oplus T\scM(\beta,\gamma)\oplus V_\gamma\oplus \bR^{\{\gamma\}} \arrow[r,""] \arrow[d,"\simeq"]& T\scM(\alpha,\gamma)\oplus V_\gamma\oplus \bR^{\{\gamma\}}
        \arrow[d,"\simeq"]\\ 
    (W_{\alpha\beta},W_{\alpha\beta})\oplus I_{\alpha\beta}\oplus V_\alpha\oplus (W_{\beta\gamma},W_{\beta\gamma})\oplus I_{\beta\gamma}\oplus V_\beta \arrow[r,""] & (W_{\alpha\gamma},W_{\alpha\gamma})\oplus I_{\alpha\gamma}\oplus V_\alpha \end{tikzcd} \end{equation}
    commutes, where we implicitly use the isomorphism~\eqref{eq:tangen-bundle-sliced-fiber-product}
\end{definition}

Since we will need the main result of \cite{AB24} that symmetric flow categories are the $0$-simplices of a stable $\infty$-category, we introduce the relevant adaptions of their definitions here. Given a finite set $\cP = (\cP_0,\dots,\cP_n)$ of sets, we define the category $\cP^{\bR_{\ge 0}}(p,q)$ for $p\in \cP_i$ and $q \in \cP_j$ with $i \leq j$ to have 
\begin{itemize}[leftmargin=20pt]
    \item objects linear trees $T$ with two exterior edges so that 
    \begin{itemize}[leftmargin=15pt]
        \item each inter edge labeled by an element of $\cP_k$ for $i\leq k \leq j$,
        \item the incoming edge is labeled by $p$ and the outgoing one by $q$,
        \item each vertex is labeled by $m \in \bR_{\ge 0}$ and a subset of $\{k+1,\dots,\ell-1\}$, where $k$ and $\ell$ are the labels of the edges adjacent to $v$;
    \end{itemize}
    \item morphism from $T$ to $T'$ given by collapsing a (possibly empty) sequence of consecutive edges so that the labels of the collapsing of $T$ agree with those of $T'$ as described in \cite[Definition~4.2]{AB24}.
    \end{itemize}
This yields a strict $2$-category $\cP^{\bR_{\ge 0}}$, where the horizontal composition is given by gluing two trees along the `common' exterior edge. If $\cP$ is a symmetric set, we can define the $2$-category $\cP^{\bR_{\ge 0}}$ analogously. When $\cP$ is a symmetric set, then there are canonical isomorphisms $\cP^{\bR_{\ge 0}}(\sigma\cdot p,q)\cong\cP^{\bR_{\ge 0}}( p,q)\cong \cP^{\bR_{\ge 0}}(p,\sigma'\cdot q)$ induced by the symmetric actions on the labels.

\begin{definition}\label{de:enriched-in-complicated}
    A \emph{(non-unital graded) symmetric category} $\cC$ \emph{enriched in $\dOrb_{/\cdot}$} consists of
    \begin{itemize}[leftmargin=20pt]
        \item a symmetric set $\obj(\cC)$ in the category of closed orbifolds, associating to $x$ the orbifold $B_x$,
        \item for any pair of objects $x,y$ an object $(\cC(x,y),B_x,B_y)$ of $\dOrb_{/\cdot}$ together with a proper energy map 
        \begin{equation}\label{} 
        	E_{xy}\cl \cC(x,y)\to \bR_{\ge0},
        \end{equation}        
        \item for any $\sigma\in S_{|x|}$ and $\sigma'\in S_{|y|}$ isomorphisms 
        \begin{equation}
        	\cC(\sigma\cdot x,y) \xra{\sigma^*} \cC(x,y)\xra{\sigma'_*} \cC( x,\sigma'\cdot y)
        \end{equation}
    that are compatible with the symmetric actions on $\obj(\cC)$ and which intertwine the energy maps.
        \item for any triple $x,y,z$ the composition map is a strong equivalence 
        \begin{equation}\label{eq:composition-general}
            \cC(x,y)\,\ov{\times}_{B_y}\,\cC(y,z)\to \del_{y}\cC(x,y), 
        \end{equation}
        which is a component of the boundary $\del_y \cC(x,z)$ so that $E_{xz}$ restricts to $E_{xy} + E_{yx}$ on the image and
        \begin{center}\begin{tikzcd}
        		\cC(x,y)\,\ov{\times}_{B_y}\,\cC(y,z) \arrow[rr,""] \arrow[dr,""]&&\cC(x,\tau\cdot y)\,\ov{\times}_{B_{\tau\cdot y}}\,\cC(\tau\cdot y,z) \arrow[dl,""]\\ & \cC(x,z) \end{tikzcd} \end{center}
        commutes for any $\tau\in S_{|y|}$. We can thus let $\del_{[y]}\cC(x,z)$ denote the image of~\eqref{eq:composition-general}, where $[y]$ is the orbit of $y$ under the symmetric action.
    \end{itemize}
We require that 
\begin{equation}\begin{tikzcd}
		\cC(x,y)\,\ov{\times}_{\,B_y}\,\cC(y,y')\,\ov{\times}_{\,B_{y'}}\,\cC(y',z)  \arrow[r,""] \arrow[d,""]&\cC(x,y)\,\ov{\times}_{\,B_y}\,\cC(y,z) \arrow[d,""]\\ 
		\cC(x,y')\,\ov{\times}_{\,B_{y'}}\,\cC(y',z) \arrow[r,""] &  \cC(x,z) \end{tikzcd} \end{equation}

is a pullback square and that $\del \cC(x,z) = \djun{[y]}\,\del_{[y]}\cC(x,z).$
\end{definition}

\begin{definition}[{cf. \cite[Definition~4.8]{AB24}}]\label{de:elementary-flow-simplex}
    An \emph{elementary symmetric $n$-flow simplex} consists of a sequence $\cP = (\cP_0,\dots,\cP_n)$ of symmetric sets, a closed orbifold $B_p$ for each $p \in \djun{i}\cP_i$, and the data of a symmetric category $\scX$ enriched in $\dOrb_{/\cdot}$ with objects given by the symmetric set $\obj(\scX) = \djun{i}\cP_i$, together with a strict $2$-functor $\cP_\scX\to \cP^{\bR_{\ge 0}}$, where $\cP_\scX$ is the stratifying category of the corners of $\scX$. We require the energy map 
    \begin{equation}
        E\cl \djun{q}\scX(p,q) \to \bR
    \end{equation}
    to be proper for any $p \in \obj(\scX)$, with a uniform lower bound independent of $p$.
\end{definition}

For the next definition, observe that we can identify the strata of the standard simplex $\Delta^n$ with subsets of $\{0,\dots,n\}$. Thus, letting $I = \{i_1,\dots,i_k\}$ be the subset associated to a stratum $\sigma\sub \Delta^n$, we write $\del^\sigma\cP = (\cP_{i_1},\dots,\cP_{i_k})$. We write $\epsilon_i$ for the stratum corresponding to the complement of the singleton $\{i\}$. 
Given an elementary symmetric flow simplex $\scX$ as in Definition~\ref{de:elementary-flow-simplex}, we let $\del^\sigma\scX$ be the restriction of $\scX$ to $\del^\sigma\cP$.
As pointed out in \cite[Remark~4.12]{AB24}, Definition~\ref{de:elementary-flow-simplex} is not quite sufficient since the simplicial set obtained from these elementary symmetric flow simplices might not satisfy the horn-filling property and is thus not an $\infty$-category. This is remedied by the following condition.

\begin{definition}[{cf. \cite[Definition~4.10]{AB24}}]\label{de:flow-simplex}
    A \emph{symmetric $n$-flow simplex} consists of a sequence $\cP = (\cP_0,\dots,\cP_n)$ of sets, an orbifold $B_p$ for each $p \in \djun{i}\cP_i$, an elementary symmetric flow simplex $\scX^\sigma$ lifting $\del^\sigma\cP$ for each stratum $\sigma$ of $\Delta^n$, and a functor $\scX_\tau\to \del^\tau X_\sigma$ enriched in $\dOrb_{/}$ for any strata $\tau \sub \sigma$. They lift the isomorphisms of stratifying categories and satisfy the usual associativity condition for any triple $\rho\sub \tau\sub \sigma$ of strata. For each $i \in \{0,\dots,n\}$ we define
    \begin{equation}\label{eq:face-map}
        \del_i\scX := (\scX_{\sigma})_{\sigma\subseteq \epsilon_i}.
    \end{equation}
\end{definition}

The definition of a \emph{structured symmetric flow simplex} in our setting is equivalent to Definition~\ref{de:flow-simplex} with the changes of \cite[Definition~4.18 and Definition~4.19]{AB24}. This uses the isomorphism~\eqref{eq:tangen-bundle-sliced-fiber-product}.
We can now define the semisimplicial set underlying the stable $\infty$-category $\normalfont\text{Flow}^\Sigma$. 

\begin{lemma}[Flow]\label{lem:flow-semi-simplicial} Letting $\text{Flow}^\Sigma_n$ be the set of symmetric $n$-flow simplices for $n \ge 0$ and taking $\del_i\cl \normalfont\text{Flow}^\Sigma_n \to \normalfont\text{Flow}^\Sigma_{n-1}$ to be the map given by~\eqref{eq:face-map}, defines a semisimplicial set $\normalfont\text{Flow}^\Sigma$.\qed
\end{lemma}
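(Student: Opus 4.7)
The plan is to verify two assertions: that each $\del_i$ sends a symmetric $n$-flow simplex to a symmetric $(n-1)$-flow simplex, and that the face operators satisfy the semisimplicial identities $\del_i \del_j = \del_{j-1} \del_i$ for $i < j$. Both are essentially combinatorial, and the point of the lemma is that the definition of a symmetric $n$-flow simplex has been set up so that all data and compatibilities restrict cleanly.

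First I would construct $\del_i\scX$ explicitly. Given $\scX \in \normalfont\text{Flow}^\Sigma_n$ and an index $i \in \{0,\dots,n\}$, the underlying sequence of symmetric sets of $\del_i\scX$ is obtained from $\cP = (\cP_0,\dots,\cP_n)$ by omitting $\cP_i$ and reindexing via the unique order-preserving bijection $\{0,\dots,n-1\} \to \{0,\dots,n\}\sm\{i\}$. The orbifolds $B_p$ are inherited for $p \in \djun{k\neq i}\cP_k$. Under this reindexing, the strata of $\Delta^{n-1}$ correspond bijectively to the strata of the face $\epsilon_i \subseteq \Delta^n$: to a stratum $\tau$ of $\Delta^{n-1}$ we associate the corresponding $\sigma_\tau \subseteq \epsilon_i$, and we set $(\del_i\scX)_\tau := \scX_{\sigma_\tau}$. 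The enriched compatibility functors $\scX_\rho \to \del^\rho\scX_\sigma$ for $\rho \subseteq \sigma \subseteq \epsilon_i$ supply the requisite compatibility data for $\del_i\scX$, while associativity, the strict $2$-functor to $\cP^{\bR_{\ge 0}}$, and the uniform lower bound on the energy map are inherited without modification.

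For the semisimplicial identities, fix $i < j$ and observe that both $\del_i\del_j\scX$ and $\del_{j-1}\del_i\scX$ amount to restricting $\scX$ to the $(n-2)$-simplex spanned by the vertices $\{0,\dots,n\}\sm\{i,j\}$, together with the reindexing determined by the unique order-preserving bijection from $\{0,\dots,n-2\}$ to that vertex set. On the underlying sequence of symmetric sets, on the orbifolds $B_p$, on the elementary flow simplices $\scX_\sigma$ for $\sigma$ a stratum of this sub-simplex, on the compatibility functors, and on the $2$-functors to $\cP^{\bR_{\ge 0}}$, the two sides are canonically identified.

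The argument involves no analytic input beyond the axioms packaged into Definition~\ref{de:flow-simplex}; the only care needed is consistent bookkeeping of the two index shifts that arise when composing face maps, and in particular confirming that the two ways of identifying $\{0,\dots,n-2\}$ with $\{0,\dots,n\}\sm\{i,j\}$ (via first removing $j$ then $i$, versus first removing $i$ then $j-1$) agree. This is the only step where one must be attentive to notation, but it is purely a matter of order-preserving bijections and poses no real obstacle.
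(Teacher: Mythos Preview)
Your proposal is correct and matches the paper's approach: the paper simply states that the proof is a straightforward verification and marks the lemma with \qed, and you have spelled out exactly that verification. There is nothing to add.
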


The proof is a straightforward verification. As the set-theoretic problems facing this definition are exactly the same as in \cite{AB24}, we refer to \cite[Remark~4.14]{AB24} for an approach on how to deal with them. We can now state the main result of this subsection, generalizing \cite[Theorem~1.6]{AB24}.

\begin{proposition}\label{prop:flow-stable}
    $\normalfont\text{Flow}^\Sigma$ admits the structure of a stable $\infty$-category whose morphisms are symmetric bimodules. The same is true for the stably complex case $\normalfont\text{Flow}^{\Sigma,U}$.
\end{proposition}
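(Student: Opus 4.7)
The plan is to adapt the proof of \cite[Theorem~1.6]{AB24} to our setting by systematically replacing the Cartesian product with the quotient fiber product $\ov{\times}$ and tracking the symmetric $S_{|p|}$-actions throughout. The essential inputs we will use are Lemma~\ref{lem:commute-with-colimits}, which asserts that $\ov{\times}$ commutes with filtered colimits; the tangent-bundle splitting~\eqref{eq:tangen-bundle-sliced-fiber-product}, which ensures that stable complex structures transport across boundary embeddings and are preserved by bar constructions; and the naturality of all structural data under the symmetric isomorphisms in~\eqref{eq:symmetric-isomorphisms},~\eqref{eq:compatible-complex},~\eqref{eq:compatible-stabilising}, so that boundary strata $\del_{[\beta]}\scM(\alpha,\gamma)$ are unambiguously indexed by $S_{|\beta|}$-orbits.

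The first main step is to verify that the semi-simplicial set $\text{Flow}^\Sigma$ of Lemma~\ref{lem:flow-semi-simplicial} is a quasi-category by establishing inner horn-filling. Following \cite[\S4]{AB24}, the strategy will be to fill an inner horn $\Lambda^n_k\to \text{Flow}^\Sigma$ inductively over missing strata: the missing morphism spaces are built as bar-type telescopes of sliced fiber products of the given faces. Here Lemma~\ref{lem:commute-with-colimits} is needed so that the relevant colimits are formed in $\dOrb_{/\cdot}$ and still realise morphism spaces of the required form, while the symmetric compatibility of the composition in Definition~\ref{de:enriched-in-complicated} ensures that the filler is well-defined on $S_{|\beta|}$-orbits of objects.

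The second step is stability. The empty flow category serves as a zero object since $\scM(\varnothing,\scM')$ and $\scM(\scM',\varnothing)$ are empty for any $\scM'$. Given a symmetric bimodule $\scN\cl\scM_0\to \scM_1$, its mapping cone is constructed as a flow category whose object set is $\obj(\scM_0)\sqcup\obj(\scM_1)$ with the disjoint symmetric structure, and whose morphism spaces are obtained by telescoping $\scM_0$, $\scN$ and $\scM_1$ along $\ov{\times}$. One then verifies as in \cite[\S5]{AB24} that fiber and cofiber sequences coincide, using Lemma~\ref{lem:commute-with-colimits} once more. The stably complex case $\text{Flow}^{\Sigma,U}$ follows by propagating the data of Definition~\ref{de:stable-complex-flow-cat} through these constructions; coherence of the stable complex lifts along the bar construction is exactly the content of diagram~\eqref{dig:associativity-stable-complex} combined with~\eqref{eq:tangen-bundle-sliced-fiber-product}.

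The hard part will be verifying that all bar constructions, face maps, degeneracies and horn fillers are coherently equivariant for the symmetric actions on intermediate objects: a bar telescope of the form $\colim_q\scX(p,q)\,\ov{\times}_{B_q}\,\scX(q,r)$ is really indexed over $S_{|q|}$-orbits of intermediate objects, and one must check that the resulting filtered diagram in $\dOrb_{/\cdot}$ interacts correctly with the slicings used to define $\ov{\times}$. This bookkeeping is essential new content beyond \cite{AB24}, but it should be handled by the compatibility diagrams already imposed in Definitions~\ref{de:sliced-flow-cat} and~\ref{de:enriched-in-complicated}, together with Example~\ref{ex:quotient-fiber-product}, which identifies the tangent space of a sliced fiber product over a transitive orbifold canonically via $\fg_Z\cong TZ_0$.
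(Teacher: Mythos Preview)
Your proposal is correct and follows essentially the same approach as the paper: adapt \cite{AB24} by replacing Cartesian products with quotient fiber products and tracking the symmetric actions, invoking Lemma~\ref{lem:commute-with-colimits} where filtered colimits appear. The paper organizes things slightly differently---it first handles the semisimplicial-to-simplicial lift via the degeneracies $s_0,s_n$ (\cite[\S6]{AB24}) before horn-filling (which is \cite[\S5]{AB24}, not \S4), and uses the suspension-autoequivalence criterion for stability---and its key simplifying observation is that the horn-filling arguments of \cite{AB24} operate on one morphism space at a time, so the symmetric-equivariance bookkeeping you flag as the ``hard part'' is essentially automatic rather than requiring new ideas.
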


\begin{proof}
    The proof follows from observing that the arguments of \cite{AB24} carry through. We first observe that the arguments of \cite[\textsection6]{AB24}, in particular, Proposition~6.4 loc. cit., which lift their semisimplicial set $\Flow^\Sigma$ to a simplicial set, carry over verbatim to our setting. Indeed, the reasoning is formal, using \cite{Stei18} adapted as in \cite{AB24}, once one has constructed the inital and terminal degeneracies
    \begin{gather*}
        s_0 \cl \Flow^\Sigma_n \to \Flow^\Sigma_{n+1}\\
        s_n\cl \Flow^\Sigma_n \to \Flow^\Sigma_{n+1},
    \end{gather*}
    and the constructions of \cite[\textsection6.2]{AB24} extends to our setting by replacing the usual product by the sliced fiber product of Definition~\ref{de:sliced-fiber-product} and keeping track of the symmetric action on objects and morphism spaces.\par 
    The proof of the horn-filling property is the part that is the least obvious to adapt to our setting as it requires delicate geometric arguments. However, they are always about one morphism space at at a time. Thus, these arguments carry through when having symmetric sets of objects, where each object is simply a point, due to the naturality of the constructions and by replacing the products appearing in the definition of the map~(5.5) loc. cit. by quotient fiber products. The proof of \cite[Lemma~5.8]{AB24}, showing the claim of \cite[Theorem~5.1]{AB24} under the simplifying Assumption~1 loc. cit. requires Lemma~\ref{lem:commute-with-colimits} in our setting. The proof without the assumption (cf. \cite[Proposition~5.12]{AB24}) is about a single morphism space and thus is not affected by our generalization of the definition of a flow category. This shows that $\Flow^\Sigma$ as in Lemma~\ref{lem:flow-semi-simplicial} is an $\infty$-category.\par
    Recall that an $\infty$-category $C$ is \emph{stable} by \cite[Theorem~1.1.2.14 and Remark~1.1.2.15]{Lur09}) if 
    \begin{itemize}[leftmargin=25pt]
        \item $C$ has a zero object $*$,
        \item the suspension $\Sigma \cl C\to C$, taking $x$ to the pushout $\Sigma x$ of $*\to x\leftarrow *$, exists and is an auto-equivalence,
        \item any morphism in $C$ admits a cofiber.
    \end{itemize}   
    The unit of $\Flow^\Sigma$ is the flow category $\emst$ whose set of objects is empty, just as in \cite[\S7.2]{AB24}. The proof that the suspension $\Sigma$ exists and is an auto-equivalence is the same as the proofs of Lemma~7.8 and Lemma~7.9 in \cite{AB24}, by associating to the ``additional'' elements of $s_i\cP$ in Equations~(7.31) and~(7.32) the obvious orbifolds, and by replacing the products in Equation~(7.36) with quotient fiber products. The last property follows from the constructions of \cite[\S7.4]{AB24} by noting that they do not require the boundary strata to be products of other morphism spaces. This shows the claim in the unstructured case. The arguments of \cite[\S7.5]{AB24} allows us to lift the assertion to structured flow categories.
\end{proof}

\begin{lemma}\label{lem:all-colimits}
    The stable $\infty$-category $\normalfont\text{Flow}^\Sigma$ of unstructured flow categories admits all $\aleph_0$-small (homotopy) colimits. The same is true for stably complex flow categories.
\end{lemma}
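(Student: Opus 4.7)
The plan is to leverage that $\Flow^\Sigma$ is stable by Proposition~\ref{prop:flow-stable}, whence it admits all finite colimits. To obtain all $\aleph_0$-small colimits, it then suffices to construct countable filtered colimits, since any $\aleph_0$-small diagram is the sequential colimit of its finite subdiagrams, and such combinations of finite and sequential colimits always exist together.

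Concretely, given a sequential diagram
$$\scM_0 \xrightarrow{\scN_0} \scM_1 \xrightarrow{\scN_1} \scM_2 \xrightarrow{\scN_2} \dots$$
in $\Flow^\Sigma$, encoded by symmetric bimodules, I would construct a candidate colimit $\scM_\infty$ explicitly. Its underlying symmetric set of objects is the colimit $\colim_n \obj(\scM_n)$, formed in symmetric sets of closed orbifolds. For objects $\alpha \in \obj(\scM_m)$ and $\beta \in \obj(\scM_n)$ (with $m\leq n$, up to passing far enough along the diagram), the morphism space $\scM_\infty(\alpha,\beta)$ is obtained from a telescope construction that concatenates the bimodules $\scN_k$, $m \le k < n$, with the relevant morphism spaces of the $\scM_k$. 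The source, target, and energy maps are induced from the diagram.

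The main obstacle will be verifying that the composition maps \eqref{eq:composition} assemble consistently at the colimit: one must check that the quotient fiber products appearing in composition interact correctly with the filtered colimit used in defining $\scM_\infty$. This is precisely the content of Lemma~\ref{lem:commute-with-colimits}. Associativity is then preserved by passing to the colimit in the associativity square, using that finite limits commute with filtered colimits and that the stratifying trees involved at any finite stage are finite objects. The universal property of $\scM_\infty$ in $\Flow^\Sigma$ follows formally from the universal property of filtered colimits on objects and morphism spaces, together with the established stability of $\Flow^\Sigma$.

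For the stably complex case, the data of a stably complex lift---the bundles $V_\alpha$, $I_{\alpha\beta}$, $W_{\alpha\beta}$, and the equivalences \eqref{eq:equivalence-stable-complex}---is transported along bimodules by the very definition of morphisms in $\Flow^{\Sigma,U}$. Thus, the tangential data on the $\scM_n$ assembles under the structure maps of the diagram to a coherent system that descends to a stably complex lift of $\scM_\infty$. The associativity square \eqref{dig:associativity-stable-complex} is preserved under the filtered colimit by the same reasoning as in the unstructured case, so $\scM_\infty$ inherits the structure of an object of $\Flow^{\Sigma,U}$ and provides the desired colimit there.
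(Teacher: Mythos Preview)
Your approach diverges from the paper's and contains a genuine gap. The paper does not construct sequential colimits directly. Instead, it invokes \cite[Proposition~4.4.3.2]{Lur09}: since $\Flow^\Sigma$ is stable (hence has all finite colimits), it admits all $\aleph_0$-small colimits as soon as it admits $\aleph_0$-small \emph{coproducts}. Coproducts are then constructed trivially as disjoint unions of flow categories: given $\{\scX_i\}_{i\in\bN}$, take $\obj(\scX) = \djun{i}\obj(\scX_i)$ and set $\scX(p,q) = \scX_i(p,q)$ if $p,q\in\obj(\scX_i)$ and $\scX(p,q)=\emst$ otherwise. Verifying this is a coproduct is straightforward, and the stably complex case follows verbatim.

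Your telescope construction has a concrete problem at the very first step: a bimodule $\scN_n$ from $\scM_n$ to $\scM_{n+1}$ is a $1$-simplex in $\Flow^\Sigma$, which by Definition~\ref{de:elementary-flow-simplex} carries a pair of symmetric sets $(\cP_0,\cP_1)$ but no map between them. Thus ``$\colim_n\obj(\scM_n)$'' has no meaning, and the rest of the construction cannot proceed as written. Even setting this aside, your claim that the universal property ``follows formally from the universal property of filtered colimits on objects and morphism spaces'' is not justified: colimits in an $\infty$-category of categories are not computed objectwise and morphism-space-wise, and Lemma~\ref{lem:commute-with-colimits} concerns filtered colimits of derived orbifolds in $\dOrb_{/\cdot}$, not colimits in $\Flow^\Sigma$. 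The reduction to coproducts sidesteps all of this.
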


\begin{proof}
By \cite[Proposition~4.4.3.2]{Lur09} and Theorem~\ref{prop:flow-stable}, it suffices to show that $\normalfont\text{Flow}^\Sigma$ has all $\aleph_0$-small coproducts. Its coproducts are given by disjoint unions of flow categories: if $\{\scX_i\}_{i\in \bN}$ is a set of symmetric flow categories with object sets $P_i$, let $\scX$ be the flow category with objects the symmetric set $P = \djun{i}P_i$ and morphism spaces 
\begin{equation*}
    \scX(p,q) = \begin{cases}
        \scX_i(p,q) & p,q\in P_i\\
        \emst & \text{otherwise,}
    \end{cases}
\end{equation*}
equipped with the obvious symmetric actions and composition maps. It is a straightforward verification that this is indeed a coproduct. The argument carries over verbatim to the stably complex case.
\end{proof}
\subsection{A symmetric flow category with bounded action}\label{subsec:unstructured-flow-cat}
We can now construct a flow category using the Kuranishi charts of the previous section. Throughout, $(Y,\lambda)$ is a closed contact manifold equipped with a non-degenerate contact form $\lambda$. We denote its Reeb vector field by $R$ and let $(\wh Y,\omega) = (\bR\times Y,d(e^s\lambda))$ be the symplectisation of $(Y,\lambda)$. 
Given $L > 0$, let 
$$\cP_L\coloneqq \{\Gamma = (\gamma_1,\dots,\gamma_k)\mid \gamma_i \text{ is a Reeb orbit, with action }\cA_\lambda(\gamma_i)\leq L\}$$ 
be the set of finite sequences of unparametrized Reeb orbits of action at most $L$.  Recall that given by a function $\Lambda\cl \Gamma^-\to \Gamma^+$ and a sequence $\beta =(\beta_\gamma)_{\gamma\in \Gamma^+}$ of relative homology classes, we defined the moduli space $\Mbar_{\sft}^{\, J}(\Gamma^+,\Gamma^-;\beta)_\Lambda$ of buildings with disconnected domains in Definition~\ref{de:disconnected-domains}.

\begin{theorem}\label{thm:flow-cat}
    Given $L > 0$ and a choice of $\lambda$-adapted almost complex structure $J$, there exists a symmetric flow category $\scM^{Y,\lambda}_{\leq L}$ of class rel--$C^1$ whose objects are elements of $\cP_L$ and whose morphism spaces are 
    \begin{equation*}\label{eq:morphisms}
        \scM_{\leq L}^{Y,\lambda}(\Gamma^-,\Gamma^+)\,\coloneqq \,\djun{\Lambda}\djun{\beta}\Mbar_{\sft}^{\, J}(\Gamma^+,\Gamma^-;\beta)_\Lambda
    \end{equation*}
    for any $\Gamma^+,\Gamma^-\in\cP_L(Y)$, where the disjoint unions range over functions $\Lambda\cl \Gamma^-\to \Gamma^+$ and sequences $\beta =(\beta_\gamma)_{\gamma\in \Gamma^+}$ of relative homology classes.
\end{theorem}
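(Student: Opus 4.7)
The plan is to take as objects the orbifolds $B\Gamma$, with the torus $\bT_\Gamma \coloneqq (S^1)^{|\Gamma|}$ acting by rotating each component and equipped with the symmetric action permuting the entries of $\Gamma$. The morphism spaces
\[
\scM_{\leq L}^{Y,\lambda}(\Gamma^-,\Gamma^+) \coloneqq \bigsqcup_{\Lambda,\beta}\, \Mbar^{\,J}_{\sft}(\Gamma^+,\Gamma^-;\beta)_\Lambda
\]
will carry the derived-orbifold structure from Proposition~\ref{prop:disconnected-buildings}. The source and target submersions record the asymptotic markers at the negative and positive punctures; they are freely sliced by the rotation actions of $\bT_{\Gamma^-}$ and $\bT_{\Gamma^+}$ respectively. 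The energy $\cA_\lambda(\Gamma^+) - \cA_\lambda(\Gamma^-)$ is proper on each morphism space by SFT compactness combined with the finiteness of effective relative homology classes at bounded energy.

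Next I would identify the composition structure. A codimension-$1$ boundary point of $\Mbar^{\,J}_{\sft}(\Gamma^+,\Gamma^-;\beta)_\Lambda$ corresponds to a leveled building whose forest splits between two consecutive levels along an intermediate sequence $\Gamma$ of Reeb orbits whose asymptotic markers match. Since the relative translation between adjacent levels is quotiented out and matched punctures are identified modulo the diagonal $\bT_\Gamma$-action, this stratum is canonically isomorphic to $\scM_{\leq L}(\Gamma^-,\Gamma)\,\ov{\times}_{B\Gamma}\, \scM_{\leq L}(\Gamma,\Gamma^+)$ in the sense of Definition~\ref{de:sliced-fiber-product}. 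Proposition~\ref{prop:gluing-level} provides the local charts in the leveled thickening that realize this identification, and the associativity pullback squares of Definition~\ref{de:sliced-flow-cat} correspond to triple intersections of boundary strata parametrizing three-leveled buildings, whose locality is controlled by iterating the Cartesian square~\eqref{dig:blow-up-square}.

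The hard part will be choosing the perturbation data coherently, following the inductive scheme of \cite{BX22}. I would order the pairs $(\Gamma^-,\Gamma^+)$ by a complexity such as $(\cA_\lambda(\Gamma^+) - \cA_\lambda(\Gamma^-),\,|\Gamma^+|+|\Gamma^-|)$, noting that the action bound $L$ makes this take only countably many values and that every codimension-$1$ boundary factor has strictly smaller complexity. Given compatible perturbation data for all strictly smaller pairs, I would extend them over $\scM_{\leq L}(\Gamma^-,\Gamma^+)$ by first pulling back the previously chosen good coverings (Definition~\ref{de:good-covering}) and joint finite-dimensional approximation schemes along collar neighborhoods of the codimension-$1$ faces, and then enlarging the $G$-representation $E$ by a new summand achieving transversality at the (compact) interior regular locus, as in the proof of Theorem~\ref{thm:pardon-gkc}(1) via Lemma~\ref{lem:invariant-approximations-exist} and Lemma~\ref{lem:transversality-pointwise}. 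The principal obstacle will be the bookkeeping at triple intersections of boundary strata: the joint perturbation spaces pulled back from different codimension-$1$ faces must already agree on their common higher-codimension stratum. Iterating the Cartesian squares of Proposition~\ref{prop:gluing-level} forces this compatibility from the inductive hypothesis, but the formal argument requires a simultaneous treatment of all adjacent-level pairs encoded in the leveled base space $\cBS$ of \textsection\ref{subsubsec:blowup_to_leveled_base}.

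Finally, the rel--$C^1$ regularity of the compositions, symmetric actions, and source/target maps will follow from Theorem~\ref{thm:gluing-main} and Proposition~\ref{prop:c1loc}, exactly as in Theorem~\ref{thm:pardon-gkc}(2). Symmetric equivariance of the entire construction is maintained by choosing all auxiliary data ($S_n$-invariant good coverings, approximation schemes, and slice maps) invariantly under the permutation action on marked points, as already arranged for the slicing map~\eqref{eq:reducing-structure-group}. Together these steps yield the desired rel--$C^1$ symmetric flow category $\scM^{Y,\lambda}_{\leq L}$.
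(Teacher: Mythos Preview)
Your outline captures the overall architecture correctly, but it glosses over the central technical obstacle the paper has to resolve in \textsection\ref{subsec:embeddings-base}: the boundary stratum of the global Kuranishi chart $\scK_\Lambda$ is \emph{not} canonically isomorphic to $\scK_{\Lambda^0}\,\ov{\times}_{B\Gamma}\,\scK_{\Lambda^1}$. The thickening $\scT_\Lambda$ lives over a base space built from framings in $\bP^{d_\Lambda}$, whereas the factors live over $\bP^{d_{\Lambda^0}}$ and $\bP^{d_{\Lambda^1}}$, with $d_{\Lambda^0}+d_{\Lambda^1}\neq d_\Lambda$ and different covering groups $G_{\Lambda^0}\times G_{\Lambda^1}\hookrightarrow G_\Lambda$. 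The paper constructs an explicit equivariant embedding $\Psi\cl \scBS_{\Lambda^0}\times_{\bT_\Gamma}\scBS_{\Lambda^1}\hookrightarrow\scBS_\Lambda$ (Proposition~\ref{prop:embedding-base-spaces}) by lifting the evaluation maps to spheres (Lemma~\ref{lem:sphere_pullback}, Corollary~\ref{cor:normalized-evaluation}) and then combining framings via the maps $F_i$. Even then, the image of $\Psi$ is only a submanifold of the boundary stratum; Lemma~\ref{lem:boundary-up-to-stabilisation} shows the full stratum is a vector bundle $Q_{\Lambda^*}$ over the $G_\Lambda$-sweep of $\im\Psi$. This is why the composition maps~\eqref{eq:emb-gkc} are only \emph{strong equivalences} onto boundary strata, not isomorphisms. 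Your appeal to Proposition~\ref{prop:gluing-level} does not address this: that result relates the leveled thickening to the Pardon thickening over the same base, not products of smaller charts to boundary strata of larger ones.

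A second, smaller issue: your inductive scheme speaks of pulling back ``joint finite-dimensional approximation schemes'' along collars, but those are the cobordism-side gadgets of Lemma~\ref{lem:joint-fin-dim-scheme}. In the symplectisation, the paper's inductive step (Lemma~\ref{lem:embedding-inductive-step} and Proposition~\ref{prop:embedding-gkc}) instead extends the slice map $\zeta$ across the clutching embedding~\eqref{eq:clutching-on-stable}, extends the more flexible good coverings of Definition~\ref{de:good-covering} so that the diagram~\eqref{eq:square-for-coverings} commutes, and then builds the perturbation space $(E_\Lambda,\mu_\Lambda)$ so that over each boundary stratum it splits as $(E_{\Lambda^0}\oplus E_{\Lambda^1})\oplus(W,0)$. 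The corner compatibility you flag as ``the principal obstacle'' is handled by Lemma~\ref{lem:extending-from-boundary} and the associativity of the base-space embeddings (Lemma~\ref{lem:associativity-of-embeddings}), not by iterating Proposition~\ref{prop:gluing-level}. Your complexity ordering would serve the same inductive purpose as the paper's norm $\|(\Gamma^-,\Gamma^+)\|$, so that difference is harmless.
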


Observe that the order of $\Gamma^-$ and $\Gamma^+$ is opposite the usual one in the morphism space of the symmetric flow category. We do so to be compatible with the conventions of \cite{AB24}. We use the energy functions
$$E = E_{\Gamma^-\Gamma^+}\cl \scM^{Y,\lambda}_{\leq L}(\Gamma^- , \Gamma^+) \to \subset [0,\infty) : \scA_\lambda(\Gamma^+)- \scA_\lambda(\Gamma^-),$$ 
which are clearly additive under composition and proper by \cite{BEH03}. The composition is given by the canonical ``stacking'' of buildings, as shown in 
\begin{figure}[H]
    \centering
    
    \def\svgscale{0.7}
    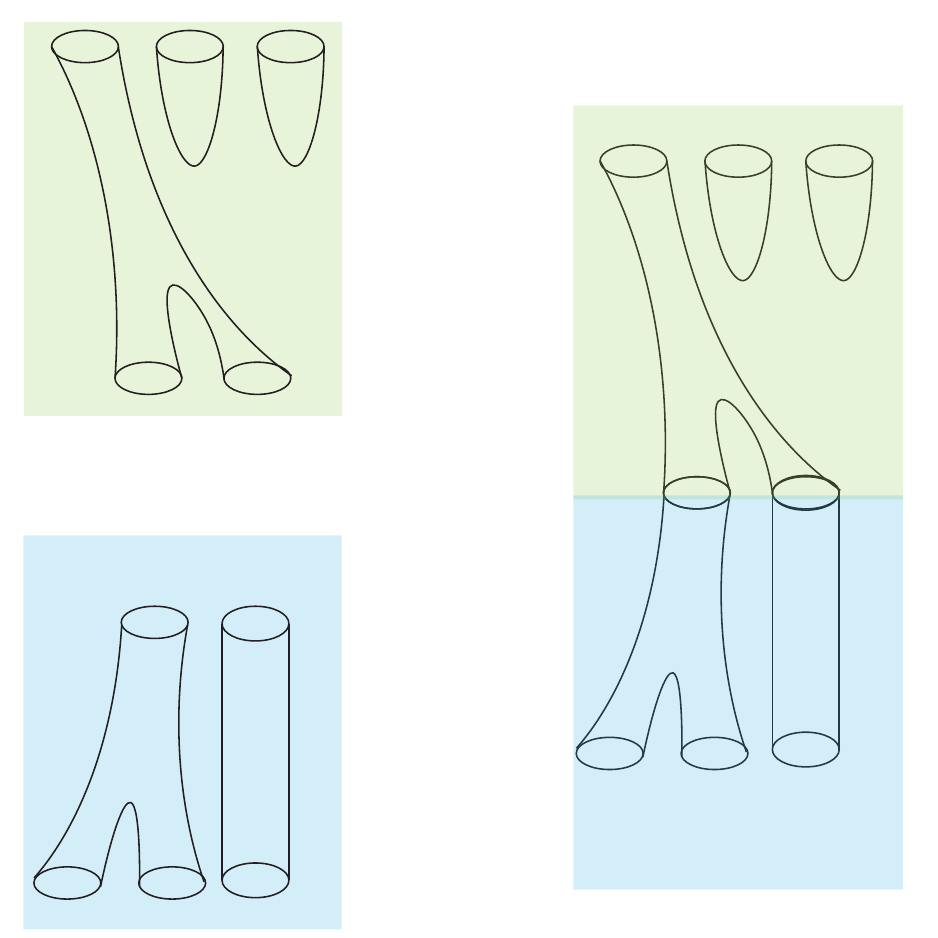

    \caption{Composition in $\scM_{\le L}^{Y,\lambda}$ } 
    \label{fig:flocomp}
\end{figure}

\begin{remark}
The bound by $L$ selects Reeb orbits with action less than $L$ but does not constrain the length (and action) of $\Gamma$. Any Reeb orbit with action greater than $L$ will not occur in the moduli space $\Mbar^{\, J}_{\sft}(\Gamma^+,\Gamma^-,\beta)_\Lambda$ if $\cA_\lambda(\gamma_i^\pm) \leq L$ for all $\gamma_i^+$. This is due to the fact that each connected component of the holomorphic buildings we consider has exactly one positive puncture. 
\end{remark}
 
In order to prove Theorem~\ref{thm:flow-cat}, we construct the global Kuranishi charts for the moduli spaces~\eqref{eq:morphisms} inductively as in \cite{BX22}. Fix a pre-perturbation datum $\fD$ consisting of
\begin{itemize}
    \item an action bound $L > 0$;
    \item a $\cP_L$-integral approximation $\wt\lambda$ of $\lambda$,
    \item a translation-invariant $J$-linear connection $\wh\conn$ on $T\wh Y$,
    \item a prime number $p \gg \max \{\cA_{\wt\lambda}(\gamma) \mid \gamma \in \cP_L \}$. 
\end{itemize}
Recall that $\Mbar_{\sft}^{\, J}(\Gamma^+,\Gamma^-;\beta)_\Lambda$ is the moduli space of disconnected leveled buildings, where each connected component has one positive puncture asymptotic to some $\gamma\in \Gamma^+$ and negative punctures asymptotic to the elements $\gamma'\in \Lambda_\gamma \coloneqq\Lambda\inv(\gamma)$ and of degree $\beta_\gamma$.
 
\noindent We will from now on drop the mention of degree to make the notation more tractable. Define a partial order $\prec$ on $\cP_L$ by  
$$\Gamma \prec \Gamma' \qquad \dimp \qquad\exists\Lambda: \Mbar^{\, J}_{\sft}(\Gamma',\Gamma)_\Lambda \neq \emst$$
and define the ``norm"
$$\norm{(\Gamma,\Gamma')} := \sup\set{k\in \bN_0\mid \exists \Gamma_0,\dots,\Gamma_k \in \cP_L: \Gamma = \Gamma_0 \prec \Gamma_1 \prec\dots\prec \Gamma_k \prec \Gamma'}$$
 on $\cP_L\times \cP_L$. Note that $\Gamma \nprec \Gamma'$ if and only if $\norm{(\Gamma,\Gamma')} = -\infty$.
\noindent We use induction on $\norm{(\Gamma^-,\Gamma^+)}$ to construct perturbation data for the moduli spaces $\Mbar_{\sft}^{\, J}(\Gamma^+,\Gamma^-)_\Lambda$. 
 If $\norm{(\Gamma^-,\Gamma^+)} = -\infty$, there is nothing to do. Given a pair $\Gamma^\pm$ of norm $0$ and a partition $\Lambda\cl \Gamma^-\to \Gamma^+$, extend $(\wt\lambda,\wh\conn,p)$ to an arbitrary perturbation datum $\alpha_{\Lambda}$ for $\Mbar^{\, J}(\Gamma^+,\Gamma^-)_\Lambda$. Let 
 $$\cK_\Lambda =  (\wh G_{\Lambda},\cT_{\Lambda},\cE_{\Lambda},\fs_{\Lambda})$$ 
 be the associated global Kuranishi chart given by Proposition~\ref{prop:disconnected-buildings}. Given permutations $\sigma^\pm$, we define $\cK_{\sigma^+\g \Lambda\g \sigma^-}$ to be the global Kuranishi chart obtained from $\cK_{\Lambda}$ by permuting the labels of the positive/negative marked punctures according to $\sigma^+$ and $\sigma^-$ respectively. Note that we also have to change the partition $\Lambda$.\par 
 Returning to $\cK_{\Lambda}$, recall that its thickening (and obstruction bundle) are rel--$C^1$ over the base space $\cBS_\Lambda$, constructed in \textsection\ref{subsec:base-for-disconnected}.
It admits canonical smooth maps
\begin{equation}\label{eq:base-total-degree}
    \cBS_\Lambda\to \p{\gamma\in \Gamma^+}{\cBR_{\gamma,\Lambda_{\gamma}}} \to \p{\gamma\in \Gamma^+}{\cB(d_{\Lambda_\gamma})}
\end{equation}
where $\cB(d) \sub \Mbar_0(\bP^d,d)$ was defined in \textsection\ref{subsec:base-space} and the degree is given by
\[d_{\Lambda_\gamma} = 1+\#\Lambda_\gamma-2 +  p\,\lbr{\cA_{\wt\lambda}(\gamma)-\s{\gamma'\in \Lambda_\gamma}{\cA_{\wt\lambda}(\gamma')}}.\]

In particular, the evaluations maps on the products of $\cB(d)$ induce smooth $\PU(d_{\Lambda_\gamma}+1)$-equivariant evaluation maps 
\begin{equation}\label{eq:evaluation-positive}
    \eva^+_\gamma\cl\cBR_{\gamma,\Lambda_{\gamma}}\to \bP^{d_{\Lambda_{\gamma}}}
\end{equation}
for $\gamma\in \Gamma^+$ and 
\begin{equation}\label{eq:evaluation-negative}
    \eva^-_{\gamma'}\cl \cBR_{\gamma,\Lambda_{\gamma}}\to \bP^{d_{\Lambda_{\gamma}}}
\end{equation}
for $\gamma'\in \Lambda_\gamma$.

\subsubsection{Embeddings of base spaces}\label{subsec:embeddings-base} We will lift the evaluation maps $\eva^\pm_{\gamma'}$ to smooth maps to the spheres $S^{2d_{\Lambda_\gamma}+1}$ as in \cite[Definition~5.2.1]{BX22} but via a different construction. Then, we will use these lifts to construct embeddings of base spaces that will induce the composition maps of the symmetric flow category later on. 
Fix thus $\gamma\in \Gamma^+$ and set $d \coloneqq d_{\Lambda_\gamma}$. Let $J_0$ be the standard complex structure on 
\begin{equation}\label{eq:symplectisation-sphere}
    \bR \times S^{2d+1}\,\cong\, \cO_{\bP}(-1)\sm 0 \,\cong\,\bC^{d+1}\sm\{0\},
\end{equation}
considered as the symplectization of the contact manifold $S^{2d+1}$ equipped with the round (Morse-Bott) contact form. We denote the moduli space of Pardon buildings in $\bR \times S^{2d+1}$ by $\Mbar^{J_0}_{\gamma\sqcup\Lambda_\gamma}(S^{2d+1})$ and we equip it with the Gromov topology as described in \cite{Par19}. Note that we do not fix the Reeb orbits the punctures have to be asymptotic to, nor any base point of the Reeb orbits. The quotient map $S^{2d+1}\to \bP^d$ induces a continuous map 
\begin{equation}\label{eq:map-to-stable-maps}
    \Mbar^{J_0}_{\gamma\sqcup\Lambda_\gamma}(S^{2d+1})\to \Mbar_{0,\gamma\sqcup\Lambda_\gamma}(\bP^d,d)
\end{equation}
and we denote by $\cBRB_{\gamma, \Lambda_\gamma}$ the preimage of $\cB_{\gamma\sqcup\Lambda_\gamma}(d)$ under \eqref{eq:map-to-stable-maps}.
By construction, the map~\eqref{eq:map-to-stable-maps} lifts to a continuous map $q\cl \cBRB_{\gamma, \Lambda_\gamma} \to \cBR_{\gamma,\Lambda_\gamma}$.

\begin{lemma}\label{lem:principal-bundle} The map
 $q\cl \cBRB_{\gamma, \Lambda_\gamma} \to \cBR_{\gamma, \Lambda_\gamma}$ is a principal $S^1$-bundle.  
 \end{lemma}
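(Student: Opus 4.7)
The plan is to identify $\bR\times S^{2d+1}$ with $\bC^{d+1}\sm \{0\}$ via $(t,x)\mapsto e^t x$; this is a biholomorphism taking $J_0$ to the standard complex structure and exhibiting the projection to $\bP^d$ as the principal $\bC^*$-bundle $\cO_{\bP^d}(-1)\sm 0\to\bP^d$. Under this identification, a $J_0$-holomorphic building in $\bR\times S^{2d+1}$ projecting to a stable map $\varphi\cl C\to \bP^d$ corresponds, on each irreducible component $C_v$, to a nowhere-vanishing holomorphic section of $\varphi^*\cO(-1)|_{\dot C_v}$ with prescribed pole and zero orders at the positive and negative punctures respectively (determined by the Reeb orbit multiplicities). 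The $S^1$-action on $\cBRB_{\gamma,\Lambda_\gamma}$ is scalar multiplication by $S^1\sub \bC^*$ on $\bC^{d+1}\sm\{0\}$, i.e.\ phase rotation along the Hopf fibre in $S^{2d+1}$; this commutes with $q$ and is free, since multiplying every component of a building by $e^{i\theta}$ agrees with a translation $(c_v)_v \in \bR^{V(T)}$ only if $e^{i\theta} = e^{c_v}\in \bR_{>0}$ for all $v$, forcing $\theta = 0$.

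The core step is to show that any two lifts $u,u'\cl \dot{C}\to \bC^{d+1}\sm\{0\}$ of the same $\varphi\in \cBR_{\gamma,\Lambda_\gamma}$ differ, after applying the $\bR^{V(T)}$-translation, by a single global phase. Pointwise, $u(z)$ and $u'(z)$ lie in the same fibre of $\cO(-1)\sm 0\to\bP^d$, hence $u' = f\cdot u$ for a holomorphic function $f\cl \dot{C}\to \bC^*$. Since the asymptotic expansions of $u$ and $u'$ at each puncture have identical leading directions in $\bP^d$ and the same Reeb orbit multiplicity, $f$ extends across each puncture, and its restriction to each component $C_v\cong\bP^1$ is a constant $\lambda_v\in\bC^*$. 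The equality of the matching isomorphisms on $\cBR_{\gamma,\Lambda_\gamma}$ induced by $u$ and $u'$ then forces $\arg(\lambda_v) = \arg(\lambda_{v'})$ for each adjacent pair $(v,v')$, because the matching isomorphism at a node is by construction the $S^1$-class of the ratio of leading asymptotic coefficients (its $\bR_{>0}$-part being quotiented away by translations). Connectedness of $T$ then implies that all $\arg(\lambda_v)$ agree with a common $\theta\in S^1$, while the moduli $|\lambda_v|\in \bR_{>0}$ are unconstrained and are precisely absorbed by the $\bR^{V(T)}$-quotient defining $\cBRB_{\gamma,\Lambda_\gamma}$. Each fibre of $q$ is therefore a single $S^1$-orbit.

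Local triviality then follows formally: because the Hopf bundle $S^{2d+1}\to\bP^d$ is locally trivial, for any $\varphi_0 \in \cBR_{\gamma,\Lambda_\gamma}$ one may work in Hopf charts covering the images of the finitely many distinguished points of $\varphi_0$ and solve for a lift $u_\varphi$ depending continuously on $\varphi$ in a neighbourhood of $\varphi_0$; combined with the freeness and fibre-transitivity established above, this exhibits $q$ as a principal $S^1$-bundle. The main technical obstacle is the verification, in the second step, that the matching-isomorphism data on $\cBR_{\gamma,\Lambda_\gamma}$ is exactly the phase of the ratio of leading asymptotic coefficients in $\bC^{d+1}$ at each node -- a direct but slightly delicate computation using Definition~\ref{de:sft-base-space} together with the asymptotic formula of \cite{HWZI} for $J_0$-holomorphic curves near Reeb orbits in $\bR\times S^{2d+1}$.
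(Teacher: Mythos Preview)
Your proof is correct and takes essentially the same approach as the paper: both use the identification $\bR\times S^{2d+1}\cong\bC^{d+1}\sm\{0\}\cong\cO_{\bP^d}(-1)\sm 0$ to interpret lifts of $\varphi$ as meromorphic sections of $\varphi^*\cO(-1)$ with prescribed pole/zero orders, and then argue that the matching isomorphisms propagate the phase across components while the $\bR^{V(T)}$-quotient absorbs the moduli $|\lambda_v|$. The one difference worth noting is in local triviality: the paper simply observes that $q$ is proper (as the pullback of the proper map $\Mbar^{J_0}_{\gamma\sqcup\Lambda_\gamma}(S^{2d+1})\to\Mbar_{0,\gamma\sqcup\Lambda_\gamma}(\bP^d,d)$) and uses that a free $S^1$-action whose invariant quotient map is a proper continuous bijection is automatically a principal bundle, which sidesteps your more hands-on construction of continuous local sections via Hopf charts and the attendant need to check continuity of the lift across the stratification of $\cBR_{\gamma,\Lambda_\gamma}$.
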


\begin{proof} The $S^1$-action on $S^{2d+1}$ induces a continuous $S^1$-action on $\cBRB_{\gamma, \Lambda_\gamma}$, with respect to which the evaluation $\eva^+_\gamma$ at the positive puncture is equivariant. Since $S^1$ acts freely on $S^{2d+1}$, the action on $\cBRB_{\gamma, \Lambda_\gamma}$ is free as well. The map $q$ is clearly $S^1$-invariant. Since $\cBRB_{\gamma, \Lambda_\gamma} \to \cBR_{\gamma, \Lambda_\gamma}$ is the pullback of the proper map~\eqref{eq:map-to-stable-maps}, it is proper, and thus so is $q$.\par
It remains to show that the descent of $q$ to the quotient is bijective. 
To see surjectivity, fix $[\varphi,C,b,m] \in \cBR_d$, where $b$ denotes the asymptotic markers and $m$ denotes the matching isomorphisms, and let $T$ be the underlying decorated graph. Recall that to every vertex $v$ with one incoming edge and $k$ outgoing edges we can associate numbers $d^+_v, d^-_{v,1}, \dots d^-_{v,k}$ such that 
\[\deg (\varphi_v)  = d^+_{v} - d^-_{v,1} - \dots d^-_{v,k} + (k+1)-2 = (d^+_{v}-1) - (d^-_{v,1}-1) - \dots (d^-_{v,k}-1).  \] 
Thus, there is a $\bC^*$-family of sections of $\varphi_v^* \cO(-1)$ with pole of order $d^+_v-1$ at $z_{v,e}\in \C_v$ for $e\in E^+_v$ and zeroes of order $d^-_{v,i}-1$ at $z_{v,e_i} \in \C_v$ for $e_i\in E^-_v$ with $i\in \{ 1, \dots ,k\}$. Note that the degrees $d_{v,i}^-$ and $d_v^+$ are always at least $3$ by our choice of integer $p$.
Using the identification \eqref{eq:symplectisation-sphere}, we can construct the lifts of $\varphi_v$ by choosing sections of $\varphi^*_v \cO(-1)$ satisfying the pole-zero arithmetic described above and then projecting to $S^{2d+1}$. Note that the choice of the meromorphic section over $\varphi_v^*\cO(-1)$ is only well defined up to the $\bC^*$-action given by scaling. Thus, writing $D_v$ for the divisor of punctures of $C_v$, a lift $\Phi_v  : B_{D_v} C_v  \to S^{2d+1}$ is only well defined up to the Hopf action. Fix a vertex $v \in V(T)$ and a lift $\Phi_v$. By using the matching condition at the punctures, we can find unique lifts $\Phi_{v'}$ for vertices $v'\neq v$ such that $\Phi \cl \cup_{v\in V(T)} C_v\sm D_v \to S^{2d+1}$ is a Pardon building. From the construction it is clear that there is an $S^1$ family of such lifts of $[\varphi,C,b,m]$. The injectivity of the descent of $q$ follows directly.
\end{proof}

\begin{lemma}\label{lem:sphere_pullback}
    For $p \in \{\gamma\}\sqcup \Lambda_\gamma$, let $\eva_{p}$ be the respective evaluation map of \eqref{eq:evaluation-positive} or \eqref{eq:evaluation-negative}. Then the following holds.
    \begin{enumerate}[label=\normalfont\arabic*),leftmargin=20pt,ref=\arabic*]
        \item $\cBRB_{\gamma,\Lambda_\gamma}$ is isomorphic as a topological principal $S^1$-bundle to $\eva_{\gamma'}^*S^{2d+1}$,
        \item If we equip $\cBRB_{\gamma,\Lambda_\gamma}$ with the smooth structure pulled back from $\eva_\gamma^*S^{2d+1}$, then there exists a $U(d+1)$-equivariant smooth submersion $\wt\eva_{\gamma'} \cl \cBRB_{\gamma,\Lambda_\gamma}\to S^{2d+1}$ so that 
        \begin{equation*}\begin{tikzcd}
		\cBRB_{\gamma,\Lambda_\gamma} \arrow[r,"\wt\eva_{\gamma'}"] \arrow[d,""]&S^{2d+1} \arrow[d,""]\\ 
        \cB_{\gamma,\Lambda_\gamma} \arrow[r,"\eva_{\gamma'}"] & \bP^d \end{tikzcd} \end{equation*}
        commutes for each $p \in \Lambda_\gamma$.
        \item The restriction of $\cBRB_{\gamma,\Lambda_\gamma}$ to \begin{equation}\label{eq:slice-of-base}
            \scBR_{\gamma,\Lambda_\gamma} := \eva_\gamma\inv(\{[1:0:\dots:0]\})
        \end{equation} is a trivializable principal bundle with a compatible $\normalfont\text{U}(d)$-action, where $\normalfont\text{U}(d) \hkra \PU(d+1)$ is the canonical embedding.
    \end{enumerate}
\end{lemma}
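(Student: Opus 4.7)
\emph{Plan.} The crux of all three claims is to realize $\wt\eva_p$ (for $p\in \{\gamma\}\sqcup\Lambda_\gamma$) as the tautological asymptotic evaluation of a lifted Pardon building at the puncture labeled by $p$, using the asymptotic marker and the fact that the Reeb orbits of the round contact form on $S^{2d+1}$ are precisely the Hopf circles lying above points of $\bP^d$. Concretely, given $\Phi \in \cBRB_{\gamma,\Lambda_\gamma}$ projecting to $\varphi$ and a puncture $z_{v,p}$ on the component $C_v$ containing it, the map $\Phi_v$ is asymptotic near $z_{v,p}$ to a trivial cylinder over a Hopf circle lying in $\text{pr}^{-1}(\eva_p(\varphi))$, where $\text{pr}\cl S^{2d+1}\to \bP^d$ is the Hopf projection; the asymptotic marker at $z_{v,p}$ selects a distinguished point on this Hopf circle, which we declare to be $\wt\eva_p(\Phi)\in S^{2d+1}$. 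Continuity of $\wt\eva_p$ with respect to the Gromov topology is a consequence of the asymptotic decay estimates of \cite{HWZI} together with the compactness results of \cite{BEH03}, while $S^1$-equivariance is immediate from the fact that the $S^1$-action on $\cBRB_{\gamma,\Lambda_\gamma}$ of Lemma~\ref{lem:principal-bundle} is induced by post-composition with the Hopf rotation on $S^{2d+1}$.

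For part 1), the pair $(q,\wt\eva_{\gamma'})$ defines an $S^1$-equivariant continuous map from $\cBRB_{\gamma,\Lambda_\gamma}$ to the pullback $\eva_{\gamma'}^*S^{2d+1}$. Since both sides are principal $S^1$-bundles over $\cBR_{\gamma,\Lambda_\gamma}$ by Lemma~\ref{lem:principal-bundle}, an $S^1$-equivariant map covering the identity is automatically an isomorphism of principal bundles. For part 2), once we transfer the smooth structure of $\eva_{\gamma'}^*S^{2d+1}$ to $\cBRB_{\gamma,\Lambda_\gamma}$ via this isomorphism, the map $\wt\eva_{\gamma'}$ becomes the projection of a pullback bundle onto its second factor, which is tautologically a smooth submersion, and the required square commutes by construction. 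For $U(d+1)$-equivariance, observe that the $\PGL_{d+1}(\bC)$-action on $\cBR_{\gamma,\Lambda_\gamma}$ restricts to a $\PU(d+1)$-action through which $U(d+1)$ acts on lifts by post-composition; this intertwines $\wt\eva_{\gamma'}$ with the standard linear $U(d+1)$-action on $S^{2d+1}$, with the central $U(1)$ acting precisely as the Hopf $S^1$ of the bundle $q$.

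For part 3), apply part 1) with $\gamma' = \gamma$. Since $\eva_\gamma$ is constant on $\scBR_{\gamma,\Lambda_\gamma}$, the restricted bundle $\eva_\gamma^*S^{2d+1}\!\mid_{\scBR_{\gamma,\Lambda_\gamma}}$ is canonically isomorphic to $\scBR_{\gamma,\Lambda_\gamma}\times \text{pr}^{-1}([1{:}0{:}\dots{:}0])\cong\scBR_{\gamma,\Lambda_\gamma}\times S^1$, giving the trivialization. The stabilizer of $[1{:}0{:}\dots{:}0]$ in $\PU(d+1)$ is the image of $\{\text{diag}(1,B)\mid B\in U(d)\}\sub U(d+1)$, yielding the stated embedding $U(d)\hkra \PU(d+1)$; it preserves $\scBR_{\gamma,\Lambda_\gamma}$ and fixes each point of the Hopf fiber over $[1{:}0{:}\dots{:}0]$ pointwise, so the $U(d)$-action is compatible with the chosen trivialization. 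The main obstacle will be to verify continuity of $\wt\eva_p$ uniformly across the boundary strata of $\cBRB_{\gamma,\Lambda_\gamma}$, where neck-stretching and nodal degenerations interact with the asymptotic evaluation; but this is controlled by the exponential convergence of finite-Hofer-energy punctured curves to trivial cylinders and the explicit description of the Gromov topology of Pardon buildings as recalled in \S\ref{subsec:buildings}.
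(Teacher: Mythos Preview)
Your arguments for parts 1) and 3) are essentially those of the paper: the natural asymptotic evaluation gives an $S^1$-equivariant map over the identity, hence a principal bundle isomorphism, and restricting to the fiber over $[1:0:\dots:0]$ trivializes.

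The gap is in part 2). The statement fixes \emph{one} smooth structure on $\cBRB_{\gamma,\Lambda_\gamma}$, namely the one pulled back from $\eva_\gamma^*S^{2d+1}$ via the isomorphism at the \emph{positive} puncture $\gamma$. You then need $\wt\eva_{\gamma'}$ to be a smooth submersion with respect to \emph{that} structure for each negative puncture $\gamma'\in\Lambda_\gamma$. Your sentence ``once we transfer the smooth structure of $\eva_{\gamma'}^*S^{2d+1}$ \dots\ the map $\wt\eva_{\gamma'}$ becomes the projection of a pullback bundle onto its second factor'' establishes smoothness only with respect to the (a priori different) smooth structure coming from $\eva_{\gamma'}^*S^{2d+1}$. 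You have not shown that the natural evaluation at a negative puncture is smooth for the smooth structure determined by the positive puncture; equivalently, you have not shown that the topological isomorphisms $\cBRB_{\gamma,\Lambda_\gamma}\cong \eva_\gamma^*S^{2d+1}$ and $\cBRB_{\gamma,\Lambda_\gamma}\cong \eva_{\gamma'}^*S^{2d+1}$ induce the same smooth structure.

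The paper sidesteps exactly this issue. It does not attempt to prove that the natural asymptotic evaluation at $\gamma'$ is smooth. Instead, having fixed the smooth structure via $\gamma$, it observes that the topological isomorphism $\cBRB_{\gamma,\Lambda_\gamma}\to \eva_{\gamma'}^*S^{2d+1}$ can be replaced by a \emph{diffeomorphism} $\psi_{\gamma'}$ (invoking a result that homeomorphic smooth principal $S^1$-bundles over the same base are diffeomorphic), then uses an equivariant Whitney approximation to make $\psi_{\gamma'}$ $U(d+1)$-equivariant, and finally \emph{defines} $\wt\eva_{\gamma'}$ as the composite $\cBRB_{\gamma,\Lambda_\gamma}\xrightarrow{\psi_{\gamma'}}\eva_{\gamma'}^*S^{2d+1}\to S^{2d+1}$. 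This is smooth and submersive by construction, and lifts $\eva_{\gamma'}$, but it need not coincide with the natural asymptotic evaluation---a point the paper explicitly flags in the remark following the proof.
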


\begin{proof} The first assertion is a consequence of Lemma~\ref{lem:principal-bundle} and the fact that the induced map $\cBRB_{\gamma,\Lambda_\gamma}\to \eva_{\gamma'}^*S^{2d+1}$ is equivariant, hence a morphism of principal bundles. Taking $p = \gamma$, this allows us to pull back the smooth structure on $\eva_\gamma^*S^{2d+1}$ to $\cBRB_{\gamma,\Lambda_\gamma}$. Note that $\wt\eva_\gamma$ is smooth with respect to this smooth structure. Now, for $p \in \{\gamma\}\sqcup \Lambda_\gamma$, we obtain by \cite[Proposition~I.13]{mw09} a diffeomorphism $\psi_{\gamma'}\cl \cBRB_{\gamma,\Lambda_\gamma}\to \eva_{\gamma'}^*S^{2d+1}$, which is the canonical map if $p = \gamma$. By using an equivariant version of the Whitney approximation theorem, we can ensure that $\psi_{\gamma'}$ is $U(d+1)$-equivariant. Thus, we can define $\wt\eva_{\gamma'}$ to be the composition
\begin{equation}\label{eq:lift-of-evaluation}
    \cBRB_{\gamma,\Lambda_\gamma}\,\xra{\psi_{\gamma'}}\, \eva_{\gamma'}^*S^{2d+1}\,\to\, S^{2d+1}.
\end{equation}
Since the second map in~\eqref{eq:lift-of-evaluation} is a smooth submersion, the whole composition is a smooth submersion. It lifts $\eva_{\gamma'}$ by construction.
Now, let 
\[\wt{\eva}_\gamma\cl \cBRB_{\gamma,\Lambda_\gamma}\to S^{2d+1}\] 
be the evaluation map at the positive puncture. Then the restriction of $q$ to 
\[\wt{\eva}_\gamma\inv(\{(1,0,\dots,0)\}) \,\to \scBR_{\gamma,\Lambda_\gamma} \]
  is an isomorphism, whence the claim follows.  
\end{proof}

\begin{remark}
    Note that we do not use regularity of $\cBRB_{\gamma,\Lambda_\gamma}$ to obtain a smooth structure and that the lifted evaluation maps $\wt\eva_{\gamma'}$ might not agree with the natural evaluation maps on this moduli space of buildings. While we believe this to be true, we do not need and hence do not show it.
\end{remark}
 
\begin{corollary}[Spherical evaluation map]\label{cor:normalized-evaluation} 
There exists a lift of the evaluation map $\eva^-\cl \scBR_{d_{\Lambda_\gamma}}\to (\bP^{d_{\Lambda_\gamma}})^{\Lambda_\gamma}$ to a smooth $U(d_{\Lambda_\gamma})$-equivariant map 
\begin{equation}
    \wt\eva^-\cl \scBR_{\gamma,\Lambda_\gamma}\to (S^{2d_{\Lambda_\gamma}+1})^{\Lambda_\gamma}  .
\end{equation}
\end{corollary}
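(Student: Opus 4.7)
The plan is to construct, for each negative puncture $p \in \Lambda_\gamma$, a smooth $U(d_{\Lambda_\gamma})$-equivariant lift $\wt{\eva}^-_p \cl \scBR_{\gamma,\Lambda_\gamma} \to S^{2d_{\Lambda_\gamma}+1}$ of $\eva^-_p$, and then take their product. The strategy is to identify $\scBR_{\gamma,\Lambda_\gamma}$ with a canonical $U(d_{\Lambda_\gamma})$-invariant slice of $\cBRB_{\gamma,\Lambda_\gamma}$, and then restrict the lifted evaluation maps from Lemma~\ref{lem:sphere_pullback} to this slice. Write $d = d_{\Lambda_\gamma}$ to lighten the notation.

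The first step is to construct a canonical section $\sigma \cl \scBR_{\gamma,\Lambda_\gamma} \to \cBRB_{\gamma,\Lambda_\gamma}|_{\scBR_{\gamma,\Lambda_\gamma}}$. Because the square in Lemma~\ref{lem:sphere_pullback}(2) commutes for $p = \gamma$, for any $x \in \scBR_{\gamma,\Lambda_\gamma}$ the fiber $q^{-1}(x)$ is mapped by $\wt\eva_\gamma$ into the Hopf fiber over $[1\!:\!0\!:\!\dots\!:\!0]$, namely the $S^1$-orbit of $(1,0,\dots,0) \in S^{2d+1}$. Since $q$ is an $S^1$-principal bundle and $\wt\eva_\gamma$ is $S^1$-equivariant, this restriction is a diffeomorphism of $S^1$-torsors. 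I would then define $\sigma(x)$ to be the unique point of $q^{-1}(x)$ mapped by $\wt\eva_\gamma$ to $(1,0,\dots,0)$; equivalently, $\sigma$ identifies $\scBR_{\gamma,\Lambda_\gamma}$ with the slice $\wt\eva_\gamma^{-1}((1,0,\dots,0))$. Smoothness of $\sigma$ follows from the fact that $\wt\eva_\gamma$ is a smooth submersion by Lemma~\ref{lem:sphere_pullback}(2), so this slice is a smooth submanifold on which $q$ restricts to a diffeomorphism.

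The second step is to verify that $\sigma$ is $U(d)$-equivariant. The canonical embedding $U(d) \hookrightarrow \PU(d+1)$ is the one that fixes $[1\!:\!0\!:\!\dots\!:\!0] \in \bP^{d}$, and it lifts to the block embedding $U(d) \hookrightarrow U(d+1)$ fixing $(1,0,\dots,0) \in S^{2d+1}$ pointwise. Because $\wt\eva_\gamma$ is $U(d+1)$-equivariant by Lemma~\ref{lem:sphere_pullback}(2), this subgroup preserves the slice $\wt\eva_\gamma^{-1}((1,0,\dots,0))$, and the identification $\sigma$ is $U(d)$-equivariant by construction. (This also recovers, more concretely, the equivariant trivializability asserted in Lemma~\ref{lem:sphere_pullback}(3): $\sigma$ is a $U(d)$-equivariant section, so $\cBRB|_{\scBR}\cong \scBR\times S^1$ equivariantly.)

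For the final step, I set $\wt\eva^-_p \coloneqq \wt\eva_p \circ \sigma$ for each $p \in \Lambda_\gamma$. Smoothness is immediate, $U(d)$-equivariance follows because $\wt\eva_p$ is $U(d+1)$-equivariant, and the identity $\pr \circ \wt\eva^-_p = \eva^-_p$ (with $\pr \cl S^{2d+1} \to \bP^{d}$ the Hopf projection) follows from the commutativity of the square in Lemma~\ref{lem:sphere_pullback}(2) combined with the fact that $q \circ \sigma = \ide_{\scBR_{\gamma,\Lambda_\gamma}}$. Taking the product over $p \in \Lambda_\gamma$ yields the desired lift $\wt\eva^-$. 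The construction is essentially formal once Lemma~\ref{lem:sphere_pullback} is in hand; the only non-trivial verification is the compatibility of the $U(d)$-action with the chosen basepoint, which is built into the definition of the embedding $U(d) \hookrightarrow \PU(d+1)$.
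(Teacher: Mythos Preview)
Your proof is correct and follows essentially the same approach as the paper: identify $\scBR_{\gamma,\Lambda_\gamma}$ with the slice $\wt\eva_\gamma^{-1}((1,0,\dots,0))\sub \cBRB_{\gamma,\Lambda_\gamma}$ via the section $\sigma$ (which is the inverse of the isomorphism in Lemma~\ref{lem:sphere_pullback}(3)), then compose with the lifted evaluation $\wt\eva_p$ at each negative puncture. Your write-up is in fact more explicit than the paper's about why $\sigma$ is smooth and $U(d)$-equivariant, and you correctly pin down the key point that the block embedding $U(d)\hookrightarrow U(d+1)$ fixes the basepoint $(1,0,\dots,0)$.
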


\begin{proof}
    We may assume without loss of generality that $\Lambda\inv(\gamma)\neq \emst$. Take $\wt\eva^-$ to be the product of the compositions
    \begin{equation*}
        \scBR_{\gamma,\Lambda_\gamma}\,\xra{\sim}\, \wt\eva_{\gamma'}\inv(\{(1,0,\dots,0)\}) \,\xra{\wt\eva_{\gamma'}}\, S^{2d_{\Lambda_\gamma}+1}
    \end{equation*}
    over all $p \in \Lambda_\gamma$. Since $\wt\eva_{\gamma'}$ is $U(d+1)$-equivariant, its restriction to $\wt\eva_{\gamma'}\inv(\{(1,0,\dots,0)\})$ is $U(d)$-equivariant. As the first isomorphism is the inverse of a $U(d)$-equivariant map, the resulting map is equivariant as desired.
\end{proof}

Set $\scBR_\Lambda\coloneqq\p{\gamma\in \Gamma^+}{\scBR_{\gamma,\Lambda_\gamma}}$
and let $\scBS_\Lambda = \cBS_\Lambda\times_{\cBR_\Lambda} \scBR_\Lambda$ be the pullback, where $\cBS_\Lambda$ and the map to $\cBR_\Lambda$ were constructed in \textsection\ref{subsec:base-for-disconnected}. Equivalently, $\scBS_{\Lambda}$ is a fiber of the (submersive) evaluation map $\cBS_{\Lambda}\to \p{\gamma\in \Gamma^+}{\bP^{d_{\Lambda_\gamma}}}.$ Abbreviate
\begin{equation}
    \cG_\Lambda \coloneqq \p{\gamma\in \Gamma^+}{\PGL_{d_{\Lambda_\gamma}+1}(\bC)_{[1:0:\dots:0]}}\qquad \qquad G_{\Lambda} \coloneqq \p{\gamma\in \Gamma^+}{\text{U}(d_{\Lambda_\gamma})}
\end{equation}
which acts smoothly on $\scBR_\Lambda$ and $\scBS_\Lambda$ via the embedding 
    $$A\in U(d) \;\mapsto\; \begin{pmatrix} 1 & 0 \\ 0 & A\end{pmatrix} \in \PU(d+1)$$
onto the stabilizer of $[1:0:\dots:0]\in \bP^d$. Since $\PU(d+1)$ acts transitively on $\bP^d$ for any $d\geq 1$, the inclusion induces an isomorphism 
\begin{equation}\label{eq:comparing-group-quotients}
    \cG_\Lambda/G_\Lambda \,\cong\, \p{\gamma\in \Gamma^+}{\PGL_{d_{\Lambda_\gamma}+1}(\bC)/\PU(d_{\Lambda}+1)}.
\end{equation}

\begin{proposition}\label{prop:embedding-base-spaces}
    Given a factorization $\Gamma^- \xra{\Lambda^0}\Gamma_0 \to \dots \to\Gamma_{k-1}\xra{\Lambda^k} \Gamma^+$, there exists a smooth embedding
\begin{equation}\label{eq:embedding-base-spaces}
    \Psi\cl \scBS_{\Lambda^0}\qtimes{\;\bT_{\Gamma_0}}\dots\qtimes{\;\bT_{\Gamma_{k-1}}}\scBS_{\Lambda^k}\,\hkra\, \scBS_{\Lambda},
\end{equation}
where $\scBS_{\Lambda^{j-1}}\qtimes{\;\;\bT_{\Gamma^{j-1}}}\scBS_{\Lambda^{j}}$ is the quotient of the product by the diagonal action of $\bT_{\Gamma^{j-1}}$. The embedding maps to a boundary stratum of codimension $k$ and is equivariant with respect to the block matrix inclusion
\begin{equation}\label{eq:inclusion-of-groups}
    G_{\Lambda^k}\times \dots \times G_{\Lambda^0}\hkra G_{\Lambda}.
\end{equation}
\end{proposition}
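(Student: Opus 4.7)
We construct the embedding $\Psi$ by gluing framed leveled forests along matching positive and negative punctures, and then verify the regularity and equivariance properties. For clarity, we first describe the construction for $k=1$, namely a factorization $\Gamma^- \xra{\Lambda^0} \Gamma_0 \xra{\Lambda^1} \Gamma^+$; the general case follows by iteration.

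Fix a representative $(\wt\varphi_0, \wt\varphi_1)$ of a point in $\scBS_{\Lambda^0} \qtimes{\;\bT_{\Gamma_0}} \scBS_{\Lambda^1}$. Each $\wt\varphi_i$ records a leveled framed nodal curve with asymptotic markers at all punctures and matching isomorphisms at interior nodes. The slicing condition forces the positive evaluation of $\wt\varphi_0$ at each $\gamma \in \Gamma_0$ and the negative evaluation of $\wt\varphi_1$ at $\gamma$ (via Corollary~\ref{cor:normalized-evaluation}) to lie in the same fiber $\wt\eva_\gamma\inv(\{[1\!:\!0\!:\!\cdots\!:\!0]\})$. The fiber product over $\bT_{\Gamma_0}$ records the remaining freedom, which is precisely a matching isomorphism at each to-be-glued puncture. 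We form the nodal curve $C = C_0 \cup_{\Gamma_0} C_1$ and combine the line bundles of Equation~\eqref{eq:linebundle-construction}; the telescoping of the action terms shows that for each $\gamma \in \Gamma^+$ the combined line bundle on the corresponding connected component of $C$ has total degree $d_{\Lambda_\gamma}$, so the framing lies in $\cBS_\Lambda$, and its positive evaluation at $\Gamma^+$ is inherited from that of $\wt\varphi_1$, hence lies in the slice $\scBS_\Lambda$.

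The image of $\Psi$ lies in the boundary stratum corresponding to the leveled tree obtained by concatenating the level functions of $\wt\varphi_0$ and $\wt\varphi_1$ separated by a new gap; by Definition~\ref{de:leveled-monoid} this is a codimension-$1$ stratum, so iterating for general $k$ yields codimension $k$. Smoothness of $\Psi$ follows from the local description of the generalized blow-up near a corner stratum (Remark~\ref{rem:neighbourhood-of-exceptional-boundary}): in the product coordinates the map is the inclusion of a slice of the form $\{t=0\} \times \prod_i(\Delta^{\#\ell\inv(i)-1})^{\normalfont\text{int}} \times (\text{moduli data})$. Equivariance with respect to the block matrix inclusion~\eqref{eq:inclusion-of-groups} is built into the construction: for each $\gamma \in \Gamma^+$, the target $\bP^{d_{\Lambda_\gamma}}$ decomposes according to the contributing levels via the direct sum splitting of the combined line bundle, and the stabilizer $U(d_{\Lambda^i_\gamma}) \hkra \PU(d_{\Lambda^i_\gamma}+1)$ acts on the block corresponding to $\Lambda^i$ while fixing the slice point $[1\!:\!0\!:\!\cdots\!:\!0]$ of the other block.

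The main technical obstacle is to check that $\Psi$ is an embedding of manifolds with corners rather than merely a topological one, i.e., that the smooth structure defined by the refinement of monoidal complexes for $\cBS_\Lambda$ restricts correctly to the product of refinements on the factors. This reduces to verifying that the concatenation of two maximally leveled decorated forests is itself a maximally leveled decorated forest of the concatenated tree, which follows from the construction of the pre-level function (Lemma~\ref{lem:pre-level-exists}) and Proposition~\ref{prop:maximally-leveled-tree}, so the induced map on monoidal complexes is a morphism of refinements in the sense of \cite{KM15}, and $\Psi$ is smooth with smooth image.
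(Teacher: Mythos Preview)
Your outline has the right shape, but there is a genuine gap at the crucial step.  You write that we ``combine the line bundles of Equation~\eqref{eq:linebundle-construction}'' and that the telescoping of degrees shows the result has degree $d_{\Lambda_\gamma}$, ``so the framing lies in $\cBS_\Lambda$''.  But an element of $\scBS_\Lambda$ is not an abstract line bundle of the right degree: it is a \emph{framing}, i.e., an actual holomorphic map $C\to\bP^{d_{\Lambda_\gamma}}$ satisfying the regularity and non-degeneracy conditions of \textsection\ref{subsec:base-space}.  You have not said how to build this map from the given framings $\varphi_0\cl C_0\to\bP^{d_{\Lambda^0_\bullet}}$ and $\varphi_1\cl C_1\to\bP^{d_{\Lambda^1_\bullet}}$, nor why the construction is smooth and equivariant.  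Simply choosing a basis of $H^0(C,L)$ is not canonical and will not give the block-diagonal equivariance~\eqref{eq:inclusion-of-groups}.

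The paper's proof fills exactly this gap with an explicit formula.  It writes down linear maps $F_i\cl S^{2d+1}\times\bP^{d_i}\to\bP^{d_0+\dots+d_n}$ and defines the glued framing on $C_{\gamma^+}$ by the inclusion $j\cl\bP^{d_\Gamma}\hkra\bP^{d_{\Lambda_{\gamma^+}}}$ into the first coordinates, and on each $C_\gamma$ with $\gamma\in\Gamma_0$ by $F_\gamma(\wt\eva_\gamma(\varphi_{\gamma^+}),-)\circ\varphi_\gamma$, twisting by the spherical evaluation $\wt\eva_\gamma$ of Corollary~\ref{cor:normalized-evaluation} at the \emph{negative} puncture of the \emph{upper} level.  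This twist is what makes the two pieces agree at the node and is the reason the preceding subsection went to the trouble of constructing $\wt\eva$.  (Note also that your description of the slice condition is off: the slice $\scBR_{\gamma,\Lambda_\gamma}$ constrains the evaluation at the \emph{positive} puncture of each factor, so it pins $\eva^+_{\Gamma_0}(\varphi_0)$ and $\eva^+_{\Gamma^+}(\varphi_1)$, not the negative evaluation of $\varphi_1$ at $\Gamma_0$.)  Only after this explicit construction at the $\cBR$-level does the paper lift to $\cBS$, using the Fubini--Study metric to produce canonical lifts of the asymptotic markers to the normal bundles and an explicit ``refined matching isomorphism'' incorporating the relative lengths.
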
 


\begin{proof}
We construct this map first at the level of the base spaces $\cBR$ and then lift it to~\eqref{eq:embedding-base-spaces}. We only discuss the case of $k = 1$, the more general case follows analogously. 
Suppose thus $\Lambda^1 \cl\Gamma\to \Gamma^+$ and $\Lambda^0 \cl\Gamma^-\to \Gamma$ are two partitions with $\Lambda = \Lambda^1\g\Lambda^0$. We will consider the case where $\Gamma^+ =\{\gamma^+\}$; the case of more components in the top level is a straightforward generalization. Assume without loss of generality that $\Gamma\neq \emst$.\par 
For any $d_0,\dots,d_n \geq 1$ we have a smooth map 
\begin{gather}
    \notag F_i\cl  S^{2d+1} \times \bP^{d_i}\,\to\, \bP^{d_0 +\dots + d_n} \\
    (a,[z]) \mapsto [a_0z_0:\dots: a_{d_0}z_0:0:\dots:0:z_1:\dots:z_{d_i}:0:\dots:0],
\end{gather}
considering $S^{2d+1}$ as a subset of $\bC^{d+1}$ and inserting $z_1,\dots,z_{d_i}$ in the positions $(d_0+\dots+d_{i-1}+1)$ to $(d_0+\dots+d_i+1)$. Now, given $\varphi_{\gamma^+} = [\check{\varphi}_{\gamma^+},C_{\gamma^+},m_{\gamma^+}]\in \scBR_{{\gamma^+},\Gamma}$ and $\varphi_{\gamma} = [\check{\varphi}_{\gamma},C_{\gamma},m_{\gamma}]\in \scBR_{\gamma,\Lambda^0_{\gamma}}$ for $\gamma\in \Gamma$, we define 
$$\wt{\varphi}\cl C_{\gamma^+}\sqcup \djun{\gamma}C_{\gamma}\,\to\, \bP^{d_{\Lambda_{\gamma^+}}}$$
by 

\begin{equation} \wt\varphi|_{C_{\gamma}} = 
    \begin{cases}
        j\g \varphi_{\gamma^+} \quad & \gamma = \gamma^+\\
        F_{\gamma^+}(\wt\eva_{\gamma}(\varphi_{\gamma^+},-)\g \varphi_{\gamma}\quad & \gamma\in \Gamma,
    \end{cases}
\end{equation}
where $j\cl \bP^{d_{\Gamma}}\hkra \bP^{d_{\Lambda_{\gamma^+}}} :[z]\mapsto [z:0\dots:0]$ is the inclusion into the first homogeneous coordinates.
By Corollary~\ref{cor:normalized-evaluation} and the definition of $\scB$, this descends to a holomorphic map $\check\varphi\cl C\to \bP^{d_{\Lambda_{\gamma^+}}}$ on the curve $C$ obtained from clutching $C_0$ and the $C_{\gamma}$ at the respective marked point. Using the lift of the clutching map to the real-oriented blow-up, we obtain the map \eqref{eq:embeddings-pardon-base-for-stable-complex}. 
Since 
\[j^*\cO_{\bP^{d_{\Lambda_{\gamma^+}}}}(1) = \cO_{\bP^{d_{\Gamma}}}(1)\qquad\quad \qquad F_{\gamma}(a_{\gamma},\cdot)^*\cO_{\bP^{d_{\Lambda_{\gamma^+}}}}(1) =\cO_{\bP^{d_{\Lambda^0_{\gamma}}}}(1),\] 
the map $\Psi_{\Lambda^1,\Lambda^0}$ is a well-defined map 
\begin{equation}\label{eq:embedding-pardon-base}
    \scBR_{\Lambda^0}\qtimes{\,\bT_\Gamma}\scBR_{\Lambda^1}\to\scBR_\Lambda
\end{equation}It is smooth and equivariant with respect to the inclusion~\eqref{eq:inclusion-of-groups} by construction. To lift it, observe that the universal family $\cC\to \cB(\Lambda)$ is canonically embedded in the product $\cB(\Lambda)\times \p{\gamma\in \Gamma}{\bP^{d_{\Lambda_\gamma}}}$. Pulling back the Fubini--Study metric on projective space, we obtain for any asymptotic marker a canonical lift to the normal bundle of the respective divisor. Using the explicit lift description in the proof of Lemma~\ref{lem:disconnected-base-well-defined} and in the discussion below Theorem~\ref{thm:corner-blowup}, we can lift the map~\eqref{eq:embedding-pardon-base} to~\eqref{eq:embedding-base-spaces} by incorporating the lengths into the matching isomorphisms. Concretely, recall that the matching isomorphisms at the newly created level jump are given by the sequence $(m_\gamma)_{\gamma\in \Gamma} = ([b^0_{\gamma}\otimes b^1_{\gamma}])_{\gamma}$. The $b^i_\gamma\in (T_{z_\gamma}C^i\sm0)/\bR_{>0}$ are the respective asymptotic markers, and $[b^0_{\gamma}\otimes b^1_{\gamma}]$ indicates their image in the quotient by the $S^1$-action. Writing $\wt b^i_\gamma$ for the lift to $T_{z_\gamma}C^i$ and given $a \in \bR^{\Gamma}_{>0}/\bR_{>0}$, we define the `refined matching isomorphism' $\wt m$ to be the image of $(\wt b^0_\gamma\,\otimes\, a_\gamma\wt b^1_\gamma)_{\gamma\in \Gamma}$ in the quotient
$$\bP_{>0}\lbr{\bigoplus\limits_{\gamma\in \Gamma} \sigma_\gamma^*T_{\scC\inn_{\Lambda^0}/\scB_{\Lambda^0}}\otimes \sigma_\gamma^*T_{\scC\inn_{\Lambda^1}/\scB_{\Lambda^1}}}/\bT_\Gamma,$$
where $\bP_{>0}$ of a vector bundle was defined in Equation~\eqref{eq:positive-part}. This completes the proof.
\end{proof}

\begin{lemma}\label{lem:associativity-of-embeddings} 
    For any factorization $\Gamma^-\xra{\Lambda^0}\Gamma_0\xra{\Lambda^1}\Gamma_1\xra{\Lambda^2}\Gamma^+$ of $\Lambda$, the square
    \begin{equation}\begin{tikzcd}
    \scBS_{\Lambda^0}\qtimes{\bT_{\Gamma_0}} \scBS_{\Lambda^1} \qtimes{\bT_{\Gamma_1}} \scBS_{\Lambda^2} \arrow[d,"\Psi\times \ide"] \arrow[rr,"\ide\times \Psi"]&& \scBS_{\Lambda^0}\qtimes{\bT_{\Gamma_0}}\scBS_{\Lambda^{21}}\arrow[d,"\Psi"]\\ 
    \scBS_{\Lambda^{10}}\qtimes{\bT_{\Gamma_{10}}}\scBS_{\Lambda^2} \arrow[rr,"\Psi"] && \scBS_{\Lambda} 
    \end{tikzcd} \end{equation}
    commutes and is a pullback square.
\end{lemma}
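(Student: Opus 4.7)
The plan is to verify both claims by unraveling the explicit description of $\Psi$ in the proof of Proposition~\ref{prop:embedding-base-spaces}. Recall that $\Psi$ is built from three pieces of data: (i) the gluing of the holomorphic maps $\check\varphi_*\cl C_*\to \bP^{d_*}$ via the maps $F_i$; (ii) the asymptotic markers at the punctures, which are untouched; and (iii) a choice of refined matching isomorphism at each newly-created node, encoding the relative length parameter in $\bR_{>0}^\Gamma/\bR_{>0}$. The two claims will be checked separately on each piece.

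For commutativity, the nontrivial part is piece (i). Along the left-then-bottom route, one first clutches $C_{\gamma_0}$ (for $\gamma_0\in \Gamma_0$) to $C_{\gamma_1}$ (for $\gamma_1\in \Gamma_1$ with $\Lambda^1(\gamma_0) = \gamma_1$) using $F_{\gamma_0}\lbr{\wt\eva_{\gamma_0}(\check\varphi_{\gamma_1},-),\,\check\varphi_{\gamma_0}}$, and then clutches the resulting curve to $C_{\gamma_2}$ using $F_{\gamma_1}\lbr{\wt\eva_{\gamma_1}(\check\varphi_{\gamma_2},-),-}$. Along the top-then-right route the two clutchings occur in the opposite order. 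I will show that both produce the same map to $\bP^{d_{\Lambda_{\gamma^+}}}$ by exploiting that the $F_i$'s act on disjoint blocks of homogeneous coordinates and that $\wt\eva_{\gamma_0}$ is invariant under the $U(d_{\Lambda^1_{\gamma_1}})$-stabilizer of $[1:0:\dots:0]$, so the inner sphere evaluation factors correctly through the gluing. For pieces (ii) and (iii), the claim is tautological: asymptotic markers are unchanged, and the refined matching isomorphisms at distinct nodes are treated independently, with the length data at the $\gamma_0$-level and $\gamma_1$-level living in independent $\bR_{>0}$-rays modulo $\bT_{\Gamma_0}$ and $\bT_{\Gamma_1}$, respectively.

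For the pullback property, the idea is to identify the image of each arrow in the square as a specific corner stratum of $\scBS_\Lambda$ indexed by a leveled tree. By Proposition~\ref{prop:embedding-base-spaces}, the image of $\Psi\cl \scBS_{\Lambda^{10}}\qtimes{\bT_{\Gamma_{10}}}\scBS_{\Lambda^2}\hkra \scBS_\Lambda$ is the codimension-$1$ boundary stratum $S_{\Gamma_1}$ corresponding to a level jump at the sequence $\Gamma_1$; similarly the image of the top horizontal map is the stratum $S_{\Gamma_0}$. Their fiber product over $\scBS_\Lambda$ is the codimension-$2$ stratum $S_{\Gamma_0}\cap S_{\Gamma_1}$, and by the construction of $\scBS$ as a generalized corner blow-up in \S\ref{subsubsec:blowup_to_leveled_base} (in particular the explicit local model $\cBR_{T_\varphi}\times\prod_j(\Delta^{\#\ell\inv(j)-1})\inn$ of \S\ref{dsc: gluing}), this intersection is canonically identified with the iterated gluing $\scBS_{\Lambda^0}\qtimes{\bT_{\Gamma_0}}\scBS_{\Lambda^1}\qtimes{\bT_{\Gamma_1}}\scBS_{\Lambda^2}$.

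The main obstacle is bookkeeping rather than conceptual: one must verify that the refined matching isomorphisms at the two new nodes — each recorded as a class in $\bP_{>0}$ of an appropriate direct sum modulo the torus action — combine to give the correct class in $\bP_+$ of the full normal bundle at the codimension-$2$ stratum, as described after~\eqref{eq:point-in-blow-up}. This is exactly the compatibility encoded in the iterative description of generalized blow-ups from \cite[\S3]{KM15}, and pulls the pullback-square statement down from $\cBR_\Lambda$ to $\scBR_\Lambda$ and thence to $\scBS_\Lambda$. Assembling (i)--(iii) and using that all maps in the square respect the evaluation constraints defining $\scBR \sub \cBR$, the commutativity and pullback properties follow.
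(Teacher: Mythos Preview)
Your proof plan is correct and considerably more explicit than the paper's own argument, which consists of the single sentence ``The argument is analogous to the proof of \cite[Proposition~5.22]{BX22}.'' You have essentially written out what that analogous argument looks like in this setting: decomposing $\Psi$ into the three constituent pieces (the holomorphic map gluing via the $F_i$, the asymptotic markers, and the refined matching isomorphisms) and verifying commutativity and the pullback property on each. The key observations you isolate --- that the $F_i$ act on disjoint coordinate blocks so the order of clutching is immaterial, and that the images of the two $\Psi$'s are transverse codimension-$1$ corner strata of $\scBS_\Lambda$ whose intersection is identified by the local model of \S\ref{dsc: gluing} --- are exactly the right ones.

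One small point worth tightening: your phrase ``$\wt\eva_{\gamma_0}$ is invariant under the $U(d_{\Lambda^1_{\gamma_1}})$-stabilizer'' should really say \emph{equivariant} (cf.\ Lemma~\ref{lem:sphere_pullback}); what you actually need is that the spherical evaluation at $\gamma_0$ is unaffected by the clutching at the $\Gamma_1$-level, which follows because that clutching only modifies components above $\gamma_0$ in the tree. This is implicit in your ``disjoint blocks'' observation but could be stated more carefully.
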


\begin{proof}
    The argument is analogous to the proof of \cite[Proposition~5.22]{BX22}.
\end{proof}

Given a sequence $\Lambda^* = \{ \Lambda_i \}_{i=0}^k$ of partitions composing to $\Lambda \cl\Gamma^-\to \Gamma^+$, define 

\begin{equation}
    \cc{\scB}^\bR_{\Lambda^*}\,\coloneqq\, G_\Lambda\times_{G_{\Lambda^*}} \im(\Psi_{\Lambda^*}),
\end{equation}
where $G_{\Lambda^*} = \prod_{j = 0}^k G_{\Lambda^j}$. For the next result, we have to recall some definitions from \cite[\S 5.2.5]{BX22}. For all $d\geq 0$, set ${\bm Q}_d \coloneqq \tilde {\bm Q}_d/ \bR_{> 0}$, where
\begin{equation*}
     \tilde {\bm Q}_d:= \set{ \tilde h \in \bC^{(d+1)\times (d+1)}\ |\ \tilde h^\ast   = \tilde h,\ \tilde h_{00} \neq 0 }.
\end{equation*}
We use the convention that the indices of the Hermitian matrix $\tilde h \in \tilde {\bm Q}_d$ range from $0$ to $d$. The multiplicative group ${\bR}_{> 0}$ acts on $\tilde {\bm Q}_d$ by scalar multiplication on each entry. The $\bR_{> 0}$-orbit of $\tilde h \in \tilde {\bm Q}_d$ is denoted by $[\tilde h]$. We identify ${\bm Q}_d$ with
\begin{equation*}
{\bm Q}_d^*:= \Big\{ h \in \bC^{(d+1)\times (d+1)}\ |\ h^\ast = h,\ h_{00} = 0 \Big\} 
\end{equation*}

in the way that a Hermitian matrix $h$ with $h_{00} = 0$ is identified with the $\bR_{> 0}$-orbit of $\tilde h = I_{d+1} + h$. Then ${\bm Q}_d$ is a real vector space with dimension equal to $d^2 + 2d$. 

\begin{lemma}\label{lem:boundary-up-to-stabilisation} There exists a smooth $G_\Lambda$-vector bundle 
\begin{equation}
    Q_{\Lambda^*}\to \cc{\scB}^\bR_{\Lambda^*}
\end{equation}
so that the total space admits an equivariant diffeomorphism $Q_{\Lambda^*}\to \del_{\Lambda^*}\scB_\Lambda$ to the corner stratum of $\scB_\Lambda$ extending the embedding~\eqref{eq:emb-base},\qed\end{lemma}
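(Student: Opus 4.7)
The strategy follows the approach of \cite[\S 5.2.5]{BX22}, where an analogous corner-stratum description is used in the Hamiltonian Floer setting. The starting observation is that the corner stratum $\del_{\Lambda^*}\scB_\Lambda$ sitting inside $\scBS_\Lambda$ corresponds to broken framed configurations of combinatorial type $\Lambda^*$ whose top puncture evaluates to $[1{:}0{:}\ldots{:}0]$. Such a configuration is determined by the framing data of each piece together with a decomposition of $\bC^{d_\Lambda+1}$ into a direct sum of subspaces, one for each vertex, compatible with the tree combinatorics and the matching points $a_\gamma$. The embedding $\Psi_{\Lambda^*}$ realizes the canonical ``block diagonal'' choice of this decomposition.

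Concretely, let $P_\Lambda \subset \PGL_{d_\Lambda+1}(\bC)_{[1:0:\ldots:0]}$ be the parabolic stabilizer of the base point, so that the action of $P_\Lambda$ on $\scBS_\Lambda$ preserves the slice. The polar decomposition restricts to a smooth factorization $P_\Lambda \cong G_\Lambda \cdot \exp(\fq)$, where $\fq \subset \fp\fu_{d_\Lambda+1}$ is the space of Hermitian matrices with vanishing $(0,0)$-entry, naturally identified with the vector space ${\bm Q}_{d_\Lambda}$ appearing before the statement. The Hermitian matrices that preserve the block decomposition fixed by $\Psi_{\Lambda^*}$ form a $G_{\Lambda^*}$-subrepresentation $\fq_{\Lambda^*}^\circ \subset \fq$, and I let $\fq_{\Lambda^*}' := \fq/\fq_{\Lambda^*}^\circ$ denote its $G_{\Lambda^*}$-equivariant quotient. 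The desired vector bundle is then defined as the associated bundle
\[
Q_{\Lambda^*} \;:=\; G_\Lambda \times_{G_{\Lambda^*}} \bigl( \im(\Psi_{\Lambda^*}) \times \fq_{\Lambda^*}' \bigr) \;\longrightarrow\; \cc{\scB}^\bR_{\Lambda^*},
\]
which is manifestly a smooth $G_\Lambda$-equivariant vector bundle.

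The diffeomorphism $\Theta \cl Q_{\Lambda^*} \to \del_{\Lambda^*}\scB_\Lambda$ will be given by $[g,\varphi_*,h] \mapsto g \cdot \exp(h) \cdot \varphi_*$, where the action is the lift of the $P_\Lambda$-action to the leveled base space $\scBS_\Lambda$. That $\Theta$ lands in the corner stratum is immediate since $\im(\Psi_{\Lambda^*})$ does and the action preserves stratifications. Equivariance under $G_\Lambda$ is built in by the associated-bundle construction. For surjectivity, the key input is that the corner stratum in $\cB_\Lambda$ is a single $\cG_\Lambda$-orbit of the canonical broken configuration, so passing to the slice $\scB_\Lambda$ cuts this down to the $P_\Lambda$-orbit of $\im(\Psi_{\Lambda^*})$; the polar decomposition of $P_\Lambda$ together with the definition of $\fq_{\Lambda^*}'$ then shows this orbit equals $G_\Lambda \cdot \exp(\fq_{\Lambda^*}') \cdot \im(\Psi_{\Lambda^*})$. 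Injectivity amounts to a stabilizer computation: two triples have the same image iff they differ by an element of $G_{\Lambda^*}$ acting in the way built into the associated-bundle quotient. Smoothness of $\Theta$ uses that $\Psi_{\Lambda^*}$ is a smooth embedding (Proposition~\ref{prop:embedding-base-spaces}) and real-analyticity of the polar decomposition; smoothness of $\Theta^{-1}$ follows from the fact that the assignment of a block decomposition to a broken framed configuration is smooth on the corner stratum.

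The main technical obstacle is compatibility with further boundary degenerations within $\del_{\Lambda^*}\scB_\Lambda$, where additional nodes form inside one of the factors $\scBS_{\Lambda^j}$. The parabolic--polar decomposition must continue to describe the corner stratum after iterated blow-ups, with the vector bundle $Q_{\Lambda^*}$ degenerating consistently with the generalized blow-up structure of Theorem~\ref{thm:corner-blowup}. I will handle this by induction on the depth of the corner, using the associativity diagram of Lemma~\ref{lem:associativity-of-embeddings} to compare the decomposition defined by $\Psi_{\Lambda^*}$ with those defined by finer factorizations; the pullback-square property there ensures that the block decomposition on a deeper stratum is the refinement of the one on a shallower stratum, so the $\fq_{\Lambda^*}'$ and $\fq_{\Lambda^*\Lambda^{**}}'$ assemble coherently and $Q_{\Lambda^*}$ extends smoothly across the iterated blow-up loci of $\cBS_\Lambda$.
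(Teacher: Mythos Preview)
Your overall framework is correct and matches the paper: both construct $Q_{\Lambda^*}$ as an associated bundle $G_\Lambda\times_{G_{\Lambda^*}}(\im(\Psi_{\Lambda^*})\times\bm Q_{\Lambda^*})$ for a suitable $G_{\Lambda^*}$-representation $\bm Q_{\Lambda^*}$, and both reference \cite[\S5.2.5]{BX22}. However, two concrete steps fail.

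First, the polar decomposition does not restrict to the parabolic as you claim. The stabilizer $P_\Lambda$ of $[1{:}0{:}\ldots{:}0]$ consists of block upper-triangular matrices; a positive Hermitian matrix lying in $P_\Lambda$ must be block \emph{diagonal}, so $P_\Lambda\cap\exp(\fq)$ is far too small to account for the unipotent radical. Consequently your map $\Theta([g,\varphi_*,h])=g\cdot\exp(h)\cdot\varphi_*$ with $h$ Hermitian does not even stay in the slice $\scBS_\Lambda$. The paper instead acts by $(I+\rho_h)$, where $\rho_h$ is the strictly upper block-triangular part of $h$; this is a unipotent action, and the space $\bm Q_{\Lambda^*}$ is a specific subspace of Hermitian matrices whose upper-triangular part has only the off-diagonal first block-row nonzero (Equation~\eqref{matrxform}). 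Relatedly, defining $\fq'_{\Lambda^*}$ as a quotient and then writing $\exp(h)$ for $h$ in that quotient is not well-posed; you need an explicit complement, which is exactly what $\bm Q_{\Lambda^*}$ provides.

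Second, your surjectivity argument asserts that the corner stratum is a single $\cG_\Lambda$-orbit, which is false: the pieces $\varphi_{\gamma^+},\varphi_{\gamma_i}$ carry continuous moduli beyond the group action. What needs to be shown is that every broken curve in $\del_{\Lambda^*}\scB_\Lambda$ lies in the $G_\Lambda\cdot(I+\bm Q_{\Lambda^*})$-orbit of $\im(\Psi_{\Lambda^*})$, and this is the actual content of the lemma. The paper supplies it by introducing \emph{t-fans}: to a broken curve one associates the system of linear spans $W_0,W_{1i}\subset\bC^{d_\Lambda+1}$ of its components, observes that $\im(\Psi_{\Lambda^*})$ corresponds precisely to \emph{normal} t-fans (those where $W_0\perp(L_i^\perp\cap W_{1i})$), uses Gram--Schmidt and the $G_\Lambda$-action to standardize the induced t-flag, and then exhibits an explicit $(I+A)$ carrying the given t-fan to a normal one. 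Your proposal skips this geometric analysis entirely; filling it in would essentially reproduce the paper's argument.

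Your final paragraph on compatibility with deeper corners is not needed for the statement as written and is not addressed in the paper's proof.
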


\begin{proof}
    This is similar to {\cite[Proposition~5.24]{BX22}} but since we have several punctures and asymptotic markers at outgoing as well as incoming punctures, we describe how the proof has to be adapted. We make a few simplifying assumptions to keep the exposition clear while still highlighting the essential modifications needed. Thus we will deal with the case of $\Lambda^* = \{ \Lambda^0,\Lambda^1 \}$, hence $\Lambda = \Lambda^1 \circ \Lambda^0 $. We further assume $\Gamma^+ =\{\gamma^+\}$. The general result will follow from a similar but combinatorially more involved argument. Write $\Gamma = (\gamma_1, \dots, \gamma_m).$
    
    Define the bundle $Q_{\Lambda^*}$ to be the $G_{\Lambda}$-equivariantization of the $G_{\Lambda^*}$ bundle $\bm Q_{\Lambda^*} \times \im(\Psi_{\Lambda^*})$ where $\bm Q_{\Lambda^*}$ is a distinguished subspace of $\bm Q_{{1+d_0 + d_{10}\dots +d_{1m}}}$. The required equivariant diffeomorphism $Q_{\Lambda^*} \to \partial_{\Lambda^*}\scB_\Lambda$ is determined uniquely by the map $$\rho \cl \bm Q_{\Lambda^*} \times \im (\Psi_{\Lambda^*}) \to \partial_{\Lambda^*}\scB_\Lambda$$ 
    such that $$\rho (h,\varphi)= (Id+\rho_h)\varphi,$$ 
    where $\rho_h$ is the upper diagonal matrix corresponding to $h$. The proof of the map $Q_{\Lambda^*} \to \partial_{\Lambda^*}\scB_\Lambda$  being injective follows exactly by the same arguments as in \cite[Proposition 5.24]{BX22}. 

    In order to prove surjectivity, recall that a rational stable map $\varphi$ of degree $\diamond \le d$ curve in $\bP^d$ lies in a unique minimal linear $\bP(W)\sub \bP^d$ where $\dim W = \diamond +1$. We call $W$ the \emph{linear span} of $\varphi$ \footnote{ We are slightly abusing terminology since classically the linear span is defined as the projectivization of the vector space we consider here}. Thus, any curve $\varphi \in \partial_{\Lambda^*}\scB_\Lambda$, determines a system of linear projective spaces $\bP(W_0), \bP(W_{10}),\,\dots ,\,\bP(W_{1m})$ where $W_{*}\hkra \bC^{1+d_0 + d_{10}\dots +d_{1m}}$ is a subspace so that
    \begin{itemize}
        \item $W_0$ is the linear span of $\varphi_{\gamma^+}$ and $W_{1i}$ is the linear span of $\varphi_{\gamma_i}$,
        \item $\dim (W_0 \cap W_{1i}) =1$,
        \item $W_{1i} \cap W_{1j} = \emst $ unless $i=j$,
        \item $W_0 + W_{10} \dots + W_{1m} = \bC^{1+d_0 + d_{10}\dots +d_{1m}}.$
    \end{itemize}
    The above system of spaces can be viewed as a `tree' generalization of the fans defined in \cite[\S 5.2.5]{BX22}. Thus, we call such a system a \textit{t-fan}. Set $L_i \coloneqq W_0 \cap W_{1i}$. We say a t-fan is \textit{normal} if $W_0$ is orthogonal to the orthogonal complement of $L_i \hkra W_{1j}$ for all $i$. A similar argument as in \cite[Proposition 5.21]{BX22} proves that $\cc{\scB}^\bR_{\Lambda^*}$ is exactly the set of curves with normal t-fans.

   The \textit{t-flag} corresponding to the t-fan $W_*$ is defined as \begin{itemize}
        \item $V_{0}= W_{0}$,
        \item $V_{1i} = W_{0} + W_{1i}.$
    \end{itemize}

By using the Gram--Schmidt process, any t-flag can be mapped to a standard t-flag under an action of $g\in G_\Lambda$ where the standard t-flag is defined as
    \begin{itemize}
        \item     $V_{0}= \bC^{1+d_0}$,
        \item $V_{1j} = \bC^{1+d_0} \times \{0\}^{d_{11} +\dots d_{1j-1}}\times \bC^{d_{1j}} \times \{0\}^{d_{1j+1} +\dots d_{1m}}.$
    \end{itemize}
Thus, given some $\varphi \in \partial_{\Lambda^*} \scB_\Lambda$, we can assume that the t-flag corresponding to it is the standard one. In particular we can assume that that linear span of $\varphi_{\gamma^+}$ is $\bC^{1+d_0} \hkra \bC^{1+d_0 + d_{10}\dots +d_{1m}}$, i.e., is given by the first $1+d_0$ coordinates. Let $y_i=\wt {ev}_{\gamma_i}(\varphi_{\gamma_+}) \in \bC^{1+d_0}$ and choose vectors $w^{1i}_1,w^{1i}_2,\dots ,w^{1i}_{d_i}$ such that $W_{1i}$ has a basis $(y_i,w^{1i}_1,w^{1i}_2,\dots ,w^{1i}_{d_i}).$ 

Let $\bm Q_{\Lambda^*}$ denote the elements of $\bm Q_{1+d_0+\dots}$ which are of the form $Id + A +A^*$ where $A$ is of the following block matrix form,

\begin{equation}\label{matrxform}
A=\begin{bmatrix}
0 & A^0_1 & A^0_2 & \cdots & A^0_m \\
0 & 0 & 0 & \cdots & 0 \\
0 & 0 & 0 & 0 & \cdots \\
\vdots & & & \ddots & \\
\end{bmatrix}.  
\end{equation}
We will construct an element $h\in \bm Q^*_{1+d_0+\dots}$ such that $I+h^u$ takes the t-fan $W_*$ to a normal t-fan where $h^u$ is the upper triangular matrix corresponding to $h$. Denote the orthogonal complement of $W_{0} \cap W_{1i}$ in $W_{1i}$ as $W_i^o$. Note that $W_i^o$ can be viewed as a graph of a linear function $T^o_i:\bC^{d_i} \to \bC^{1+d_0}$. Thus we can find a matrix $A$ of the form as described in \eqref{matrxform} such that $I+A+A^*$ takes the t-fan $W_*$ to the normal t-fan 
\begin{itemize}
    \item $\scW_0 =\bC^{1+d_0}\times\{0\}^{d_{11}+ d_{12}\dots}$,
    \item $ \scW_i = \bC \cdot \langle y_i \rangle \oplus \{0\}^{1+d_0+d_{11}\dots d_{1i-1}}\times \bC^{d_{1i}} \times \{ 0\}^{d_{1i+1}+\dots}$.
\end{itemize}
This shows that $\varphi$ lies in the image of the map $Q_{\Lambda^*} \to \partial_{\Lambda^*}\scB_\Lambda$. 
\end{proof}

\subsubsection{Embeddings of families of buildings}\label{subsec:embeddings-thickening} We first discuss the inductive construction when $\norm{(\Gamma^+,\Gamma^-)} = 1$. Let $\Lambda\cl \Gamma^-\to \Gamma^+$ be a partition and $\Gamma^-\xra{\Lambda^0}\Gamma\xra{\Lambda^1}\Gamma^+$ be a factorization. We will do the construction for the moduli space associated to $\Lambda$ and then endow any other moduli space in the orbit of the symmetric action with the same global Kuranishi chart after permuting the labels of the punctures.\par
By the inductive hypothesis, we have constructed global Kuranishi charts $\scK_{\Lambda^0}$ and $\scK_{\Lambda^1}$ with base spaces $\scBR_{\Lambda^0}$, respectively $\scBR_{\Lambda^1}$ and by Proposition~\ref{prop:embedding-base-spaces}, there exists an embedding
\begin{equation}\label{eq:emb-base}
    \Psi\cl \scBS_{\Lambda^0}\times_{S^1} \scBS_{\Lambda^1}\hkra \scBS_{\Lambda}
\end{equation} 
whose image is contained in a boundary stratum $\scBS_{\Lambda^{01}}$ of $\scBS_{\Lambda}$. Define the `restricted family'
$$\scZ_{\Lambda} \coloneqq\scBS_{\Lambda}\times_{\cB_{\Lambda}} \cZ_{\cBR_{\Lambda}}$$
of buildings, where $\cZ_{\cBR_{\Lambda}}$ was defined in Definition~\ref{de:family-of-tree}. To lift the embeddings~\eqref{eq:emb-base} to maps between these families, we need the following definition.

\begin{definition}[Orbifold associated to Reeb orbit]\label{de:reeb-orbifold}
    Given an unparametrized Reeb orbit $\gamma$, let 
\begin{equation}\label{eq:parametrisations-reeb-orbit}
    E\gamma\coloneqq\set{\sigma\in C^\infty(S^1,Y)\mid \dot{\sigma} = \cA_\lambda(\gamma)\,R(\sigma),\,\im(\sigma) = \im(\gamma)}
\end{equation}
be the space of parametrized Reeb orbits lying over $\gamma$. Then, $S^1$ acts transitively on $E\gamma$ with isotropy $\bZ/m_\gamma$ and we define 
\begin{equation}
    B\gamma := [E\gamma/S^1].
\end{equation}
Note that we have an equivalence $B\bZ/{m_\gamma} \to B\gamma$, where $m_\gamma$ is the multiplicity of $\gamma$.
\end{definition}

\noindent The asymptotic markers yield rel--$C^1$ evaluation maps
\begin{equation}\label{eq:evaluation-to-reeb-orbit}
    p_\gamma\cl \ol\scT_\Lambda\coloneqq [\scT_\Lambda/\wh G_\Lambda] \to B\gamma 
\end{equation}
induced by 
$$(\varphi,u,w) \mapsto [\theta\mapsto (\wh u)_{z_\gamma}(\theta\cdot b_\gamma)],$$
where $\theta\in S^1$ and for each $\gamma$ labeling an exterior edge $b_\gamma$ denotes the asymptotic marker associated to $\varphi$ and $\gamma$. Given a sequence $\Gamma =(\gamma_1,\dots,\gamma_k)$ of Reeb orbits, we set $$B\Gamma\coloneqq \prod_{i=1}^kB{\gamma}_i.$$
Then, the (smooth local) embeddings~\eqref{eq:emb-base} lift to (continuous) embeddings 
\begin{equation}\label{eq:emb-families-with-maps}
   \Psi_{\Lambda^{01}}\cl  \scZ_{\Lambda^0}\ov{\times}_{B\Gamma}\,\scZ_{\Lambda^1}\coloneqq \lbr{\scZ_{\Lambda^0}\times_{B\Gamma}\,\scZ_{\Lambda^1}}/S^1\hkra \scZ_{\Lambda},
\end{equation}
where $S^1$ acts on the fiber product via the diagonal embedding $S^1\hkra \bT_{\Gamma}\times\bT_\Gamma$. The lift uses the fact that the map~\eqref{eq:emb-base} is covered by a canonical isomorphism 
\begin{equation}\label{eq:emb-family}
\pr_1^*\scC_{\Gamma_i,\Gamma^+}\inn\sqcup\pr_2^*\scC_{\Gamma^-,\Gamma_i}\inn\cong\Psi^*\scC\inn_{\Gamma^+,\Gamma^-}.
\end{equation}
Note that we have already fixed a pre-perturbation datum $\fD$ for $\Mbar_{\sft}^J(\Gamma^+,\Gamma^-)_\Lambda$. Suppose $\cU_\Lambda$ is a good covering as in Definition~\ref{de:good-covering} and $\zeta$ a smooth map as in \eqref{eq:reducing-structure-group} yielding a $\cG_\Lambda$-equivariant map
\begin{equation}\label{eq:slice-map}
\zeta_\Lambda\cl \cZ_\Lambda \to \fp\fu_\Lambda \coloneqq\bigoplus\limits_{\gamma\in \Gamma^+}{\text{Lie}(\PU({d_{\Lambda_\gamma}+1}))},
 \end{equation}
 due to the isomorphism~\eqref{eq:comparing-group-quotients}. The pullback of~\eqref{eq:slice-map} to $\scZ_\Lambda$ yields a $\cG_{\Lambda}$-equivariant map $\zeta_\Lambda\cl\scZ_\Lambda\to\fg_\Lambda$. Thus, we may demand that $\zeta_\Lambda$ restricts to $\zeta_{\Lambda^1}\times\zeta_{\Lambda^0}$ over the respective boundary stratum. Assume also that we have found a perturbation space $(E_\Lambda,\mu)$ so that $(E_{\Lambda^1},\mu_{\Lambda^1})\times (E_{\Lambda^1},\mu_{\Lambda^1})$ admits a linear equivariant embedding into the pullback of $(E,\mu)$ along~\eqref{eq:emb-family}. Recall that the perturbation space depends on the choice of $\zeta_\Lambda$, since the regularity condition has to be satisfied over the locus $\{(\varphi,u)\mid \delbar_{J}u = 0,\, \zeta_\Lambda(\varphi,u) = 0\}$. Then, $\alpha_{\Lambda} = (\fD,\cU_\Lambda,\zeta,E_{\Lambda},\mu_\Lambda)$ is a well-defined perturbation datum. Thus, Proposition~\ref{prop:disconnected-buildings} yields a global Kuranishi chart 
$$\cK_\Lambda = \lbr{\bT_{\Gamma^+}\times\bT_{\Gamma^-}\times G'_\Lambda,\cTR_\Lambda,\cE^\bR_\Lambda,\fs_\Lambda}$$
for $\Mbar^{\, J}_{\sft}(\Gamma^+,\Gamma^-)_\Lambda$ equipped with a rel--$C^1$ map $\cT_\Lambda\to \cBS_\Lambda$. We let $G_\Lambda\sub G'_\Lambda$ be the product of stabilizers of $[1:0:\dots:0]$ and set $\wh G_\Lambda := \bT_{\Gamma^+}\times\bT_{\Gamma^-}\times G_\Lambda$. Define the $\wh G_\Lambda$-invariant subspaces
\begin{equation}
    \scT_\Lambda \coloneqq \scBS_\Lambda\times_{\cBS_\Lambda}\,\cTR_\Lambda\qquad \qquad   \scE_\Lambda \coloneqq \scBS_\Lambda\times_{\cBS_\Lambda}\,\cE^\bR_\Lambda
\end{equation}
and denote the pullback of $\fs_\Lambda$ to $\scT_\Lambda$ by the same symbol. By construction, the embeddings \eqref{eq:emb-families-with-maps} induce rel--$C^1$ embeddings
\begin{gather}\label{eq:emb-thickenings}
   \notag\wt\Psi\cl \scT_{\Lambda^0}\ov{\times}_{B\Gamma}\, \scT_{\Lambda^1}\hkra \scT_{\Lambda} \\ 
   ((\varphi_0,u_0,w_0),(\varphi_1,u_1,w_1)) \mapsto (\Psi((\varphi_0,u_0),\varphi_1,u_1)),w_0\oplus w_1).
\end{gather}
They are covered by embeddings of obstruction bundles, defined as follows. The original obstruction bundle is the direct sum $\cE_\Lambda = E_\Lambda \oplus \fp\fu(d_\Lambda +1)$. The embeddings $E_{\Lambda^1_i}\oplus E_{\Lambda^0_i}\hkra E_\Lambda$ exist by assumption, while the embeddings of Lie algebras are by the inclusion~\eqref{eq:inclusion-of-groups} of covering groups. We define 
\begin{equation}
     \scK_{\Lambda^0}\ov{\times}_{B\Gamma}\,\scK_{\Lambda^1}\coloneqq \lbr{\bT_{\Gamma^-}\times G_{\Lambda^0}\times G_{\Lambda^1}\times \bT_{\Gamma^+},\scT_{\Lambda^0}\ov{\times}_{B\Gamma}\,\scT_{\Lambda^1},\scE_{\Lambda^0}\boxplus\scE_{\Lambda^1},\fs_{\Lambda^0}\boxplus\fs_{\Lambda^1}}
\end{equation}
and summarize the construction in the following lemma.

\begin{lemma}\label{lem:embedding-inductive-step}
    There exists a global Kuranishi chart
    \begin{equation}\label{eq:slice-gkc}
        \scK_{\Lambda} \;\coloneqq\; (\wh{G}_\Lambda,\scT_{\Lambda},\scE_{\Lambda}, \fs_{\Lambda}) 
    \end{equation} 
    for $\Mbar_{\sft}^{\, J}(\Gamma^+,\Gamma^-)_\Lambda$ admitting strong equivalences 
     \begin{equation}\label{eq:emb-gkc}\scK_{\Lambda^0}\ov{\times}_{B\Gamma}\,\scK_{\Lambda^1} \hkra \scK_{\Lambda}
    \end{equation}
    onto a boundary strata of $\scK_{\Lambda}$ for any factorization $\Gamma^-\xra{\Lambda^0}\Gamma\xra{\Lambda^1}\Gamma^+$ of $\Lambda$.
\end{lemma}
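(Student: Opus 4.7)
The plan consists of three steps, most of which follow from the preparation carried out in the discussion preceding the lemma.

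\emph{Step 1 — the chart $\scK_\Lambda$.} I first verify that $\scK_\Lambda$ is a global Kuranishi chart for $\Mbar_{\sft}^{\,J}(\Gamma^+,\Gamma^-)_\Lambda$. Starting from the chart $\cK_\Lambda$ of Proposition~\ref{prop:disconnected-buildings} with base $\cBS_\Lambda$ and covering group $\wh G'_\Lambda = \bT_{\Gamma^+}\times\bT_{\Gamma^-}\times \prod_{\gamma \in \Gamma^+} \PU(d_{\Lambda_\gamma}+1)$, I restrict to the slice $\scBS_\Lambda = \cBS_\Lambda\times_{\cBR_\Lambda}\scBR_\Lambda \hookrightarrow \cBS_\Lambda$ cut out by fixing the evaluation at each positive puncture to $[1:0:\dots:0]$. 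Because $\PU(d+1)$ acts transitively on $\bP^d$ with stabilizer $U(d)$, the slice $\scBS_\Lambda$ is transverse to the orbits of the $\prod_{\gamma}\PU(d_{\Lambda_\gamma}+1)$-factor, and the natural morphism $[\scT_\Lambda/\wh G_\Lambda]\to [\cTR_\Lambda/\wh G'_\Lambda]$ is an equivalence of topological orbifolds. The pulled-back obstruction pair $(\scE_\Lambda,\fs_\Lambda)$ therefore cuts out the same footprint with unchanged virtual dimension, and is rel--$C^1$ since the slice is a smooth submanifold.

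\emph{Step 2 — the morphism $\wt\Psi$.} At the base level the embedding is $\Psi$ of Proposition~\ref{prop:embedding-base-spaces}, which lifts to $\wt\Psi$ on thickenings as in \eqref{eq:emb-thickenings} using the canonical identification \eqref{eq:emb-family} of interior universal families. The obstruction data respect $\wt\Psi$ by design: the inductive choice of perturbation spaces provides a linear embedding $E_{\Lambda^0}\oplus E_{\Lambda^1}\hookrightarrow E_\Lambda$; the block-matrix embedding \eqref{eq:inclusion-of-groups} supplies the $\fp\fu$-component; and the slice map $\zeta_\Lambda$ was arranged above to restrict to $\zeta_{\Lambda^1}\times\zeta_{\Lambda^0}$ on the image of $\Psi$. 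The mollification of $\wh\fs_\Lambda$ producing $\fs_\Lambda$ is chosen, inductively on $\|(\Gamma^-,\Gamma^+)\|$, to restrict to the corresponding mollification of $\wh\fs_{\Lambda^0}\boxplus\wh\fs_{\Lambda^1}$; this is possible by first mollifying on the boundary and extending via a $\wh G_\Lambda$-invariant partition of unity.

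\emph{Step 3 — strong equivalence onto a boundary stratum.} By Lemma~\ref{lem:boundary-up-to-stabilisation}, the associated boundary stratum $\del_{\Lambda^*}\scBS_\Lambda$ of the base is equivariantly diffeomorphic to the total space of the $G_\Lambda$-vector bundle $Q_{\Lambda^*}\to \cc{\scB}^\bR_{\Lambda^*} = G_\Lambda\times_{G_{\Lambda^*}}\im(\Psi)$. Since $\scT_\Lambda \to \scBS_\Lambda$ is a rel--$C^1$ fiber bundle by Proposition~\ref{prop:pre-thickening-smooth} and the fiberwise identification \eqref{eq:emb-family} holds along the embedding, the restriction of $\scT_\Lambda$ to this stratum is the total space of a $G_\Lambda$-vector bundle over the $\wh G_\Lambda$-equivariantization of $\scT_{\Lambda^0}\ov{\times}_{B\Gamma}\scT_{\Lambda^1}$, into which $\wt\Psi$ sits as the zero section; by the definition recalled above Definition~\ref{de:derived-orbifold-category}, this is exactly the data of a strong equivalence of Kuranishi charts.

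\emph{Main obstacle.} The only delicate point is the inductive coherence of the perturbation data: a single chart $\scK_\Lambda$ must simultaneously carry the composition structure for \emph{every} factorization of $\Lambda$. Arranging the perturbation space $E_\Lambda$, the good covering $\cU_\Lambda$, and the slice map $\zeta_\Lambda$ to restrict correctly on all such boundary strata at once requires a careful induction on $\|(\Gamma^-,\Gamma^+)\|$ in the spirit of \cite{BX22}: one fixes the lower-norm data first, then enlarges $E_\Lambda$ and refines $\cU_\Lambda$ so as to preserve the surjectivity requirement \eqref{eq:perturbations-suffice} in the interior while matching the prescribed behavior on the boundary.
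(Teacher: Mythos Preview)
Your structural outline is correct, and Steps~1 and~3 accurately summarize material already established in the discussion preceding the lemma. However, you have inverted the logical status of the key ingredients. In the text just before the lemma, the existence of a good covering $\cU_\Lambda$ with $\zeta_\Lambda$ restricting to $\zeta_{\Lambda^1}\times\zeta_{\Lambda^0}$, and of a perturbation space $(E_\Lambda,\mu_\Lambda)$ receiving a linear embedding of $(E_{\Lambda^0},\mu_{\Lambda^0})\times(E_{\Lambda^1},\mu_{\Lambda^1})$, are explicitly stated as \emph{assumptions} (``Suppose\ldots'', ``Assume also that\ldots''). The content of the lemma's proof is precisely to discharge these assumptions. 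Your Step~2 treats them as given ``by design'', and your ``Main obstacle'' paragraph only names the issue without carrying out the construction. This is the genuine gap.

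The paper's proof consists exactly of these two constructions. For the good covering, one uses the clutching embeddings $\psi$ of the stabilized base spaces and Palais properness to build an equivariant $\zeta$ on $\scB^{\stb}$ extending the product of the boundary maps; then one extends the sections $\sigma_{i,j}$ and divisors $D_{i,j}$ of the boundary good coverings into the interior using transversality and openness, and extends the cut-offs $\chi_i$ via Tietze, so that the stabilization squares over $U_i$ commute. The key simplification at norm~$1$ is that the images of the various $\Psi$ (and hence their $\cG_\Lambda$-orbits) are pairwise disjoint, so the tubular neighborhoods can be chosen disjoint and the extensions done independently---this is why your worry about simultaneous compatibility across corner strata is actually the concern of the \emph{next} proposition, not this lemma. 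For the perturbation space, one equivariantizes $E_{\Lambda^0}\oplus E_{\Lambda^1}$ to a $G_\Lambda$-bundle over $\im(\Psi)^\sim$, adds a complementary bundle to trivialize it (via \cite{Las79}), pushes it off the boundary using a collar and cut-off, and then enlarges by an interior piece $(E^\circ,\mu^\circ)$ supported away from the boundary to achieve surjectivity over all of $\zeta_\cU\inv(0)$. Without these two explicit steps, the ``inductive choice of perturbation spaces'' you invoke in Step~2 has not yet been made.
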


\begin{proof}
   The discussion above shows that it remains to construct a suitable good covering and a suitable perturbation space. To keep the notation tractable, we assume that $\Gamma^+ = \{\gamma\}$ and $\Lambda^1 = \{\gamma'\}$. The general case is a straightforward generalization.\\
   
   \noindent\textbf{Step 1:} The usual clutching maps induce embeddings
   \begin{equation}\label{eq:clutching-on-stable}
   \psi\cl \scB^{\text{st}}_{1+\#\Lambda^1+3d'_{\Lambda^1}}(d_{\Lambda^1})\times \scB^{\text{st}}_{1+\#\Lambda^0+3d'_{\Lambda^0}}(d_{\Lambda^0})\hkra \scB^{\text{st}}_{1+\#\Lambda+3d'_{\Lambda}}
   \end{equation}
   where the superscript indicates that we only consider stable maps. The maps $\psi$ are equivariant with respect to the inclusions $\cG_{\Lambda^1}\times\cG_{\Lambda^1}\hkra \cG_\Lambda$. By \cite[Lemma~4.13]{AMS23} taking $X = \{\Pt\}$ in Definition~4.6 op. cit., the $\cG_\Lambda$-action on $\cB^{\stb}_{1+\#\Lambda + 3d'_{\Lambda}}$ is Palais proper. Thus, \cite[Theorem~4.3.1]{Pal61} asserts that $\scB^{\text{st}}_{\Lambda,+3d'_{\Lambda}}$ admits a $\cG_\Lambda$-invariant Riemannian metric as well as $\cG_\Lambda$-invariant bump functions. First, we can extend the product of the $\zeta_i := \zeta_{\cU^i}$ to the canonical  $\cG_\Lambda$-equivariant function 
   \[\zeta_{01}\cl \im(\psi)^\sim\coloneqq\cG_\Lambda\times_{\cG_{\Lambda^1}\times\cG_{\Lambda^0}}\im(\psi)\,\to\, \fg_\Lambda.\]
   Then, choosing a $\cG_\Lambda$-equivariant tubular neighborhood of $\im(\psi)^\sim$, we can extend $\zeta_{01}$ to a $\cG_\Lambda$-equivariant function on an open neighborhood of $\im(\psi)^\sim$. If we have several embeddings of the form~\eqref{eq:clutching-on-stable}, their images are disjoint, even under the $\cG_\Lambda$-action, due to our assumption on $\Gamma^+$ and $\Gamma^-$. Thus we can choose the tubular neighborhoods to be disjoint. Then, using $\cG_\Lambda$-invariant cut-off functions, we can extend the thus obtained functions to a smooth $\cG_\Lambda$-equivariant function 
   \begin{equation}\label{eq:suitable-slice-function}
    \zeta \cl\scB^{\text{st}}_{\Lambda,+3d'_{\Lambda}} \to \fg_\Lambda.
   \end{equation}
   It remains to extend the good coverings $\cU_{\Lambda^1}$ and $\cU_{\Lambda^0}$. Recall that such a good covering consists of a finite collection $\{(U_i,\sigma_{i,j},D_{i,j},\chi_i)\}_{i,j}$ of 
    \begin{enumerate}[ref=\roman*,leftmargin=20pt,label=\roman*)]
        \item\label{i:intersection-nice} open $\cG_{\Lambda'}$-invariant subsets $U_i \sub \scZ_{\Lambda'}$ that cover $\scZ_{\delbar}$
        \item a smooth $\cG_{\Lambda'}$-equivariant section $\sigma_{i,j}\cl U_i \to \cC\inn|_{U_i}$ for $1 \leq j \leq 3d_{\Lambda'}$ and divisors $D_{i,j}\sub Y$ so that for any $(\varphi,u)\in U_i$, we have $u\pf D_i$ and $u(\sigma_{i,j}(\varphi,u))\in D_{i,j}$ and
        $$\# \,C_v\cap \{\sigma_{i,j}(\varphi,u)\}_j = \frac{3}{p}\deg(L_{u,v})$$
        \noindent for any irreducible components $C_v\sub C$, allowing for the stabilisation map to lift to 
        \begin{equation}
            \text{st}_{U_i}\cl U_i \to \cB_{1+\#\Lambda + 3d'_{\Lambda'}}(d_{\Lambda'})
        \end{equation}
        \item $\cG_{\Lambda'}$-invariant functions $\chi_i \cl Z_i \to [0,1]$ with support contained in $U_i$
    \end{enumerate}
    so that $\scZ_{\Lambda',\delbar}$ is contained in the support of $\s{}{\chi_i}$.\par
    \vspace{-5pt}
    \noindent Write $\cU_{\Lambda^r}= \{(D_{i,j}^r,U_i^r,\chi_i^r)\}_{i\in I^r,j}$ and choose for $i \in I^0\times I^r$ an open $\cG_{\Lambda}$-invariant subset $U_i$ of $\scZ_\Lambda$ so that $U_i \cap \im(\Psi) = \Psi(U_{i^1}\times U_{i^0})$ and $U_i$ does not intersect any other other boundary stratum. Shrinking $U_i$ if neccessary, we may ensure that for any $1 \leq j \leq 3d'_{\Lambda^r}$ the section $\sigma_{i^r,j}^r$ extends to a $\cG_\Lambda$-invariant section $U_i\to\cC\inn|_{U_i}$ with $u(\sigma_{i,j}^r(\varphi,u))\in D^r_{i^r,j}$ and $u\pf D_{i,j}$ near $\sigma_{i^r,j}(\varphi,u)$ for any $(\varphi,u)\in U_i$. This requires the divisor $D_{i^r,j}$ and the fact that transversality is an open condition. Note that we do not need the whole map $u$ to intersect $D^r_{i^r,j}$ transversely. 
    In particular, this construction yields a $\cG_{\Lambda}$-equivariant map $\stb_{U_i}\cl U_i \to \cB^{\stb}_{1+\# \Lambda + 3d'_{\Lambda}}(d_\Lambda)$ so that 
\begin{equation}\label{eq:square-for-coverings}\begin{tikzcd}
    U_{i^1}^1\times_{S^1} U_{i^0}^0 \arrow[rr,"\Psi"] \arrow[d,"\stb_{U_{i^1}^1}\times\stb_{U_{i^0}^0}"]&&  U_i\arrow[d,"\stb_{U_i}"]\\ \scB^{\stb}_{1+\#\Lambda^1+3d'_{\Lambda^1}}\times\scB^{\text{st}}_{1+\#\Lambda^0+3d'_{\Lambda^0}}\arrow[rr,""] && \scB^{\text{st}}_{1+\#\Lambda+3d'_{\Lambda}}
    \end{tikzcd} \end{equation}
     commutes. Now, we can use the Tietze extension theorem, applied to $\cZ_\Lambda/\cG_\Lambda$, to extend the cut-off functions $\chi_{i^1}^1\times\chi^0_{i^0}$ on $\Psi(U_{i^1}^1\times U_{i^0}^0)^\sim$ equivariantly to obtain invariant continuous functions $\chi'_{i}\cl U_i\to [0,1]$. Since $A_i  := \cG\cdot\text{supp}(\chi_{i^1}^1\times\chi^0_{i^0})$ is closed in $\scZ_\Lambda$ and contained in $U_i$, we can use the Tietze extension theorem again to find a $\cG$-invariant function $\rho_i \cl \scZ_\Lambda\to [0,1]$ which is identically $1$ on $A_i$ and supported in $U_i$. Thus, we can extend $\rho_i\chi'_i$ to a $\cG$-invariant function $\chi_i$ on all of $\cZ_\Lambda$, extending the cut-off function on the boundary. Now, we can complete $\cU'$ to an invariant open cover $\cU$ of $\scZ_{\Lambda,\delbar}$ so that any $U \in \cU\sm \cU'$ does not meet the images of the embeddings $\Psi_i$. This yields the desired good covering $\cU_\Lambda$, so that $\zeta_{\cU_\Lambda}$ is an extension of the functions $\zeta_{01}$.\\
     
     \noindent\textbf{Step 2:} First, note that we can rephrase a perturbation space $(E,\mu)$ as the trivial $G$-vector bundle $E\to \cC\inn\times \wh Y$ equipped with a $\bR\times G$-equivariant vector bundle morphism $E\to \Lambda^{0,1*}_{\cC\inn/\cB}\otimes_\bC T\wh Y$. Thus, we can extend the trivial $G_{\Lambda_i^1}\times G_{\Lambda^0_i}$-vector bundle $E_{\Lambda^1}\times_{S^1} E_{\Lambda^0}\to \scB_0\times_{S^1}\scB_1$ to a $G_\Lambda$-vector bundle 
 \begin{equation*}
    E'_{01} := G_\Lambda\times_{G_{\Lambda_i^1}\times G_{\Lambda^0_i}} (E_{\Lambda^1}\times_{S^1} E_{\Lambda^0})
 \end{equation*}
 and we can extend $\mu_{\Lambda^1}\times\mu_{\Lambda^0}$ uniquely to $G_\Lambda$-equivariant map. By \cite[Proposition~1.1]{Las79}, we can take the direct sum with a $G_\Lambda$-vector bundle $W\to \im(\Psi)^\sim$, equipped with the zero map to $\Lambda^{0,1*}_{\cC\inn/\cB}\otimes_\bC T\wh Y$, to obtain a $G_\Lambda$-perturbation space $(E_{01},\mu_{01})$ for $\im(\Psi)^\sim$ that extends the perturbation space of the boundary stratum. Using the map of Lemma~\ref{lem:boundary-up-to-stabilisation} and multiplying $\mu_{01}$ with a suitable cut-off function, we can pull back this perturbation space to a perturbation space on the whole boundary stratum. Using a (sufficiently small) collar of $\im(\Psi_{\Lambda^0,\Lambda^1})^\sim$ and a bump function, we can extend $(E_{01},\mu_{01})$ to a perturbation space $(E_{01},\mu_{\Lambda,01})$ on all of $\cB_\Lambda$. Since these perturbation spaces are sufficient to achieve transversality for maps in $$\fs_{\text{pre}}\inv(0) = \{(\varphi,u)\in \cZ_\Lambda\mid \zeta_\cU(\varphi,u) = 0\}$$ 
 that lie near some boundary stratum, we can extend the direct sum $\bigoplus\limits_{\Gamma_i}(E_{01},\mu_{\Lambda,01})$ to a perturbation space $(E_\Lambda,\mu_\Lambda)$ for $\fs_{\text{pre}}\inv(0)$ so that over a boundary stratum $(E_\Lambda,\mu_\Lambda)$ is of the form $(E_{01},\mu_{01})\oplus (W,0)$. This yields the desired perturbation datum $\alpha =(\fD,\cU,\zeta,E,\mu)$.\par
      By definition (and \cite[Lemma~3.5]{AMS23}) the global chart $\scK_{\Lambda}$ is equivalent to $\cK_{\Lambda}$. In particular, it is a global chart for $\Mbar_{\sft}^{\, J}(\Gamma^+,\Gamma^-)_\Lambda$. The embeddings~\eqref{eq:emb-thickenings} exist by our choice of perturbation space $E_{\Gamma^+,\Gamma^-}$ and they fit into a commutative square 
    \begin{equation}\begin{tikzcd}
    \scE_{\Lambda^0}\ov{\times}_{B\Gamma}\, \scE_{\Lambda^1} \arrow[r,""] \arrow[d,""]& \scE_{\Lambda} \arrow[d,""]\\ 
    \scT_{\Lambda^0}\ov{\times}_{B\Gamma}\, \scT_{\Lambda^0} \arrow[r,""] & \scT_{\Lambda} 
    \end{tikzcd} \end{equation}
    This completes the proof.
\end{proof}

\begin{remark}
    In the construction of the perturbation space, we have used a strategy of \cite{Rez22}, instead of the construction in \cite{BX22}. This allows us to see the embeddings of boundary strata immediately as strong equivalences, while \cite{BX22} has to use an outer-collaring as well as some gluing results (\cite[Proposition~5.64]{BX22}). 
\end{remark}

It remains to prove the inductive step.

\begin{proposition}\label{prop:embedding-gkc}
    There exists a system of global Kuranishi charts $\{\scK_{\Lambda}\}_{\Lambda\cl\Gamma^-\to\Gamma^+}$ for the moduli spaces $\Mbar_{\sft}^J(\Gamma^+,\Gamma^-)_\Lambda$ admitting strong equivalences 
    \begin{equation}\label{eq:emb-gkc-first-step}
        \scK_{\Lambda^0}\ov{\times}_{B\Gamma}\,\scK_{\Lambda^1}\hkra \scK_{\Lambda}
    \end{equation} 
    onto the codimension-$1$ boundary strata of $\scK_{\Lambda}$ for any factorization $\Lambda = \Lambda^1\g\Lambda^0$ so that any boundary stratum of $\scK_{\Lambda}$ is the image of~\eqref{eq:emb-gkc} up to stabilization for some factorization and the squares 
    \begin{equation}\begin{tikzcd}
    \scK_{\Lambda^0}\ov{\times}_{B\Gamma_0}\, \scK_{\Lambda^1}\ov{\times}_{B\Gamma_1}\, \scK_{\Lambda^2} \arrow[r,""] \arrow[d,""]&\scK_{\Lambda^0}\ov{\times}_{B\Gamma_0}\,\scK_{\Lambda^{21}}\arrow[d,""]\\  \scK_{\Lambda^{10}}\ov{\times}_{B\Gamma_1}\,\scK_{\Lambda^2} \arrow[r,""] & \scK_{\Lambda} 
    \end{tikzcd} \end{equation}
    are pullback squares.
\end{proposition}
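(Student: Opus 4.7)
The plan is to construct the system $\{\scK_\Lambda\}$ by induction on the integer $\norm{(\Gamma^-,\Gamma^+)}$ defined at the start of \textsection\ref{subsec:unstructured-flow-cat}, extending the single-factorization construction of Lemma~\ref{lem:embedding-inductive-step} to all factorizations simultaneously. The base case $\norm{(\Gamma^-,\Gamma^+)} = 0$ is immediate: there are no non-trivial factorizations, so any perturbation datum extending the fixed pre-perturbation datum $\fD$ yields $\scK_\Lambda$, and the boundary conditions are vacuous. For the inductive step, fix $N \geq 1$ and assume that a coherent system $\{\scK_{\Lambda'}\}$ has been constructed for all partitions $\Lambda'\cl \Gamma'^-\to \Gamma'^+$ with $\norm{(\Gamma'^-,\Gamma'^+)} < N$ satisfying the conclusion, and that all such boundary data are compatible with the symmetric action on sequences of Reeb orbits.

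Fix now $\Lambda\cl \Gamma^-\to \Gamma^+$ with $\norm{(\Gamma^-,\Gamma^+)} = N$ and consider the finite poset of non-trivial factorizations $\Lambda = \Lambda^1\g\Lambda^0$ through intermediate sequences $\Gamma$. For each such factorization the inductive hypothesis supplies $\scK_{\Lambda^0}\ov{\times}_{B\Gamma}\,\scK_{\Lambda^1}$, together with Proposition~\ref{prop:embedding-base-spaces} and Lemma~\ref{lem:boundary-up-to-stabilisation} which identify the underlying base-level stratum $\del_{\Lambda^1,\Lambda^0}\scBS_\Lambda$ up to $G_\Lambda$-equivariant stabilization by the bundle $Q_{\Lambda^1,\Lambda^0}$. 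The first step is to build, exactly as in Step~1 of the proof of Lemma~\ref{lem:embedding-inductive-step}, a good covering $\cU_\Lambda$ of $\scZ_\Lambda$ and a $\cG_\Lambda$-equivariant slicing function $\zeta\cl \scB^{\text{st}}_\Lambda\to\fg_\Lambda$ whose restriction to a $\cG_\Lambda$-invariant neighborhood of each boundary stratum $\im(\Psi_{\Lambda^1,\Lambda^0})^\sim$ agrees with the extension of the previously chosen $\zeta_{\Lambda^0}\times\zeta_{\Lambda^1}$. Since distinct intermediate $\Gamma$ (and their symmetric orbits) produce disjoint boundary strata with disjoint $\cG_\Lambda$-saturations, the tubular-neighborhood/cut-off construction of Lemma~\ref{lem:embedding-inductive-step} can be performed simultaneously over all factorizations. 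The same strategy produces a $G_\Lambda$-equivariant perturbation space $(E_\Lambda,\mu_\Lambda)$ extending the direct sum $\bigoplus_{\Gamma}(E_{\Lambda^1}\oplus E_{\Lambda^0})$ over a collar of the full boundary, after which the sections, cut-off functions and partitions of unity are glued equivariantly.

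The second step is to verify the associativity/pullback condition. Given a triple factorization $\Lambda^2\g\Lambda^1\g\Lambda^0$, both composite embeddings factor through the codimension-$2$ stratum of $\scBS_\Lambda$ corresponding to two simultaneous level-jumps, which by Lemma~\ref{lem:associativity-of-embeddings} is already a pullback at the level of base spaces. The inductive hypothesis asserts this pullback property one level down, and our boundary data are defined on collars that respect the fiber product structure. The commutativity of the resulting square of strong equivalences is then a direct consequence of the equivariant identification~\eqref{eq:emb-family} of universal families, combined with the uniqueness of the extended perturbation datum over the codimension-$2$ stratum; the pullback property lifts from $\scBS$ to $\scT$ and $\scE$ since both are obtained as pullbacks along the structure maps.

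Finally, symmetric compatibility is enforced by averaging the construction over the symmetric orbits: given $\scK_\Lambda$, we declare $\scK_{\sigma^+\g\Lambda\g\sigma^-}$ to be the same chart with relabeled marked points, using that $\zeta$, $\cU_\Lambda$ and $(E_\Lambda,\mu_\Lambda)$ were chosen equivariantly under the symmetric action on punctures. The main obstacle I expect is ensuring the \emph{simultaneous} compatibility across all factorizations during the extension of $(E_\Lambda,\mu_\Lambda)$ and $\zeta_\cU$: one needs the collars around different boundary strata to be mutually disjoint and invariant, and the cut-off/extension procedure must not destroy either transversality over already-treated strata or the pullback identity over codimension-$2$ corners. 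This is where the disjointness coming from $\cP_L$ being finite and the use of equivariant Tietze/tubular-neighborhood constructions are essential, exactly as indicated in Step~1 of Lemma~\ref{lem:embedding-inductive-step}.
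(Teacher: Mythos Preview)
Your overall strategy matches the paper's: induction on $\norm{(\Gamma^-,\Gamma^+)}$, extending the good covering, the slicing map $\zeta$, and the perturbation space from the boundary strata inward, then invoking Lemma~\ref{lem:associativity-of-embeddings} for the pullback squares. The structure and the ingredients you cite are the right ones.

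There is, however, a genuine gap in your Step~1. You assert that ``distinct intermediate $\Gamma$ produce disjoint boundary strata with disjoint $\cG_\Lambda$-saturations,'' and use this to run the tubular-neighborhood/cut-off argument of Lemma~\ref{lem:embedding-inductive-step} for each factorization independently. This is true only when $\norm{(\Gamma^-,\Gamma^+)} = 1$. As soon as $N \geq 2$, the \emph{closures} of the codimension-$1$ strata $\del_{\Lambda^1,\Lambda^0}\scBS_\Lambda$ meet along codimension-$2$ corners (corresponding to triple factorizations $\Lambda = \Lambda^2\g\Lambda^1\g\Lambda^0$), so you cannot choose mutually disjoint collars around them. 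Consequently, the extensions of $\zeta_{\Lambda^0}\times\zeta_{\Lambda^1}$ and of $(E_{\Lambda^0}\oplus E_{\Lambda^1},\mu_{\Lambda^0}\oplus\mu_{\Lambda^1})$ over different strata are forced to interact, and nothing in your argument ensures they agree on the overlap.

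The paper resolves this by first observing that the inductive hypothesis guarantees the boundary data already agree over the codimension-$2$ corners (since the charts $\scK_{\Lambda^{10}}$, $\scK_{\Lambda^{21}}$ were themselves built compatibly at the previous stage), so one has a single well-defined equivariant function on all of $\del\scBS_\Lambda$ rather than a family of locally defined ones. The extension to the interior is then a Seeley-type problem: the paper invokes Lemma~\ref{lem:extending-from-boundary} for $\zeta$ and \cite{Kott} for $\mu^\del_\Lambda$, both of which extend a smooth function from a manifold-with-corners boundary given only that the restrictions to the closed codimension-$1$ faces match on their mutual intersections. You flag this issue correctly in your final paragraph, but the resolution---replacing ``disjoint collars'' by ``coherent boundary data plus a single extension lemma''---is the missing step.
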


\begin{proof} 
    The construction of the embeddings $\scT_{\Lambda^1}\ov{\times}_{B\Gamma}\,\scT_{\Lambda^0}\hkra \scT_{\Lambda}$ of thickenings for respective partitions of Reeb orbits follows by induction, using the arguments of \textsection\ref{subsec:embeddings-thickening} and Lemma~\ref{lem:embedding-inductive-step}. The same holds for the embeddings of obstruction bundles. The additional difficulty in the general case is to ensure that our extensions of good coverings and perturbation spaces can be done compatibly over the corner strata.  We first extend the maps of~\eqref{eq:suitable-slice-function} from the boundary strata to the interior. Extending $\zeta_{\Gamma^+,\Gamma'}$ and $\zeta_{\Gamma',\Gamma^+}$ and extending to $\cG$-equivariant maps, we obtain $\cG$-equivariant functions 
    $$\wt\zeta_{\Gamma'}\cl \del_{\Lambda^{01}} {\cBS}^{\stb}_\Lambda\,\longrightarrow\,\fg_{\Lambda} $$
    that agree over the corner strata. By Lemma~\ref{lem:extending-from-boundary}, we may extend them to a $\cG$-equivariant smooth function $\zeta' \cl {\cBS}^{\stb}_{\Gamma^+,\Gamma^-}\to \fg$. By the inductive hypothesis, we assume that the good coverings of the codimension-$1$ strata over their respective boundary strata are obtained from good coverings of the moduli spaces forming the codimension-$2$ strata. Thus, we may use the same argument as in the discussion before the square~\eqref{eq:square-for-coverings} to extend the good coverings of the boundary strata to a good covering of a neighborhood of the boundary. In particular, the maps $\stb_{U_i}$ and $\stb_{U_j}$ agree over the intersection of the boundary with $U_i \cap U_j$. Thus, we may choose arbitrary extensions of $\cG$-invariant bump functions and extend these data to a good covering as at the end of the proof of Lemma~\ref{lem:embedding-inductive-step}.\par 
    To construct the perturbation space $(E_{\Gamma^+,\Gamma^-},\mu_{\Gamma^+,\Gamma^-})$, we observe that the beginning of the construction in the proof of Lemma~\ref{lem:embedding-inductive-step} applied to the codimension-$1$ boundary strata of $\cT_{\Gamma^+,\Gamma^-}$ yields a $G$-representation $E'_\Lambda$, equipped with a a map $$\mu^\del_\Lambda\cl C^\infty_c(\cC\inn|_{\del\scBS_{\Gamma^+,\Gamma^-}}\times\wh Y,\Lambda^{0,1*}_{\cC\inn/\scBS_{\Gamma^+,\Gamma^-}}\otimes_\bC T\wh Y)^\bR.$$
    whose restrictions to the closures of the codimension-$1$ strata agree over the codimension-$2$ strata. Thus, we can extend $\mu^\del_\Lambda$ to a $G_\Lambda$-equivariant map $\mu'_\Lambda\cl E'_\Lambda\to C^\infty_c(\cC\inn\times\wh Y,\Lambda^{0,1*}_{\cC\inn/\scBS_{\Gamma^+,\Gamma^-}}\otimes_\bC T\wh Y)^\bR$ by \cite{Kott}. As this is sufficient to achieve transversality for curves with domain near the boundary, we may extend $(E'_\Lambda,\mu'_\Lambda)$ to a perturbation datum, where $(E_\Lambda,\mu_\Lambda) = (E'_\Lambda,\mu'_\Lambda)\oplus (E\inn,\mu\inn)$, where $\mu\inn(e)$ is supported away from $\cC|_{\del\scB_\Lambda}\times \wh Y$. 
    This proves the first claim. Any boundary stratum is covered by such an embedding since any boundary stratum of $\scB_\Lambda$ is a vector bundle over the image of some embedding $\scB_{\Lambda^1}\times\scB_{\Lambda^0}\to \scB_\Lambda$. The last assertion follows from Lemma~\ref{lem:associativity-of-embeddings} and the construction of the perturbation spaces.
\end{proof}

\begin{lemma}\label{lem:extending-from-boundary}
    Suppose $\cG$ acts properly on a smooth manifold $M$ with corners and for each boundary stratum $S\sub \del M$ there exists a $\cG$-equivariant function $f_S \cl S\to V$ to a common finite-dimensional $\cG$-representation so that the restrictions agree over the codimension-$2$ corner strata. Then, there exists a $\cG$-equivariant smooth function $f \cl M\to V$ that extends the functions $f_S$.
\end{lemma}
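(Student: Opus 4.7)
The plan is to reduce to a local statement via an equivariant partition of unity and then to solve the local problem by extending face-by-face, with corrections that vanish along previously treated faces. Since $\cG$ acts properly on $M$, \cite{Pal61} provides $\cG$-invariant smooth partitions of unity subordinate to any $\cG$-invariant open cover, so it suffices to construct, over such a cover, local $\cG$-equivariant smooth extensions of the $f_S$; the glued result will then restrict to $f_S$ on each face $S$ because each local piece does. For the local problem I would fix $p \in M$ of depth $k$ and invoke the slice theorem for proper actions to obtain a $\cG$-invariant neighborhood of $\cG \cdot p$ of the form $\cG \times_{\cG_p} W$, where $\cG_p$ is the compact isotropy group and $W \sub \bR^{n-k} \times \bR^{k}_{\geq 0}$ is a $\cG_p$-invariant open neighborhood of the origin. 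The local codimension-$1$ faces are $F_j = W \cap \{x_j = 0\}$ for $j = 1, \dots, k$, and the hypothesis provides $\cG_p$-equivariant smooth functions $f_{F_j} \cl F_j \to V$ whose restrictions to $F_i \cap F_j$ agree for all $i \neq j$.

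I would fix an ordering of the faces and construct the local extension inductively on $j$. Having built a smooth $\cG_p$-equivariant $g_{j-1}$ on $W$ with $g_{j-1}|_{F_i} = f_{F_i}$ for $i < j$, form the defect $d_j := f_{F_j} - g_{j-1}|_{F_j}$ on $F_j$, which by the codimension-$2$ compatibility vanishes on $F_j \cap \bigcup_{i < j} F_i$. Using Hadamard's lemma applied to $d_j$ in the directions $x_1, \dots, x_{j-1}$, together with a $\cG_p$-invariant cutoff that is flat along $\bigcup_{i<j} F_i$, I would extend $d_j$ to a smooth $\cG_p$-equivariant function $\tilde d_j$ on $W$ that vanishes on $\bigcup_{i<j} F_i$, and set $g_j := g_{j-1} + \tilde d_j$. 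After $k$ steps, $g_k$ is the desired local extension; $\cG_p$-equivariance can be enforced at each step by averaging over the compact group $\cG_p$.

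The hard part will be the construction of $\tilde d_j$ that vanishes on all previously treated faces while restricting to $d_j$ on $F_j$. This is a Whitney-type extension problem across a corner and depends crucially on the codimension-$2$ compatibility hypothesis: it is precisely this hypothesis that forces $d_j$ to vanish on $F_j \cap \bigcup_{i<j} F_i$, which via Hadamard's lemma lets one write $d_j = \sum_{i<j} x_i \cdot h_i$ on a neighborhood and thereby reduces the extension to a standard flat-function construction. Once the local extensions are in place, patching them via the $\cG$-invariant partition of unity yields the required global smooth $\cG$-equivariant extension $f\cl M \to V$.
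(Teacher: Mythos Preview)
Your approach is correct and genuinely different from the paper's. The paper first obtains a \emph{non-equivariant} smooth extension $\tilde f \cl M \to V$ by citing a black-box result on extending compatible smooth corner data (the reference \cite{Kott}), then averages $\tilde f$ over the maximal compact subgroup $G \subset \cG$ to get a $G$-equivariant extension, and finally promotes this to $\cG$-equivariance by covering $M$ with Palais slices $U \cong \cG \times_G S$, setting $f_U(g \cdot s) := g \cdot \tilde f(s)$, and gluing via a $\cG$-invariant partition of unity. You instead localize first and construct the extension directly in corner charts, averaging only over the compact isotropy groups $\cG_p$. Your route is more self-contained (you essentially reprove the content of \cite{Kott} locally) and does not rely on the distinguished maximal compact $G$, whereas the paper's route is shorter once one accepts the black box.

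One technical correction in your inductive step: the Hadamard decomposition should be a \emph{product}, not a sum. Since $d_j$ vanishes on $F_j \cap F_i$ for each $i < j$ separately, iterated Hadamard on $F_j$ gives $d_j = x_1 x_2 \cdots x_{j-1} \cdot h$ for some smooth $h$; extending $h$ arbitrarily in the $x_j$-direction (with a cutoff in $x_j$ only) and multiplying back by $x_1 \cdots x_{j-1}$ yields a $\tilde d_j$ that manifestly vanishes on each $F_i$, $i<j$, and restricts to $d_j$ on $F_j$. The sum form $d_j = \sum_{i<j} x_i h_i$ uses only vanishing on the deepest corner $\bigcap_{i<j} F_i \cap F_j$ and does not produce an extension vanishing on the individual faces; likewise a cutoff that is identically $1$ on $F_j$ and flat along $\bigcup_{i<j} F_i$ cannot exist near $F_j \cap F_i$. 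With the product form, your argument goes through.
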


\begin{proof}
    We first apply \cite{Kott} to the functions $\{f_S\}$ to obtain a smooth extension $\wt f\cl M\to V$. Averaging over $G$, we may assume $\wt f$ to be $G$-invariant. By \cite{Pal61}, we can find a locally finite open cover $\cU$ of $M$ so that for each $U \in \cU$ there exists a $G$-invariant subset $S\sub U$ with $U\cong\cG\times_G S$. Moreover, we can find a $\cG$-invariant partition of unity $\{\varsigma_U\}_U$ subordinate to $\cU$. Define $f_U \cl U\to V$ by $f_U(g\cdot s) = g\cdot \wt f(s)$ for $(g,s)\in \cG\times S$ and set $f := \s{U}{\varsigma_U\, f_U}$ to obtain the desired extension.
\end{proof}

This completes the proof of Theorem~\ref{thm:flow-cat}.
\subsection{Flow bimodules from symplectic cobordisms}\label{subsec:flow-bimodule} 
In this subsection we show that exact symplectic cobordisms induce flow bimodules between the flow categories constructed in \S\ref{subsec:unstructured-flow-cat}. Let $(\wh X,\omega)$ be an exact symplectic cobordism from $(Y^+,\lambda^+)$ to $(Y^-,\lambda^-)$, equipped with an $\omega$-adapted almost complex structure $J$. Suppose we are given action bounds $L^\pm\ge 0$ and pre-perturbation data $\fD^\pm = (\wt\lambda^\pm,\nabla^\pm, p^\pm )$ for $\cP_{L^\pm}$, where 
\begin{itemize}
    \item $\wt \lambda ^\pm$ is a $\cP_{L^\pm}(Y^\pm)$-integral approximations of $\lambda^\pm$,
    \item $\wh \nabla^\pm$ are complex linear connections on $\wh X$ and $\wh Y^\pm$, respectively, such that  $\nabla_o$ restricts to $\wh \nabla^\pm$ on the corresponding end and the connections on $Y^\pm$ are translation invariant;
    \item prime numbers $p^+, p^-$ satisfying~\eqref{eq:primes-for-base}.
\end{itemize}

\begin{theorem}\label{thm:flo_bim}
    Given a pre-perturbation datum $\fD$ extending $\fD^\pm$, there exists a flow bimodule $\scN^{\wh X}$ from the flow category $\scM_{\leq L^-}^{Y^-}$ to the flow category $\scM_{\leq L^+}^{Y^+}$. The morphism space from an object $\Gamma^- \in \cP_{L^-}(Y^-)$ to $\Gamma^+\in\cP_{L^+}(Y^+)$ is
    \begin{equation*}\label{eq:morphisms-bimodule}
        \scN^{\wh X}(\Gamma^-,\Gamma^+) = \djun{\Lambda}\,\Mbar_{\sft}^{\wh X,\, J}(\Gamma^+,\Gamma^-;\beta)_\Lambda
    \end{equation*}
   respectively, a global Kuranishi chart of said moduli space.
\end{theorem}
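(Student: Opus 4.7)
The approach is a direct cobordism adaptation of the inductive construction used in the proof of Theorem~\ref{thm:flow-cat}. I work with the set $\cQ := \cP_{L^-}(Y^-)\sqcup\cP_{L^+}(Y^+)$ of possible asymptotic configurations, equipped with the partial order $\prec$ defined by $\Gamma\prec\Gamma'$ iff some appropriate moduli space (of Pardon buildings in either a symplectization or in $\wh X$, depending on the ends of $\Gamma,\Gamma'$) is nonempty, and the associated length function $\|(\Gamma,\Gamma')\|$ on $\cQ\times\cQ$. For pairs $(\Gamma^-,\Gamma^+)\in \cP_{L^-}(Y^-)\times\cP_{L^+}(Y^+)$ I will construct, by induction on this length, perturbation data $\alpha^{\wh X}_\Lambda$ for the cobordism moduli spaces $\Mbar^{\wh X,\,J}_{\sft}(\Gamma^+,\Gamma^-;\beta)_\Lambda$ (via Corollary~\ref{cor:leveled-gkc-cobordism} and Proposition~\ref{prop:disconnected-in-cobordims}) that are compatible with the data $\alpha_{\Lambda^\pm}$ already fixed for the flow categories of $Y^\pm$. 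The morphism spaces of $\scN^{\wh X}$ are then the global Kuranishi charts produced by Construction~\ref{con:gkc-cobordism} pulled back to $\cBS_{c,\Lambda}$.

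The base case is $\|(\Gamma^-,\Gamma^+)\|=0$: extend the chosen $\fD$ arbitrarily to a perturbation datum using Theorem~\ref{thm:cobordism-gkc}\eqref{cobordism-gkc-unobstructed-aux}, in particular using Lemma~\ref{lem:joint-fin-dim-scheme} to produce a joint finite-dimensional approximation scheme whose restriction to each cylindrical end contains the already chosen $(E_{\Lambda^\pm},\mu_{\Lambda^\pm})$ from the flow-category construction. For the inductive step I need analogues of Proposition~\ref{prop:embedding-base-spaces} for the cobordism base spaces $\cBS_{c,\Lambda}$. The codimension-$1$ boundary strata of $\cBS_{c,\Lambda}$ are of two types, reflecting the two classes of level-breakings of cobordism buildings: breakings at the positive end, giving strata of the shape $\cBS_{\Lambda^+}\ov{\times}_{\,\bT_{\Gamma}}\cBS_{c,\Lambda^0}$, and breakings at the negative end, giving strata of the shape $\cBS_{c,\Lambda^0}\ov{\times}_{\,\bT_{\Gamma}}\cBS_{\Lambda^-}$, together with corner strata that are iterates of these. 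These embeddings are constructed exactly as in Proposition~\ref{prop:embedding-base-spaces}, using Corollary~\ref{cor:normalized-evaluation} for the spherical lifts of evaluation maps and the block-matrix structure of the covering groups from \S\ref{subsec:cobordism-framings}. The resulting embeddings satisfy an obvious bimodule analogue of Lemma~\ref{lem:associativity-of-embeddings}.

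The inductive step then proceeds as in Lemma~\ref{lem:embedding-inductive-step} and Proposition~\ref{prop:embedding-gkc}: given all data on codimension-$\geq 1$ strata, extend the slicing map $\zeta_\cU$ using Lemma~\ref{lem:extending-from-boundary} and a $\cG_\Lambda$-invariant tubular neighborhood; extend the good covering via Tietze for the sections $\sigma_{i,j}$, the divisors $D_{i,j}$ (splitting them into $Y^\pm$-valued data near the ends and $\wh X$-valued data in the middle) and the cut-off functions $\chi_i$; extend the joint perturbation space $(E_\Lambda,\mu_\Lambda,\mu_\Lambda^\pm)$ by first taking the direct sum over boundary strata and then adding a $G_\Lambda$-invariant summand to achieve surjectivity in the interior. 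The bimodule structure maps are then furnished by the induced strong equivalences $\scM_{\leq L^-}(\Gamma^-,\Gamma)\,\ov{\times}_{B\Gamma}\,\scN^{\wh X}(\Gamma,\Gamma^+)\hookrightarrow \scN^{\wh X}(\Gamma^-,\Gamma^+)$ and $\scN^{\wh X}(\Gamma^-,\Gamma)\,\ov{\times}_{B\Gamma}\,\scM_{\leq L^+}(\Gamma,\Gamma^+)\hookrightarrow \scN^{\wh X}(\Gamma^-,\Gamma^+)$ onto the two types of codimension-$1$ faces, and Lemma~\ref{lem:associativity-of-embeddings} in its cobordism form yields the bimodule axiom (the appropriate pullback square for triples).

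The main obstacle is the bookkeeping for the joint perturbation spaces: the inductive extension must simultaneously match the already chosen perturbation spaces for the pure symplectization problems at the $Y^+$ end and the $Y^-$ end, and these two compatibilities have to coexist on the corner strata where a building breaks both into a symplectization piece at the top and another at the bottom. This is handled by always working with joint finite-dimensional approximation schemes in the sense of the definition preceding Lemma~\ref{lem:joint-fin-dim-scheme}, taking direct sums over the two boundary types, and using that the slicing map factorizes through $\cG_{\Lambda^+}\times\cG_\Lambda\times\cG_{\Lambda^-}$ on the corresponding corner strata; the associativity statement then follows from the cobordism analogue of Lemma~\ref{lem:associativity-of-embeddings} together with the by now standard construction of the perturbation space as a collared extension. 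The rel-$C^1$ regularity and the orientation statements, needed to see this as a bimodule in $\Flow^\Sigma$, follow verbatim from Theorem~\ref{thm:cobordism-gkc}\eqref{cobordism-gkc-orientation} and the gluing theory of \S\ref{sec:gluing}.
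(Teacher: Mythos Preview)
Your approach is essentially the same as the paper's: inductive construction of perturbation data on the cobordism moduli spaces, with induction on the norm of the pair $(\Gamma^-,\Gamma^+)$, using joint finite-dimensional approximation schemes (Lemma~\ref{lem:joint-fin-dim-scheme}) and the cobordism analogues of Proposition~\ref{prop:embedding-base-spaces}, Lemma~\ref{lem:associativity-of-embeddings}, Lemma~\ref{lem:embedding-inductive-step}, and Proposition~\ref{prop:embedding-gkc}. The paper's proof is a terse sketch of exactly this outline.

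There is, however, one point you have omitted that the paper singles out as essential: the artificial declaration
\[
\scN^{\wh X}(\emptyset_{Y^-},\emptyset_{Y^+}) \coloneqq *.
\]
This is not cosmetic. In a cobordism, a family of holomorphic planes in $\wh X$ (curves with no negative punctures) can degenerate by escaping off the positive end, producing a two-level building with a plane in $\wh Y^+$ and an \emph{empty} cobordism level. For the bimodule structure maps to cover all codimension-$1$ boundary strata, this breaking must be recorded as an embedding of $\scN^{\wh X}(\emptyset,\emptyset)\,\ov{\times}_{B\emptyset}\,\scM^{Y^+}_{\leq L^+}(\emptyset,\Gamma^+)$, which only makes sense once $\scN^{\wh X}(\emptyset,\emptyset)$ is declared to be a point. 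Your inductive scheme as written would not produce this morphism space (the base case $\|(\Gamma^-,\Gamma^+)\|=0$ only yields charts for nonempty moduli), so the boundary covering axiom of a flow bimodule would fail for curves without negative punctures. You should insert this declaration at the start of the induction and verify that the resulting boundary embeddings are consistent.
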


\begin{proof}
    We only sketch the construction of $\scN^{\wh X}$ because of its similarity to the construction of the flow category in \ref{subsec:unstructured-flow-cat}. There is a natural extension of the definition of the `norm' of $\|(\Gamma^+ , \Gamma^-) \|$ for $\Gamma^\pm \in \cP_{L^\pm}(Y^\pm)$ given by the height of the tallest building in $\Mbar_{\sft}^{\wh X,\, J}(\Gamma^+,\Gamma^-;\beta)_\Lambda$ (if it were unobstructed). The inductive construction of perturbation data for the moduli spaces begins similarly as before. By the choice of perturbation datum $\fD^\pm$ and the constructions of \textsection\ref{subsec:unstructured-flow-cat}, we are given global Kuranishi charts for the morphism spaces between objects in the same symplectization. Hence, we only need to construct charts for the moduli of buildings from an object of $\scM_{\leq L^-}^{Y^-}$ to an object of $\scM_{\leq L^+}^{Y^+}$ so that its boundary strata are compatible with the already chosen charts in the construction of $\scM_{\leq L^\pm}^{Y^\pm,\lambda^\pm}.$ 
    
    We do this inductively as well. However, both due to the formalism and due to the geometry of the moduli spaces, we have to make an artificial choice: we let 
    \begin{equation*}\label{eq:trivial-cobordism}
        \scN^{\wh X}(\emptyset_{Y^+},\emptyset_{Y_-})\coloneqq *
    \end{equation*}  
    be the trivial global Kuranishi chart for a point. This choice is forced on us due to the following phenomenon in cobordisms: there can be a family of holomorphic planes in $\wh X$ that escape off to $\wh Y^+$, resulting in a two-leveled building, which has a holomorphic plane in $\wh Y^+ $ and an empty level in the symplectization. This phenomenon can occur whenever curves have no negative punctures. 
    
    Given a partition $\Lambda\cl \Gamma^+\to \Gamma^-$ of norm $0$, extend $\fD$ to an arbitrary perturbation datum $\alpha_{\Gamma^\pm}$ for $\Mbar^{\wh X,\, J}_{\sft}(\Gamma^+,\Gamma^-)_\Lambda$ as in Definition~\ref{de:auxiliary-datum}. Let 
 $$\scK_{\Lambda}^c = \scK^c_{\alpha_{\Lambda}} = (\wh G_{\Lambda},\scT_{\Lambda},\scE_{\Lambda},\fs_{\Lambda})$$ 
 be the associated global Kuranishi chart for $\Mbar_{\sft}^{\wh X,\, J}(\Gamma^+,\Gamma^-;\beta)_\Lambda$ as in Lemma~\ref{lem:embedding-inductive-step}, obtained from the chart $\cK_{\alpha_{\Lambda}}$ of Proposition~\ref{prop:disconnected-in-cobordims}. The proofs of the counterparts of Propositions \ref{prop:embedding-base-spaces} and \ref{lem:associativity-of-embeddings} are the same except for the added notational complexity required to keep track of the targets.\par
 An important observation is that the category of trees that stratifies $\scBS_c$ naturally carries the information of an order $\mathrm{d}_e $ for every edge $e$, obtained from Definitions~\ref{de:type-for-exact-cobordism} and~\ref{de:type-for-symplectisation}. The other change one has to make is that the concatenation $T_e\#T_c$, of a leveled forest $T_e$ labeling a stratum in $\scBS$ with a leveled forest $T_c$ labeling a stratum in $\scBS_c$, is a leveled forest, which labels a stratum in $\scBS_c$. For the cobordism counterparts of Lemma \ref{lem:embedding-inductive-step} and Proposition \ref{prop:embedding-gkc}, the only difference lies in the construction of the embedding maps between thickenings; see \S \ref{subsec:embeddings-thickening}. While choosing finite-dimensional approximation scheme $E_*$, we use Lemma \ref{lem:joint-fin-dim-scheme} to obtain a joint finite-dimensional approximation scheme. The rest of the construction follows similarly. 
\end{proof}

An important bimodule from a flow category $\scX$ to itself is the \emph{diagonal bimodule} $\Delta_\scX$, which can be thought of as the identity morphism. It has objects given by two copies $\cP^\pm$ of the symmetric sets $\cP$ of objects of $\scX$ and morphisms given by 
\begin{equation}\label{eq:morphisms-diagonal-bimodule}
    \Delta_\scX(x,y) = \begin{cases}
        \cD \scX(x,y) \quad & x \neq y \\
        * \quad & x= y,
    \end{cases}
\end{equation}
where $\cD X$ is the conic degeneration of an orbifold, \cite[\textsection 6.1]{AB24}. It comes with a natural map $d\cl\cD X\to X$, and we write $\cD\cK = (\cD\cT,d^*\cE,d^*\fs)$ for the conic degeneration of a derived orbifold. In particular, $\Delta_\scX(x,y)$ defines an object of the category $\dOrb_{/\cdot}$ defined in Definition~\ref{de:derived-orbifold-category}

\begin{lemma}\label{lem:trivial-cobordism-diagonal-bimodule}
    If $(\wh X,\omega ) = (\wh Y,d(e^s\lambda))$ is the trivial cobordism equipped with the almost complex structure $J$, then $\scN_{\leq L}^{\wh X}$ of Theorem~\ref{thm:flo_bim} is equivalent to the diagonal bimodule.
\end{lemma}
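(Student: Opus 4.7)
The strategy is to exploit the global $\bR$-translation symmetry of the trivial cobordism $\wh X = \wh Y$. I would choose the perturbation datum of Definition~\ref{de:auxiliary-datum-cobordism} so that $\mu = \mu^+ = \mu^-$ under the canonical identifications $(\Theta^\pm)^*\lambda = e^s\lambda$, and select the good covering $\cU$, the slicing map $\zeta$, and the corner blow-up data of \S\ref{subsec:cobordism-base} to all be translation-invariant. Such choices exist because the family $\cZ^c$ of Definition~\ref{de:family-of-cobordism-buildings} and the base space $\cBS_c$ carry compatible $\bR$-actions coming from translation on $\wh X$.

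With these choices the perturbed Cauchy--Riemann equation on middle-level maps becomes $\bR$-equivariant, so the open stratum of $\scN^{\wh X}(\Gamma^-,\Gamma^+)$ consisting of single-middle-level buildings is identified with $\scM^{Y,\lambda}_{\le L}(\Gamma^-,\Gamma^+) \times \bR$, the $\bR$-factor recording the overall translation of the middle. The SFT compactification then produces two kinds of boundary strata: breakings within the middle level recover $\partial \scM^{Y,\lambda}_{\le L}(\Gamma^-,\Gamma^+)$, while the two ends of the $\bR$-factor correspond to configurations where the middle-level content escapes into $\wh Y^+$ or $\wh Y^-$, leaving only trivial cylinders behind. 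For $\Gamma^- \neq \Gamma^+$, this matches the conic degeneration $\cD \scM^{Y,\lambda}_{\le L}(\Gamma^-,\Gamma^+)$ precisely: the two distinguished boundary strata at $\pm \infty$ realise the left and right composition maps of the diagonal bimodule as inclusions. For $\Gamma^- = \Gamma^+$, the energy formula $E = \cA_\lambda(\Gamma^+) - \cA_\lambda(\Gamma^-)$ vanishes, so by exactness of $d\lambda$ every element of $\scN^{\wh X}(\Gamma,\Gamma)$ is a disjoint union of trivial cylinders; with the above perturbations these collapse to a single equivalence class, matching $\Delta(\Gamma,\Gamma) = *$.

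The main obstacle will be compatibility of these identifications with the full structure of a symmetric flow bimodule: the composition maps, the symmetric actions of Definition~\ref{de:sliced-flow-cat}, and the stably complex structure, each of which is assembled inductively in \S\ref{subsec:unstructured-flow-cat}. Propagating the translation-invariance through that scheme---with its choices of good coverings, sections of universal families, and finite-dimensional approximation spaces at each stage---requires care so that the composition squares with the boundary strata of $\cD$ commute strictly. Where a strict isomorphism of symmetric bimodules fails, I would fall back on producing an explicit flow bordism in $\Flow^\Sigma$ connecting the two bimodules, built from a one-parameter family of translation-invariant perturbation data; this still yields the required equivalence.
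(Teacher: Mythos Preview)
Your core geometric idea is correct and matches the paper: in the trivial cobordism the middle level is a map to $\wh Y$ that is \emph{not} quotiented by $\bR$-translation, so forgetting that translation gives a fibration over the symplectisation thickening with interval fibres, and the two ends of the interval correspond to the middle content escaping to $\wh Y^\pm$, which is exactly the conic degeneration.

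Where your execution differs from the paper is in how translation-invariance is arranged. You propose to make the cobordism construction of \S\ref{subsec:gkc-for-cobordism} translation-invariant by choosing $\mu=\mu^\pm$ and translation-invariant corner blow-up data in $\cBS_c$. The obstacle is that the cobordism base space is built using two distinct primes $p^+\gg p^-$ (see~\eqref{eq:primes-for-base} and Remark~\ref{rem:distinguish-from-degree}), precisely so that the framing distinguishes components mapping to $\wh Y^+$, $\wh X$, and $\wh Y^-$; this is incompatible with full translation-invariance. The paper sidesteps this entirely: it observes that the symplectisation pre-perturbation datum $\fD=(\wt\lambda,\conn,p)$ and the very same perturbation data $\alpha_\Lambda$ already used for $\scM^{Y,\lambda}_{\le L}$ serve as perturbation data for $\Nbar^{\,J}_{\sft}(\Gamma^+,\Gamma^-)$, so the cobordism chart has the \emph{same} base space $\scBS_\Lambda$ as the symplectisation chart. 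The only difference between the two thickenings is then the missing $\bR$-quotient on one level, and Lemma~\ref{lem:auxiliary-thickening} packages this as a $[0,1]$-bundle $q\cl\scT^c_\Lambda\to\scT_\Lambda$ with pulled-back obstruction bundle and section. Contractibility of $\text{Homeo}_+([0,1])$ and the explicit description of the composition maps then identify $\scT^c_\Lambda$ with $\cD\scB_\Lambda\times_{\scB_\Lambda}\scT_\Lambda$ directly, compatibly with the bimodule structure.

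Because the paper reuses $\alpha_\Lambda$ rather than building new translation-invariant data, the compatibility with the inductive scheme of \S\ref{subsec:unstructured-flow-cat} is automatic, and no bordism fallback is needed. The price is that this special bimodule is not literally the one produced by Theorem~\ref{thm:flo_bim} (which uses $p^+\neq p^-$); the paper closes that gap by citing the independence of global Kuranishi charts under change of perturbation data, \cite[Proposition~6.1]{HS22}.
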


\begin{proof}
    We will use slightly different perturbation data to construct the flow bimodule in this case. The proof of equivalence of global Kuranishi charts in \cite[Proposition~6.1]{HS22} then shows that the associated flow bimodule is equivalent to the one constructed in Proposition~\ref{thm:flo_bim}. Here we say two flow bimodules are equivalent if they have the same objects and their morphism spaces are equivalent compatibly with structure maps.\par
    Let $\fD$ be the pre-perturbation datum chosen for the construction of $\scM_{\le L}^Y$ and let $\{\alpha_\Lambda\}_\Lambda$ be the collection of perturbation data constructed inductively in \textsection\ref{subsec:unstructured-flow-cat}. Then, $\fD$ is also a pre-perturbation datum for the symplectic cobordism $\wh X$ and $\wt\alpha_\Lambda\coloneqq \alpha_\Lambda$ defines a perturbation datum for the moduli space $\Nbar^J_{\sft}(\Gamma^+,\Gamma^-)$ of buildings in $\wh X$, denoted by $\Nbar$ instead of $\Mbar$ in order to distinguish it from moduli spaces of buildings in the symplectisation. By definition, $\scN^{\wh X}(\emst,\emst)$ is a point, while for a nonempty sequence $\Gamma$, the moduli space of trivial cylinders in $\wh X$ is regular and a point, whence $\scN^{\wh X}(\Gamma,\Gamma) = *$ as well. Suppose $\Gamma^-\neq \Gamma^+$ and let $\Lambda\cl \Gamma^-\to\Gamma^+$ be a partition. By Lemma~\ref{lem:auxiliary-thickening}, the thickening $\scT^c_\Lambda$  of the global Kuranishi chart $\scK^c_\Lambda$ for $\Nbar^{\,J}(\Gamma^-,\Gamma^+)_\Lambda$ admits a canonical equivariant rel--$C^1$ map $q\cl \scT^c_\Lambda\to \scT_\Lambda$, which is a fiber bundle of intervals. Moreover, $\scE^c_\Lambda = q^*\scE_\Lambda$ and the obstruction section is pulled back as well. Since $\text{Homeo}_+([0,1])$ is contractible, one can lift $q$ to a homeomorphism $\scT^c_\Lambda\to \scT_\Lambda\times [0,1]$. Using the explicit description of the composition maps, this shows that the forgetful map $\scT^c_\Lambda\to \scB_\Lambda$ factors through the conic degeneration $\cD\scB_\Lambda$ of $\scB_\Lambda$ and that $\scT^c_\Lambda = \cD\scB_\Lambda\times_{\scB_\Lambda}\scT_\Lambda$. Since the map $\scT^c_\Lambda\to \cD\scB_\Lambda$ is compatible with the bimodule structure maps, the claim follows.
\end{proof}

\subsection{Stable complex structures}\label{subsec:stable-complex}
In this subsection we show that the flow categories of Theorem~\ref{thm:sft-flow-category} admit stable complex structures. Recall that we fixed a nondegenerate contact manifold $(Y,\lambda)$, a $\lambda$-adapted almost complex structure $J$, a real number $L > 0$. Given this, let $\scM^{Y}_{\leq L}$ the flow category of Theorem~\ref{thm:sft-flow-category}.

\begin{theorem}\label{thm:stable-complex-structure}
    The symmetric flow category $\scM^{Y,\lambda}_{\leq L}$ admits a lift to a stably complex symmetric flow category.
\end{theorem}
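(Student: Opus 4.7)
The strategy is to build the data of Definition~\ref{de:stable-complex-flow-cat} directly out of the index bundles from the global Kuranishi chart construction, in close analogy to the orientation computation of Theorem~\ref{thm:pardon-gkc}\eqref{gkc-orientation} in \textsection\ref{subsec:orientation}.

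The first step is to associate to each Reeb orbit $\gamma$ a complex virtual orbi-bundle $V_\gamma$ over $B\gamma$. I would do this by globalising the construction of $V_{\gamma,b}$ from Definition 2.46 of Pardon cited above: instead of picking a base point $b$, I consider the universal family of parametrised Reeb orbits over $E\gamma = \{\tilde\gamma\}$ (cf.\ Definition~\ref{de:reeb-orbifold}), pull back $\xi\oplus\bC$ fibrewise, extend the resulting family of Cauchy--Riemann operators on $\bC^\times$ to $\bC$, and take the $S^1$-equivariant index. The resulting virtual complex $S^1$-representation descends to a virtual complex orbi-bundle $V_\gamma$ over $[E\gamma/S^1]\simeq B\bZ/m_\gamma$. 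For a sequence $\Gamma=(\gamma_1,\dots,\gamma_k)$ I set $V_\Gamma = \boxplus_i V_{\gamma_i}$; the symmetric action permutes summands, giving a symmetric set of virtual bundles lifting $\cP$. Crucially, $V_\gamma$ is defined as a \emph{complex} virtual bundle regardless of whether $\gamma$ is good or bad; the non-trivial $\bZ/m_\gamma$-twist that obstructs orientability for bad Reeb orbits is exactly what will compensate later.

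Next I exploit the splittings already built into the global Kuranishi chart $\cK^\bR_\Lambda=(\widehat G_\Lambda,\scT_\Lambda,\scE_\Lambda,\fs_\Lambda)$ for $\Mbar^{J}_{\sft}(\Gamma^+,\Gamma^-;\beta)_\Lambda$ provided by Theorem~\ref{thm:leveled-gkc} and Proposition~\ref{prop:disconnected-buildings}. The tangent micro-bundle decomposes as $T\scT_\Lambda \cong T_{\scT_\Lambda/\cBS_\Lambda}\oplus \pi^*T\cBS_\Lambda$, and $T\cBS_\Lambda$ in turn is (canonically, since the blow-ups are all along complex submanifolds) a real torus bundle over a \emph{complex} manifold. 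As in \textsection\ref{subsec:orientation}, the vertical part is canonically the index of the perturbed linearised Cauchy--Riemann operator
\[
D_u^{\conn}+\mu_k(\cdot)|_{\graph(\varphi,u)}\cl W^{k,2,\delta}(\dot C,u^*T\wh Y)\oplus E_k \longrightarrow W^{k-1,2,\delta}(\dot C,\Omega^{0,1}\otimes u^*T\wh Y).
\]
The target is a complex vector bundle, $E_k$ is a complex $G$-representation (by construction), so its index is naturally a complex virtual bundle, up to the summand $E_k$. The cobordism counterpart of the Pardon--type excision statement \cite[Lemma~2.51]{Par19} then identifies $\mathrm{ind}(D_u^\conn)$ with $V_{\Gamma^+}-V_{\Gamma^-}$ plus a canonical complex summand, up to a real line coming from the $\bR$-translation symmetry of the symplectisation. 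Assembling all of this, I obtain a canonical equivalence
\begin{equation*}
T\scM^{Y,\lambda}_{\leq L}(\Gamma^-,\Gamma^+)\oplus V_{\Gamma^+}\oplus \bR^{\{\Gamma^+\}}
\;\simeq\; (W_{\Gamma^-\Gamma^+},W_{\Gamma^-\Gamma^+})\oplus I_{\Gamma^-\Gamma^+}\oplus V_{\Gamma^-},
\end{equation*}
with $I_{\Gamma^-\Gamma^+}$ recording the complex summands (base of the moduli of stable maps, complex part of $E_k$, complex part of $\fg$ after polarisation) and $W_{\Gamma^-\Gamma^+}$ recording the purely real pieces (obstruction summand $\fp\fu$, the real parts introduced by the blow-ups giving $\cBS_\Lambda$, and the tori from asymptotic markers paired with their duals as in the orientation computation).

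For the compatibility with the symmetric action, permuting the labels of positive or negative punctures induces obvious isomorphisms of $V_\Gamma$, $I_{\Gamma^-\Gamma^+}$ and $W_{\Gamma^-\Gamma^+}$ that intertwine the equivalence above; this is immediate from the construction. For the composition compatibility, diagram~\eqref{dig:associativity-stable-complex} has to commute. This follows from the fact that the embeddings of global Kuranishi charts $\scK_{\Lambda^0}\overline\times_{B\Gamma}\scK_{\Lambda^1}\hookrightarrow\scK_\Lambda$ constructed in Proposition~\ref{prop:embedding-gkc} are compatible with the direct-sum decompositions of perturbation spaces (by construction of the inductive extensions) and with the block-matrix inclusion~\eqref{eq:inclusion-of-groups} of covering groups. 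Combined with the standard gluing/excision isomorphism for indices of Cauchy--Riemann operators on pairs of punctured curves matched along Reeb asymptotics, this produces the required split embeddings
\[
I_{\Gamma^-\Gamma_0}\oplus I_{\Gamma_0\Gamma^+}\hookrightarrow I_{\Gamma^-\Gamma^+}, \qquad W_{\Gamma^-\Gamma_0}\oplus W_{\Gamma_0\Gamma^+}\hookrightarrow W_{\Gamma^-\Gamma^+},
\]
and the auxiliary $U_{\Gamma^-\Gamma^+}=(0,\bR^{\{\Gamma^+\}})$ data absorb the extra $\bR^{\{\beta\}}$ factors so that~\eqref{dig:associativity-stable-complex} commutes. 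The main technical obstacle in executing this plan will be bookkeeping the gluing/excision isomorphism for index bundles over the corner strata so that the data are strictly (not just up to homotopy) compatible at all levels of the telescope of Kuranishi charts from \textsection\ref{subsec:unstructured-flow-cat}; as in \textsection\ref{subsec:embeddings-thickening} this is arranged inductively on $\|(\Gamma^-,\Gamma^+)\|$, using that the embeddings of perturbation spaces were chosen as split inclusions on the nose.
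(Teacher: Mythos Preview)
Your plan captures the overall architecture --- split into objects, base, vertical tangent bundle, and compatibilities --- and your construction of $V_\gamma$ is essentially the paper's. However, there is a genuine gap in the treatment of the vertical tangent bundle, and a smaller imprecision for the base.

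\textbf{The main gap.} You write that ``the target is a complex vector bundle, $E_k$ is a complex $G$-representation \dots\ so its index is naturally a complex virtual bundle,'' and then invoke Pardon-style excision to identify $\mathrm{ind}(D_u^\conn)$ with $V_{\Gamma^+}-V_{\Gamma^-}$ plus a complex correction. But the linearised Cauchy--Riemann operator $D_u^\conn$ is only \emph{real}-linear in general: the domain and target carry complex structures, but $D_u^\conn$ contains a zeroth-order term (the difference $B_u = D_u^{\conn^Y}-(\conn^Y\cdot)^{0,1}$) that is anti-linear. Consequently the index is not canonically complex, and \cite[Lemma~2.51]{Par19} gives an isomorphism of \emph{orientation lines}, not of complex virtual bundles. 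The paper closes this gap by a substantial geometric construction (Lemma~\ref{lem:auxiliary-thickening} and Proposition~\ref{prop:interpolating-to-complex}): it builds an auxiliary thickening $\wt\scT_\Lambda\to\scT^c_\Lambda\to\scT_\Lambda$ using the \emph{trivial symplectic cobordism}, and over it a family $\wt D$ of Cauchy--Riemann operators interpolating between $D_u^{\conn^Y}$ (at one section $a_\infty$) and the genuinely complex-linear $(\conn^Y\cdot)^{0,1}$ (at the other section $a_0$), glued with the operators $\delbar^{\wt\gamma}$ defining $V_\gamma$. The complex bundle $I^{v,+}_\Lambda$ arises as the kernel of the complex-linear endpoint, and the equivalence \eqref{eq:equivalence-stable-complex} is then extracted via Lemma~\ref{lem:obtain-isomorphism-from-homotopy}. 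This interpolation is the heart of the argument and is absent from your proposal.

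\textbf{The base.} You assert that $T\cBS_\Lambda$ is canonically complex because the blow-ups are along complex submanifolds. This is true for the first step ($\cB\to\cBR$, a real-oriented blow-up along divisors), but the passage $\cBR\to\cBS$ is a Kottke--Melrose \emph{generalized corner blow-up}, which is not along a complex locus. The paper proves separately (Lemma~\ref{lem:stable-complex-corner-blow-up}) that such a blow-up inherits an almost complex structure up to contractible choice, and then packages this into Lemma~\ref{lem:stable-complex-base}; your proposal would need an analogous argument.
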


The proof is similar to \cite[\S 11.3]{AB21} and \cite[\S B]{AB24}. We abbreviate $\scM \coloneqq \scM^{Y,\lambda}_{\leq L}$. The morphism space is the disjoint union
$$\scM(\Gamma^-,\Gamma^+)\; =\; \djun{\Lambda}\,\scM(\Gamma^-,\Gamma^+)_\Lambda$$ 
over all functions $\Lambda\cl \Gamma^-\to \Gamma^+$. We will phrase every statement in terms of these partitions as we have done in \textsection\ref{subsec:unstructured-flow-cat}. The tangent bundle of the global Kuranishi chart satisfies 
\begin{equation}\label{eq:tangent-bundle-decomposition}
	T^+\scM(\Gamma^-,\Gamma^+)_\Lambda\,\oplus\, \wh\fg_\Lambda =\; T\scB_{\Lambda}\,\oplus\, T^v\scT_{\Lambda}
\end{equation}
while the obstruction bundle is given by
\begin{equation}\label{eq:obstruction-bundle-decomposition}
	T^-\scM(\Gamma^-,\Gamma^+)_\Lambda\, =\; E_{\Lambda}\,\oplus\, \fp\fu_\Lambda\,
\end{equation}
where $\wh\fg_\Lambda$ is the Lie algebra of the covering group $\wh G_\Lambda$, $\fp\fu_\Lambda$ is the Lie algebra of the product $\p{\gamma\in \Gamma^+}{\PU(d_{\Lambda_\gamma}+1)}$ of projective unitary groups, and $E_\Lambda$ is a finite-rank $G$-vector bundle.\par In \textsection\ref{subsec:lift-of-objects}, we define the lift of the objects $\Gamma^+$ to objects of a stably complex flow category. Subsequently, we construct the stable complex structures on the morphism spaces in \textsection\ref{subsec:stable-complex-base} and \textsection\ref{subsec:stable-complex-fiber}, summarizing the results in Proposition~\ref{prop:existence-stable-complex}.

\subsubsection{Lift of the objects}\label{subsec:lift-of-objects}
Recall that the objects of a lift of $\scM$ to a stably complex flow category $\scM^{U}$ consist of a finite sequence $\Gamma$ of Reeb orbits of action at most $L$ and a virtual vector space $V_\Gamma = (V^+_\Gamma,V^-_\Gamma)$. We will construct for each Reeb orbit $\gamma$ of $\lambda$ a virtual $S^1$-vector bundle $V_\gamma$ over the $S^1$-manifold $E\gamma$ defined in~\eqref{eq:parametrisations-reeb-orbit}.Then, we define $$V_\Gamma \coloneqq\p{\gamma\in \Gamma}{V_\gamma}\to B\Gamma$$
to be the product vector bundle. 
\begin{remark}
    If $\gamma$ is a good Reeb orbit, this virtual vector bundle is orientable. It is non-orientable otherwise.
\end{remark}

Recall that the pre-perturbation datum $\fD$  we chose for the construction of $\scM$ includes a choice of $J$-linear connection $\conn^Y$ on $T\wh Y = \xi \oplus \bC$. While the construction of global Kuranishi charts does not require any properties of $\conn^Y$ except linearity with respect to $J$, for the following constructions it will be useful to assume that $\conn^Y$ has trivial monodromy around any simple Reeb orbit of action at most $L$. 
Moreover, fix a smooth cutoff function $\chi$ on $\bR$ with 
\begin{equation}
	\chi(s) = \begin{cases}
		1\quad& s \ll 0\\
		0 \quad& s \gg 0.
	\end{cases}
\end{equation}
Given a Reeb orbit $\gamma$ and a parametrization $\wt\gamma\in E\gamma$, let $c_{\wt\gamma}\cl \bR\times S^1\to \wh Y$ be the trivial cylinder over $\wt\gamma$. Then, the pullback $c_{\wt\gamma}^*T\wh Y= c_{\wt\gamma}^*\xi \oplus \bC$, equipped with the chosen almost complex structure $J$, is a complex vector bundle over $\bR\times S^1$. It carries two canonical connections: the pullback of $\conn^Y$, which is complex linear, and the connection $\conn'$ induced by the pullback of $\cL_{R_\lambda}$ on $\xi$ and the trivial connection on $\bC$. We define the connection 
\begin{equation}
	\conn^{\wt\gamma} \coloneqq \conn^Y + \chi(s)(\conn'-\conn^Y)
\end{equation}
on $\bR\times S^1$ and let $\delbar^{\wt\gamma} = (\conn^{\wt\gamma})^{0,1}$ be the associated real Cauchy--Riemann operator. Since $c_{\wt\gamma}^*\xi\oplus \bC$ is trivializable, $(c_{\wt\gamma}^*\xi\oplus \bC,\delbar^{\wt\gamma})$ extends uniquely to a Cauchy--Riemann problem $(\cV_{\wt\gamma},\delbar^{\wt\gamma})$ on the capping off of $\bR\times S^1$ at the \emph{positive} end, using the trivialization induced by the connection $\conn^Y$.\footnote{We cap off at the positive instead of the negative end in order to obtain formulas compatible with the conventions in \cite{AB24}. This is for the same reason that we define $\scM(\Gamma^-,\Gamma^+)$ to be $\Mbar^J_{\sft}(\Gamma^+,\Gamma^-)$.} As all data that depend on the parametrization $\wt\gamma$ depend smoothly on it, we obtain a smooth $S^1$-vector bundle 
$$\cV\to E\gamma$$
with an $S^1$-invariant family $\delbar^\gamma = \{\delbar^{\wt\gamma}\}_{\wt\gamma}$ of Cauchy--Riemann operators. Fix an element $\wt\gamma\in E\gamma$ and choose a finite-dimensional complex representation 
\begin{equation}\label{eq:perturbation-for-point}
	\nu_0 \cl W'\to \Omega^{0,1}_c(\bR\times S^1,c_{\wt\gamma}^*T\wh Y)
\end{equation}
that is invariant under the isotropy of $\wt\gamma$ and is sufficiently large so that $\delbar^{\wt\gamma}\,\oplus \,\nu_0$ is surjective. We require that 
\begin{equation}\label{eq:support-of-orbit-perturbation}
    \supp(\nu_0(w)) \sub [-1,1]\times S^1
\end{equation}
for each $w \in W'$. Using the $S^1$-equivariance of $\delbar^\gamma$ and the transitivity of the $S^1$-action on $E\gamma$, we obtain a finite-rank $S^1$-vector bundle $W'_\gamma\to E\gamma$ with fiber-wise trivial $S^1$-action and a linear $S^1$-equivariant map $\nu$ from $W'_\gamma$ to the bundle with fibers $\Omega^{0,1}(\bR\times S^1,c_{\wt\gamma}^*\xi)$ which surjects onto the fiber-wise cokernel of $\delbar^\gamma$. Then, we define the vector bundles 
\begin{equation}
	V_\gamma^+ = \ker(\delbar^\gamma +\nu)\qquad \qquad V^-_\gamma = W'_\gamma.
\end{equation}

\subsubsection{Stable complex structures on base spaces}\label{subsec:stable-complex-base} This is essentially a more complicated version of \cite{AB24} since our base spaces go through an additional generalized blow-up compared to those in \cite{AB24}. 

\begin{lemma}\label{lem:stable-complex-corner-blow-up}
	The generalized blow-up of an almost complex manifold, equipped with the canonical smooth structure, admits a canonical almost complex structure up to contractible choice.
\end{lemma}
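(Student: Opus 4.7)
The plan is to construct the almost complex structure $J$ locally near each boundary and corner face of the generalized blow-up $\wt X$ using the natural tangent-bundle decomposition induced by the spherical blow-up, extend it to the interior by interpolating with the pullback of $J_X$, and argue contractibility of the auxiliary choices involved.

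The key geometric input lies in the local description. Let $D_1, \dots, D_m$ denote the complex submanifolds of $X$ being blown up, and take a point $\wt p$ in a codimension-$k$ corner of $\wt X$ lying over $p \in D_I = D_{i_1} \cap \dots \cap D_{i_k}$. From the explicit coordinates recalled below Theorem~\ref{thm:corner-blowup}, the tangent space admits a canonical splitting
\[ T_{\wt p} \wt X \;=\; \bigoplus_{j=1}^k (\bR \nu_j \oplus \bR \tau_j) \;\oplus\; T_p D_I, \]
where $\nu_j$ is the inward normal to the $j$-th boundary face of $\wt X$ at $\wt p$ and $\tau_j$ is tangent to the associated sphere fiber of the blow-up. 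The polar-coordinate identification $\bR \nu_j \oplus \bR \tau_j \cong (N_{D_{i_j}/X})_p$ transports the complex structure of the normal line to each factor, and combined with $J_X|_{TD_I}$ this produces a canonical boundary almost complex structure $J_\partial$. The additional faces introduced by the generalized corner blow-ups of Theorem~\ref{thm:corner-blowup} are again positive parts $\bP_+$ of direct sums of complex line bundles, and the same splitting persists on their tangent bundles.

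To globalize, the blow-down $\beta\cl \wt X \to X$ is a diffeomorphism away from a collar neighborhood $W$ of $\partial \wt X$, where I take $J := \beta^* J_X$. On $W$ one interpolates between $\beta^* J_X$ and $J_\partial$ using a cutoff function and a path in the space of almost complex structures on the relevant fibers compatible with the inherited orientation, which is path-connected. Smoothness reduces to checking in the product local model $\prod_j ([0,\epsilon)_{r_j} \times S^1_{\theta_j}) \times V$ of \S\ref{subsubsec:blowup_to_leveled_base} that the transition functions between almost holomorphic charts on $X$ adapted to the stratification preserve the pairing $(\partial_{r_j}, \partial_{\theta_j})$ to first order in the normal directions, which follows from $\bC$-linearity of the derivatives on the normal bundles $N_{D_{i_j}/X}$.

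For contractibility, the auxiliary data consist of a Hermitian metric on each $N_{D_i/X}$, a collar of $\partial \wt X$, a cutoff function for the interpolation, and an atlas of almost holomorphic charts adapted to $\{D_i\}$. Each space of choices is contractible by standard convex-combination or tubular neighborhood arguments, and the assignment of data to $J$ is continuous, so $J$ is canonical up to contractible choice. I expect the main obstacle to be the smoothness verification across the new faces introduced by the generalized corner blow-ups, where the tangent bundle splits into several radial-angular factors plus contributions from products of sphere fibers; a direct computation in the explicit coordinates of \cite{KM15} is needed to confirm that the canonical pairings described above assemble consistently under the further refinement of the smooth structure.
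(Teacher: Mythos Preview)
Your proposal conflates two distinct blow-up operations. The lemma concerns the \emph{generalized corner blow-up} of Kottke--Melrose (Theorem~\ref{thm:corner-blowup}), whose input $M$ is already a manifold with corners carrying an almost complex structure on $TM$; the operation refines the corner structure according to a monoidal refinement. Your setup instead introduces complex submanifolds $D_i\subset X$ and works with the local model $\prod_j([0,\epsilon)_{r_j}\times S^1_{\theta_j})\times V$ and the radial--angular pairing $(\bR\nu_j\oplus\bR\tau_j)\cong (N_{D_{i_j}/X})_p$. That is the picture for the \emph{real-oriented} blow-up along a normal crossing divisor, which the paper handles separately in Remark~\ref{rem:real-oriented-stable-complex}.

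The tangent-space decomposition you write down does not hold on the exceptional locus of the generalized blow-up. Over a corner $C=\bigcap_{i=1}^{\ell+1}B_i$ of codimension $\ell+1$ in $M$, the new boundary face is $E\cong C\times\Delta^\ell$ (a simplex, not a torus), and after choosing metrics one has
\[
T\wt M|_E\;\cong\;N_{E/\wt M}\oplus\pr_C^*TC\oplus\pr_\Delta^*T\Delta^\ell,
\]
with a \emph{single} inward normal and an $\ell$-dimensional simplex factor. There is no natural way to split this into $k$ copies of $\bR\nu_j\oplus\bR\tau_j$, so your proposed $J_\partial$ is not defined on these faces, and the interpolation step has nothing to interpolate to. The sentence ``the same splitting persists on their tangent bundles'' is precisely the point that fails.

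The paper's argument avoids constructing $J$ directly and instead produces a bundle isomorphism $T\wt M\to\beta^*TM$ over the exceptional face: $d\beta$ already identifies $N_{E/\wt M}\oplus TC$ with $TC\oplus N_1$, and one chooses an isomorphism $\Phi\colon T\Delta^\ell\to N_2\oplus\cdots\oplus N_{\ell+1}$ sending the standard basis $e_i$ to the inward normals $n_{i+1}$. The almost complex structure is then simply $\beta^*J_M$ transported through this isomorphism. The only choices are the two metrics and $\Phi$, each drawn from a contractible space. This is what you should aim for; your interpolation idea is unnecessary once the bundle isomorphism is in hand.
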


\begin{proof}
We will show that the generalized blow-up $\wt M \xrightarrow{\beta}M $ of a smooth manifold $M$ with corners comes equipped with a bundle isomorphism $T\wt M \to \beta^* TM$. The choice of this isomorphism is canonical up to a contractible set. In particular, if $M$ is almost complex, we can lift its almost complex structure to the generalized blow-up uniquely up to a contractible choice. Moreover, it follows from the argument in \cite[\S B.3.1]{AB24} and Remark~\ref{rem:neighbourhood-of-exceptional-boundary} that once the bundle isomorphism is determined on the exceptional boundary locus, then it can be extended up to a contractible choice using bump functions. Hence, it suffices to construct such an isomorphism over the blow-up locus.
For simplicity, we assume that $\wt M$ is generalized blow-up of the corner $$C = \lcap{i=1}{\ell+1} B_i,$$ 
where $B_1,\dots,B_{\ell +1}$ are boundary faces of $M$. Denote the exceptional boundary face $\beta\inv(C)$ by $E$. The construction begins with choosing any two metrics $\wt g, g$ on $\wt M$ and $M$ respectively. From hereon in the proof, we use canonical to mean \textit{canonical for a fixed pair $\wt g,g$.} A metric on $M$ determines a trivialization of the normal bundles $N_i \to B_i$. This in turn yields a canonical splitting 
\begin{equation}\label{eq:splitting-over-corner}
    TM|_C \cong TC \oplus\bigoplus N_i
\end{equation}
and we write $n_i$ for the inward-pointing unit normal vector in $N_i$.
By construction of the generalized blow-up, we have a canonical isomorphism $E \,\cong\, \bP_+ (\bigoplus_i N_i)$ to the positive part of the spherical projectivisation of the normal bundle at $C$, defined in Equation~\ref{eq:positive-part}. Using the canonical trivialization of $N_i$, this shows that $E \cong C\times \Delta^{\ell} $ where $\Delta^{\ell}$ is identified with the intersection of $S^{\ell}\cap [0,\infty)^{\ell+1}$. Thus, the pair $(g,\wt g)$ yields a canonical splitting 
\begin{equation*}
    T\wt M |_E \;\cong\; N_{E /\wt M}\,\oplus \,\pr_C^*TC \,\oplus\, \pr_{\Delta}^* T\Delta^{\ell},
\end{equation*} 
with $\ker(d\beta) = \pr_{\Delta}^*T\Delta^{\ell}$. In particular, $d\beta$ restricts to an isomorphism
\begin{equation*}
     (\pr_\Delta^*T\Delta^\ell|_{C\times\star})^\perp\,\xlongrightarrow{\simeq}\, \beta^*(TC\oplus N_1)|_{C\times \star},
\end{equation*}
where $\star = (1,0,\dots 0)\in \Delta^\ell$. The tangent space $T_\star \Delta^\ell $ is canonically identified with $ \{ 0\} \times \bR^\ell$ in $\bR^{\ell+1}$ and we write $e_1,\dots,e_\ell$ for the standard basis of $\bR^\ell$.
Fix an isomorphism 
\begin{equation*}
    \Phi \cl \pr_\Delta^*T \Delta^\ell|_{C\times\star} \to \im(d\beta|_{E})^\perp
\end{equation*}
such that $\Phi(c,\star,e_i) = n_{i+1}(c)$ for $i = 1,\dots,\ell$ under the isomorphism~\eqref{eq:splitting-over-corner}. The space of such isomorphisms is a contractible space. Therefore, the bundle isomorphism 
\begin{equation}
    T\wt M |_E \;\xra{\simeq}\; N_{\wt M / E} \oplus \pr_C^*TC \oplus \pr_{\Delta}^* T\Delta^{\ell} \;\xlongrightarrow{\beta_* \oplus \beta_* \oplus \Phi}\; \beta^* TM
\end{equation}
is canonical up to contractible choice.
\end{proof}

\begin{remark}\label{rem:real-oriented-stable-complex}
    In the case of a real-oriented blow-up $\text{Bl}_D(X)$, we do not require the choice of a metric on $X$ to be able to pull back an almost complex structure. Then, the preimage $E$ of $D$ is canonically isomorphic to $\bP_{> 0}(N_{D/X})$, whence we have a free $S^1$-action on $E$. Thus, a metric on $\text{Bl}_D(X)$ gives us a decomposition $$T\text{Bl}_D(X)|_E \cong \beta^*TD\oplus \bR\oplus L,$$
    where $L$ is the canonical line of $\bP_{> 0}(N_{D/X})$. On $\beta^*TD$ we have a canonical complex structure $J_D$ and we extend it to $J$ on $T\text{Bl}_D(X)|_E$ by mapping the unit section of $L$ to the unit vector of $\bR$ (corresponding to the canonical vector field of the action). 
\end{remark}

We are now going to apply this to the base space of the topological flow category $\scM$. Given a smooth manifold $M$ with the action of a compact Lie group $G$, it will be useful to write $T\ov{M}$ for the virtual vector bundle $TM -\fg$, where $\fg$ is the Lie algebra of $M$. In the result below we will not explicitly indicate the quotient by the group action by any tangent bundle should be considered in that sense.

\begin{lemma}\label{lem:stable-complex-base}
    For any $\Gamma^-,\Gamma^+\in \cP_{\leq L}$ and any partition $\Lambda\cl \Gamma^-\to \Gamma^+$ of $\Gamma^-$, there exists 
    \begin{enumerate}[label=\normalfont\arabic*),leftmargin=20pt,ref=\normalfont\arabic*]
        \item a complex $\wh G_\Lambda$-vector bundle $I^b_\Lambda\to \scBS_\Lambda$
        \item an equivalence 
        \begin{equation}
            T\cc{\scB}^\bR_\Lambda\,\oplus\,\fp\fu_\Lambda\,\oplus\, \bR \;\simeq\;I^b_\Lambda\,\oplus\, \bR^{\Gamma^+}
        \end{equation}
        of $\wh{G}_\Lambda$-equivariant virtual vector bundles on $\scBS_\Lambda$.
        \item for any factorization $\Gamma^-\xra{\Lambda^0 } \Gamma\xra{\Lambda^1}\Gamma^+$ of $\Lambda$ a split equivariant embedding 
        \begin{equation}
            I^b_{\Lambda^0}\,\oplus I^b_{\Lambda^1}\,\to\, I^b_\Lambda
        \end{equation}
        of complex equivariant vector bundles over $\scBS_{\Lambda^0}\qtimes{\bT_\Gamma}\scBS_{\Lambda^1}$.
        \end{enumerate}
        They satisfy the following compatibility conditions.
        \begin{itemize}[leftmargin=25pt]
            \item The diagram 
        \begin{equation}\label{compatiblity-1}\begin{tikzcd}
		   I^b_{\Lambda^0}\,\oplus \bR^{\Gamma}\,\oplus I^b_{\Lambda^1}\,\oplus \bR^{\Gamma^+}\arrow[r,""] \arrow[d,""]&I^b_\Lambda \oplus \bR^{\Gamma}\,\oplus \bR^{\Gamma^+}\arrow[dd,""]\\
          T\cc{\scB}^\bR_{\Lambda^0}\,\oplus\fp\fu_{\Lambda^0}\,\oplus \bR_{\Lambda^0}\,\oplus T\cc{\scB}^\bR_{\Lambda^1}\,\oplus\fp\fu_{\Lambda^1}\,\oplus \bR_{\Lambda^1}\arrow[d,"\simeq"]\\
         \ft_{\Gamma}\,\oplus T\cc{\scBS_{\Lambda^0}\qtimes{\bT_\Gamma}\scBS_{\Lambda^1}}\,\oplus\bR_{\Lambda^0}\,\oplus\bR_{\Lambda^1}\,\oplus  \fp\fu_{\Lambda^0}\,\oplus \fp\fu_{\Lambda^1} \arrow[r,""] &\ft_{\Gamma}\,\oplus T\scBS_{\Lambda}\,\oplus \fp\fu_\Lambda\oplus\bR_\Lambda\end{tikzcd} \end{equation}
         commutes, where $\ft_\Gamma$ is identified canonically with $\bR^{\Gamma}$, $\bR_{\Lambda^1}$ is identified with $\bR_{\Lambda}$, and $\bR_{\Lambda^0}$ is identified with the normal vector of the boundary stratum of $\scBS_\Lambda$. Moreover, we identify $\fp\fu_\Lambda$ with $\fp\fu_{\Lambda^0} \oplus \fp\fu_{\Lambda^1}\oplus \frac{\fp\fu_\Lambda}{\fp\fu_{\Lambda^0}\oplus \fp\fu_{\Lambda^1}}$ and the latter summand with the normal bundle of the image of $\cc{\scB^\bR_{\Lambda^0}\qtimes{\,\bT_\Gamma}\scB^\bR_{\Lambda^1}}$ in $\ov{\scB}^\bR_\Lambda$ using Lemma~\ref{lem:boundary-up-to-stabilisation}.
        \item For any factorization $\Lambda = \Lambda^2\g \Lambda^1\g \Lambda^0$ the square
        \begin{equation}\label{compatibility-2}\begin{tikzcd}
		   I^b_{\Lambda^0}\,\oplus I^b_{\Lambda^1}\,\oplus I^b_{\Lambda^2}\arrow[r,""] \arrow[d,""]&I^b_{\Lambda_0}\,\oplus I^b_{\Lambda^{12}} \arrow[d,""]\\
         I^b_{\Lambda^{01}}\,\oplus I^b_{\Lambda^2} \arrow[r,""] & I^b_\Lambda\end{tikzcd} \end{equation}
         over $\scBS_{\Lambda^0}\qtimes{\;\bT_\Gamma}\scBS_{\Lambda^1}\qtimes{\;\;\bT_{\Gamma'}}\scBS_{\Lambda^2}$ commutes.
         \end{itemize}
\end{lemma}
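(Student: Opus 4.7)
The approach is to build $I^b_\Lambda$ by lifting the natural complex structure on the underlying moduli of stable maps $\cB_\Lambda$ through each geometric construction that produces $\scBS_\Lambda$, then rearranging the accumulating trivial real bundles using the spherical lifts of evaluation maps and Lie-algebraic splittings. First, $\cB_\Lambda := \prod_{\gamma \in \Gamma^+} \cB_{\gamma,\Lambda_\gamma}(d_{\Lambda_\gamma})$ is a smooth quasi-projective complex manifold, so $T\cB_\Lambda$ is canonically complex, and the universal cotangent lines $\bL_i$ at marked points are holomorphic. Passing from $\cB_\Lambda$ to the asymptotic-marker bundle $\cB^{am}_\Lambda \to \cB_\Lambda$ (a product of the $S^1$-bundles $S(\bL_i^\vee)$), I will use the general identity $TS(L) \oplus \bR \cong \pi^*(TM \oplus L)$ for a complex line bundle $L \to M$ to conclude that $T\cB^{am}_\Lambda \oplus \bR^{|\Gamma^+| + |\Gamma^-|}$ is canonically stably isomorphic to the pullback of a complex bundle. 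The subsequent real-oriented blow-up at the normal-crossing divisor of type-1 nodes preserves this stably complex structure by Remark~\ref{rem:real-oriented-stable-complex}, and the generalized blow-up $\cBS_\Lambda \to \cBR_\Lambda$ lifts it by Lemma~\ref{lem:stable-complex-corner-blow-up}. All choices are made $\wh G_\Lambda$-equivariantly, up to contractible choice.

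Next I pass to the slice $\scBS_\Lambda = \cBS_\Lambda \times_{\cBR_\Lambda} \scBR_\Lambda$, the preimage of the basepoints $\{[1:0:\dots:0]\}_{\gamma \in \Gamma^+}$ under the evaluation map. Its normal bundle in $\cBS_\Lambda$ is canonically the trivial complex bundle $\bigoplus_\gamma T_{[1:0:\dots:0]}\bP^{d_{\Lambda_\gamma}}$, so $T\scBS_\Lambda + \bR^{|\Gamma^+|+|\Gamma^-|}$ differs stably from a complex bundle only by this trivial complex summand. Using the short exact sequence $0 \to \fu(d_\gamma) \to \fp\fu(d_\gamma+1) \to T_{pt}\bP^{d_\gamma} \to 0$ arising from the transitive $\PU(d_\gamma+1)$-action on $\bP^{d_\gamma}$, which admits a splitting unique up to contractible choice, I replace the normal bundle contribution by $\fp\fu_\Lambda - \fg_\Lambda$, where $\fg_\Lambda = \prod_\gamma \fu(d_{\Lambda_\gamma})$. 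The remaining bookkeeping of real trivial factors needed to bring the equivalence into the form $T\scBS_\Lambda \oplus \fp\fu_\Lambda \oplus \bR \simeq I^b_\Lambda \oplus \bR^{\Gamma^+}$ will be handled using the spherical lifts $\cBRB_{\gamma,\Lambda_\gamma} \to S^{2d_{\Lambda_\gamma}+1}$ from Lemma~\ref{lem:sphere_pullback}: each odd sphere satisfies $TS^{2d+1} \oplus \bR \cong \bC^{d+1}$, furnishing the trivial complex bundles needed to absorb the asymmetric contributions distinguishing positive punctures (absorbed into Hopf-type structure) from negative ones (contributing free $S^1$-factors). $I^b_\Lambda$ is then defined as the resulting complex virtual bundle.

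The split equivariant embeddings $I^b_{\Lambda^0} \oplus I^b_{\Lambda^1} \to I^b_\Lambda$ over the boundary strata are induced by the embeddings $\Psi$ from Proposition~\ref{prop:embedding-base-spaces}, with the complex structure on $I^b_\Lambda$ restricting correctly over the image thanks to Lemma~\ref{lem:boundary-up-to-stabilisation}: the normal bundle of $\Psi$ is identified with $Q_{\Lambda^*}$, whose fiber $\bm Q_d$ carries a canonical complex structure coming from the underlying holomorphic clutching of stable maps. The compatibility diagrams \eqref{compatiblity-1} and \eqref{compatibility-2} then reduce to the naturality of each construction involved, proved analogously to Lemma~\ref{lem:associativity-of-embeddings}. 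The main obstacle is the coherent choice of splittings of the Lie-algebra sequences $0 \to \fu(d_\gamma) \to \fp\fu(d_\gamma+1) \to T_{pt}\bP^{d_\gamma} \to 0$ across all factorizations simultaneously, so that the compatibility diagrams commute rather than merely homotopy commute. I expect to handle this by working up to contractible choice throughout (as in the proof of Lemma~\ref{lem:stable-complex-corner-blow-up}) and by inductively constructing splittings along the corner stratification of $\scBS_\Lambda$, extending compatibly from codimension-$k$ to codimension-$(k+1)$ strata using contractibility of the space of splittings at each step.
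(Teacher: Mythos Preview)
Your overall plan---lift the complex structure from the underlying moduli of stable maps through the real and corner blow-ups via Remark~\ref{rem:real-oriented-stable-complex} and Lemma~\ref{lem:stable-complex-corner-blow-up}, and build the split embeddings from those of Proposition~\ref{prop:embedding-base-spaces}---matches the paper. But the execution differs in two ways that matter.

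First, the paper takes the slice \emph{before} everything else. The constrained moduli space $\scB(\Lambda)\subset\prod_{\gamma}\Mbar_{0,\gamma\sqcup\Lambda_\gamma}(\bP^{d_{\Lambda_\gamma}},d_{\Lambda_\gamma})_{\varphi(z_\gamma)=[1:0:\dots:0]}$ is already a complex submanifold, so one never has to reconstruct the slice normal bundle via the sequence $0\to\fu(d_\gamma)\to\fp\fu(d_\gamma+1)\to T_{pt}\bP^{d_\gamma}\to 0$ or worry about coherent splittings. The paper then recognises $\scBS_\Lambda\to\lc{\scB}^\bR_\Lambda$ as a principal $\bT_{\Gamma^+}\times\bT_{\Gamma^-}$-bundle, giving $T\scBS_\Lambda\cong T\lc{\scB}^\bR_\Lambda\oplus\ft_{\Gamma^+}\oplus\ft_{\Gamma^-}$ without the circle-bundle identity $TS(L)\oplus\bR\cong\pi^*(TM\oplus L)$; those torus Lie-algebra summands cancel exactly against the torus part of $\wh\fg_\Lambda$ when you form $T\ov\scB^\bR_\Lambda$. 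The result is simply $I^{b,+}_\Lambda=\beta^*T\lc{\scB}^P_\Lambda$ and $I^{b,-}_\Lambda=\fg\fl_\Lambda$, with the split embeddings for $I^{b,-}$ given by block-matrix inclusions of complex Lie algebras. Your route reaches the same destination but with more bookkeeping and more auxiliary choices to make coherent.

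Second, and this is a genuine gap: your explanation of the $\bR^{\Gamma^+}$ and $\bR$ terms is wrong. They do not come from the spherical lifts of Lemma~\ref{lem:sphere_pullback} or from any Hopf-type structure at the positive punctures. They come from the simplex factor in the disconnected base-space construction of \S\ref{subsec:base-for-disconnected}: the corner blow-up identifies $T\lc{\scB}^\bR_\Lambda\cong T\Delta^{|\Gamma^+|-1}\oplus\beta^*T\lc{\scB}^P_\Lambda$, and $T\Delta^{|\Gamma^+|-1}\oplus\bR\cong\bR^{\Gamma^+}$. The asymmetry between $\Gamma^+$ and $\Gamma^-$ is because $\scBS_\Lambda$ is built from $[0,1)^{|\Gamma^+|}\times\cBR_\Lambda$---only the positive punctures index the extra simplicial direction encoding relative heights of the connected components. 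The spherical lifts in the paper are used to construct the embeddings $\Psi$, not to produce trivial bundles in the tangent-space computation; you should drop that step entirely.
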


\begin{proof}
     Recall that the base space of $\scM(\Gamma^-,\Gamma^+)_\Lambda$ is the manifold $\scBS_\Lambda$ equipped with an action of $\wh G_\Lambda\coloneqq\bT_{\Gamma^-}\times\bT_{\Gamma^-}\times G_\Lambda$, where $\bT_\Gamma := (S^1)^\Gamma$ and $G_\Lambda = \p{\gamma\in \Gamma^+}{U(d_{\Lambda_\gamma})}$. In particular, it carries the structure of a principal $\bT_{\Gamma^-}\times\bT_{\Gamma^-}$-bundle $\pi_\Lambda\cl \scBS_\Lambda\to \lc{\scB}^\bR_\Lambda.$ The map $\pi_\Lambda$ is given by forgetting the asymptotic markers. The space $\lc{\scB}^\bR_\Lambda$ is a stratum of the (corner) blow-up of $\lc{\scB}^P_\Lambda\times[0,1)^{\Gamma^+}$, where $\lc{\scB}^P_\Lambda$ is a real oriented blow-up of the complex manifold 
    $$\scB(\Lambda)\sub \p{\gamma\in \Gamma^+}{\Mbar_{0,\gamma\sqcup \Lambda_\gamma}(\bP^{d_{\Lambda_\gamma}},d_{\Lambda_\gamma})_{\varphi(z_\gamma) = [1:0:\dots:0]}}.$$
Given any factorization $\Lambda = \Lambda^1\g \Lambda^0$, we have an embedding 
\begin{equation}\label{eq:embeddings-base-for-stable-complex}
    \scBS_{\Lambda^0}\qtimes{\;\bT_\Gamma}\scBS_{\Lambda^1}\to \scBS_\Lambda
\end{equation}
covering
\begin{equation}\label{eq:embeddings-pardon-base-for-stable-complex}
    \scBR_{\Lambda^0}\qtimes{\;\bT_\Gamma}\scBR_{\Lambda^1}\to \scBR_\Lambda,
\end{equation}
both constructed in \textsection\ref{subsec:embeddings-base}. Using \cite{Kott}, we can construct systems $\{\wt{g}_\Lambda\}_\Lambda$ and $\{g_\Lambda\}_\Lambda$ of invariant Riemannian metrics on $\scBS_\Lambda$ and $\scBR_\Lambda$, respectively, so that the embeddings~\eqref{eq:embeddings-base-for-stable-complex} and~\eqref{eq:embeddings-pardon-base-for-stable-complex} are isometric. By Remark~\ref{rem:real-oriented-stable-complex}, the metric $g_\Lambda$ induces a complex structure on $T\lc{\scB}^P_\Lambda$. Using $\wt g_\Lambda$ and $g_\Lambda$ in Lemma~\ref{lem:stable-complex-corner-blow-up}, we get a decomposition 
\begin{equation*}
    T\scBS_\Lambda \;\cong\; T\lc{\scB}^\bR_\Lambda\oplus \ft_{\Gamma^+}\,\oplus\, \ft_{\Gamma^-}\;\cong\; T\Delta^{|\Gamma^+|-1}\,\oplus\,\beta^*T\lc{\scB}^P_\Lambda\,\oplus \,\ft_{\Gamma^+}\,\oplus \,\ft_{\Gamma^-}
\end{equation*}
Identifying $T\Delta^{k-1}$ with $\bR^k/\bR$, this shows that 
\begin{equation}\label{eq:decomposition-1}
    T\scBS_\Lambda\,\oplus\, \bR \;\cong\; T\bR^{\Gamma^+}\,\oplus\,\beta^*T\lc{\scB}^P_\Lambda\,\oplus\, \ft_{\Gamma^+}\,\oplus\, \ft_{\Gamma^-}
\end{equation}
canonically. Thus, we can set $I^{b,+}_\Lambda \coloneqq \beta^*T\lc{\scB}^P_\Lambda$. The decomposition~\eqref{eq:decomposition-1} can be further transformed to 
\begin{equation}\label{eq:decomposition-2}
    T\cc{\scB}^\bR_\Lambda\,\oplus\,\fg\fl_\Lambda\,\oplus\, \bR \;\cong\; T\bR^{\Gamma^+}\,\oplus \,\beta^*T\lc{\scB}^P_\Lambda\,\oplus\, \fp\fu_\Lambda
\end{equation}
where $\fg\fl_\Lambda \stackrel{\eqref{eq:comparing-group-quotients}}{=}\ii\fu_\Lambda\oplus\fp\fu_\Lambda$ is the Lie algebra of the complex Lie group $\cG_\Lambda$. Therefore, we can set $$I^{b,-}_\Lambda \coloneqq\fg\fl_\Lambda.$$
\noindent The maps $I^{b,-}_{\Lambda^0}\,\oplus I^{b,-}_{\Lambda^1}\to I^{b,-}_{\Lambda}$ are induced by the block matrix inclusions $\cG_{\Lambda^0}\times\cG_{\Lambda^1}\hkra\cG_{\Lambda}$, whence their compatibility in the sense of the square~\eqref{compatibility-2} is immediate. Meanwhile, the embeddings $I^{b,+}_{\Lambda^0}\,\oplus I^{b,+}_{\Lambda^1}\to I^{b,+}_{\Lambda}$ come from the embeddings~\eqref{eq:embedding-base-spaces} of base spaces. The commutativity of~\eqref{compatibility-2} follows in their case from the inductive choice of Riemannian metrics. Since the vertical maps in Diagram~\eqref{compatiblity-1} are defined using these compatible metrics, the commutativity of this diagram follows as well.
\end{proof}

\subsubsection{Stable complex structures on vertical tangent bundle}\label{subsec:stable-complex-fiber}
We will first prove the existence of stable complex structures on each morphism space $\scM(\Gamma^-,\Gamma^+)$ separately before considering their compatibilities. Our strategy is similar to the one of \cite[\S 11.3.4]{AB21}. The idea is to construct a homotopy equivalence $\wt\scT_\Lambda\to \scT_\Lambda$ with two sections $a$ and $b$ and a family of Cauchy--Riemann operators over $\wt\scT_\Lambda$ whose restrictions to $\im(a)$ and $\im(b)$ are given by 
$$\delbar^{\gamma}\,\#\, D\delbar_J\qquad \text{and}\qquad D^\bC\,\# \,(\#_{\gamma'\in \Lambda_\gamma}\delbar^{\gamma'}),$$
respectively, where $D^\bC$ is a (non-canonical) complex-linear Cauchy--Riemann operator. 

\begin{notation*}
   Given a partition $\Lambda\cl \Gamma^-\to \Gamma^+$, we write $S = S_\Lambda$ for the set of Reeb orbits $\gamma$ (counted with multiplicities) so that $\gamma\in \Gamma\sm\im(\Lambda^0)$ for some factorization $\Gamma^-\xra{\Lambda^0}\Gamma\xra{\Lambda^1}\to \Gamma^+$ of $\Lambda$. Note that we allow $\Lambda^1 = \ide_{\Gamma^+}$.
\end{notation*}

\begin{lemma}\label{lem:auxiliary-thickening}
	There exists for each function $\Lambda\cl \Gamma^-\to \Gamma^+$, there exists a $G_\Lambda$-equivariant fiber bundle 
	$$q\cl \scT_{\Lambda}^c\to \scT_{\Lambda}$$ with fibers given by $[0,1]$. It admits two equivariant sections $a_\infty$ and $a_0$.
\end{lemma}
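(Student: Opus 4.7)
The plan is to construct $\scT_\Lambda^c$ as the thickening for the moduli space of buildings in the trivial cobordism $\wh X = \wh Y$ constructed in \textsection\ref{subsec:gkc-for-cobordism}, and to exploit the $\bR$-translation symmetry of this trivial cobordism to exhibit the $[0,1]$-fiber structure over $\scT_\Lambda$.

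First I would extend the pre-perturbation datum $\fD$ used in the construction of $\scM_{\le L}^{Y,\lambda}$ to a pre-perturbation datum for $\Nbar^{J}(\Gamma^-,\Gamma^+)_\Lambda$ by setting $\wt\lambda^\pm = \wt\lambda$, $p^\pm = p$, and $\wh\nabla^X = \wh\nabla^Y$. Crucially, I would choose the joint finite-dimensional approximation scheme of Lemma~\ref{lem:joint-fin-dim-scheme} to be translation-invariant on the cobordism end. This is possible because the $\bR$-action commutes with $G_\Lambda$, preserves $J$, and acts on smooth approximation schemes by pullback, so that restricting to the fixed subspace followed by averaging retains the required density property. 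Similarly, the good covering and the slicing map $\zeta$ for the cobordism moduli can be chosen translation-invariantly on the cobordism level, so that the resulting thickening $\cT_\Lambda^c$ carries a free $\bR$-action translating the cobordism-level map in the $\wh X$-direction, commuting with the action of $\wh G_\Lambda$.

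Next, I would define the map $\cT_\Lambda^c \to \cT_\Lambda$ by forgetting the designation $(\ast^+,\ast^-) = (0,1)$ of the cobordism vertex (reinterpreting it as a symplectization vertex) and quotienting by the extra translation. By translation-invariance of the perturbation data, this map is well-defined, $\wh G_\Lambda$-equivariant, rel--$C^1$, and its fibers are precisely the orbits of the $\bR$-action. Pulling back along $\scBS_\Lambda \to \cBS_\Lambda$ yields an $\bR$-principal bundle $\scT_\Lambda^c \to \scT_\Lambda$ on the sliced thickenings. Since $\bR$ is contractible, this bundle is trivializable and the $\bR$-factor extends smoothly with the rel--$C^1$ structure.

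Finally I would compactify the $\bR$-factor to $[0,1]$ using the SFT compactness theorem \cite{BEH03} applied to the cobordism moduli: as the cobordism-level map is translated toward $\pm \infty$, it escapes into the corresponding symplectization end and in the limit appears as an additional trivial-cylinder level of the resulting symplectization building. Adding these two limit strata as endpoints compactifies the $\bR$-fibers to $[0,1]$, and the two sections $a_0$ and $a_\infty$ are the two limiting inclusions $\scT_\Lambda \hkra \scT_\Lambda^c$. The hard part will be this last step: one must show that the two limit strata of the $\bR$-action are canonically identified with $\scT_\Lambda$ itself in a manner compatible with the full equivariant rel--$C^1$ structure and with the corner stratification described in \textsection\ref{subsec:cobordism-base}, so that the added boundary lands genuinely inside $\scT_\Lambda^c$ as built in \textsection\ref{subsec:gkc-for-cobordism} rather than as an abstract compactification. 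This identification requires a careful matching of framings between symplectization and cobordism buildings via the criterion of Remark~\ref{rem:distinguish-from-degree}, together with the matching of the joint perturbation space $(E,\mu,\mu^\pm)$ with the symplectization perturbation space $(E,\mu)$ at the two boundary strata.
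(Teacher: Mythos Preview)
Your approach is essentially the same as the paper's---build $\scT_\Lambda^c$ as the thickening for the trivial cobordism $\wh X=\wh Y$ with the same pre-perturbation datum $\fD$---but the paper is more direct and your ``hard part'' dissolves once you set things up correctly.

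Since $p^+=p^-=p$ and $\wt\lambda^\pm=\wt\lambda$, the degree criterion of Remark~\ref{rem:distinguish-from-degree} cannot distinguish cobordism from symplectization components, so the cobordism base space of \textsection\ref{subsec:cobordism-base} degenerates to $\scBS_\Lambda$ itself (cf.\ the remark after Definition~\ref{de:family-of-cobordism-buildings}). The extra $[0,1]$ does not live in the base at all; it lives in the family $\cZ^c$, which carries the additional datum of \emph{which} vertex is the cobordism vertex. The local model $G^c_{T/}$ in \eqref{eq:local-model-cobordisms} already contains the factor $(0,\infty]\cong[0,1)$ parametrizing the relative height of the cobordism level, and the gluing theory of \textsection\ref{sec:gluing} builds the boundary strata directly into $\scT_\Lambda^c$. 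So there is no separate compactification step: the map $q$ is just the forgetful map $\scT_\Lambda^c\to\scT_\Lambda$ that discards the designation $\ast=(0,1)$ of the cobordism vertex and passes to the translation quotient, and its fibers are $[0,1]$ by inspection of the local model.

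The two sections are then explicit rather than limiting: $a_\infty$ sends $(\varphi,u,w)$ to the two-level building with $u$ in the symplectization and a trivial cylinder over $\Gamma^+$ in the cobordism above it, while $a_0$ puts trivial cylinders over $\Gamma^-$ in the cobordism below. These are exactly the boundary configurations in Figure~5, and they visibly land in $\scT_\Lambda^c$ with no framing-matching issue, since the framing line bundle on a trivial-cylinder component has the correct degree automatically. Likewise there is no joint-perturbation-space issue: the symplectization perturbation space $(E,\mu)$ is already $\bR$-invariant by its definition in \eqref{de:full-perturbation-space}, so one takes $\mu=\mu^+=\mu^-$ and no averaging is needed.
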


\begin{proof}
	Recall that we chose a pre-perturbation datum $\fD = (\wt\lambda,\conn,p)$ for the construction of $\scM$. We observe first that we can construct the global Kuranishi chart for buildings $\Nbar^{\,J}(\Gamma^-,\Gamma^+)_\Lambda$ in the trivial symplectic cobordism using the $\fD$. In this case, the base space agrees with the base space $\scBS_\Lambda$ of the global Kuranishi chart for $\Mbar^{\,J}_{\sft}(\Gamma^-,\Gamma^+)_\Lambda$, but the thickening is defined using the family $\cZ^c_\lambda$ in Definition~\ref{de:family-of-cobordism-buildings} instead of Definition~\ref{de:family-of-tree}. There exists a canonical rel--$C^1$ submersion 
    \begin{equation}\label{eq:forgetful-1}
        \scT^c_\Lambda\;\to\; \scT_\Lambda,
    \end{equation}
    which is the identity on the level of base spaces. Its fibers are canonically identified with $[0,1]$. The map $\scT^c_\Lambda\to\scT_\Lambda$ has two canonical sections
     \begin{equation*}
         a_0 \cl \Nbar^{\,J}(\Gamma^-,\Gamma^-)\ov{\times}_{B\Gamma^-}\scT_\Lambda \hkra \scT^c_\Lambda\qquad \quad a_\infty \cl \scT_\Lambda\ov{\times}_{B\Gamma^+}  \Nbar^{\,J}(\Gamma^+,\Gamma^+)\times \hkra \scT^c_\Lambda,
     \end{equation*}
     given by adding trivial cylinders (in the cobordism) at the positive puncture or at the negative one. If the compactified domain is given by the sphere in the middle of Figure~\ref{fig:auxiliary-spheres}, then the section $a_\infty$ is given by the configuration on the left, while $a_0$ is given by the right configuration on the right hand side:
     
	\begin{figure}[H]
		\centering
		\includegraphics[scale=1.4]{./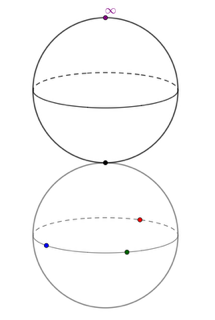}\hspace{0.8cm}
		\includegraphics[scale=1.2]{./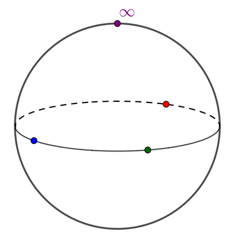}\hspace{.8cm}
		\includegraphics[scale=0.9]{./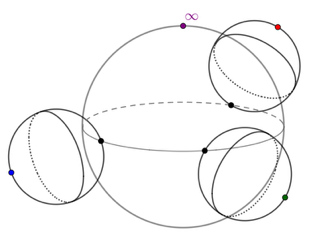}
		\label{fig:auxiliary-spheres}
		\caption{$\qquad\qquad\qquad$}
	\end{figure}
	\noindent Here the black spheres are those components that are mapped to the symplectic cobordism (being trivial cylinders in the left and right configuration), while the gray components are mapped to the symplectisation. The point labeled by $\infty$ is the positive puncture, while all other marked points correspond to negative punctures.
\end{proof}

Fix a Riemannian metric $h$ on $\wh Y$ and let $B_r(\gamma)$ be the ball of radius $r > 0$ around a Reeb orbit $\gamma$. Fix some number $\delta > 0$ that is less than the injectivity radius of $h$. Let $d$ be the associated distance and $\Phi$ be the parallel transport of $h$ and write $\Phi_{x\to y}$ for parallel transport along the unique geodesic connecting $x$ to $y$, whenever $d(x,y)$ is sufficiently small.

\begin{proposition}\label{prop:interpolating-to-complex}
	There exists a principal $G'_\Lambda$-bundle $\wt\scT_\Lambda\to \scT^c_\Lambda$, which admits a $G'_\Lambda\times \wh G_\Lambda$-vector bundle $\cW\to \wt\scT_{\Lambda}$, a vector bundle $W^v_\Lambda$, and a complex virtual vector bundle $I^v_{\Lambda}$ so that
	\begin{equation}
		a_\infty^*\cW \,\cong\, T^v\scT_{\Lambda}\,\oplus\,\bR^{\Gamma^+}\,\oplus\, V^+_{\Gamma^+}\,\oplus\, V^-_{\Gamma^-}\,\oplus I^{v,-}_\Lambda\,\oplus\, W^v_\Lambda
	\end{equation}
        and
    \begin{equation}
        a_0^*\cW \,\cong\,E_\Lambda\,\oplus\, I^{v,+}_{\Lambda}\,\oplus\, V_{\Gamma^-}^+\,\oplus\, V_{\Gamma^+}^-\,\oplus\, W^v_\Lambda,
    \end{equation}
	where $a_\infty$ and $a_0$ are the sections of Lemma~\ref{lem:auxiliary-thickening}.
\end{proposition}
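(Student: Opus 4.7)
The plan is to realize both $a_\infty^*\cW$ and $a_0^*\cW$ as the total graded index (kernel plus a stabilised cokernel) of a single family of real Fredholm operators $D_\Lambda$ defined on closed genus-zero surfaces obtained by capping off each cobordism building parametrized by $\scT^c_\Lambda$ at every Reeb puncture. Because the cap operators from \textsection\ref{subsec:lift-of-objects} can be chosen so that $D_\Lambda$ is complex linear up to a compactly supported, finite-dimensional real perturbation, its virtual index is a complex virtual vector bundle, and this is the $I^v_\Lambda = (I^{v,+}_\Lambda, I^{v,-}_\Lambda)$ appearing in the statement.

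I first define $\wt\scT_\Lambda \to \scT^c_\Lambda$ as the principal bundle of unitary asymptotic framings of the complex bundles $c_{\wt\gamma}^{*}T\wh Y$ along every Reeb orbit $\wt\gamma$ that can appear in any stratum (parametrised by $S_\Lambda$); the group $G'_\Lambda$ is the corresponding product of unitary groups. These framings are needed so that the fibrewise cap problems $(\cV_{\wt\gamma},\delbar^{\wt\gamma})$ and their stabilisations $\nu \cl W'_\gamma \to \Omega^{0,1}$ from \textsection\ref{subsec:lift-of-objects} are identified canonically across the family. Over $\wt\scT_\Lambda$, I glue at every positive puncture the cap $(\cV_{\wt\gamma},\delbar^{\wt\gamma})$ and at every negative puncture an orientation-reversed mirror cap to obtain a closed Riemann surface for each building; adding to this the perturbation $\mu_\Lambda$ of the global chart together with a compactly supported, complex linear correction implementing the matching at the remaining nodes produces $D_\Lambda$. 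Since $D_\Lambda$ is complex linear away from the compact supports of $\nu$, $\mu_\Lambda$ and the node corrections, its virtual index carries a canonical complex structure, which defines $I^v_\Lambda$.

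At $a_\infty$, every positive puncture is separated from the original curve by a trivial cobordism cylinder; the operator $D_\Lambda|_{a_\infty}$ splits as the direct sum of the linearised operator $D\delbar_J + \mu$ on the original curve (kernel $T^v\scT_\Lambda$, cokernel fully cancelled by $E_\Lambda$), the positive cap operators contributing $V^+_{\Gamma^+}$ and $V^-_{\Gamma^+}$, the negative caps contributing $V^+_{\Gamma^-}$ and $V^-_{\Gamma^-}$, plus a factor $\bR^{\Gamma^+}$ coming from the free $\bR$-translation on each trivial cobordism cylinder. Taking $\cW$ to be $\ker(D_\Lambda + \sigma) \oplus \sigma$ for a sufficiently large $G'_\Lambda \times \wh G_\Lambda$-equivariant stabilisation $\sigma$ that kills the cokernel globally, and collecting the pieces above, recovers the claimed description of $a_\infty^*\cW$, with $W^v_\Lambda$ being the stabilising summand absorbing the leftover $V^-_{\Gamma^+}$, $V^+_{\Gamma^-}$ and $E_\Lambda$ terms. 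The identification of $a_0^*\cW$ is entirely analogous, with the roles of positive and negative punctures interchanged and $E_\Lambda$ now contributing on the kernel side in place of $T^v\scT_\Lambda$.

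The hard part will be constructing $D_\Lambda$ together with its equivariant stabilisation in a rel--$C^1$ way that is compatible with the stratification of $\scT^c_\Lambda$ and the gluing analysis of \textsection\ref{sec:gluing}: one must interpolate the choice of caps and node corrections consistently across boundary strata where components collide or separate. I plan to do this by induction on $\|(\Gamma^-,\Gamma^+)\|$, following the scheme of Proposition~\ref{prop:embedding-gkc}, choosing caps and complex linear corrections first on the boundary strata and extending inward using $\wh G_\Lambda$-invariant partitions of unity and Lemma~\ref{lem:extending-from-boundary}. A secondary subtlety is to guarantee that $W^v_\Lambda$ is a genuine, finite-rank vector bundle rather than merely virtual; this is handled as in \cite[\S 11.3.4]{AB21} by taking a sufficiently large $G'_\Lambda \times \wh G_\Lambda$-representation that simultaneously absorbs all stabilising terms over the compact moduli under consideration.
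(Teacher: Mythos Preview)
Your proposal misses the central mechanism of the paper's argument. The key difficulty is that the linearised operator $D^{\conn^Y}_u$ on the main building is \emph{not} complex linear: it has a real zeroth-order term $B_u = D^{\conn^Y}_u - (\conn^Y\cdot)^{0,1}$. So capping off at the punctures and declaring that the resulting operator is ``complex linear away from the compact supports of $\nu$, $\mu_\Lambda$ and the node corrections'' is simply false, and hence your candidate for $I^v_\Lambda$ does not acquire a complex structure that way. Relatedly, if you attach the same caps at both positive and negative punctures regardless of whether you are at $a_\infty$ or $a_0$, the index contributions of those caps are identical at both sections; there is no mechanism in your outline that produces the asymmetry between $a_\infty^*\cW$ (where $T^v\scT_\Lambda$ appears) and $a_0^*\cW$ (where a complex bundle $I^{v,+}_\Lambda$ appears in its stead).

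The paper's solution is not to cap off but to use the auxiliary cobordism thickening $\scT^c_\Lambda$ to carry a \emph{family of operators interpolating between real and complex-linear}. One writes $\wt D = D^{\conn^Y}_u - \wt\chi\, B_u$, where $\wt\chi$ is a cutoff depending on the height in the cobordism level. As the cobordism level slides from the top ($a_\infty$) to the bottom ($a_0$), the cutoff $\wt\chi$ goes from identically $0$ on the main curve to identically $1$, so $\wt D$ goes from the original real $D^{\conn^Y}_u$ to the complex-linear $(\conn^Y\cdot)^{0,1}$. The trivial cobordism cylinders themselves then carry the Reeb cap operators $\delbar^{\wt\gamma}$, which is how $V_{\Gamma^+}^+$ appears at $a_\infty$ but $V_{\Gamma^-}^+$ at $a_0$. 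The principal $G'_\Lambda$-bundle $\wt\scT_\Lambda$ is not a bundle of asymptotic trivialisations; it is built from a \emph{second} framing into $\bP^{d'}$ (with $p'\gg p$) so that the trivial cylinders acquire nondegenerate domains over which one can parametrise the height function. Finally, $W^v_\Lambda$ is not a generic stabilisation absorbing leftover $E_\Lambda$ and $V^\pm$ terms: it is specifically $\bigoplus_{\Gamma^-\prec\Gamma\prec\Gamma^+}\bigoplus_{\gamma\in\Gamma} V_\gamma^-$, the cap perturbations at all \emph{intermediate} breaking orbits, needed so that $\wt D$ remains surjective across boundary strata where the main curve itself breaks.
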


\begin{proof}
	Let $p' \gg p$ be a large prime number, and let $\wt\fD = (\wt\lambda,\conn,p',p)$ be the pre-perturbation datum for the moduli spaces of buildings in the trivial symplectic cobordism. Then, we can construct the global Kuranishi chart for $\Nbar^{\,J}(\Gamma^-,\Gamma^+)$ using the base space $\scB^c_\Lambda$ defined in \textsection\ref{subsec:cobordism-base}. Let 
	$$\wt\cB_\Lambda \sub \p{\gamma\in \Gamma^+}{\Mbar_{0,\gamma\sqcup \Lambda_\gamma}(\bP^{d'_{\Lambda_\gamma}}\times \bP^{d_{\Lambda_\gamma}},(d'_{\Lambda_\gamma},d_{\Lambda_\gamma}))_{z_\gamma\mapsto (e_0,e_0)}}$$
	be the preimage of $\scB^c_\Lambda\times\scB_\Lambda$. The argument of \cite[Lemma~7.3]{HS22} shows that $\wt\cB_\Lambda$ is unobstructed, thus a complex manifold with a complex $\cG'_{\Lambda}$-action. We can thus define 
	\begin{equation}
		\wt\scB_\Lambda = \scB^c_\Lambda\times_{\cB_\Lambda}\wt\cB_\Lambda.
	\end{equation}
	The only difference between $\scB^c_\Lambda$ and $\wt\scB_\Lambda$ is that framings in the former are constant on the domains of trivial cylinders, while those in $\scB^c_\Lambda$ are not (they have degree $(p'-p)\cA_{\wt\lambda}(\gamma)$). Thus, the universal family $\wt\cC_\Lambda\to \wt\scB_\Lambda$ allows us to access these domains. In order to lift this to the thickenings, we fix a map $\zeta'_\cU$ as in~\eqref{eq:slice-map} that determines the unitary framings in $\scB^c_\Lambda$ and set 
	\begin{equation}
		\wt\scT_\Lambda \coloneqq \set{(\wt\varphi,\varphi,u,w)\in \wt\scB_\Lambda\times_{\scB^c_\Lambda}\scT^c_\Lambda\mid T_{\wt\varphi} = T_u\, \zeta'_\cU(\wt\varphi,u) = 0}.
	\end{equation}
	The conditions we impose are open in the fiber product, so this is a rel--$C^1$ manifold with corners. Moreover, the canonical forgetful map $\wt\scT_\Lambda\to \scT^c_\Lambda$ is a principal $G'_\Lambda$-bundle, where $G'_\Lambda$ is a product of unitary groups. Let $\cC^{sc}\sub \wt\cC_\Lambda\inn$ be the locus of points that are mapped to the symplectic cobordism. Then, the height functions of the respective map give us a rel--$C^1$ function
	\begin{equation}
	\eva_\bR\cl \wt \cC^{sc}\to \bR,
	\end{equation}
	which extends to a continuous function $\ov\eva_\bR\wt\cC_\Lambda\to [-\infty,\infty]$. 
	Using $\eva_\bR$, we can extend $\chi$ to a rel--$C^1$- map  $\wt\chi\cl \wt\cC_\Lambda\to [0,1]$. Note that $\wt\chi$ is invariant under the covering group action.\par
	Let $\cF\to \wt\scT_\Lambda$ and $\cE^{0,1}\to \wt\scT_\Lambda$ be the s bundles with
	$$\cF_{(\wt\varphi,u,w)} = W^{\ell,2,\delta}\lbr{\wt\cC\inn_{\wt\varphi},u^*T\wt Y} \qquad \qquad \cE^{0,1}_{(\varphi,u,w)} = W^{\ell,2,\delta}\lbr{\wt\cC\inn_{\wt\varphi},\Lambda^{0,1*}_{\wt\cC\inn_{\wt\varphi}}\otimes u^*T\wt Y},$$
	where $\ell \ge 6$ and $\delta > 0$ is a sufficiently small exponential weight.
	Let $B_u\in \Omega^{0,1}(\dot{C},\End(u^*T\wt Y))$ be the difference $D^{\conn^Y}_u - (\conn^Y\cdot)^{0,1}$. We define the rel--$C^1$ family $\wt D_{(\wt\varphi,u,w)}$ of Cauchy--Riemann operators by 
	\begin{equation}\label{eq:interpolating-cr-operator}
		\wt D_{(\wt\varphi,\varphi,u,w)}(\xi) = D^{\conn^Y}_u(\xi) -\wt\chi(\wt\varphi,\varphi,u,w,\cdot)B_u(\xi).
	\end{equation}
	Writing the domain of $a_\infty(\varphi,u,w)$ as $\djun{\gamma\in \Gamma^+}C^\infty_\gamma\vee C_\gamma$, where $C^\infty_\gamma$ is the `cylinder on top' (see Figure \ref{fig:auxiliary-spheres}), this operator is the restriction of
	\begin{equation}
		\wt D_{a(\varphi,u,w)} =\begin{cases}
			D^{\conn^Y}_u(\xi) \quad& \text{on }C^\infty(C_\gamma,u^*T\wt Y) \\
			\delbar^{\wt\gamma}\quad& \text{on }C^\infty(C_\gamma^\infty,c_{\wt\gamma}^*T\wt Y),
		\end{cases}
	\end{equation}  
	where $c_{\wt\gamma}$ is the trivial cylinder a parametrisation $\wt\gamma$ of $\gamma$, induced by $u$ and the asymptotic marker at $z_\gamma$. On the other hand, writing the domain of $a_0(\varphi,u,w)$ as $\djun{\gamma\in \Gamma^+}C_\gamma\vee \bigvee\limits_{\gamma'\in \Lambda_\gamma}C^0_{\gamma'}$, where $C^0_{\gamma'}$ are the `additional cylinders at the bottom', $\wt D_{b(\varphi,u,w)}$ is the restriction of 
	\begin{equation}
		\wt D_{b(\varphi,u,w)} =\begin{cases}
			(\conn^Y\xi)^{0,1} \quad& \text{on }C^\infty(C_\gamma,u^*T\wt Y) \\
			\delbar^{\wt\gamma'}\quad& \text{on }(C_{\gamma'}^0,c_{\wt\gamma'}^*T\wt Y),
		\end{cases}
	\end{equation}
	where $\wt\gamma'$ is the parametrization of $\gamma'$ determined by $u$ and the asymptotic marker at $z_{\gamma'}$.\par 
	We use finite-dimensional complex vector spaces of perturbations to achieve surjectivity of $\wt D$. However, first we extend the maps $\nu_\gamma\cl V^-_\gamma\to \Omega^{0,1}_c(\bR\times S^1,u_\gamma^*T\wt Y)^\bR$ of \textsection\ref{subsec:lift-of-objects} to maps $\nu_\gamma\cl p_\gamma^*V^-_\gamma\to \cE^{0,1}$ for $\gamma\in \Gamma^+\sqcup \Gamma^-$, where $p_\gamma$ is the evaluation map~\eqref{eq:evaluation-to-reeb-orbit}. We now explain how we can construct extensions of the perturbations~\eqref{eq:perturbation-for-point}.

    \begin{construction}\label{con:extending-perturbations}  We have the evaluation map  
    \begin{equation}
    \eva_Y\cl \wt\cC\inn_\Lambda \to Y : (\wt\varphi,u,w,z) \mapsto \pr_Y(u(z)),
    \end{equation} Choose now for $\gamma\in \Gamma^\pm$ a neighborhood $U_\gamma^\pm$ of the canonical section $\sigma_\gamma\sub \wt\cC_\Lambda$ given by the respective marked point so that
	\begin{itemize}
		\item $\eva_Y({U^\pm_\gamma})\sub B_\delta(\gamma)$,
		\item the closures are pairwise disjoint, and
		\item the closures do not meet the critical points of $\wt\cC_\Lambda\to \wt\scT_\Lambda$.
	\end{itemize}  
	Fix for each $\gamma$ a rel--$C^1$ invariant cut-off function $j_\gamma\cl \wt\cC_\Lambda\to [0,1]$ which is identically $1$ near $\sigma_\gamma$ and supported in $U_\gamma$. Recall that we have a canonical retraction $r \cl B_\delta(\gamma)\to \im(\gamma)$. Let $\Phi_x$ be the parallel transport along the normal geodesic from $r(x)$ to $x$ (using the Riemannian metric $h$ on $Y$ chosen above). Then, the vector bundle map
    \begin{equation}
    \wt\nu_\gamma \cl \wt\scT_\Lambda\times_{E\gamma} V_\gamma^- \,\to\, \cE^{0,1}
    \end{equation}
    at $y = (\wt\varphi,\varphi,u,w)$ is given by
    \begin{equation}\label{eq:extension-orbit-lift}
    	\wt\nu_\gamma(y,v) \coloneqq 
    \begin{cases}
        j_\gamma(y,\cdot)\wt\chi(\eva_\bR(y,\cdot))\,\Phi_{u_Y(\cdot)}[\nu_\gamma(p_\gamma(y),v)(r_\gamma(u_Y(\cdot)))]\qquad & \text{on }\wt\cC_y\cap U^\pm_\gamma,\\
        0 \quad & \text{otherwise}.
    \end{cases}
    \end{equation}
	where the superscript of $U^\pm_\gamma$ is determined by whether $\gamma$ labels a positive or negative puncture. This is well-defined even if $\gamma$ is multiply-covered by the definition of the map $\nu_\gamma$ in \textsection\ref{subsec:lift-of-objects}.
\end{construction}

   \noindent By~\cite{Kott}, we can arrange for the choices in Construction~\ref{con:extending-perturbations} to be compatible across boundary strata. We now do a similar extension for $\nu_\gamma$ whenever $\gamma$ is a Reeb orbit at which a building in $\Mbar_{\sft}^{\,J}(\Gamma^+,\Gamma^-)$ breaks. Since any element of $\Gamma^+$ has action $\leq L$ and each component has a unique positive puncture, we have $\cA_\lambda(\gamma)\leq L$. By the construction in Theorem~\ref{thm:flo_bim}, each boundary stratum of $\wt\scT_\Lambda$ admits a map
    \begin{equation}\label{eq:bottom-section}
        \wt\scT_{\Lambda^{01}}\to\scT^c_{\Lambda^{01}}\to G_\Lambda\times_{G_{\Lambda^0}\times G_{\Lambda^1}}\im\lbr{\scT^c_{\Lambda^0}\ov{\times}_{B\Gamma}\scT_{\Lambda^1}\hkra \wt\scT_\Lambda},
    \end{equation}
    respectively,
    \begin{equation}\label{eq:infinity-section}
        \wt\scT_{\Lambda^{10}}\to \scT^c_{\Lambda^{10}}\to G_\Lambda\times_{G_{\Lambda^0}\times G_{\Lambda^1}}\im\lbr{\scT_{\Lambda^0}\ov{\times}_{B\Gamma}\scT^c_{\Lambda^1}\hkra \wt\scT_\Lambda}
    \end{equation}
	for some $\Gamma$ and partitions $\Lambda^1 \cl \Gamma\to \Gamma^+$ and $\Lambda^0 \cl \Gamma^-\to \Gamma$ with $\Lambda^1\g\Lambda^0 = \Lambda$. The first map in each case is induced by the forgetful map $\wt\scT_\Lambda\to \scT^c_\Lambda$, while the second map is a vector bundle map (cf. Lemma~\ref{lem:boundary-up-to-stabilisation}). The evaluation map $p_\gamma\cl \scT_{\Lambda^1}\to E\gamma$ for $\gamma\in \Gamma$ is invariant under the $G_{\Lambda^1}$-action, so it induces a rel--$C^1$ map 
	\begin{equation}\label{eq:extension-orbit-evaluation}
		\wt p_\gamma\cl \wt\scT_{\Lambda^{01}}\to E\gamma
	\end{equation}
    that is equivariant with respect to the action of $G_\Lambda$.
    \begin{construction}\label{con:extending-breaking-perturabtions} Let $U_{ij}\sub \wt\scT_{\Lambda^{ij}}$ be an open subset of the boundary stratum so that for any curve in $U_{ij}$ the component that is mapped to the symplectization is either directly above the Reeb orbits $\gamma\in \Gamma$ or below it. In other words, $U_{ij}$ is the complement of a union of some of the boundary strata. We can use the same definition as in Construction~\ref{con:extending-perturbations} once we have constructed for each such $\gamma$ an equivariant extension of the map $\wt{p}_\gamma|_{U^\gamma_{ij}}$. We discuss the case of the embedding~\eqref{eq:infinity-section}, and will just write $U$. Let $\sigma_\gamma\cl \wt\scT_{\Lambda^{ij}}\to \wt\cC_\Lambda$ be the section given by the nodal point labeled by $\gamma\in \Gamma^+$. For each such $\gamma$, let $U''_\gamma$ be a neighborhood of $\sigma_\gamma(U)$ so that 
    \begin{itemize}
        \item $\eva_Y(U''_\gamma\sm \im(\sigma_\gamma))\sub B_\delta(\gamma)$,
        \item $\eva_\bR|_{U''_\gamma\cap \wt\cC\inn_\Lambda}$ is a fiber-wise submersion
        \item $\cc{U''_\gamma}$ does not intersect $\cc{U_{\gamma'}}$ for any $\gamma'\in \Gamma^+\sqcup \Gamma'$
        \item $\cc{U''_\gamma}\cap \cc{U''_{\gamma'}} = \emst$ if $\gamma\neq \gamma'$ are Reeb orbits at which buildings in $\wt\scT_\Lambda$ break,\footnote{Note that $\gamma$ might appear twice as a Reeb orbit at which a building breaks. These two occurrences are distinct, although we omit this from the notation.}
        \item $\cc{U''_\gamma}$ only meets the critical points of $\wt\cC_\Lambda\to \wt\scT_\Lambda$ in $\im(\sigma_\gamma)$.
    \end{itemize}
    In particular, we may assume that the intersection $\wt\cC_y\cap U''_\gamma$ with a fiber is contained in the unique irreducible component $C^\bullet_y$ of $\wt\cC_y$ that is mapped to the symplectic cobordism. Set $U'_\gamma := U''_\gamma\sm\im(\sigma_\gamma)$. The asymptotic marker at the positive puncture $\Lambda^1(\gamma)$ and the matching isomorphism on nodes induce for each $y\in U$ a path $L^\gamma_y\sub C_y^\bullet$ from the unique positive `puncture' (which is a point in $\wt\cC_\Lambda$) of $C_y^\bullet$ to $\sigma_\gamma(y)$. Since $\pi|_{U_\gamma}\cl U'_\gamma\to \pi(U'_\gamma)$ is a rel--$C^1$ submersion, we can use Ehresmann's Lemma and the connection induced by the chosen Riemannian metric and the Fubini-Study metric on complex projective space to obtain for each $y \in \pi(U'_\gamma)$ an interval $L_y \sub C^\bullet_y$, possibly replacing the whole thickening by an invariant neighborhood of the zero locus $\fs\inv(0)$. 
    Writing $d$ for the Gromov--Hausdorff metric on $\wt\scT_\Lambda$, we define 
    \begin{equation}
        R_\gamma\cl \pi(U'_\gamma) \to [0,\infty) : y \mapsto d(y,\wt\scT_\Lambda)
    \end{equation}
    be the distance from the boundary stratum, where $\pi \cl \wt\cC_\Lambda\to \wt\scT_\Lambda$ is the universal family. Then, we can extend $\sigma_\gamma$ to a rel--$C^1$ map $\wt\sigma_\gamma\cl \pi(U'_\gamma) \to U''_\gamma$ by letting $\wt\sigma_\gamma(y)=:z$ be the unique point in the fiber so that 
    $$\eva_\bR(y,z) = R_\gamma(y) \qquad\qquad z\in L_y.$$
    Such a point exists and is unique due to the asymptotic behavior of the curves near nodes. Finally, we can define $\wt{p}_{\gamma,\gamma'}\cl \pi(U'_\gamma)\to E\wt\gamma$ via the composition 
    \begin{equation*}
      \pi(U'_\gamma)\,\xra{\wt\sigma_\gamma}\, U''_{\gamma}\,\xra{(y,z)\,\mapsto r(\eva_Y(u,z))} \,\im(\gamma) \,\xra{\simeq} \, E\gamma,
    \end{equation*}
    where the first map associates to $y$ the point $r(\eva_Y(\sigma_{\gamma'}(y))$ and the second map is the inverse of $\wt\gamma\mapsto \wt\gamma(1)$. Now we may define the extension by the term~\eqref{eq:extension-orbit-lift} with $p_\gamma$ replaced by $\wt p_\gamma$.
    \end{construction}
    
    \noindent We define 
	\begin{equation}
		W^v_\Lambda \coloneqq \bigoplus\limits_{\Gamma^- \prec \Gamma\prec \Gamma^+}\bigoplus\limits_{\gamma\in \Gamma} V_\gamma^-
	\end{equation}
	equipped with the induced map $\wt\nu_\Lambda\cl W^v_\Lambda \to \cE^{0,1}$.
	Since $q \cl \wt\scT_\Lambda\to \scT_\Lambda$ is proper, the preimage $q\inv(\fs\inv(0))$ is compact. Thus, we may find a finite-rank complex $G$-vector bundle $I^{v,-}_{\Lambda}\to \scT_\Lambda$ over the `original' thickening and an equivariant map $\kappa_\Lambda\cl I^{v,-}_\Lambda\to \cE^{0,1}$ so that the operator
	\begin{equation}\label{eq:perturbed-homotopy-of-cr}
		\wt D_{(\wt\varphi,u,w)}+\nu_{\Gamma^+}+\nu_{\Gamma^-}+\wt\nu_\Lambda+\kappa_\Lambda \cl \cF_{(\wt\varphi,u,w)}\,\oplus\, V_{\Gamma^+}^-\oplus\, V_{\Gamma^-}^-\oplus\, W^v_\Lambda\oplus {I^{v,-}_\Lambda}_{_{(\wt\varphi,u,w)}}\to\, \cE^{0,1}_{(\wt\varphi,u,w)}
	\end{equation}
	is surjective for any ${(\wt\varphi,\varphi,u)}\in q\inv(\fs\inv(0))$. Shrinking $\scT_\Lambda$, we may assume~\eqref{eq:perturbed-homotopy-of-cr} is surjective on all of $\wt\scT_\Lambda$. We define 
	\begin{equation}
		\wt\cW \coloneqq \ker\lbr{\wt D + \mu_\Lambda+\nu_{\Gamma^+}+\nu_{\Gamma^-}+\wt\nu_\Lambda+\kappa_\Lambda} 
	\end{equation}
	where $\mu_\Lambda\cl E_\Lambda\to \cE^{0,1}$ is the perturbation space chosen in the construction of the global Kuranishi chart. Since $\wt\cW$ is invariant under the free $G'_\Lambda$ action, the quotient $\cW \coloneqq \wt\cW/G'_\Lambda$ admits a vector bundle map $\cW\to \scT^c_\Lambda$,
    Since $\wt\nu_\Lambda$ vanishes near $a_\infty$ and $a_0$, we have a canonical isomorphism
	\begin{equation*}
		a_\infty^*\cW \;\cong\; q^*T^v\scT_\Lambda\,\oplus\, \bR^{\Gamma^+} \,\oplus\, V_{\Gamma^+}^+\,\oplus\, V_{\Gamma^-}^-\,\oplus\, I^{v,-}_\Lambda\,\oplus\, W^v_\Lambda,
	\end{equation*}
	where $\ker(D_y + \mu_\Lambda(y)) = T^v_y\scT_\Lambda\oplus \bR^{\Gamma^+}$ because we quotient by $\bR$-translations in the target. Meanwhile, 
	\begin{equation*}
		a_0^*\cW \;\cong\; E_\Lambda\,\oplus\, I^{v,+}_\Lambda\,\oplus\, V_{\Gamma^-}^+\,\oplus\, V_{\Gamma^+}^-\,\oplus\, W^v_\Lambda,
	\end{equation*}
	whose first part $I^{v,+}_\Lambda \coloneqq \ker((\conn^Y\cdot)^{0,1} + \mu_\Lambda+\kappa_\Lambda)$ is the kernel of a complex-linear surjective Cauchy--Riemann operator and thus carries a canonical complex structure.
\end{proof}

\subsubsection{Compatibilities of stable complex structures}  We can now make Theorem~\ref{thm:stable-complex-structure} precise, phrasing the statement in terms of the partitions $\Lambda$ to keep the notation consistent. 

\begin{proposition}\label{prop:existence-stable-complex}
	Possibly after shrinking the thickenings of the morphism spaces of $\scM$, there exists for any pair of objects $\Gamma^-,\Gamma^+$ and any partition $\Lambda\cl \Gamma^-\to\Gamma^+$ an equivalence 
    \begin{equation}
     T\scT_{\Lambda} \,\oplus\, V_{\Gamma^+}\,\oplus\, W_{\Lambda}\,\oplus \bR\;\simeq\; I_{\Lambda}\,\oplus\, V_{\Gamma^-}\,\oplus\, W_{\Lambda}
    \end{equation}
    of virtual vector bundles that is compatible with the symmetric actions. For any factorization $\Gamma^- \xra{\Lambda^0} \Gamma\xra{\Lambda^1}\Gamma^+$ of $\Lambda$ we have an equivariant split embeddings
    \begin{equation}\label{eq:embeddings-index-bundles}
         I_{\Lambda^0}\,\oplus\, I_{\Lambda^1}\;\to \;I_{\Lambda}
    \end{equation}
     \begin{equation}\label{eq:embeddings-stabilising-bundles}
         W_{\Lambda^0}\,\oplus\, W_{\Lambda^1}\;\to \;W_{\Lambda}
    \end{equation}
    over $\scT_{\Lambda^0}\ov{\times}_{B\Gamma}\scT_{\Lambda^1}$ so that the square of a
    \begin{equation*}\begin{tikzcd}
		T\scT_{\Lambda^0}\,\oplus\, V_{\Gamma^-}\,\oplus\, W_{\Lambda^0}\,\oplus\, \bR_{\Lambda^0}\,\oplus T\scT_{\Lambda^1}\,\oplus\, V_{\Gamma}\,\oplus\, W_{\Lambda^1}\,\oplus\, \bR_{\Lambda^1} \arrow[r,""] \arrow[d,""]& T\scT_{\Lambda}\,\oplus  V_{\Gamma^+}\,\oplus\, W_{\Lambda}\,\oplus\, \bR_\Lambda \arrow[d,""]\\ 
        I_{\Lambda^0}\,\oplus\, V_{\Gamma^-}\,\oplus\, W_{\Lambda^0}\,\oplus  I_{\Lambda^1}\,\oplus\, V_{\Gamma}\,\oplus\, W_{\Lambda^1}\arrow[r,""] &  I_{\Lambda}\,\oplus\, V_{\Gamma^-}\,\oplus\, W_{\Lambda} \end{tikzcd} \end{equation*}
    commutes, where $\bR_{\Lambda^1}$ is mapped to $\bR_\Lambda$ and $\bR_{\Lambda^1}$ to the normal bundle of the boundary stratum. Moreover, the restrictions are compatible across the boundary strata of codimension $2$.
\end{proposition}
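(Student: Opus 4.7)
The plan is to combine the base-space stable complex structure from Lemma~\ref{lem:stable-complex-base} with the fiberwise data from Proposition~\ref{prop:interpolating-to-complex}, and to argue inductively on the norm $\norm{(\Gamma^-,\Gamma^+)}$ introduced in \textsection\ref{subsec:unstructured-flow-cat} so that the resulting equivalences restrict to the previously constructed ones on every codimension-$\geq 1$ stratum. Concretely, I would set
\begin{equation*}
    I_\Lambda \;:=\; I^b_\Lambda \,\oplus\, I^{v,+}_\Lambda \,\ominus\, I^{v,-}_\Lambda,
\end{equation*}
which is a well-defined complex virtual $\wh G_\Lambda$-bundle because each summand carries a canonical (or chosen) complex structure: $I^b_\Lambda$ from Lemma~\ref{lem:stable-complex-base}, $I^{v,+}_\Lambda$ from the fact that it is the kernel of the surjective complex-linear operator $(\conn^Y\cdot)^{0,1}+\mu_\Lambda+\kappa_\Lambda$ appearing in the proof of Proposition~\ref{prop:interpolating-to-complex}, and $I^{v,-}_\Lambda$ by construction. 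The stabilizing bundle $W_\Lambda$ is the sum of $E_\Lambda$, $\fp\fu_\Lambda$, $W^v_\Lambda$ and a trivial bundle large enough to convert any residual real summands into terms of the form $(W,W)$.

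The main equivalence is obtained by assembling the two preceding results. Since $\scT_\Lambda\to\scBS_\Lambda$ is a rel--$C^1$ submersion, $T\scT_\Lambda \cong \pi^*T\scBS_\Lambda \oplus T^v\scT_\Lambda$ as $\wh G_\Lambda$-bundles. The forgetful map $\scT^c_\Lambda\to\scT_\Lambda$ of Lemma~\ref{lem:auxiliary-thickening} is a fiber bundle with contractible fibers, so the sections $a_\infty,a_0$ give isomorphisms $a_\infty^*\cW \simeq a_0^*\cW$ after pullback to $\scT_\Lambda$, which yields
\begin{equation*}
    T^v\scT_\Lambda \oplus \bR^{\Gamma^+} \oplus V^+_{\Gamma^+}\oplus V^-_{\Gamma^-}\oplus I^{v,-}_\Lambda\oplus W^v_\Lambda \;\simeq\; E_\Lambda\oplus I^{v,+}_\Lambda\oplus V^+_{\Gamma^-}\oplus V^-_{\Gamma^+}\oplus W^v_\Lambda.
\end{equation*}
Adding $T\scBS_\Lambda\oplus\bR$ to both sides and substituting the equivalence of Lemma~\ref{lem:stable-complex-base} on one side, then cancelling $\bR^{\Gamma^+}$ and moving the $V^\pm_{\Gamma^\pm}$ terms appropriately, gives the asserted equivalence $T\scT_\Lambda\oplus V_{\Gamma^+}\oplus W_\Lambda\oplus\bR\simeq I_\Lambda\oplus V_{\Gamma^-}\oplus W_\Lambda$ after stabilizing by $W_\Lambda$ taken sufficiently large.

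The embeddings~\eqref{eq:embeddings-index-bundles} and~\eqref{eq:embeddings-stabilising-bundles} for a factorization $\Lambda=\Lambda^1\g\Lambda^0$ are obtained summand-by-summand: on the base-space piece $I^b_*$ this is Lemma~\ref{lem:stable-complex-base}; on $E_*$, $\fp\fu_*$ and $W^v_*$ it comes from the inductive construction in Proposition~\ref{prop:embedding-gkc} and Proposition~\ref{prop:interpolating-to-complex}, once one arranges the finite-dimensional approximations $I^{v,-}_*$ and the perturbations $\kappa_*,\wt\nu_*$ to be chosen inductively along boundary strata (using \cite{Kott} and Lemma~\ref{lem:extending-from-boundary} to extend from the boundary to a neighborhood, exactly as in the inductive step of Theorem~\ref{thm:flow-cat}). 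The commutative square in the proposition is then the direct sum of the corresponding squares from Lemma~\ref{lem:stable-complex-base} (top row of~\eqref{compatiblity-1}) and from the naturality of kernels of the glued Cauchy--Riemann operators $\wt D_{\Lambda^0}\#\wt D_{\Lambda^1}$ defined in~\eqref{eq:interpolating-cr-operator}; the identification of $\bR_{\Lambda^0}$ with the normal bundle of the boundary stratum uses the outward normal coordinate in the corner blow-up, as in Remark~\ref{rem:neighbourhood-of-exceptional-boundary}.

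The main obstacle is the inductive choice of the auxiliary family of Cauchy--Riemann operators $\wt D$, the perturbation spaces $I^{v,-}_\Lambda$ and the transition data $W^v_\Lambda$ in Proposition~\ref{prop:interpolating-to-complex} in such a way that their restrictions to every boundary stratum agree \emph{on the nose} with the corresponding data built at previous stages, compatibly across codimension-two corners. This requires that the extensions of the asymptotic-marker evaluation maps $\wt p_\gamma$ in Construction~\ref{con:extending-breaking-perturabtions}, the cut-off functions $\wt\chi$, the parallel transport trivializations used for $\wt\nu_\gamma$ and the $G'_\Lambda$-bundle $\wt\scT_\Lambda\to \scT^c_\Lambda$ be chosen systematically along the filtration by $\norm{(\Gamma^-,\Gamma^+)}$, mirroring the proof of Proposition~\ref{prop:embedding-gkc}; once this is done the commutativity in codimension two and the compatibility with the symmetric action follow automatically from naturality, since all constructions are intertwined with the relabeling action that permutes the punctures.
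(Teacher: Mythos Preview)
Your approach is essentially the same as the paper's: split $T\scT_\Lambda$ into base and vertical parts, handle the base via Lemma~\ref{lem:stable-complex-base}, handle the vertical part via the interpolation bundle $\cW$ of Proposition~\ref{prop:interpolating-to-complex}, and construct all auxiliary data inductively along the filtration by $\norm{(\Gamma^-,\Gamma^+)}$. Your definitions of $I_\Lambda$ agree with the paper's (the paper writes $I^v_\Lambda$ for the virtual bundle $(I^{v,+}_\Lambda,I^{v,-}_\Lambda)$). Your $W_\Lambda$ is larger than the paper's choice $W_\Lambda=\bR^{\Gamma^+}\oplus W^v_\Lambda$; since $E_\Lambda$ and $\fp\fu_\Lambda$ already sit in the negative part of $T\scT_\Lambda$ via~\eqref{eq:obstruction-bundle-decomposition}, including them in $W_\Lambda$ is harmless but redundant.

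There is one step you are underselling. You write that contractibility of the fibers of $\scT^c_\Lambda\to\scT_\Lambda$ ``gives isomorphisms $a_\infty^*\cW\simeq a_0^*\cW$'', and later that once the data $\wt D,\kappa_\Lambda,\wt\nu_\Lambda,I^{v,-}_\Lambda$ are chosen inductively the compatibility of the square ``follows automatically from naturality''. But contractibility only gives a \emph{contractible space} of such isomorphisms, and the issue is to pick one for each $\Lambda$ so that on the boundary stratum $\scT_{\Lambda^0}\ov{\times}_{B\Gamma}\scT_{\Lambda^1}$ it restricts to the product of the two previously chosen isomorphisms. This is not automatic from having chosen the underlying bundles compatibly; it is a second inductive construction, of a different flavor (choosing trivializations of the $[0,1]$-bundle and of $\cW$ over a compatible system of open covers). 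The paper isolates exactly this point as Lemma~\ref{lem:obtain-isomorphism-from-homotopy}, which takes as input the bimodule structure on $\scT^c$ and on $\cW$ and outputs a coherent system of isomorphisms $\{\Phi_{\Lambda}\}$. Your proof would be complete once you either invoke that lemma or reproduce its argument; without it, the commutativity of the square in the statement is asserted but not established.
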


\begin{proof}[Proof of Proposition~\ref{prop:existence-stable-complex}]
    Recall that $$T\cT_\Lambda = (T\ov{\scB}^\bR_\Lambda\,\oplus\, T^v\scT_\Lambda,E_\Lambda\,\oplus\,\fp\fu_\Lambda),$$
    where $\fp\fu_\Lambda$ is the Lie algebra of $\p{\gamma\in \Gamma^+}{\PU_{d_{\Lambda_\gamma}+1}(\bC)}$.
    We define
    \begin{equation}
        I_\Lambda \coloneqq I^b_\Lambda\,\oplus\,I^v_\Lambda\quad\qquad U_\Lambda \coloneqq (0,\bR) \qquad\quad W_\Lambda \coloneqq \bR^{\Gamma^+}\oplus W^v_\Lambda.
    \end{equation}
    By Lemma~\ref{lem:obtain-isomorphism-from-homotopy} and Lemma~\ref{lem:stable-complex-base}, it suffices to show that we can choose the vector bundles of Proposition~\ref{prop:interpolating-to-complex} compatibly over boundary strata. 
    Due to the decompositions~\eqref{eq:tangent-bundle-decomposition} and~\eqref{eq:obstruction-bundle-decomposition}, we can discuss the compatibility of the stable complex structures on the tangent bundle of the base spaces and the vertical tangent bundles separately. The compatibility for the base spaces was shown in \S\ref{subsec:stable-complex-base}.
    The existence of the split embeddings~\eqref{eq:embeddings-index-bundles} follows from Lemma~\ref{lem:stable-complex-base} and by constructing the bundles $I^{v,-}_{\Lambda}$ in the proof of Proposition~\ref{prop:interpolating-to-complex} inductively as in Step 2 of the proof of Lemma~\ref{lem:embedding-inductive-step}, respectively, of Proposition~\ref{prop:embedding-gkc}. Meanwhile, the existence of the split embeddings $I^{v,+}_{\Lambda^0}\,\oplus\, I^{v,+}_{\Lambda^1}\to I^{v,+}_\Lambda$ follows immediately from the inductive construction of the perturbation spaces. This completes the proof.
\end{proof}

\begin{lemma}\label{lem:obtain-isomorphism-from-homotopy}
    Suppose $\scX$ is a symmetric flow category and $\scY$ a symmetric flow bimodule from $\scX$ to itself, admitting for each $x,y\in \scX$ a fibration $q_{xy}\cl \scY(x,y)\to \scX(x,y)$ with fibers given by the interval $[0,1]$, so that the system $\{q_{xy}\}$ is compatible with the structural maps of the flow bimodule, including the symmetric actions. Suppose also that $q_{x,y}$ admits two boundary sections $a_+,a_-$ so that the squares
    \begin{equation*}\begin{tikzcd}
		\scX(x,y)\,\ov{\times}_{y}\,\scX(y,z) \arrow[d,"a_-\times \ide"] \arrow[r,""]&\scX(x,z)\arrow[d,"a_-"]\\ 
       \scY(x,y)\,\ov{\times}_{y}\,\scX(y,z)  \arrow[r,""]&\scY(x,z) \end{tikzcd} \qquad \qquad \begin{tikzcd}
		\scX(x,y)\,\ov{\times}_{y}\,\scX(y,z) \arrow[d,"\ide\times a_+"] \arrow[r,""]&\scX(x,z)\arrow[d,"a_+"]\\ 
       \scX(x,y)\,\ov{\times}_{y}\,\scY(y,z)  \arrow[r,""]&\scY(x,z) \end{tikzcd} \end{equation*}
    commute. Then, for any vector bundle $\scW\to \scY$ that is compatibly a flow bimodule itself, there exists a contractible space of systems $\{\Phi_{x,y}\}_{x,y}$ of compatible isomorphisms 
    \begin{equation}\label{eq:compatible-isomorphisms}
        \Phi_{xy}\cl a_+^*\scW(x,y)\to a_-^*\scW(x,y)
    \end{equation}
\end{lemma}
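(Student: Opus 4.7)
The plan is to reduce the construction of the isomorphisms $\Phi_{xy}$ to choosing fiberwise connections on $\scW$ along the interval fibers of $q_{xy}$, and then to propagate these choices inductively through the bimodule structure so that they are compatible with compositions and symmetric actions.

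First I would observe that for each $(x,y)$ and each point $p\in\scX(x,y)$, the restriction $\scW|_{q_{xy}\inv(p)}$ is a vector bundle over an interval, hence admits parallel transport along the fiber from $a_+(p)$ to $a_-(p)$ once we fix a fiberwise linear connection. The space of equivariant (with respect to the covering groups and symmetric actions) fiberwise linear connections on $\scW\to \scY(x,y)$ is a nonempty affine space, hence contractible; averaging via the compact group action or using rel--$C^1$ partitions of unity as in \cite{Kott} shows nonemptiness. Given such a connection $\conn_{xy}$, fiberwise parallel transport from $a_+$ to $a_-$ defines a smooth (rel--$C^1$) equivariant bundle isomorphism $\Phi_{xy}^{\conn}\colon a_+^*\scW(x,y)\to a_-^*\scW(x,y)$, and the dependence on $\conn_{xy}$ is through a contractible space of parameters.

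Next I would arrange compatibility with the bimodule structure and the symmetric actions by an inductive choice of connections. Order the pairs $(x,y)$ by the ``norm'' $\|(x,y)\|$ as in \S\ref{subsec:unstructured-flow-cat}. For the base case, the morphism space has no nontrivial boundary strata coming from compositions, so any equivariant fiberwise connection does the job. For the inductive step, the two commuting squares in the statement identify the restriction of $q_{xz}$ over the codimension-one stratum $\del_{[y]}\scX(x,z)$ with either $\scY(x,y)\,\ov{\times}_y\,\scX(y,z)$ or $\scX(x,y)\,\ov{\times}_y\,\scY(y,z)$ (and the pullbacks of $\scW$ split accordingly via the bimodule structure). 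Thus on $\del_{[y]}\scY(x,z)$ a fiberwise connection is already prescribed by the previously chosen connections on the smaller morphism spaces, and these prescriptions agree over the codimension-two strata by associativity and the compatibility of the symmetric actions. An equivariant extension to the interior of $\scY(x,z)$ exists and is unique up to contractible choice by the same Tietze--Kottke--style extension argument used in Lemma~\ref{lem:extending-from-boundary} and in the proof of Proposition~\ref{prop:embedding-gkc}, applied to the (rel--$C^1$) affine bundle of fiberwise linear connections.

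The resulting system $\{\Phi_{xy}\}$ is then compatible with composition and symmetric actions by construction: parallel transport commutes with the bimodule restriction maps because the connection on the boundary strata was chosen to be the one induced by the lower-rank data. The main difficulty, as in the related constructions in \S\ref{subsec:stable-complex-base} and \S\ref{subsec:stable-complex-fiber}, is purely bookkeeping: one must check that the corner-compatibility of connections holds simultaneously across all boundary strata of all morphism spaces in a symmetric and bimodule-equivariant way. Once the collection of allowed connections is identified as the space of sections of an equivariant affine bundle over $\scY$ which restricts to prescribed sections over each corner, contractibility of the total space of choices follows from the contractibility of the fibers of this affine bundle together with the standard fact that the space of extensions of a section from a closed equivariant subset is contractible whenever nonempty.
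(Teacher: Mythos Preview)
Your argument is correct, but it follows a different path from the paper's. The paper does not use connections or parallel transport at all; instead it proceeds via local trivializations and the classical bundle homotopy theorem. Concretely, the paper (i) inductively chooses invariant Riemannian metrics so that all composition maps are isometric, (ii) uses these to build compatible locally finite open covers $\cU^{xy}$ of the spaces $\scX(x,y)$ so that $\cU^{xz}$ restricts to $\cU^{xy}\,\ov{\times}_y\,\cU^{yz}$ on each boundary stratum, (iii) over each chart chooses trivializations of both the interval fibration $q_{xy}$ and of $\scW$, and (iv) inductively orders the charts so that the inclusions of covers are order-preserving. With this in place, the paper simply invokes the construction in the proof of \cite[Theorem~(14.3.1)]{tD08} (the homotopy invariance of vector bundles) to produce the compatible isomorphisms.

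Your connection-and-parallel-transport approach is more geometric and arguably cleaner once one accepts that equivariant fiberwise connections form an affine bundle and can be extended from corners by the same Kottke-type arguments used elsewhere in the paper. It also makes the contractibility of the space of choices immediate. The paper's approach, by contrast, is more combinatorial: it reduces everything to ordered covers and local trivializations, which has the advantage of being manifestly robust in the rel--$C^1$ setting and of piggybacking on a standard textbook construction rather than having to verify that parallel transport behaves well under the bimodule gluing. Either route proves the lemma.
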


\begin{proof}
    Using \cite{Kott}, we inductively choose a system of invariant Riemannian metrics so that the composition maps are isometric. Then, we use the Riemannian metrics to inductively construct countable locally finite open covers $\cU^{xy} = \{B_{r_i}(x_i)\}_i$ so that $\cU^{xz}\cap( \scX(x,y)\ov{\times}-y\scX(y,z))= \cU^{xy}\,\ov{\times}_{y}\,\cU^{yz}$. Inductively, choose trivializations $$\wt U_i := q\inv(U_i)\xra{\psi_i} U_i\times [0,1],$$ so that $\psi_i(a_-(b)) = (b,0)$ and $\psi_i(a_+(b)) = (b,1)$, and $$W|_{\wt U_i}\xra{\wt\psi_i} U_i\times[0,1]\times W_{a_+(x_i)},$$
    covering $\psi_i$ and which are compatible across boundary strata. Inductively, define orderings of the open covers $\cU^{xy}$ so that the inclusions $\cU^{xy}\,\ov{\times}_{y}\,\cU^{yz}\hkra \cU^{xz}$ are order-preserving. Then, the construction in the proof of \cite[Theorem~(14.3.1)]{tD08} carries over to yield the desired compatible isomorphisms.
\end{proof}

\subsection{A colimit and a cylindrical contact flow category}\label{subsec:colimit-proof}
Our construction of global Kuranishi charts for the moduli spaces $\Mbar^{J}_{\sft}(\Gamma^+,\Gamma^-;\beta)_\Lambda$ requires us to approximate the contact form $\lambda$ by a $1$-form with integral action on the Reeb orbits in order to construct framings. This forces our construction of the flow category $\scM^{Y,\lambda}_{\le L}$ to depend on the action bound $L$. To obtain the ``full flow category'' of $(Y,\lambda)$ we take a colimit of the induced diagram of flow categories. Since we do not prove that the composition of our flow bimodules is (homotopic to) the flow bimodule of the `larger' cobordism, the full flow category will be a telescope of the flow categories of Theorem~\ref{thm:sft-flow-category}.

\begin{proposition}\label{prop:colimit-of-flow-cats}
    There is a directed system  \[ \scM^{Y,\lambda}_1 \rightarrow  \scM^{Y,\lambda}_2 \rightarrow \dots \] of the symmetric stably complex flow categories constructed in Theorem \ref{thm:flow-cat}. 
\end{proposition}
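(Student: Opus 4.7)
The plan is to obtain the bimodules $\scM^{Y,\lambda}_n \to \scM^{Y,\lambda}_{n+1}$ as instances of Theorem~\ref{thm:flo_bim}, applied to the trivial symplectic cobordism, and then glue them into a directed system. Concretely, fix an unbounded increasing sequence $L_1 < L_2 < \dots$ of positive real numbers and set
\[ \scM^{Y,\lambda}_n \coloneqq \scM^{Y,\lambda}_{\le L_n},\]
constructed from a pre-perturbation datum $\fD_n = (\wt\lambda_n, \wh\conn_n, p_n)$ via Theorem~\ref{thm:flow-cat}, together with a stably complex lift via Theorem~\ref{thm:stable-complex-structure}. The data $\fD_n$ may be chosen freely at this stage, but the key observation is that both $\wt\lambda_n$ and $\wt\lambda_{n+1}$ can be taken to be $\cP_{L_{n+1}}$-integral approximations of $\lambda$ after replacing $\wt\lambda_n$ with a compatible refinement; this does not change the resulting flow category up to the equivalence we need because the perturbation data for the morphism spaces of $\scM^{Y,\lambda}_n$ involve only Reeb orbits of action at most $L_n$.

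Second, I would construct the bimodule $\scN_n$ from $\scM^{Y,\lambda}_n$ to $\scM^{Y,\lambda}_{n+1}$ by invoking Theorem~\ref{thm:flo_bim} applied to the trivial exact symplectic cobordism $(\bR\times Y, d(e^s\lambda))$, viewed as a cobordism from $(Y,\lambda)$ to itself, with action bounds $L^- = L_n$ and $L^+ = L_{n+1}$. Using Example~\ref{ex:interpolating-between-contact-structures} (applied with $\lambda^\pm = \lambda$ but with distinct pre-perturbation data $\fD_n, \fD_{n+1}$ at the two ends), we obtain a pre-perturbation datum $\wt\fD_n$ for the cobordism which restricts to $\fD_n$ on the negative end and $\fD_{n+1}$ on the positive end. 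Theorem~\ref{thm:flo_bim} then produces a rel--$C^1$ flow bimodule $\scN_n$, and the inductive construction of perturbation data and extensions of good coverings in the proof of that theorem allow us to demand that the boundary strata of the global Kuranishi charts underlying $\scN_n$ agree, up to strong equivalence, with the already chosen charts for $\scM^{Y,\lambda}_n$ and $\scM^{Y,\lambda}_{n+1}$.

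Third, the stably complex lift is transported across $\scN_n$ by essentially the construction of \S\ref{subsec:stable-complex}. The bundles $V_\gamma$ attached to Reeb orbits in \S\ref{subsec:lift-of-objects} depend only on $(Y,\lambda,J)$ and not on the action bound, so they are canonically the same at the two ends. The interpolating Cauchy--Riemann operator of Proposition~\ref{prop:interpolating-to-complex} and the joint approximation scheme of Lemma~\ref{lem:joint-fin-dim-scheme} can be chosen to restrict on the two ends to the already built index and stabilizing bundles of $\scM^{Y,\lambda}_n$ and $\scM^{Y,\lambda}_{n+1}$; this is analogous to the inductive matching in Proposition~\ref{prop:existence-stable-complex}. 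The resulting $\scN_n$ lives in $\text{Flow}^{\Sigma,U}$, giving the morphisms of the directed system.

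The main technical obstacle is the compatibility of the auxiliary data. While the pre-perturbation data $\fD_n, \fD_{n+1}$ can be matched at the two ends of the cobordism by the argument above, one must inductively extend the good coverings, the slice maps $\zeta_\cU$, and the finite-dimensional perturbation spaces over the interior of the cobordism so that they restrict to the previously fixed data on the two ends and on all corner strata (including those where buildings break at Reeb orbits that appear in both action windows). This is achieved by the same pattern as in the proof of Proposition~\ref{prop:embedding-gkc}: inductively extending equivariant maps from boundary strata via Lemma~\ref{lem:extending-from-boundary}, and inductively enlarging the perturbation spaces via joint finite-dimensional approximation schemes as in Lemma~\ref{lem:joint-fin-dim-scheme}, with the stable complex compatibilities imposed simultaneously as in \S\ref{subsec:stable-complex}. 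Since $\text{Flow}^{\Sigma,U}$ admits $\aleph_0$-small colimits (Lemma~\ref{lem:all-colimits}), the telescope of the directed system $\{\scN_n\}$ then yields the flow category $\scM^{Y,\lambda}$ of Theorem~\ref{thm:sft-flow-category}.
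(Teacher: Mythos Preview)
Your overall strategy is the same as the paper's: build the bimodules as instances of Theorem~\ref{thm:flo_bim} applied to the trivial cobordism $(\bR\times Y, d(e^s\lambda))$, and check compatibility of the stably complex structures. However, several of your supporting statements are imprecise or incorrect, and one of the paper's main technical points is missing from your argument.

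First, the primes $p_n$ cannot be chosen independently. The framing construction for cobordism buildings in \S\ref{subsec:cobordism-framings} requires the pair $(p^+,p^-)$ to satisfy~\eqref{eq:primes-for-base}, namely $p^- > \sum_{\gamma\in\cF^+}\cA_{\wt\lambda}(\gamma)$ and $p^+ \gg p^-$. Since the bimodule $\scN_n$ uses $(p^+,p^-)=(p_{n+1},p_n)$, one must arrange all the $p_n$ from the outset so that every consecutive pair meets this inequality. The paper does exactly this, and also imposes a divisibility condition $\cA_{\wt\lambda_n}(\gamma)\mid\cA_{\wt\lambda_{n+1}}(\gamma)$ on the integral approximations. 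Your sentence about ``replacing $\wt\lambda_n$ with a compatible refinement'' does not address this, and your invocation of Example~\ref{ex:interpolating-between-contact-structures} is misplaced: that example interpolates contact forms and almost complex structures, which are fixed here; what varies are the integral approximations and the primes.

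Second, your claim that the bundles $V_\gamma$ of \S\ref{subsec:lift-of-objects} ``depend only on $(Y,\lambda,J)$'' is not correct. They depend on the connection $\conn^Y$ (part of the pre-perturbation datum $\fD_n$) and on the auxiliary perturbation~\eqref{eq:perturbation-for-point}. The paper accordingly writes $V^i_\Gamma$ and $V^{i+1}_\Gamma$ as a priori different bundles and produces the equivalence $T\scN^i_\Lambda \oplus V^i_{\Gamma^+}\oplus\wt W_\Lambda \cong \wt I_\Lambda\oplus\wt U_\Lambda\oplus\wt W_\Lambda\oplus V^{i+1}_{\Gamma^-}$ relating them; you cannot simply declare them equal.

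Third, you do not mention the dimension shift that distinguishes $\wt U_\Lambda=(\bR,\bR^{\{\Gamma^+\}})$ for the bimodule from $U_\Lambda=(0,\bR^{\{\Gamma^+\}})$ for the flow category. This arises because for buildings in a cobordism one does not quotient by target translation, so $T^v\scN^i_\Lambda = \ker(D\delbar_J+\mu_\Lambda)$ whereas $T^v\scM^{Y,\lambda}_i\oplus\bR=\ker(D\delbar_J+\mu_\Lambda)$. This is the content needed to make Diagram~\eqref{dig:associativity-stable-complex} commute across the bimodule, and it should be stated explicitly.
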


\begin{definition}
    We call the colimit $$\scM^{Y,\lambda} \coloneqq \colim\limits\,\scM^{Y,\lambda}_n$$ in $\Flow^{\Sigma,U}$ ``the" \emph{contact flow category} of the contact manifold $(Y,\lambda).$ 
\end{definition}   

While we expect $\scM^{Y,\lambda}$ to be independent of the required auxiliary choices, we do not show this invariance here.

\begin{proof} From the discreteness of $\spec(\lambda) \sub \bR$, we can find an order-preserving enumeration $a\cl \bN \to \spec(\lambda)$. We choose $\cP_L$-integral approximations by taking any $\cP_{a(1)}$-integral approximation $\wt\lambda_1$ and selecting the $\cP_{a(n+1)}$-integral approximation $\wt\lambda_n$ such that the action $\cA_{\wt \lambda_n}(\gamma)$ divides $\cA_{\wt \lambda_{n+1}}(\gamma)$ whenever $\cA_\lambda(\gamma) \leq a(n)$.
Choose prime integers $p_n$ so that for any $n\geq 1$, the pair $(p^+,p^-) = (p_{n+1},p_n)$ satisfies~\eqref{eq:primes-for-base}. Let $\wh X = \wh Y$ be the trivial cobordism equipped with the chosen almost complex structure $J$ and let $\scN^{n}_{\wh Y}$ be the bimodule associated to $(\wh X,J)$ and the following pre-perturbation datum $\fD^\pm_n$ given by
\begin{itemize} 
    \item $L^+ = a(n+1), L^- = a(n)$,
    \item $\cP^-_n = \cP_{a(n)}$ and $\cP^+_n = \cP_{a(n+1)}$,
    \item $p^+ = p_{n+1}$, $p^- = p_n$.
\end{itemize}
The bimodules $\scN^{n}_{\wh Y}$ induce the directed system $\scM^{Y,\lambda}_1 \;\stackrel{\scN^{1}_{\wh Y}}{\longrightarrow}\;  \scM^{Y,\lambda}_2\;\xrightarrow[]{\scN^{2}_{\wh Y}} \;\dots$ of the claim.\par
It remains to show that they admit stable complex structures compatible with those of $\scM^{Y,\lambda}_i$ from Theorem~\ref{thm:stable-complex-structure}.
Fix thus $i\in \bN$. By \cite[Definition~4.17]{AB24}, respectively its adaption to our setting, we have to construct for any function $\Lambda\cl \Gamma^-\to \Gamma^+$ between a sequence of Reeb orbits considered as objects of $\scM^{Y,\lambda}_i$ and $\scM^{Y,\lambda}_{i+1}$, respectively, the data of
\begin{enumerate}
    \item a complex virtual vector bundle $\wt I_\Lambda$
    \item a vector bundle $\wt W_\Lambda$
    \item the vector space $\wt U_\Lambda = (\bR,\bR^{\{\Gamma^+\}})$
\end{enumerate}
 on $\scN^i_\Lambda\coloneqq\scN^i_Y(\Gamma^-,\Gamma^+)_\Lambda$ together with an equivalence
 \begin{equation}
     T\scN^i_\Lambda\,\oplus\, V^i_{\Gamma^+} \,\oplus\, \wt W_\Lambda\ \cong\ \wt I_{\Lambda}\,\oplus\, \wt U_\Lambda\,\oplus\, \wt{W}_\Lambda\,\oplus\,V^{i+1}_{\Gamma^+}
 \end{equation}
 where $V^j_\Gamma$ is the vector bundle constructed in \S\ref{subsec:lift-of-objects} with the perturbation data used for $\scM^{Y,\lambda}_j$. These are required to satisfy analogues of the compatibility conditions described in Definition~\ref{de:stable-complex-flow-cat}. As in \textsection\ref{subsec:stable-complex}, the construction of such a stably complex lift of $\scN^{\wh Y}_i$ can be split up into the construction of stable complex structures on the base spaces, cf. Lemma~\ref{lem:stable-complex-base}, and those on the vertical tangent bundle. The generalization of Lemma~\ref{lem:stable-complex-base} to the cobordism base spaces is straightforward, because the key input, Lemma~\ref{lem:stable-complex-corner-blow-up}, is a general statement that also holds for cobordism base spaces. The construction of the stable complex structures on the vertical tangent bundles is exactly the same as in \S\ref{subsec:stable-complex-fiber}. The only difference is that we have $T^v\scN^i_\Lambda = \ker(D(\delbar_J) + \mu_\Lambda)$, while $T^v\scM^{Y,\lambda}_i\oplus  \bR = \ker(D(\delbar_J) + \mu_\Lambda)$ due to the fact that we quotient by translations in the target for curves in the symplectization. This accounts for the difference between $U_\Lambda$ and $\wt U_\Lambda$. Choosing the auxiliary data needed for the construction of the stable complex structures inductively, their compatibility with the composition maps in the sense of Proposition~\ref{prop:existence-stable-complex} follows. 
\end{proof}

In the construction of the full contact flow category above, we were forced to take the colimit approach due to a lack of an integral approximation $\wt \lambda$ without first filtering through the action. However, we can construct a flow category for cylindrical contact homology directly. We collect the necessary modifications to obtain a cylindrical contact flow category.

\begin{theorem}\label{thm:cyl-flow-cat}
    Let $(Y,\lambda)$ be a hyper-tight contact manifold with a given compatible almost complex structure $J$ interpolating between cylindrical almost complex structures $J^\pm$. Then, there exists a flow category $\scM_{cyl}$ of class rel--$C^1$ whose objects are the Reeb orbits of $\lambda$ and whose morphism spaces are 
    \begin{equation*}\label{eq:morphisms-cylindrical}
        \scM_{cyl}^{Y,\lambda}(\gamma^-,\gamma^+) = \Mbar_{\sft}^{\, J}(\gamma^+,\gamma^-)
    \end{equation*}
    for any pair $(\gamma^+,\gamma^-)$ of Reeb orbits.
\end{theorem}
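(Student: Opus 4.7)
The plan is to adapt the construction of $\scM^{Y,\lambda}_{\le L}$ in Theorem~\ref{thm:flow-cat}, restricting the objects to single Reeb orbits (rather than sequences) and using the hyper-tight assumption to remove all non-cylindrical degenerations. First, I would establish the following geometric simplification. Because $(Y,\lambda)$ has no contractible Reeb orbits, no $J$-holomorphic plane can exist in $\wh Y$ (a plane caps off its positive asymptote, which must therefore be contractible). Combined with the genus-zero, stability, and positivity-of-energy constraints on components of a building in $\Mbar_{\sft}^{\,J}(\gamma^+,\gamma^-)$, this forces every irreducible component to be a cylinder, so the entire building is a chain of cylinders. In particular, the whole SFT compactification is exhausted by the cylindrical strata, and for each fixed pair $(\gamma^+,\gamma^-)$ the finite set $\cP(\gamma^+,\gamma^-)$ of Reeb orbits that can appear has action bounded in $[\cA_\lambda(\gamma^-),\cA_\lambda(\gamma^+)]$.

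Next, enumerate the Reeb orbits as $\{\gamma_n\}_{n\in\bN}$ in order of increasing action (possible by non-degeneracy and properness of the action), and fix a monotone exhausting sequence of action bounds $a(n)\to\infty$ together with $\cP_{a(n)}$-integral approximations $\wt\lambda_n$ and primes $p_n$ chosen exactly as in the proof of Proposition~\ref{prop:colimit-of-flow-cats}, so that $\cA_{\wt\lambda_n}(\gamma)$ divides $\cA_{\wt\lambda_{n+1}}(\gamma)$ whenever $\cA_\lambda(\gamma)\le a(n)$ and so that the pair $(p_{n+1},p_n)$ satisfies~\eqref{eq:primes-for-base}. For each $n$, apply the inductive construction of \textsection\ref{subsec:unstructured-flow-cat} restricted to length-one sequences of Reeb orbits to obtain a symmetric flow category $\scM_{cyl,\le a(n)}$ whose objects are Reeb orbits of action $\le a(n)$ and whose morphism spaces are global Kuranishi charts for $\Mbar_{\sft}^{\,J}(\gamma^+,\gamma^-)$. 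The inductive construction of perturbation data goes through verbatim because the set $\cP_{a(n)}$ is finite, the integral approximation $\wt\lambda_n$ is defined, and the hyper-tight assumption guarantees the moduli spaces have only cylindrical codimension-one boundary strata, which match the ``cylinder-on-cylinder'' composition maps. Theorem~\ref{thm:stable-complex-structure} then equips each $\scM_{cyl,\le a(n)}$ with a stably complex lift.

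To pass from the finite-action categories to $\scM_{cyl}$, apply Theorem~\ref{thm:flo_bim} with $\wh X$ the trivial cobordism $\wh Y$ and perturbation data interpolating between those used at consecutive levels, producing flow bimodules $\scN^n \cl \scM_{cyl,\le a(n)}\to \scM_{cyl,\le a(n+1)}$ as in the proof of Proposition~\ref{prop:colimit-of-flow-cats}. Taking the colimit (telescope) in $\Flow^{\Sigma,U}$, which exists by Lemma~\ref{lem:all-colimits}, gives the desired flow category $\scM_{cyl}$ whose objects are all Reeb orbits of $\lambda$ and whose morphism spaces are the global Kuranishi charts for $\Mbar_{\sft}^{\,J}(\gamma^+,\gamma^-)$.

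The main obstacle is ensuring that the global Kuranishi charts remain compatible as the action bound grows, since a single $\cP$-integral approximation does not exist for the infinite set of all Reeb orbits. This is resolved, as in the full case, by the telescope construction using the divisibility property of the $\wt\lambda_n$: the framings used at action level $a(n)$ extend consistently to those at level $a(n+1)$ on the common subcategory, so the bimodule $\scN^n$ reduces to (a suitably stabilized version of) the diagonal bimodule on the overlap by Lemma~\ref{lem:trivial-cobordism-diagonal-bimodule}. A minor additional check, which does not use hyper-tightness in any essential way beyond Step 1, is that the proof of Proposition~\ref{prop:existence-stable-complex} goes through for the bimodules $\scN^n$ restricted to cylinders, giving the stably complex structure on the colimit.
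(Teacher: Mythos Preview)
Your approach is workable in spirit but takes a detour the paper deliberately avoids, and one claim in it is not justified.

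The paper's key observation is that in the hyper-tight cylindrical setting one can dispense with the integral approximation $\wt\lambda$ altogether. Since every component of a building in $\Mbar^{\,J}_{\sft}(\gamma^+,\gamma^-)$ is a cylinder, the only role of $\wt\lambda$—assigning integer degrees to the framing line bundles in \eqref{eq:linebundle-construction}—can be played by the ordered enumeration $\cA_e\cl\spec(\lambda)\to\bN$ of the (discrete) action spectrum. One replaces \eqref{eq:linebundle-construction} by
\[
L^c_{u_v}\coloneqq\cO_{C_v}\bigl(\cA_e(\gamma_{e^+})\,z_{v,e^+}-\cA_e(\gamma_{e^-})\,z_{v,e^-}\bigr)^{\otimes p},
\]
and uses $E([u])=\cA_e(\gamma^+)-\cA_e(\gamma^-)$ as the energy. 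This is additive under cylinder breaking and strictly positive on non-trivial cylinders, so the entire inductive construction of \textsection\ref{subsec:unstructured-flow-cat} runs with $\cA_e$ in place of $\cA_{\wt\lambda}$, for \emph{all} Reeb orbits simultaneously. No action filtration, no telescope.

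By contrast, your route reintroduces the very colimit the cylindrical hypothesis was meant to eliminate. More seriously, your final sentence—that the colimit in $\Flow^{\Sigma,U}$ ``has morphism spaces given by the global Kuranishi charts for $\Mbar^{\,J}_{\sft}(\gamma^+,\gamma^-)$''—is a gap: a homotopy colimit in a stable $\infty$-category is an abstract object and does not come equipped with concrete morphism spaces, so you do not recover the statement of the theorem as written. Your appeal to Lemma~\ref{lem:trivial-cobordism-diagonal-bimodule} is also misplaced: that lemma assumes the \emph{same} pre-perturbation datum on both ends, whereas your bimodules $\scN^n$ use distinct primes $p_n\ne p_{n+1}$ and approximations $\wt\lambda_n\ne\wt\lambda_{n+1}$, so the base spaces and framings genuinely differ and the bimodule is not the diagonal one on the overlap.
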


\begin{proof}
Recall that the construction of the global Kuranishi charts in \S\ref{subsec:construction} started off with a choice of a pre-perturbation datum $\fD = (\wt\lambda,\conn,p)$ as in Definition~\ref{de:pre-perturbation}. The integral approximation $\wt \lambda$ was only used to frame holomorphic buildings; cf. \S\ref{sssec:framings_of_buildings}. We can instead use the following scheme to frame cylindrical holomorphic buildings. Let $$\cA_e\cl \spec(\lambda) \to \bN$$ denote the ordered enumeration function on the spectrum of the contact form $\lambda$. Given a cylindrical holomorphic building $u$, we replace the complex line bundle in \eqref{eq:linebundle-construction} with the following cylindrical version
\begin{equation}\label{eq:cyl-linebundle-construction}
	L^c_{u_v} \coloneqq \cO_{C_v}\lbr{{\cA_e(\gamma_{e^+}) \; z_{v,e^+}} - {\cA_e(\gamma_{e^-})\; z_{v,e^-}}}^{\otimes p}.
\end{equation}
We use sections of such line bundles to frame cylindrical buildings. The map 
\begin{gather}
    E\cl \Mbar_{\sft}^J(\gamma^+,\gamma^-)^{cyl} \to \bN  \\
    [u] \mapsto \cA_e(\gamma^+) - \cA_e(\gamma^-)
\end{gather}
that sends a cylindrical holomorphic building to the difference of $\cA_e$-action of the incoming and outgoing Reeb orbit, satisfies the following properties:
\begin{itemize}
    \item $E$ is additive under cylindrical breaking, i.e., $E([u_1]\#[u_2])  =  E([u_1]) + E([u_2]), $
    \item $E\geq 0$ and $E([u])=0$ if and only if $[u]$ is a trivial cylinder.
\end{itemize}
    The proof of Theorem~\ref{thm:flow-cat} now carries over verbatim except for replacing every mention of $\cA_{\wt \lambda}$ with $\cA_{e}$, using the line bundles of \eqref{eq:cyl-linebundle-construction}, and assuming that each $\Gamma$ is a singleton set.
\end{proof}
\subsection{Recovering contact homology}\label{subsec:contact homology}
We sketch that our flow category recovers contact homology, a classical invariant of closed contact manifolds, following \cite{Par19}. Let $(Y^{2n-1},\xi)$ be a closed contact manifold equipped with a non-degenerate contact form $\lambda$ as before. Fix a choice of $\lambda$-adapted almost complex structure $J$ and action bound $L$ as well as the perturbation data required for the construction of the stably complex flow category $\scM^{Y,\lambda}_{\leq L}$. We will outline how this data yields a $\bZ/2$-graded chain complex $(CC_*(Y,\xi)_\lambda^{\leq L},\del)$, where
    \begin{equation}\label{eq:chains}
    CC^{\leq L}_*(Y,\xi)_\lambda \coloneqq \bigoplus\limits_{k\geq 0}\text{Sym}^k_\bQ\lbr{\bigoplus\limits_{\gamma\in (\cP_{\leq L})_{\text{good}}}\fo_\gamma\otimes\bQ} ,
\end{equation}
and the differential on the space generated by $\fo_{\Gamma^+} = \fo_{\gamma_1}\dots \fo_{\gamma_k}$ is induced by the map 
 \begin{equation}
     \partial_{\Gamma} \;\coloneqq\;\s{\Gamma^-}{\;\vfc{\scM^{Y,\lambda}_{\leq L}(\Gamma^-,\Gamma)_{(0)}}}
 \end{equation}
where $\scM^{Y,\lambda}_{\leq L}(\Gamma^-,\Gamma^+)_{(0)}$ is the part of the morphism space of virtual dimension $0$ and $\vfc{\cdot}$ is any choice of virtual count as in \cite[\S4]{Par19} or \cite[\S9]{BH23}. The fact that these virtual counts define well-defined maps between orientation lines follows from the existence of stable complex structures, Theorem~\ref{thm:stable-complex-structure} on the flow category $\scM^{Y,\lambda}_{\le L}$.\par 
For the sketch, recall that the orientation line $\fo_\gamma$ defined in \textsection\ref{subsec:orientation} and has parity 
$$|\fo_{\gamma}| = |\gamma| = \text{sign}(\det(\ide-A_\gamma)),$$ 
where $A_\gamma$ is the asymptotic operator associated to $\gamma$. We set $|\Gamma| = \sum_{i=1}^k|\gamma_i|$ for $\Gamma = (\gamma_1,\dots,\gamma_k)$ and use this to grade the chain complex~\eqref{eq:chains}. The key properties of the maps $\del_\Gamma$ are
\begin{enumerate}
    \item $\del_\Gamma$ has odd degree for each $\Gamma$
    \item the maps satisfy the Leibniz rule 
$$\partial_{(\gamma_1,\gamma_2)} = \partial_{(\gamma_1)}\otimes\ide + (-1)^{|\gamma_1|}\ide\otimes\partial_{(\gamma_2)}.$$
\end{enumerate}  
Both follow from studying the relevant moduli spaces of buildings. Then, the maps $\del_\Gamma$ descend to a map $\del$ on $CC_*(Y,\xi)_\lambda^{\leq L}$ and it remains to show that $\del^2 = 0$. Once this is proven, the \emph{contact homology of action at most $L$} of $(Y,\lambda)$ will be
    $$HC^{\le L}_*(Y,\xi)_\lambda := H_*(CC^{\leq L}_*(Y,\xi)_\lambda,\del).$$

\appendix
\section{Gluing Results}\label{sec:gluing}

This section develops the tools required for showing that the thickening $\cT$ is a topological manifold over the base-space $\cB^{\bR}$ where the fibers carry a smooth structure. Our presentation of the gluing results is similar to \cite[Appendix A]{AMS23} and \cite[\textsection 5]{Par19}.

Let $(\wh Y,J)$ be the symplectization of a contact manifold $(Y,\lambda)$ equipped with an $\bR$-invariant almost complex structure $J$, which satisfies $J(\partial_s) = R$ where $\partial_s$ is the infinitesimal vector field generated by the translation $\bR$ action and $R$ is the Reeb vector field. Let $\Mbar \to \cB^{\bR}$ be a smooth map and define $\cC$ via the pull-back square 
$$
\begin{tikzcd}
\Cbar \arrow{r} \arrow{d} & \overline{\cC_{\cB}} \arrow{d} \\
\Mbar \arrow{r}{}& \cB^{\bR}.
\end{tikzcd}
 $$

\noindent Recall that $\cB^{\bR}$ is the base space, which was obtained from a real-oriented blow-up of certain divisors of $\Mbar^*(\bP^{N}).$ In particular, each fiber in the universal bundle consists of the data of
\begin{itemize}
    \item a curve $C$
    \item matching conditions at certain nodes of $C$
\end{itemize}
Let $\Cbar^0$ denote the complement of the nodal points in $\Cbar$ and $\mathfrak{Y}_{\Cbar}:= \Omega^{0,1}_{\Cbar/\Mbar} \otimes_{\bC} T\wh Y$
be the bundle over $\Cbar^0 \times \wh Y$ which consists of $T \wh Y$-valued anti-holomorphic forms on the vertical bundle of $\Cbar \to \Mbar.$  Let $W$ be a real vector space with a linear map $$ A  \cl W \to \Gamma^\infty_c (\mathfrak{Y}_{\Cbar} ).$$

Given a decorated corolla $T$, we define the regular locus of moduli of buildings $\Mbar_T^{\,\normalfont\text{reg}}(\wh Y)$ to be 

\begin{align}\label{gluing-square} 
    \Mbar_T^{\,\normalfont\text{reg}}(\wh{Y}) = 
	 \left\{
		\begin{array}{l|l}
			\nu \in \Mbar & u \ \textnormal{is smooth and is of type }   T \\ 
			u \cl \Cbar|_\nu \rightarrow{} \wh Y & \delbar_J u + A(w)(\cdot ,u(\cdot)) = 0 \\
			w \in W & u \ \textnormal{is regular} \\
		\end{array} 
		\right\}.
\end{align}
We denote the fiber over a point $\nu \in \Mbar$ by $\Mbar^{\,\normalfont\text{reg}}_T(\wh{Y})|_\nu$. The regularity assumption in \eqref{gluing-square} implies that each fiber $\Mbar^{\,\normalfont\text{reg}}_T(\wh{Y})|_\nu$ carries a unique smooth structure. However, this smooth structure does not necessarily extend to a smooth structure on all of $\Mbar^{\reg}_T(\wh Y)$ due to the resolution of nodes. The next gluing statement shows a somewhat weaker assertion. It is the main result of this section.

\begin{theorem}\label{thm:gluing-main}
    For any $(\nu, u, w) \in  \Mbar_T^{\,\normalfont\text{reg}}(\wh{Y})$, there exists a neighborhood $N$ of $(\nu, u, w)$ in the fiber $\Mbar^{\,\normalfont\text{reg}}_T(\wh{Y})|_\nu$ and a neighborhood $B$ of $\nu$ in $\Mbar$ admitting an embedding $g\cl B \times N \to \Mbar^{\,\normalfont\text{reg}}_T(\wh{Y})$ that fits in the commuting square 
   $$ \begin{tikzcd}
			{B \times N} & {\Mbar^{\,\normalfont\text{reg}}_T(\wh{Y})} \\
			B & { \Mbar}
			\arrow[ from=1-2, to=2-2]
			\arrow["g",hook, from=1-1, to=1-2]
			\arrow["i"', hook, from=2-1, to=2-2]
			\arrow["\proj"', from=1-1, to=2-1]
    \end{tikzcd}$$
    such that the restriction of $g$ to each fiber of the trivial fibration $B\times N \to B$ is smooth.
\end{theorem}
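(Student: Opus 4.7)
The plan is to establish the theorem by a pre-gluing plus Newton iteration argument in exponentially weighted Sobolev spaces, adapting the framework of \cite{Par19} to the presence of the real-oriented blow-up. First I describe the parameter space. A neighborhood $B \subset \Mbar$ of $\nu_0$ consists of deformations that may (i) smooth nodes of type $1$ according to gluing parameters $g_e \in [0,\epsilon)$ arising from the real-oriented blow-up, (ii) smooth nodes of type $0$ via complex gluing parameters, (iii) vary the complex structure on each irreducible component, and (iv) vary the asymptotic markers and persisting matching isomorphisms. A neighborhood $N$ of $(\nu_0,u_0,w_0)$ in the fiber $\Mbar^{\,\reg}_T(\wh Y)|_{\nu_0}$ parametrizes deformations of $(u_0,w_0)$ on the fixed domain $\Cbar|_{\nu_0}$; by the regularity hypothesis and the implicit function theorem for a fixed domain, $N$ is canonically a smooth manifold modelled on $\ker(D_{u_0}+DA(w_0))$.

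The first step is the construction of the pre-gluing. For $\nu \in B$ with gluing parameters $\{g_e\}$ and deformed matching data, and for $(u,w)$ representing a point of $N$, I build an approximate solution $u^{\pre}_{\nu,u,w} \cl \dot{\Cbar}|_\nu \to \wh Y$ as follows. At each node $z_e$ of type $1$ being smoothed, $u$ is exponentially close to a trivial cylinder over the Reeb orbit $\gamma_e$; I fix cylindrical coordinates on either side (with the asymptotic markers and relative translation in $\wh Y$ determined by the matching isomorphism data in $\nu$) and interpolate between the two $\bR$-translated restrictions of $u$ with a trivial cylinder of neck length $\sim |\log g_e|$, using a standard cutoff. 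At nodes of type $0$, perform the analogous plumbing directly in $\wh Y$. The key observation is that the real-oriented blow-up structure of $\cBR$, together with the canonical lifts of asymptotic markers to sphere normal bundles (as in Remark~\ref{rem:neighbourhood-of-exceptional-boundary}), uniquely determines the cylindrical charts and the target translations used in the interpolation, so that $u^{\pre}_{\nu,u,w}$ depends continuously on $(\nu,u,w)$ and smoothly on $(u,w)$ for fixed $\nu$.

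The second step is the correction via Newton iteration. Fix an exponential weight $0 < \delta < \min_{\gamma \in \cP(T)} \inf|\sigma(A_\gamma)|$ and work in $W^{k,2,\delta}(\dot{\Cbar}|_\nu, (u^\pre)^* T\wh Y) \oplus W$. The linearized perturbed Cauchy--Riemann operator
\[
D^\pre_\nu = D\delbar_J(u^\pre) + DA(w_0)(\cdot)|_{\graph u^\pre}
\]
is, after a breaking-neck cutoff and identification with $\ker(D_{u_0} + DA(w_0))$-sections on $\Cbar|_{\nu_0}$, a uniformly small perturbation of the (surjective) operator at $\nu_0$, so it admits a uniformly bounded right inverse $Q_\nu$ for $\nu$ sufficiently close to $\nu_0$. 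Combined with the standard quadratic estimate for $\delbar_J + A(\cdot)$ and the exponential decay $\|\delbar_J u^\pre\|_{k-1,2,\delta} \to 0$ as $\nu \to \nu_0$, the Picard lemma produces a unique correction $\eta_{\nu,u,w} \in \im(Q_\nu)$ of small norm such that $u^\pre + \eta_{\nu,u,w}$ solves the perturbed Cauchy--Riemann equation. Setting $g(\nu,(u,w)) := (\nu,\, u^\pre + \eta_{\nu,u,w},\, w)$ and composing with the identification of $N$ with a slice in the fiber yields the desired map. Fiberwise smoothness follows from the smooth dependence of $Q_\nu$, $u^\pre$, and the iteration on $(u,w)$ for fixed $\nu$, which reduces to a standard IFT on a fixed glued domain; continuity on $B \times N$ is a standard gluing estimate.

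The final step is to verify that $g$ is an embedding onto an open neighborhood. Injectivity follows from the uniqueness clause in the Picard iteration. Openness is a standard surjectivity-of-gluing argument: any element of $\Mbar^{\,\reg}_T(\wh Y)$ projecting to $\nu \in B$ and sufficiently Gromov-close to $(u_0,w_0)$ admits a unique decomposition into a pre-glued configuration plus a correction of the form produced above. The main technical obstacle is the compatibility with the real-oriented blow-up structure: the matching isomorphism data encoded in $\nu$ must agree with the relative target-translations and asymptotic markers chosen in the pre-gluing, and the resulting glued map must satisfy the matching condition at persisting nodes. This is handled precisely by the canonical lifts provided by the blow-up of $\cB_{n^+,n^-}(d)$ to $\cBR_{n^+,n^-}(d)$, which makes all of the interpolation data depend functorially on $\nu$ and ensures that the commuting square in the statement holds.
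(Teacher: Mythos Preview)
Your proposal is correct and follows essentially the same pre-gluing plus implicit-function-theorem strategy as the paper, which in turn closely follows \cite{Par19}. The only notable difference is packaging: the paper isolates a \emph{kernel-gluing isomorphism} $\Glue_{\ker}\cl \ker D_0 \to \ker D_{\mathbf g}$ and defines the gluing map for fixed gluing parameter $\mathbf g$ as the composite
\[
\Glue(\mathbf g,\,\cdot\,)=\exp_{u_{\mathbf g}}\circ \proj_{\ker D_{\mathbf g}}^{-1}\circ \Glue_{\ker}\circ \proj_{\ker D_0},
\]
which makes the fiberwise diffeomorphism property an immediate consequence of composing diffeomorphisms (this is their Proposition labelled ``fixed-$\mathbf g$-gluing''), whereas you phrase the same step as a Picard/Newton correction in $\im Q_\nu$ and then appeal separately to surjectivity-of-gluing for openness. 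Both routes yield the same map; the paper's formulation is marginally more convenient for the rel--$C^1$ transition-map statement (Proposition~\ref{prop:c1loc}) that comes next. One small notational slip: your output $(\nu,u^{\pre}+\eta,w)$ should allow the $W$-component of $\eta$ to correct $w$ as well, since the linearized operator acts on $W^{k,2,\delta}\oplus W$.
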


We first prove a special case of Theorem \ref{thm:gluing-main}, assuming that $\Mbar \to \cB^{\bR}$ is an open embedding. Then, we use an exponentiation-along-pullback-of-the-tangent-bundle trick to extend it to any map $\Mbar \to \cB^{\bR}$.

\begin{proposition}\label{prop:c1loc}
    For any two gluing maps $g_1,g_2$ as constructed in Theorem \ref{thm:gluing-main} the restrictions to the fibers over $B_1 \cap B_2$ 
    \begin{equation}
        g_{21,b} \coloneqq g_2\inv \circ g_1|_{b} \cl (\{b\} \times N_1 )\cap g_1\inv(g_2(B_2 \times N_2)) \to \{b\} \times N_2 
    \end{equation}
     vary continuously in the $C^1_{loc}-$topology.
\end{proposition}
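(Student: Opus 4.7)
The plan is to trace through the construction of the gluing map $g$ in Theorem~\ref{thm:gluing-main} and identify precisely how the $b$-dependence enters. Recall that each $g_i$ is built by (i) choosing cylindrical coordinates near the nodes and punctures of $\Cbar|_b$ varying continuously with $b$, (ii) forming a pre-glued approximate solution $u_{i,\text{app}}(b,x)$ for $(b,x)\in B_i\times N_i$ by cutoff interpolation parametrized by the gluing data encoded in $b$, and (iii) correcting to a true solution $u_i(b,x)$ via Newton iteration using a uniformly bounded right inverse $Q_i(b,x)$ of the linearized perturbed Cauchy--Riemann operator. On each fiber $\{b\}\times N_i$ the resulting map is smooth by the standard implicit function theorem. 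The transition $g_{21,b}$ is then characterized uniquely by the condition that $u_1(b,x_1)$ and $u_2(b,g_{21,b}(x_1))$ differ by the domain reparametrization and target translation pinned down by the normalization data (the marked sections $q'_v$ and matching-circle normalizations) of both charts.

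The first step of the argument is to observe that, thanks to this uniqueness, $g_{21,b}$ is obtained by composing three maps: the smooth fiberwise parametrization $x_1 \mapsto u_1(b,x_1)$, a finite-dimensional reparametrization of domain and target picking out the normalization of the second chart, and the inverse of $x_2 \mapsto u_2(b,x_2)$. The second and third maps are smooth by construction. Therefore $C^1_{\mathrm{loc}}$-continuity of $b\mapsto g_{21,b}$ reduces to $C^1_{\mathrm{loc}}$-continuity, in $b$, of the families $\{u_i(b,\cdot)\}_b$ on compact subsets of the domain staying bounded away from the ``infinite-neck'' locus.

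The second step is to establish this continuity of the Newton solutions. This rests on three standard inputs adapted from \cite[\S5]{Par19} and \cite[Appendix~A]{AMS23}: continuity of the pre-glued maps $u_{i,\text{app}}(b,\cdot)$ in the $C^1_{\mathrm{loc}}$-topology, which follows because the cylindrical coordinates and cutoff profiles are chosen continuously in $b$; continuity of the right inverses $Q_i(b)$ in the bounded-operator norm, coming from the projection-formula construction and uniform weighted-Sobolev Fredholm estimates; and uniform convergence of the Newton iterates on the region of parameters under consideration, via the quadratic estimate. Combining these, on any compact $K\subset N_1\cap g_1^{-1}g_2(B_2\times N_2)$ the maps $g_{21,b}|_K$ converge in $C^1(K)$ as $b\to b_0$.

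The main obstacle will be the neck regions, where the gluing parameter determines the conformal length of an inserted cylinder and slightly different pre-gluing prescriptions can produce slightly different neck lengths for the same $b$. The key observation making $C^1_{\mathrm{loc}}$ (as opposed to global $C^1$) sufficient is that any compact $K\subset N_1$ is uniformly bounded away from the ``infinite neck'' limit, so the neck-length discrepancy is a bounded smooth function of $(b,x)$ with derivatives controlled uniformly on $K$. The exponential decay of holomorphic maps to trivial cylinders over non-degenerate Reeb orbits, incorporated through the exponential weight $\delta$ of \eqref{eq:small-delta}, ensures that these neck contributions do not spoil the $C^1_{\mathrm{loc}}$-estimates, completing the proof.
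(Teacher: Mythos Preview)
Your proposal is correct and follows essentially the same strategy as the paper: both reduce the $C^1_{\mathrm{loc}}$-continuity of the transition maps to the continuity in $\mathbf{g}$ of the right inverses $Q_{\mathbf{g}}$ and of the exponential/pre-gluing comparison maps, citing \cite[\S5]{Par19}. The paper's argument is somewhat more compressed: rather than framing the correction step via Newton iteration, it uses the explicit formula~\eqref{eq:gluing_map}, namely $\Glue = \exp_{u_{\mathbf{g}}}\circ\proj_{\ker D_{\mathbf{g}}}^{-1}\circ\Glue_{\ker}\circ\proj_{\ker D_0}$, and simply writes out $g_{21,b}$ as the composition $\Glue_{\ker D^2_0}^{-1}\circ\proj_{\ker D^2_{\mathbf{g}}}\circ\exp_{u_{2,\mathbf{g}}}^{-1}\circ\exp_{u_{1,\mathbf{g}}}\circ\proj_{\ker D^1_{\mathbf{g}}}^{-1}\circ\Glue_{\ker D^1_0}$, then points to the continuous dependence of each factor on $\mathbf{g}$. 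Your decomposition via ``parametrization, reparametrization, inverse parametrization'' and your separate discussion of the neck regions are correct but slightly more elaborate than necessary---the weighted Sobolev setup already absorbs the neck issues, which is why the paper does not isolate them.
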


\begin{corollary}\label{cor:tangent-bundle}
    There exists a well-defined \emph{vertical tangent bundle}
\begin{equation}\label{eq:vert-tangent-bundle}T^v\Mbar_T^{\reg}(\wh Y) \to \Mbar_T^{\reg}(\wh Y)\end{equation}
with fibers given by
\begin{equation}\label{eq:vert-tangent-bundle-fiber}
T^v_{(b,u)}\Mbar_T^{\reg}(\wh Y) = \ker(D_\varphi+ P).\end{equation}
\end{corollary}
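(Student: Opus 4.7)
The plan is to assemble the vertical tangent bundle chart by chart using the local fibrations provided by Theorem~\ref{thm:gluing-main}, and then verify that the transition data define a genuine topological vector bundle using the $C^1_{\mathrm{loc}}$-continuity of Proposition~\ref{prop:c1loc}. Concretely, around any point $(b_0, u_0, w_0) \in \Mbar^{\reg}_T(\wh Y)$, the gluing map $g \colon B \times N \to \Mbar^{\reg}_T(\wh Y)$ identifies a neighborhood with a product whose fibers over $B$ are smooth manifolds. Over such a chart I would set $T^v|_{g(B\times N)} := g_*(0_B \times TN)$, where $TN$ is the ordinary tangent bundle of the smooth fiber $N$.

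To identify the fiber at $(b,u,w)$ with $\ker(D_u + P)$, I would recall that $N$ is cut out as the zero locus (inside the space of triples $(u,w)$ with fixed $\nu$) of the section $(u,w) \mapsto \delbar_J u + A(w)(\cdot, u(\cdot))$ of the Banach bundle with fiber $\Omega^{0,1}(\dot C_\nu, u^*T\wh Y)$. Regularity ensures that the linearization $D_u^\conn + A(\cdot)|_{\graph(\nu,u)}$ is surjective, so by the implicit function theorem applied to the fiber, the tangent space at $(u,w) \in N$ is precisely $\ker(D_u^\conn + P)$. This description is intrinsic to the solution $(u,w)$ and independent of the chart.

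For compatibility between charts, given two gluing maps $g_1,g_2$ with overlapping image, the transition $g_{21,b}$ is smooth on each fiber (since both $N_1$ and $N_2$ carry the unique smooth structure coming from regularity), and therefore its fiberwise differential $d(g_{21,b})$ yields a smooth linear isomorphism between the corresponding vertical tangent spaces. Proposition~\ref{prop:c1loc} says that $b \mapsto g_{21,b}$ is continuous into $C^1_{\mathrm{loc}}$, so $b \mapsto d(g_{21,b})$ is continuous into $C^0_{\mathrm{loc}}$. This is exactly the cocycle condition needed to glue the local pieces $B \times TN$ into a topological vector bundle over $\Mbar^{\reg}_T(\wh Y)$ whose restriction to each fiber over $\Mbar$ is smooth.

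The main obstacle I anticipate is purely bookkeeping: making sure that the identification of the tangent space at $(b,u,w)$ with $\ker(D_u + P)$ in equation~\eqref{eq:vert-tangent-bundle-fiber} is natural under the transition maps, i.e., that $d(g_{21,b})$ corresponds to the canonical isomorphism of kernels induced by pre- and post-composition with parallel transport. This is a standard check once one writes the local Kuranishi-type charts in exponential coordinates (as in \cite[Proposition~3.12]{AMS23}), so no new analytical input beyond Theorem~\ref{thm:gluing-main} and Proposition~\ref{prop:c1loc} should be required.
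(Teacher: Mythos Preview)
Your proposal is correct and is exactly the argument the paper has in mind: the corollary is stated without proof immediately after Theorem~\ref{thm:gluing-main} and Proposition~\ref{prop:c1loc}, so the intended deduction is precisely to use the local product charts $B\times N$ to trivialize $T^v$ and the $C^1_{\mathrm{loc}}$-continuity of the fiberwise transition maps to glue these trivializations into a topological vector bundle. You have simply spelled out what the paper leaves implicit.
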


\subsection{Setup for Gluing} 
We will present the gluing treatment as done in \cite[\S 5]{Par19} for the sake of completeness. The main gluing theorem as stated in op. cit. is a local homeomorphism result, but actually the gluing map is smooth for a fixed gluing parameter. In our discussion we will point out how to extract that result from \cite[\S 5]{Par19} and use it to prove Theorem \ref{thm:gluing-main}.
To be consistent with notations in \cite{Par19} we recall some relevant concepts here. 

\subsubsection{Preliminaries on \textsection 2.2 and 2.5 of \cite{Par19}}
In \textsection\ref{subsec:buildings}, we defined the categories $\scS$ stratifying the moduli spaces $\Mbar^J_P(\Gamma^+,\Gamma^-;\beta)$ of Pardon buildings. The space of gluing parameters of decorated tree $T$ as in Definition~\ref{de:decorated-tree} is defined to be 
$$G_{T/} \coloneqq (0,\infty]^{E^{int}(T)}.$$
Given a gluing parameter $\mathbf{\ell}= \{\ell_e \}_e$, the tree type $T_\ell$ is obtained by contracting all the edges for which $\ell_e < \infty.$ In certain setups, it is conceptually beneficial to reparametrize $G_{T/} \cong [0,1)^{E^{int}(T)}$ by applying the function $x\to e^{-x}$ to each factor.

\begin{convention}
    In this section we will implicitly assume that the gluing parameter space $\bC^{N_0}$ actually denotes a small open neighborhood of $0$ in $\bC^{N_0}$.
\end{convention}

\subsubsection{Target Gluing}\label{ssc:targ_glu}

 Fix a point $(\nu,[u],w)\in \Mbar_T^{\reg}(\wh Y)$ in the fiber over $\nu \in \cBR$. For a gluing parameter $\ell \in G_{T_\nu/}$, we define the \textit{glued target} $\wh Y_\ell$ as follows. For each positive (resp. negative) end we truncate $[0,\infty) \times Y$ (resp. $(-\infty,0] \times Y$) to $[0,\ell_e]$ (resp. $[-\ell_e,0]$)and identify the truncated ends corresponding to the edge $e\in E^{int}(T)$ by translation by $\ell_e$. If $\ell_e = \infty$ for some edge $e$, we do nothing for that edge. The target constructed by these gluing operations is denoted as $\wh Y_\ell$. 

 We also select local sections $q_i'$ of the universal curve over $\cBR$ near $\nu$  for every curve corresponding to a vertex of $T_\nu$, such that $\pi_\bR (u(q_i'(\nu)) = 0.$  In our gluing construction, we will send these marked points to their corresponding $0-$levels in the glued target $\wh{Y}_{\textbf{g}}$. This choice of sections is intuitively a gauge-fixing for the $\bR$ action on the target. In particular, it allows us to pick a map $u\cl \cC^0_{\nu} \to \bR \times Y $ in the equivalence class of maps $[u]$.

\subsubsection{Gluing in the base}\label{ssc:base_gluing}

Before elaborating on the gluing chart in the base space, we show how we can reduce to the case of the real-oriented blow-up of the moduli space of stable curves.

\begin{lemma}\label{thm:stab_map_to_curv}
    For any $\varphi \in \cBR(d)$ there exist $d'=d(d+2)$ hyperplanes $D_i$ in $\bP^d$ such that $\wh\varphi$ intersects $D_i$ transversely away from the marked points and there is an open embedding 
    \begin{align*}
        U &\to \Mbar^\bR_{0,d'+ E^{\normalfont\text{ext}}} \\ 
        [\check{\phi},C,j,x,m]&\mapsto  [C,j,x\cup\check{\phi}\inv(D_1)\cup \dots\cup\check{\phi}\inv(D_{d'}),x]
    \end{align*}
    of a neighborhood of $\varphi_0$ into real-oriented blow-up $\Mbar^\bR_{0,d'+ E^{\normalfont\text{ext}}(T)}$ of $\Mbar_{0,d'+E^{\normalfont\text{ext}}(T)}$ at the boundary divisors.
\end{lemma}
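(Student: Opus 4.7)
The idea is to stabilize the domain by using intersections with generic hyperplanes, thereby reducing (locally in the base) the study of $\cBR(d)$ to the better-understood moduli space of stable pointed curves, and then to lift everything to the corresponding real-oriented blow-ups.

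\emph{Step 1 (choice of hyperplanes and stability).} I will first choose $d+2$ hyperplanes $D_1,\dots,D_{d+2}$ in $\bP^d$ (so $d'=d(d+2)$ new marked points in total) in general position with respect to $\wh\varphi_0$. Since $\wh\varphi_0$ is a regular embedded map, Bertini's theorem (applied componentwise) gives an open dense set of such tuples for which each $D_i$ meets $\wh\varphi_0(C)$ transversely in $d$ smooth points, all disjoint from the existing special points. By openness of transversality, the same persists on a neighborhood $U$ of $\varphi_0$ in $\cBR(d)$, and a local ordering of the intersections by proximity to those of $\varphi_0$ (possibly after shrinking $U$) lets me treat the $d'$ new points as labeled. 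Each irreducible component $C_v$ of degree $d_v\ge 1$ acquires $d_v(d+2)\ge d+2\ge 3$ new special points, while constant components are already stable; so the augmented data defines a point of $\Mbar_{0,d'+E^{\text{ext}}(T)}$.

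\emph{Step 2 (local isomorphism before blowing up).} I will compare the dimensions
\[
\dim \cB(d)=d(d+1)+d-3+|E^{\text{ext}}|=d(d+2)+|E^{\text{ext}}|-3=\dim \Mbar_{0,d'+|E^{\text{ext}}|},
\]
so that the added-marked-points forgetful map $\Phi \colon \cB(d)\to \Mbar_{0,d'+|E^{\text{ext}}|}$ is between smooth spaces of the same dimension. To conclude it is a local isomorphism at $\varphi_0$, I will show $d\Phi_{\varphi_0}$ is injective via the implicit function theorem: the linearized Cauchy--Riemann operator at a regular embedded map surjects onto normal deformations, and the transversality of each $D_i$ implies that a nontrivial deformation of $\check\phi$ must move at least one intersection point. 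Together with dimension matching, this yields a local diffeomorphism onto an open neighborhood in $\Mbar_{0,d'+|E^{\text{ext}}|}$.

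\emph{Step 3 (lifting to real-oriented blow-ups).} I will then lift $\Phi$ to the real-oriented blow-ups. Since $\Phi$ is a local biholomorphism sending the normal-crossing boundary of $\cB(d)$ (curves with nodes of type $1$) into the normal-crossing boundary of $\Mbar_{0,d'+|E^{\text{ext}}|}$ (all nodal curves), the induced linear map of normal bundles lifts the asymptotic marker data and the matching isomorphisms at type-$1$ nodes on the source to the corresponding normal sphere data on the target. For the type-$0$ nodes of the source, which are not blown up in $\cBR(d)$, the embedding $\check\phi\colon C\to\bP^d$ canonically provides a matching isomorphism from the differential $d\check\phi$ at the node (both components share the same tangent line in $\bP^d$), giving a canonical continuous section of the exceptional $S^1$-bundle over that locus. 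Assembling these, I obtain the desired open embedding $U\hookrightarrow \Mbar^\bR_{0,d'+E^{\text{ext}}(T)}$.

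\emph{Main obstacle.} The delicate point is Step~3: the source and target real-oriented blow-ups are taken along different loci (only type-$1$ divisors on $\cBR(d)$ versus all boundary divisors on $\Mbar^\bR_{0,d'+|E^{\text{ext}}|}$), so compatibility requires a canonical and continuous choice of matching isomorphism at type-$0$ nodes coming from the map to $\bP^d$, and a careful verification that this choice glues coherently with the smoothing of the type-$1$ nodes encoded by the $m$ data. Checking that the resulting map is continuous across all strata, and is a homeomorphism onto an open subset, is where the real work lies.
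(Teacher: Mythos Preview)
Your overall strategy---add intersections with generic hyperplanes to stabilise, verify a dimension-matching local isomorphism before blow-up, then lift---is exactly what the paper's cited references carry out (\cite[Proposition~6.5]{AMS21} for Step~2, \cite[Lemma~5.15(3)]{BX22} for the lift in the Hamiltonian setting). Steps~1 and~2 are correct as outlined.

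Step~3, however, has a genuine gap. For an embedded nodal rational curve in $\bP^d$ with $d\geq 2$, the two branches at a node have \emph{distinct} tangent lines in $T_{\check\phi(z)}\bP^d$; this is precisely what ``embedded'' forces at a node, since otherwise the image would have a cusp rather than an ordinary double point. Hence $d\check\phi$ does not identify $T_zC_v$ with $T_zC_{v'}$ and does not single out a ray in $T_zC_v\otimes T_zC_{v'}$. Your proposed canonical section of the exceptional $S^1$-bundle over the type-$0$ locus therefore does not exist.

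The fix is not to manufacture such a section but to read the target correctly: $\Mbar^\bR_{0,d'+E^{\text{ext}}(T)}$ should be taken to be the real-oriented blow-up only along those boundary divisors which, under the local isomorphism $\Phi$ of Step~2, correspond to the type-$1$ divisors of $\cB_{n^+,n^-}(d)$, together with the torus bundle of asymptotic markers at the $E^{\text{ext}}$ points. The paper's phrase ``at the boundary divisors'' is imprecise on this point, but this reading is forced by how the lemma is used in \S\ref{ssc:base_gluing}, where type-$0$ nodes receive ordinary complex gluing parameters in $\bC^{N_{C_0}^{\inn}}$ rather than real parameters in $G_{T_\nu/}$. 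With this interpretation, $\Phi$ is a local biholomorphism carrying the relevant normal-crossing divisors bijectively onto one another, the functorial lift to the matching blow-ups (and the compatibility with asymptotic markers) is automatic, and the obstacle you flagged disappears.
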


\begin{proof}
    This is a generalization of \cite[Lemma~5.15(3)]{BX22}, which can be shown in the same way, using \cite[Proposition~6.5]{AMS21}.
\end{proof}

For the element $\nu = (\varphi_0,C_0,j_0, m)$ of $\cBR$, we will construct a neighborhood in $\cBR$ based on the asymptotics of the map $u$ . Recall that $(C_0,j_0)$ is a marked closed Riemann surface, $\varphi_0 \cl C_0 \hkra \bP^N$ is a holomorphic embedding into $\bP^N$ and $m$ is the additional decoration of asymptotic markers and matching conditions at certain nodes. Pick cylindrical charts of the form $[M,\infty) \times S^1$ or $(-\infty,M]\times S^1$ near each puncture of $C_0$ such that the map $u$ satisfies $$u(s,t) = (Ls,\gamma(t)) + O(e^{-ks}).$$
\noindent Note that such a choice of cylindrical end is equivalent to the choice of a tangent vector $v\in T_pC_v$ at the puncture $p$.

To get a gluing chart near $\nu \in \cBR$, we use the identification from Lemma~\ref{thm:stab_map_to_curv}. We will use the local model of the moduli space of genus $0$ curves with asymptotic markers and matching conditions as \cite[\S 2.6 - 2.7, 5.1.3]{Par19}. We denote the component of the curve $C_0$ corresponding to the vertex $v\in V(T)$ as $C_0^v$. Assume that $C_0^v$ has $N_v$ nodes. Recall that nodes do not carry matching conditions.
Pick a chart
$$\scJ_v :=  \bC^{2\#p_v + \# q_v  -N_v - 3} \to \Mbar_{0,q_{v,i}, {p_{v,i}}_{(2)} }^{\#\text{nodes}=\#N_v} $$ 
which sends $0$ to the curve $(C_0^v,j_0^v)$ and $z \in \scJ_v$ to $(C_0^v,j_z^v)$, where $j_z^v$ is a complex structure agreeing with $j_0^v$ near the special points. 
The subscript $(2)$ denotes that the points $p_{v,i}$ are doubly-marked, i.e., the tangent space $T_{p_{v,i}}C_0^v \sm \{0 \}$ carries a marking (equivalently, there is an associated complex isomorphism $\bC \to T_{p_{v,i}}C_0^v$).
Moreover, we can identify $\scJ_v$ with an open neighborhood of $0$ in the cokernel of the map  
\begin{align*}
&\set{ 
  X \in C^\infty(C^v_0, TC^v_0) \;\middle|\;
  \begin{array}{l}
    X \text{ is holomorphic near } p, q, \\
    X(p_{v,i}) = X(q_{v,i}) = 0, \\
    dX(p_i) = 0
  \end{array}}
\,\to\, C^\infty_c(C^v_0 \sm \cup \{ p_{v,i}\} \cup \{ q_{v,i}\},TC^v_0 \otimes \Omega^{0,1}_{C^v_0})\\
&\qquad\qquad \qquad \qquad \qquad \qquad \qquad\qquad\qquad \qquad \qquad \quad X\mapsto \scL_X{j_0}
\end{align*}

\noindent We can then obtain a local diffeomorphism $$ \modulo{\lbr{\prod_{ v\in V(T)} \scJ_v \times \bC^{\# N_v}}}{\sim} \; \; \to f\Mbar_{0,d'+ E^{\normalfont\text{ext}}(T)}|_{T_\nu}\to \cBR|_{T_\nu}.$$
where the quotient $\sim$ is taken over relations induced from the $\bR_>0$ action on the tensor at the doubly marked points. In particular, for every edge $e = (v,v')$, we quotient by the $\bR_{>0}$ action on $T_{p_v} C^v_0 \otimes  T_{p_{v'}} C^{v'}_0 $. Here, $\cBR|^{\text{nodes fixed}}_{T_\nu}$ is the locus of all framed curves of tree type $T_\nu$ with a fixed number of nodes.
Abusing notation, we denote the image of this diffeomorphism by $\cBR|^{\text{nodes fixed}}_{T_\nu}$.
We construct a local chart near $\cBR|_{T_\nu}$ of the form,
\begin{equation}\label{base-gluing-stratum}
    \text{Glue}_{\cBR}:\cBR|^{\text{nodes fixed}}_{T_\nu}\times G_{T_\nu / }\times \bC^{N^{\inn}_{C_0}} \to \cBR,
\end{equation}
where the map $\Glue_\cBR$ is defined by a slight variation of the usual gluing in moduli of genus $0$ curves.
In particular, over the nodes in $\cN_C^\bullet$ we restrict it to the angle $ \theta = (\theta_e)_e$ determined by the matching isomorphisms in $\nu$, while the gluing parameter $\ell = (\ell_e)_e \in G_{T_\nu/}$ determines the radial component of each coordinate $\textbf{y} = (y_e)_e \in \bC^{E^{int}(T_\nu)}.$
More precisely, suppose $(C_0,j_0,m) \in \cBR$ is a curve with matching conditions $m$, which is over the fiber $(C_0,j_0) \in \cB$ under the blow-down map $\cBR \to \cB$.
Then, 
\begin{equation}
    \text{Glue}_{\cBR} ((\varphi,C,j_0,m) ,\ell,\textbf{z})  = (\varphi_{\textbf{g}},C_{\normalfont\textbf{g}},j_{\normalfont\textbf{g}},m_{\normalfont\textbf{g}}),
\end{equation} 
\noindent where $\textbf{g} = ((y_e)_e,\textbf{z}), \; y_e = (\ell_e e^{i\theta_e})$ and $(C_{\normalfont\textbf{g}},j_{\normalfont\textbf{g}})$ is obtained by the usual truncation-followed-by-gluing construction. The decoration $m_{\normalfont\textbf{g}}$ for the glued curve $(C_{\normalfont\textbf{g}},j_{\normalfont\textbf{g}})$ is induced from $(C_0,j_0,m)$ by keeping the asymptotic markers fixed and remembering the matching conditions at the edges $e$ for which $\ell_e \neq 0.$

\begin{notation*}
    From now on, we will write $\textbf{g} = (\ell,z) \in G_{T_\nu / }\times \bC^{N^{\inn}_{C_0}}$ to denote the `total' gluing parameter, which accounts for gluing along both Reeb punctures and nodes. We also abbreviate $\text{Glue}_{\cBR} (\varphi,C,j,m) , \textbf{g})$ as simply $\nu_{\textbf{g}}=(\varphi_{\textbf{g}},C_{\textbf{g}}).$
\end{notation*}

We can rewrite the gluing map in \eqref{base-gluing} in a conciser form by replacing $\cBR|_{T_\nu} \times \bC^{N^o_{C_0}}$ with the locus of $\cBR$ corresponding to the tree type $T_\nu$,  $\cBR|_{T_\nu}$ .
\begin{equation}\label{base-gluing}
    \text{Glue}_{\cBR}:\cBR|_{T_\nu}\times G_{T_\nu / } \to \cBR,
\end{equation}

\subsubsection{Linearization with respect to varying domain complex structures}
Recall that the linearization of the section $\delbar (\cdot ) + A(\cdot ) (\cdot , \cdot ) $ at the point $(\nu,u,w) \in \Mbar$ is given as 

\begin{equation}
    D^v_0 \cl \wkpd(C_\nu, u^* T\wh Y) \oplus W \to W^{k-1,p,\delta } (C_\nu, u^* T\wh Y\otimes \Omega^{0,1}TC_\nu),
\end{equation}
where the superscript $v$ denotes that this is the `vertical' part of the total linearization with respect to the fibration $\Mbar^{reg}_T(\wh Y)\to \cBR.$
If $(\nu,u,w)$ is in $\Mbar^{reg}_T(\wh Y)$, then the vertical linearization $D^v_0$ is surjective. We also have the linearization $D_0$
\begin{equation*}
     D_0 \cl \wkpd(C_\nu, u^* T\wh Y)\oplus W \oplus \cJ_{T_v}  \to W^{k-1,p,\delta } (C_\nu, u^* T\wh Y\otimes \Omega^{0,1}TC_\nu)  
\end{equation*}
given by
\begin{equation*}
     D_0(\eta,w,\xi) = D_0^v(\eta,w) + J \circ du \circ\xi. 
\end{equation*} 
Due to the regularity assumption, we have the local diffeomorphisms
\begin{align}\label{eq:kernel_at_0_identif}
    \ker D^v_0 \xrightarrow{\sim} \Mbar^{reg}_T(\wh Y)_{\nu}  \\    \ker D_0 \xrightarrow{\sim} \Mbar^{reg}_T(\wh Y)_{T_\nu}, 
\end{align}
near the origin, where $ \Mbar^{reg}_T(\wh Y)_{\nu} $ is the restriction to fiber over $\nu \in \cBR$ and $\Mbar^{reg}_T(\wh Y)_{T_\nu}$ is the restriction to strata $\cBR_{T_\nu}$corresponding to the tree type $T_\nu$.

\subsubsection{Pre-Glued maps}  
For a gluing parameter, $\textbf{g} \in G_{T_\nu/} \times \bC^{N^{\inn}_{C_0}}$. Recall that $\textbf{g}$ defines a positive real parameter $ \ell_e$ for each internal edge $e$ and that there is a Reeb orbit $\widetilde\gamma_e$ for each edge $e$ such that $u_0$ is asymptotic to the trivial cylinder $(Ls,\widetilde \gamma_e(t))$ where $L$ is the $\lambda$-action of $\widetilde \gamma_e.$ We define the \textit{flattened} building $u_0|_{\normalfont\textbf{g}}$ as follows: for every internal edge $e,$ in the chosen cylindrical coordinates $(s,t)$ near the positive puncture corresponding to the edge $e$, we define

$$u_0|_{\normalfont\textbf{g}} (s,t) := \begin{cases}
   \begin{array}{cc}
       u_0(s,t)\quad& s< \frac {1}{6} \ell_e\\
        \exp_{(Ls,\widetilde\gamma_{e}(t))} [ \chi(s-\frac{1}{6}\ell_e). \exp_{(Ls,\widetilde\gamma_{e}(t))}^{-1} u_0(s,t)] \quad  &\frac{1}{6} \ell_e \leq s \leq \frac{1}{6} S + 1 \\
        (Ls,\widetilde\gamma_{e}(t))\quad   \frac{1}{6} &\ell_e + 1<s.
        
   \end{array}    
\end{cases}$$
Near the positive end of a nodal point $n$ of $(C_0,u_0)$, we define the flattening as

$$u_0|_{\normalfont\textbf{g}} (s,t) := \begin{cases}
   \begin{array}{cc}
       u_0(s,t)\quad& s< \frac {1}{6} S_{\mathsf{n}}\\
        \exp_{u_0(\mathsf{n})} [ \chi(s-\frac{1}{6}S). \exp_{u_0(\mathsf{n})}^{-1} u_0(s,t)] \quad&  \frac{1}{6} S_{\mathsf{n}} \leq s \leq \frac{1}{6} S_{\mathsf{n}} + 1 \\
        u_0(\mathsf{n})  \quad& \frac{1}{6} S_{\mathsf{n}} + 1<s.
        
   \end{array}

\end{cases}$$
where $S_{\mathsf{n}} = -\log |z_{\mathsf{n}}|$ for the gluing parameter $z_{\mathsf{n}}$ corresponding to the node $\mathsf{n}$. 
We define the flattening analogously in the negative ends of the punctures and nodes. Here the exponential maps are taken with respect to a fixed $\bR-$invariant metric on $\wh{Y}$.

\begin{definition}
    
We define the \textit{pre-glued} map $u_{\textbf{g}} \cl C_{\textbf{g}} \to \widehat Y_{\textbf{g}}$ to be the natural descent of the flattened map $u_0|_{\normalfont\textbf{g}}.$  
\end{definition}

\subsection{Gluing Estimates} 

We will now cover the required Fredholm setup and compare the linearization of the usual `Floer-function' on the pre-glued map with the linearization before gluing. We will also prove a `kernel-gluing' which is an isomorphism between kernels of linearization before and after pre-gluing. In this section our analysis slightly deviates from Pardon's setup since we consider the gluing map without variation of the domain curve.\\

\noindent\textbf{Fredholm Setup}
We recall the relevant Sobolev spaces required for gluing. We start off with selecting metrics and connections.

\begin{convention}

We fix an $\bR-$invariant metric on the target $\wh{Y}$ and  a $J-$linear connection
on $\wh{Y}$ that is induced from the pullback of the natural map  $\wh{Y} \to \{ 0  \} \times Y$.  On the domain $C$, we fix a metric that is equal to $ds^2 + dt^2$ in the cylindrical coordinates near each puncture $p_e$. We also equip the tangent bundle $TC$ with a connection for which $\partial_s$ is parallel in the cylindrical coordinates near each puncture.
\par
Different choices of metrics or connections are commensurable, so these choices do not affect the topology.
\end{convention}

\subsubsection{Weighted Sobolev norms.}

Recall that the weighted Sobolev space $W^{k,p,\delta} (E)$ of a bundle $E \to C_\g$ consists of those sections that decay at the rate $O(e^{\delta s})$ near each cylindrical end of $C_\g$. In particular, the weighted Sobolev norm $\| \cdot \|_{k,p,\delta}$ has the usual $W^{k,p}$ norm contribution away from the ends and near each cylindrical end of $C_\g$ the contribution is $$\int (\lvert \xi \rvert^p + \lvert \nabla \xi \rvert^p+ \cdots + \lvert \nabla^{k} \xi \rvert^p)e^{p\delta s} ds\; dt$$    
for a section $\xi$ supported in the cylindrical end. The norm is finally constructed by the usual bump function trick.

\subsubsection{Floer Function}
Given a point $(\nu, u_0, w_0) \in \Mbar_T^{\,\normalfont\text{reg}}(\wh{Y}),$ and a gluing parameter, $\textbf{g},$ we know that the pre-glued map, $u_{\normalfont\textbf{g}}$, does not satisfy the equation $\delbar u_{\normalfont\textbf{g}} + A(w_0) ( \cdot, u_{\normalfont\textbf{g}}(\cdot)) = 0.$ But the `defect' of being a true solution can be explicitly measured by the following function in a neighborhood of $(u_{\normalfont\textbf{g}}, w_{\normalfont\textbf{g}})$ in $\Maps(C_{\normalfont\textbf{g}},\wh{Y}).$

\begin{equation}
    \scF_{\textbf{g}}\cl \wkpd(C_{\normalfont\textbf{g}},u_{\normalfont\textbf{g}}^* T\wh Y_{\normalfont\textbf{g}}) \oplus \scJ_{T_v} \oplus W \to W^{k-1,p,\delta}(C_{\normalfont\textbf{g}},u_{\normalfont\textbf{g}}^* T\wh Y_{\normalfont\textbf{g}} \otimes \Omega^{0,1}TC_{\normalfont\textbf{g}}) \oplus \bR^{V(T_{\normalfont\textbf{g}})}
\end{equation}
$$\scF_{\textbf{g}}(\xi,y,w) :=  (\text{P{T}}_{\exp_{u_{\normalfont\textbf{g}}} \xi \to u_{\normalfont\textbf{g}}}\otimes I_{y})\left(d(\exp_{u_{\normalfont\textbf{g}}}\xi) + A(w+w_0)(\cdot, \exp_{u_{\normalfont\textbf{g}}}\xi(\cdot)) \right)^{0,1}_{j_{y}}\oplus \bigoplus_{v\in V(T_{\normalfont\textbf{g}}) } \pi_\bR (\exp_{u_{\normalfont\textbf{g}}} \xi)(q'(v)).  $$
In the above equation, the exponential maps are taken with respect to an $\bR-$equivariant metric on $\wh Y$, $I_y$ is the composition of the natural maps 
$$\Omega^{0,1}_{C_{\normalfont\textbf{g}},j_y} \to \Omega^{0,1}_{C_{\normalfont\textbf{g}}} \otimes_\bR \bC \to \Omega^{0,1}_{C_{\normalfont\textbf{g}},j_0},$$
and the parallel transport $\text{PT}$ is taken with respect to a fixed $\wh{J} _0$ linear connection which is $\bR$ equivariant.
We recall some estimates about the Floer function $\scF_{\normalfont\textbf{g}}$ from \cite[\S 5]{Par19}.

\begin{lemma}[{\cite[Lemma~5.5]{Par19}}]
    We have $\|\scF_{\normalfont\textbf{g}}(0)\|_{k-1,2,\delta} \to 0$ as $\normalfont\textbf{g} \to 0$ for all $k\geq 1.$
\end{lemma}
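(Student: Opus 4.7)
My plan is to exploit the fact that $\scF_\textbf{g}(0)$ is supported only in the transition regions where the cutoff $\chi$ interpolates between the true solution $u_0$ and the flattened model (trivial cylinder over $\widetilde\gamma_e$ for a Reeb puncture, or the constant $u_0(\mathsf{n})$ at a node). Away from these thin regions, the pre-glued map $u_\textbf{g}$ either agrees with $u_0$ (and the perturbed Cauchy--Riemann equation holds because $(u_0,w_0)$ solves it), or equals a trivial cylinder / constant, which is automatically $J$-holomorphic and lies outside the (compact mod-$\bR$) support of $A(w_0)$ once the neck is long enough. The $\bR^{V(T_\textbf{g})}$-component vanishes because the sections $q'(v)$ lie away from the neck for $\textbf{g}$ small, so $u_\textbf{g}(q'(v))=u_0(q'(v))$, which by construction has zero $\bR$-coordinate.

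The key step is therefore the pointwise estimate on the width-$1$ transition collars. Choose the weight $\delta$ with $0<\delta<k$, where $k$ is the asymptotic convergence rate from \cite{HWZI}; recall the condition \eqref{eq:small-delta} makes this possible. For a Reeb edge $e$, exponential convergence to the trivial cylinder gives, on $s\in[\tfrac16\ell_e,\tfrac16\ell_e+1]$,
\begin{equation*}
\bigl|\exp^{-1}_{(Ls,\widetilde\gamma_e(t))}u_0(s,t)\bigr|\,+\,|\nabla\cdot|\,+\,\dots\,+\,|\nabla^{k-1}\cdot|\;\le\;C\,e^{-k s}\;\le\;C\,e^{-k\ell_e/6}.
\end{equation*}
Differentiating the definition of $u_\textbf{g}$ (which only introduces bounded derivatives of $\chi$) and applying the parallel transport in the definition of $\scF_\textbf{g}(0)$ yields the same pointwise decay. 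The weighted norm picks up at most a factor $e^{\delta(\ell_e/6+1)}$, so the contribution of this collar is bounded by $C\,e^{-(k-\delta)\ell_e/6}\to 0$ as $\ell_e\to\infty$. The analogous estimate at a node with gluing parameter $z_\mathsf{n}$ uses $S_\mathsf{n}=-\log|z_\mathsf{n}|$: the removable singularity theorem gives $|u_0(s,t)-u_0(\mathsf{n})|\le C e^{-s}$ (with all derivatives controlled), and the same geometric-series argument yields decay of order $|z_\mathsf{n}|^{(1-\delta)/6}$.

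Finally, one sums these estimates over all internal edges and interior nodes of $T_\nu$, using that only finitely many are present. Since each term tends to $0$ as the corresponding component of $\textbf{g}$ tends to $0$, we conclude $\|\scF_\textbf{g}(0)\|_{k-1,2,\delta}\to 0$. The main subtlety to check carefully will be that the interpolation between $u_0$ and its asymptotic model, carried out in exponential coordinates on $\wh Y$, interacts correctly with the parallel transport appearing in the definition of $\scF_\textbf{g}$ and with the complex-antilinear projection; this is straightforward because the $\bR$-invariant connection agrees with the Levi-Civita connection to leading order along trivial cylinders, so the extra terms generated have the same exponential decay rate and are absorbed into the constant $C$.
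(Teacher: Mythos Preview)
The paper does not supply its own proof of this lemma; it is stated with attribution to \cite[Lemma~5.5]{Par19} and used without further argument. Your sketch is the standard proof and matches what Pardon does: observe that $\scF_{\textbf{g}}(0)$ is supported only on the width-one transition collars (since $u_0$ solves the equation on the thick part, the trivial cylinder or constant solves it in the middle of the neck, and the perturbation $A(w_0)$ has compact support in $\cC^\circ$ so vanishes once the collar has drifted far enough toward the puncture), and then balance the exponential decay of $u_0$ toward its asymptotic model against the Sobolev weight. One notational point worth cleaning up: you use $k$ both for the Sobolev index in the statement and for the asymptotic decay rate of $u_0$; these are distinct constants, and it would be clearer to call the decay rate $\kappa$ and require $0<\delta<\kappa$.
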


\begin{lemma}[{\cite[Proposition~5.6]{Par19}}]
    For $\left\|\zeta\right\|_{k,2,\delta},\left\|\eta\right\|_{k,2,\delta}\leq c_{k,\delta}'$, we have
\begin{equation}\label{quad}
\left\|\scF_{\normalfont\textbf{g}}'(0,\eta)-\scF_{\normalfont\textbf{g}}'(\zeta,\eta)\right\|_{k-1,2,\delta}\leq c_{k,\delta}\left\|\zeta\right\|_{k,2,\delta}\left\|\eta\right\|_{k,2,\delta}
\end{equation}
for constants $0 < c_{k,\delta}' , c_{k,\delta}<\infty$ which are bounded uniformly in $\normalfont\textbf{g}$ near $0$, for all $k\geq 4$.
\end{lemma}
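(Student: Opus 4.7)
The plan is to establish this quadratic estimate as a standard application of the fundamental theorem of calculus combined with Sobolev multiplication estimates in weighted spaces, following the scheme of \cite[Proposition~5.6]{Par19}. First I would recall that $\scF_{\normalfont\textbf{g}}'(\zeta,\eta)$ is the derivative of $\scF_{\normalfont\textbf{g}}$ at $\zeta$ in the direction $\eta$, i.e.\ the linearization of the perturbed Cauchy--Riemann operator around $\exp_{u_{\normalfont\textbf{g}}}\zeta$ pulled back to the bundle over $u_{\normalfont\textbf{g}}$ via parallel transport, together with the linearized gauge-fixing terms. Since the exponential map, the parallel transport, and the almost complex structure $J$ are all smooth, the map $\zeta\mapsto \scF_{\normalfont\textbf{g}}'(\zeta,\eta)$ is smooth into bounded operators, and its derivative in $\zeta$ is given by a second variation $\scF_{\normalfont\textbf{g}}''(\zeta)(\zeta_1,\eta)$.

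By the fundamental theorem of calculus, one writes
\[
\scF_{\normalfont\textbf{g}}'(\zeta,\eta)-\scF_{\normalfont\textbf{g}}'(0,\eta)
\;=\;\int_0^1\scF_{\normalfont\textbf{g}}''(t\zeta)(\zeta,\eta)\,dt,
\]
so the estimate reduces to bounding the $(k-1,2,\delta)$-norm of $\scF_{\normalfont\textbf{g}}''(t\zeta)(\zeta,\eta)$ by $\|\zeta\|_{k,2,\delta}\|\eta\|_{k,2,\delta}$, uniformly for $\|\zeta\|_{k,2,\delta}\leq c'_{k,\delta}$ and uniformly in $\textbf{g}$. A direct differentiation shows that $\scF_{\normalfont\textbf{g}}''$ is a pointwise algebraic expression involving at most one derivative of $\zeta$ and $\eta$, multiplied by smooth tensorial coefficients that are built from finitely many derivatives of the exponential map, parallel transport, and $A$, all evaluated at $u_{\normalfont\textbf{g}}$.

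The key analytic input is then the Sobolev algebra property: for $k\geq 4$ and on a real surface, $W^{k,2}$ is a Banach algebra, and multiplication $W^{k,2}\times W^{k-1,2}\to W^{k-1,2}$ is continuous. This yields, schematically,
\[
\|\scF_{\normalfont\textbf{g}}''(t\zeta)(\zeta,\eta)\|_{k-1,2,\delta}
\;\leq\; C\bigl(\|u_{\normalfont\textbf{g}}\|_{C^{k-1}}\bigr)\,\|\zeta\|_{k,2,\delta}\,\|\eta\|_{k,2,\delta},
\]
with a constant depending only on the $C^{k-1}$-norms of the smooth coefficients and the tame bound on $\|\zeta\|_{k,2,\delta}$. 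The exponential weight is compatible because it is multiplicative: after cutting the domain off with a partition of unity into a compact core and cylindrical necks, one factors the product so that only one factor carries the weight, while the coefficient factors carry unweighted Sobolev norms.

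The main obstacle, and the step that requires the most care, is showing that the constants are uniform in the gluing parameter $\textbf{g}$ as $\textbf{g}\to 0$. The glued target $\wh Y_{\normalfont\textbf{g}}$ and domain $C_{\normalfont\textbf{g}}$ develop arbitrarily long cylindrical necks, but all the relevant geometric data (the $\bR$-invariant metric, the chosen connection, injectivity radius) are translation-invariant on the necks and hence bounded uniformly. The pre-glued map $u_{\normalfont\textbf{g}}$ coincides with a trivial cylinder on the necks by construction of the flattening in \S\ref{sec:gluing}, so its $C^{k-1}$-norm and the $C^{k-1}$-norms of all geometric tensors pulled back by $u_{\normalfont\textbf{g}}$ are bounded independently of $\textbf{g}$. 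The small choice of exponential weight $\delta$ in \eqref{eq:small-delta} ensures that the multiplication estimates pass through the weighted Sobolev spaces without degeneration, so the constant $c_{k,\delta}$ depends only on the geometric data fixed at the outset and on the fixed bound $c'_{k,\delta}$.
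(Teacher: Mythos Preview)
The paper does not give its own proof of this lemma; it is stated with the citation \cite[Proposition~5.6]{Par19} and no argument is supplied. Your outline is the standard one and matches the scheme in Pardon's paper: write the difference of linearizations as an integral of the second variation, observe that the second variation is a pointwise bilinear expression with smooth coefficients, and then invoke the Sobolev algebra property together with translation-invariance of the geometry on the necks to get uniformity in $\textbf{g}$. So your proposal is correct and essentially reproduces the argument the paper defers to.
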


We denote the linearization of $\scF_{\normalfont\textbf{g}}$ at $0$ by $D_{\normalfont\textbf{g}}$. Recall that, by assumption, the restriction of $D_0$ to $\wkpd(u_0^* T\wh Y) \oplus W$ is a surjective Fredholm operator. Thus, there is a right inverse $Q_0$. We now state the main kernel gluing and existence of right inverses.

\begin{proposition}
    There is a right inverse $Q_{\normalfont\textbf{g}}$ of $D_{\normalfont\textbf{g}}$ whose norm is bounded uniformly in $\textbf{g}$ and, for sufficiently small $\normalfont\textbf{g}$, an isomorphism of vector spaces 
    $$\Glue_{\ker} \cl \ker D_0 \to \ker D_{\normalfont\textbf{g}}.$$
\end{proposition}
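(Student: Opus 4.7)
The plan is to follow the standard Floer-theoretic gluing strategy as in \cite[\S 5]{Par19}: construct an approximate right inverse $\wt Q_{\normalfont\textbf{g}}$ by patching $Q_0$ with suitable cut-offs, use the quadratic estimate and the smallness of $\scF_{\normalfont\textbf{g}}(0)$ to verify that $D_{\normalfont\textbf{g}}\wt Q_{\normalfont\textbf{g}}$ differs from the identity by an operator of norm $o(1)$ as $\normalfont\textbf{g}\to 0$, and then obtain the true right inverse $Q_{\normalfont\textbf{g}} = \wt Q_{\normalfont\textbf{g}}(D_{\normalfont\textbf{g}}\wt Q_{\normalfont\textbf{g}})\inv$ via a Neumann series. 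The exponential weight $\delta$, chosen via \eqref{eq:small-delta} to be smaller than the smallest positive eigenvalue of any asymptotic operator $A_\gamma$ for $\gamma \in \cP(T)$, is essential here: it ensures the pre-glued map $u_{\normalfont\textbf{g}}$ converges to a trivial cylinder exponentially fast in the neck, so that cut-off errors are controlled uniformly in $\normalfont\textbf{g}$.

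The key construction is a pre-gluing operation on sections. Given $\eta\in \wkpd(C_\nu,u_0^*T\wh Y)$, one defines $\eta_{\normalfont\textbf{g}}$ on $C_{\normalfont\textbf{g}}$ by parallel-transporting $\eta$ via the $\wh J_0$-linear connection from $u_0$ to $u_{\normalfont\textbf{g}}$ and then truncating it by a smooth cut-off $\chi_{\normalfont\textbf{g}}$ supported away from the mid-point of each neck, exactly as in the flattening of $u_0|_{\normalfont\textbf{g}}$. The crucial estimates
\begin{equation*}
    \|\eta_{\normalfont\textbf{g}}\|_{k,2,\delta}\,\leq\, C\|\eta\|_{k,2,\delta},\qquad \|D_{\normalfont\textbf{g}}\eta_{\normalfont\textbf{g}}-(D_0\eta)_{\normalfont\textbf{g}}\|_{k-1,2,\delta}\,\leq \,\epsilon(\normalfont\textbf{g})\|\eta\|_{k,2,\delta}
\end{equation*}
with $\epsilon(\normalfont\textbf{g})\to 0$ follow from the exponential decay of $\eta$ in the cylindrical ends (using $\delta$) combined with the flattening lemma \cite[Lemma~5.5]{Par19}. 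Symmetrically, for $\zeta$ on $C_{\normalfont\textbf{g}}$ one defines a restriction--breaking operation $\zeta\mapsto \zeta^{br}$ on $C_\nu$ using the inverse cut-offs. The approximate right inverse is $\wt Q_{\normalfont\textbf{g}}(\zeta) := (Q_0\zeta^{br})_{\normalfont\textbf{g}}$; uniform boundedness of its norm in $\normalfont\textbf{g}$ is immediate from the uniform boundedness of $Q_0$ and the above estimates.

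For the kernel isomorphism, define
\begin{equation*}
    \Glue_{\ker}(\eta) \coloneqq \eta_{\normalfont\textbf{g}} - Q_{\normalfont\textbf{g}}D_{\normalfont\textbf{g}}\eta_{\normalfont\textbf{g}},
\end{equation*}
noting that $D_{\normalfont\textbf{g}}\eta_{\normalfont\textbf{g}} = D_{\normalfont\textbf{g}}\eta_{\normalfont\textbf{g}}-(D_0\eta)_{\normalfont\textbf{g}} = O(\epsilon(\normalfont\textbf{g}))\|\eta\|_{k,2,\delta}$ since $D_0\eta = 0$, so $\Glue_{\ker}(\eta)\in \ker D_{\normalfont\textbf{g}}$ and $\|\Glue_{\ker}(\eta)-\eta_{\normalfont\textbf{g}}\|_{k,2,\delta}\to 0$. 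Injectivity then follows from the observation that $\|\eta_{\normalfont\textbf{g}}\|$ bounds $\|\eta\|$ from below away from zero on any fixed compact piece of $C_\nu$ (combined with elliptic regularity, or $C^1_{loc}$ convergence). To show $\Glue_{\ker}$ is a bijection, one argues by a Fredholm index count: the index of $D_{\normalfont\textbf{g}}$ equals that of $D_0$ by the additivity of the Cauchy--Riemann index under gluing (applied edge by edge using the Reeb orbits labeling the edges), and the uniform surjectivity of $D_{\normalfont\textbf{g}}$ via $Q_{\normalfont\textbf{g}}$ gives $\dim \ker D_{\normalfont\textbf{g}} = \indo(D_{\normalfont\textbf{g}}) = \indo(D_0) = \dim\ker D_0$, so injectivity upgrades to an isomorphism.

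The main obstacle is pinning down the $\normalfont\textbf{g}$-uniform constants in the weighted Sobolev norms. Both the flattening error $\|\scF_{\normalfont\textbf{g}}(0)\|_{k-1,2,\delta}$ and the norm of the gluing of sections depend on how the weight $e^{\delta s}$ interacts with the neck of length $\tfrac{1}{6}\ell_e$; care is needed to make sure these constants do not blow up and that the cut-off derivatives, which carry a factor $\ell_e\inv$, are absorbed by the exponential decay $e^{-\delta'\ell_e}$ with $\delta' < \delta$. This is standard but delicate, and ultimately is what forces the choice $\delta < \inf|\sigma(A_\gamma)|$ in \eqref{eq:small-delta}. Once the uniform constants are established, everything else follows from the Banach fixed-point/Neumann-series machinery.
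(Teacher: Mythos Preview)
Your proposal is correct and follows the standard Floer-theoretic gluing argument; the paper's own ``proof'' simply omits the details and refers to \cite[\S 5.2.8]{Par19} for the right inverse and Equation~(5.40) therein for the kernel gluing, which is precisely the construction you outline. Your sketch is thus more explicit than the paper's treatment while matching the referenced approach.
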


\begin{proof}
    We omit the proof and refer the reader to \cite[\textsection 5.2.8]{Par19} for the construction of the right inverse $Q_{\normalfont\textbf{g}}$. The kernel gluing isomorphism is constructed in Equation (5.40) in op. cit.
\end{proof}

By Equation \eqref{quad}, the derivative $\scF_{\normalfont\textbf{g}}'(v,\cdot)$ is surjective for $\|v \|_{k,p,\delta} < c_{k,\delta} $. Thus $\scF_{\normalfont\textbf{g}}^{-1}(0)$ is a $C^{k-2}-$manifold, which is transverse to $\im \;Q_{\normalfont\textbf{g}}$.

\begin{proposition}\label{prop:proj_g}
    The projection map $\proj  :\scF_{\normalfont\textbf{g}}^{-1}(0) \to \ker D_{\normalfont\textbf{g}}$ along $\im \; Q_{\normalfont\textbf{g}}$ is a local diffeomorphism whose image contains $0 \in \ker D_{\normalfont\textbf{g}}.$
\end{proposition}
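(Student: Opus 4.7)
The strategy is the classical Newton--Picard iteration combined with the implicit function theorem. Using the right inverse $Q_{\normalfont\textbf{g}}$ we have the topological direct sum decomposition
\[
\wkpd(C_{\normalfont\textbf{g}},u_{\normalfont\textbf{g}}^*T\wh Y_{\normalfont\textbf{g}})\oplus \scJ_{T_v}\oplus W \;=\;\ker D_{\normalfont\textbf{g}}\,\oplus\,\im Q_{\normalfont\textbf{g}},
\]
since $D_{\normalfont\textbf{g}} Q_{\normalfont\textbf{g}} = \ide$ implies that $Q_{\normalfont\textbf{g}}$ is injective and that $\proj := \ide - Q_{\normalfont\textbf{g}} D_{\normalfont\textbf{g}}$ is a bounded projection onto $\ker D_{\normalfont\textbf{g}}$ along $\im Q_{\normalfont\textbf{g}}$. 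Writing $v = k+q$ with $k\in\ker D_{\normalfont\textbf{g}}$ and $q\in \im Q_{\normalfont\textbf{g}}$, and expanding
\[
\scF_{\normalfont\textbf{g}}(k+q) \;=\; \scF_{\normalfont\textbf{g}}(0) + D_{\normalfont\textbf{g}}(q) + N_{\normalfont\textbf{g}}(k+q),
\]
where $N_{\normalfont\textbf{g}}(v)\coloneqq \scF_{\normalfont\textbf{g}}(v)-\scF_{\normalfont\textbf{g}}(0)-D_{\normalfont\textbf{g}}v$, the equation $\scF_{\normalfont\textbf{g}}(k+q)=0$ is (using $D_{\normalfont\textbf{g}}Q_{\normalfont\textbf{g}}=\ide$ and that $q\in \im Q_{\normalfont\textbf{g}}$) equivalent to the fixed-point equation
\[
q \;=\; T_k(q) \;\coloneqq\; -Q_{\normalfont\textbf{g}}\bigl(\scF_{\normalfont\textbf{g}}(0) + N_{\normalfont\textbf{g}}(k+q)\bigr).
\]

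The main step is to show that for small $k$ and $\textbf{g}$, the map $T_k$ is a contraction on a small ball in $\im Q_{\normalfont\textbf{g}}$. This uses three inputs already established in the excerpt: the uniform bound on $\|Q_{\normalfont\textbf{g}}\|$, the fact that $\|\scF_{\normalfont\textbf{g}}(0)\|_{k-1,2,\delta}\to 0$ as $\textbf{g}\to 0$, and the quadratic estimate~\eqref{quad}, which upon integration yields $\|N_{\normalfont\textbf{g}}(v_1)-N_{\normalfont\textbf{g}}(v_2)\|\leq c(\|v_1\|+\|v_2\|)\|v_1-v_2\|$. The Banach fixed-point theorem then produces a unique $q(k)\in \im Q_{\normalfont\textbf{g}}$ in a small ball with $\scF_{\normalfont\textbf{g}}(k+q(k))=0$, and standard Newton--Picard arguments give continuous dependence on $k$.

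For the diffeomorphism property, apply the implicit function theorem to the map
\[
\Psi\cl \ker D_{\normalfont\textbf{g}}\oplus \im Q_{\normalfont\textbf{g}}\to W^{k-1,2,\delta}(C_{\normalfont\textbf{g}},u_{\normalfont\textbf{g}}^*T\wh Y_{\normalfont\textbf{g}}\otimes \Omega^{0,1}TC_{\normalfont\textbf{g}})\oplus \bR^{V(T_{\normalfont\textbf{g}})},\qquad \Psi(k,q)=\scF_{\normalfont\textbf{g}}(k+q).
\]
At $(k_0,q(k_0))$ the partial derivative $\partial_q\Psi = \scF'_{\normalfont\textbf{g}}(k_0+q(k_0),\cdot)|_{\im Q_{\normalfont\textbf{g}}}$ is, by the quadratic estimate and the fact that $D_{\normalfont\textbf{g}}|_{\im Q_{\normalfont\textbf{g}}}\cl \im Q_{\normalfont\textbf{g}}\to W^{k-1,2,\delta}\oplus \bR^{V(T_{\normalfont\textbf{g}})}$ is a Banach space isomorphism, a small perturbation of an isomorphism, hence itself an isomorphism. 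Therefore $k\mapsto q(k)$ is of class $C^{k-2}$, and the map $k\mapsto k+q(k)$ is a $C^{k-2}$ inverse to $\proj$ on a neighborhood of $q(0)$ in $\scF_{\normalfont\textbf{g}}^{-1}(0)$, proving the local diffeomorphism claim. Finally, the existence assertion is the value $k=0$: it yields $q(0)\in \im Q_{\normalfont\textbf{g}}$ with $\proj(q(0))=0$, so $0$ lies in the image.

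The only mild obstacle is keeping the estimates uniform in $\normalfont\textbf{g}$, but this is already packaged into the uniform bounds on $\|Q_{\normalfont\textbf{g}}\|$ and the constants $c_{k,\delta}$, $c'_{k,\delta}$ in~\eqref{quad}; hence the contraction radius and the IFT neighborhood can be chosen uniformly for $\normalfont\textbf{g}$ near $0$.
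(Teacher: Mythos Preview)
Your argument is correct and is precisely the standard Newton--Picard iteration that the paper defers to by citing \cite[\S 5.3.1]{Par19}; in fact you have spelled out what that reference does. The paper's own proof is simply the one-line citation, so your proposal fills in exactly the content being invoked.
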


\begin{proof}
    This is shown in \cite[\S 5.3.1]{Par19}.
\end{proof}

\subsection{Gluing map}
We can now define a gluing map $\Glue(\textbf{g}, \cdot )\cl $The main gluing theorem in \cite{Par19} proves the following result about local homeomorphism.

\begin{proposition}\label{fixed-g-gluing} 
    Fix a point $\nu$ in the base-space $\cBR.$
    For a given $\normalfont\textbf{g}\in G_{T_\nu/} \times \bC^{N^{\inn}_{C_0}}$, the restriction of the gluing map 
    $$\Glue(\normalfont{\textbf{g}}, \;\underline{\;\;\;}\;) \cl \Mbar_{T}^{\,\normalfont\text{reg}}(\wh{Y})|_{T_\nu} \to \Mbar_{T}^{\,\normalfont\text{reg}}(\wh{Y})|_{T_{\Glue_{\cBR}(\nu,\textbf{g})}} $$ given by the equation 
    \begin{equation}\label{eq:gluing_map}
    \Glue(\textbf{g}, \;\underline{\;\;\;}\;) =  \exp_{u_{\normalfont\textbf{g}}} \circ \proj_{\ker D_{\textbf{g}}}^{-1}\circ \Glue_{\ker}\circ \proj_{\ker D_0}
\end{equation}
    is a local diffeomorphism.
\end{proposition}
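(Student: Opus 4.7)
The strategy is to verify that each of the four maps in the composition~\eqref{eq:gluing_map} is a local diffeomorphism onto its image, so that their composition is as well. First, by the regularity assumption on $(\nu,u_0,w_0)$, the linearization $D_0$ is surjective and hence admits a bounded right inverse $Q_0$. The implicit function theorem applied to the Floer function $\scF_0$ (whose quadratic estimate is a special case of~\eqref{quad} at $\normalfont\textbf{g} = 0$) then shows that $\scF_0^{-1}(0)$ is, near the origin, a smooth submanifold transverse to $\im Q_0$, and the projection $\proj_{\ker D_0}\cl \scF_0^{-1}(0) \to \ker D_0$ along $\im Q_0$ is a local diffeomorphism. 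Combined with the identification of $\scF_0^{-1}(0)$ with a neighborhood of $u_0$ in $\Mbar^{\reg}_T(\wh Y)|_{T_\nu}$ via $\xi \mapsto \exp_{u_0}\xi$ (and the accompanying $(w,y)$-coordinates), this yields a local diffeomorphism from $\Mbar^{\reg}_T(\wh Y)|_{T_\nu}$ near $u_0$ to $\ker D_0$ near the origin, exactly as recorded in~\eqref{eq:kernel_at_0_identif}.

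Second, $\Glue_{\ker}\cl \ker D_0 \to \ker D_{\normalfont\textbf{g}}$ is a linear isomorphism between finite-dimensional vector spaces for $\normalfont\textbf{g}$ sufficiently small, by the construction recalled from \cite[Eq.~(5.40)]{Par19}; in particular it is a diffeomorphism. Third, by Proposition~\ref{prop:proj_g}, the projection $\proj_{\ker D_{\normalfont\textbf{g}}}\cl \scF_{\normalfont\textbf{g}}^{-1}(0)\to \ker D_{\normalfont\textbf{g}}$ along $\im Q_{\normalfont\textbf{g}}$ is a local diffeomorphism whose image contains $0$, so its inverse on a suitable neighborhood is also a local diffeomorphism. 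Fourth, for $\|\xi\|_{k,2,\delta}$ small, the map $\xi \mapsto \exp_{u_{\normalfont\textbf{g}}}\xi$ is a smooth embedding of a neighborhood of $0$ in $\scF_{\normalfont\textbf{g}}^{-1}(0)$ into the space of $C^{k-2}$-maps $C_{\normalfont\textbf{g}}\to \wh Y_{\normalfont\textbf{g}}$, and by construction of $\scF_{\normalfont\textbf{g}}$, the image consists precisely of tuples $(C_{\normalfont\textbf{g}}, j_{y+y_0}, u, w+w_0)$ satisfying the perturbed Cauchy--Riemann equation with the gauge-fixing $\pi_\bR(u(q'(v))) = 0$. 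Since the underlying domain $(C_{\normalfont\textbf{g}}, j_{y+y_0})$ lies in the stratum $T_{\Glue_{\cBR}(\nu,\normalfont\textbf{g})}$ for all small $y$ (the topological type being fixed by $\normalfont\textbf{g}$), the image lies in the correct fiber $\Mbar^{\,\reg}_T(\wh Y)|_{T_{\Glue_{\cBR}(\nu,\normalfont\textbf{g})}}$.

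Composing the four steps yields the local diffeomorphism claimed. The only technical point requiring care is the compatibility of the two regularity/surjectivity setups: one must check that the right inverses and kernel identifications constructed at $\normalfont\textbf{g}$ are compatible with the choices of sections $q_i'$ used to gauge-fix the $\bR$-translations (so that the extra $\bR^{V(T_{\normalfont\textbf{g}})}$ factor in the codomain of $\scF_{\normalfont\textbf{g}}$ does not interfere with the identification of zeros with elements of $\Mbar^{\reg}_T(\wh Y)$). This is not an obstacle because the sections $q_i'$ were chosen once and for all in \S\ref{ssc:targ_glu} and vary smoothly with $\nu$, so both $D_0$ and $D_{\normalfont\textbf{g}}$ genuinely describe tangent spaces to the respective strata of the moduli space after passing to the $\bR$-quotient, and $\Glue_{\ker}$ intertwines the gauge-fixing components. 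The main subtlety in the rest of the gluing package --- showing that $g\cl B\times N \to \Mbar^{\,\reg}_T(\wh Y)$ is a topological embedding and that the family of restrictions varies $C^1_{\text{loc}}$ in $\normalfont\textbf{g}$ --- is deferred to Theorem~\ref{thm:gluing-main} and Proposition~\ref{prop:c1loc}; the present proposition concerns only a fixed $\normalfont\textbf{g}$, where the argument reduces to the four-step factorization above.
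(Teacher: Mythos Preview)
Your proposal is correct and follows essentially the same approach as the paper: both decompose the composition~\eqref{eq:gluing_map} into its constituent maps and argue that each is a local diffeomorphism, invoking the identification~\eqref{eq:kernel_at_0_identif} for $\proj_{\ker D_0}$, the linear isomorphism $\Glue_{\ker}$, and Proposition~\ref{prop:proj_g} for $\proj_{\ker D_{\textbf{g}}}^{-1}$. Your version is more explicit about treating $\exp_{u_{\textbf{g}}}$ as a separate step and about the gauge-fixing compatibility, but the paper's proof simply absorbs these into the chart identifications and states the result more tersely.
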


\begin{proof}
    Fix a point $(u_0,w_0) \in  \Mbar_{T}^{\,\normalfont\text{reg}}(\wh{Y})|_\nu$. For a fixed $\textbf{g}$, the gluing map is the composition of local diffeomorphisms and thus itself is a local diffeomorphism. The map $\proj_{\ker D_0} \cl \Mbar_{T}^{\,\normalfont\text{reg}}(\wh{Y})|_\nu \to \ker D_0 $ is the inverse of the local diffeomorphism $\ker D_0 \to \Mbar_{T}^{\,\normalfont\text{reg}}(\wh{Y})|_\nu$ defined on a small neighborhood of the origin. The middle map $\Glue_{\ker}$ is the kernel gluing map $$\Glue_{\ker} \cl \ker D_0 \to \ker D_{\textbf{g}},$$ and $\proj_{\ker D_{\textbf{g}}}$ is the restriction of the projection map in Proposition \ref{prop:proj_g}. This finishes the proof.
\end{proof}

\begin{proof}[Proof of Theorem \ref{thm:gluing-main}]
We will only prove the case of $\Mbar \to \cB^{\bR}$ being an open embedding since the general result then follows from a similar argument as \cite[Corollary A.2]{AMS23}. Due to the strong regularity of $(\nu,u,w)$, there exists a neighborhood $B_{T_\nu}$ of $\nu$ in the submanifold $ \cBR|_{T_\nu} $ of domains of tree type $T_\nu$ and a neighborhood $N$ of $(\nu,u,w)$ in the fiber $\Mbar^{reg}_T(\wh Y)|_{(\nu,u,w)}$ such that there is an embedding $B_{T_\nu} \times N \to \Mbar^{reg}_T(\wh Y)|_{T_\nu}$. Fix a chart in a neighborhood of $B_{T_\nu}$ given by the map 
$$ \text{Glue}_{\cBR}:B|_{T_\nu}\times G_{T_\nu / } \to B \underset{open}{\subset} \cBR,$$
as described in \eqref{base-gluing}. Under the identification of a neighborhood of $(\nu,u,w)$ in the fiber $\Mbar^{reg}_T(\wh Y)|_{T_\nu}$  with $B|_{T_\nu} \times N$, we have that the gluing map, $\Glue$ can be written as 

$$\Glue \cl B|_{T_\nu} \times N \times G_{T_\nu/}  \to \Mbar^{reg}_T(\wh Y). $$

Now the result is a direct consequence of Proposition~\ref{fixed-g-gluing}.
\end{proof}

\begin{proof}[Proof of Proposition \ref{prop:c1loc}]
We rephrase the proposition using the language developed in this appendix. Let $(\nu_i,u_i,w_i) \in \Mbar^{reg}_T(\hat{Y}), \; i\in \{ 1,2\}$  be a pair of  points and let $\Glue^i$ be gluing maps constructed as above. We also assume that these gluing neighborhoods intersect non-trivially. In particular, let $N_i \sub \Mbar^{reg}_T(\hat{Y})|_{\nu_i}$ be neighborhoods of $(\nu_i,u_i,w_i)$ in the fiber and suppose $B_i$ are neighborhoods of $\nu_i$ in the $\cBR$ such that $B_1 \cap B_2 \neq \emptyset$ and $\Glue^1(B_1\cap B_2,  N_1) \cap \Glue^2(B_1\cap B_2,  N_2) \neq \emptyset.$ We can further assume that $N_i's$ are open neighborhoods of $0$ in the kernel of the respective linearization, $\ker D_0^i$. Thus, we can identify $N_1$ and $N_2$ with open neighborhoods of the origin in finite-dimensional Euclidean space. After potentially replacing $N_1$ with a subset, we have a map  $$G:=(\Glue^2)\inv \circ \Glue^1 \cl ( B_1 \cap B_2)\times N_1 \to ( B_1 \cap B_2)\times N_2 .$$  Now it is enough to check that the map $G(g,\;\underline{\;\;} \;) \cl N_1 \to N_2$ depends continuously on $g$ in the $C^1_{loc}$ topology. By viewing the map $G$ using the definition of gluing map as defined in Equation~\eqref{eq:gluing_map}, we see that it is enough to check continuity of derivative of 
$$ \Glue_{\ker D^2_0}\inv\circ\proj_{\ker D^2_g} \inv\circ\exp_{u_2,\textbf{g}}\inv \circ\exp_{u_1,\textbf{g}} \circ \proj_{\ker D^1_g}\inv \circ \Glue_{\ker D^1_0}\cl N_1 \to N_2$$ 
where $D^i_\textbf{g}$ is defined similarly to $D_\textbf{g}$ above. Now the result follows from the fact that the derivative of $\exp_{u_2,\textbf{g}}\inv \circ \exp_{u_1,\textbf{g}} $ depends continuously on $\textbf{g}$ and the construction of the right inverse $Q_\textbf{g}$ depends continuously on $\textbf{g}$, see \cite[\S 5.2.7-5.2.8]{Par19}.
\end{proof}

\bibliographystyle{amsalpha}
\bibliography{bib}

\Addresses

\end{document}